\theoremstyle{plain}
\newtheorem{lemma}{Lemma}[section]
\newtheorem{theorem}[lemma]{Theorem}
\newtheorem{corollary}[lemma]{Corollary}
\newtheorem{proposition}[lemma]{Proposition}
\newtheorem{definition}[lemma]{Definition}
\theoremstyle{remark}
\newtheorem{remark}{Remark}
\newcommand*{\rom}[1]{\expandafter\@slowromancap\romannumeral #1@}
\numberwithin{equation}{section}
\begin{document}

\title[The Effect of Fast Rotation and vertical viscosity on lifespan of the Primitive Equations]
{On the effect of fast rotation and vertical viscosity on the lifespan of the $3D$ primitive equations}

\date{\today}

\author[Q. Lin]{Quyuan Lin*}\thanks{*Corresponding author. Department of Mathematics, University of California,	Santa Barbara, CA 93106, USA. E-mail address: abellyn@hotmail.com}
\address[Q. Lin]
{	Department of Mathematics \\
     University of California  \\
	Santa Barbara, CA 93106, USA.} \email{abellyn@hotmail.com}

\author[X. Liu]{Xin Liu}
\address[X. Liu]
{Weierstrass-Institut
f\"ur Angewandte Analysis und Stochastik\\
    Leibniz-Institut im Forschungsverbund Berlin\\
    Mohrenstr. 39, 10117 Berlin, Germany.
Department of Applied Mathematics and Theoretical Physics\\
    University of Cambridge\\
    Cambridge CB3 0WA, UK.
}
\email{stleonliu@gmail.com} \email{stleonliu@live.com}

\author[E.S. Titi]{Edriss S. Titi}
\address[E.S. Titi]
{Department of Mathematics  \\
	Texas A\&M University  \\
	College Station  \\
	Texas, TX 77840, USA.
Department of Applied Mathematics and Theoretical Physics\\
    University of Cambridge\\
    Cambridge CB3 0WA, UK.
Department of Computer Science and Applied Mathematics \\
    Weizmann Institute of Science  \\
    Rehovot 76100, Israel.}
\email{titi@math.tamu.edu} \email{Edriss.Titi@maths.cam.ac.uk}
\email{edriss.titi@weizmann.ac.il}

\begin{abstract}
We study the effect of the fast rotation and vertical viscosity on the lifespan of solutions to the three-dimensional primitive equations (also known as the hydrostatic Navier-Stokes equations) with impermeable and stress-free boundary conditions. Firstly, for a short time interval, independent of the rate of rotation $|\Omega|$, we establish the local well-posedness of solutions with initial data that is analytic in the horizontal variables and only $L^2$ in the vertical variable. Moreover, it is shown that the solutions immediately become analytic in all the variables with increasing-in-time (at least linearly)  radius of analyticity in the vertical variable for as long as the solutions exist. On the other hand, the radius of analyticity in the horizontal variables might decrease with time, but as long as it remains positive the solution exists.  Secondly, with fast rotation, i.e., large $|\Omega|$, we show that the existence time of the solution can be prolonged, with ``well-prepared'' initial data. Finally, in the case of two spatial dimensions with $\Omega=0$, we  establish the global well-posedness provided that the initial data is small enough. The smallness condition on the initial data depends on the vertical viscosity and the initial radius of analyticity in the horizontal variables.
\end{abstract}

\maketitle

MSC(2020): 35Q35, 35Q86, 86A10, 76E07.

Keywords: Anisotropic vertically viscous primitive equations; Fast rotation; Well-posedness theory; Hydrostatic Navier-Stokes equations.

\section{Introduction}

We consider the following $3D$ viscous primitive equations (PEs) with only vertical viscosity for the large-scale oceanic and atmospheric dynamics:
\begin{subequations}\label{PE-system}
\begin{gather}
    \partial_t \mathcal{V} + \mathcal{V}\cdot \nabla  \mathcal{V} + w\partial_z \mathcal{V}  - \nu \partial_{zz} \mathcal{V} +\Omega \mathcal{V}^\perp + \nabla  p = 0 , \label{PE-1}
    \\
    \partial_z p = 0, \label{PE-2}
    \\
    \nabla  \cdot \mathcal{V} + \partial_z w =0, \label{PE-3}
\end{gather}
\end{subequations}
in the horizontal channel $\mathcal{D}:=\big\{(\boldsymbol{x},z)^\top = (x,y,z)^\top:  \boldsymbol{x}^\top \in \mathbb{T}^2,z \in (0,1) \big\}$, subject to the following initial and boundary conditions:
\begin{gather}
\mathcal{V}|_{t=0} =\mathcal{V}_0,  \label{PE-IC}\\
(\partial_z \mathcal{V}, w)|_{z=0,1}=0, \;\text{and} \; (\mathcal{V}, w)\;\text{are periodic in} \; \boldsymbol{x} \; \text{with period} \; 1. \label{PE-BC}
\end{gather}
Here the horizontal velocity field $\mathcal{V}=(u,v)^\top$, the vertical velocity $w$, and the pressure $p$ are the unknowns of the initial-boundary value problem. The $2D$ horizontal gradient is denoted by $\nabla  = (\partial_x, \partial_y)^\top$. The positive constant $\nu$ is the vertical viscosity coefficient. $ \Omega\mathcal{V}^\perp = \Omega (-v, u)^\top $ represents the Coriolis force with magnitude $ |\Omega| \in \mathbb R^+ $. As one will see later, the Coriolis force induces linear rotation waves with rotating rate $ |\Omega| $. The $3D$ viscous PEs can be derived as the asymptotic limit of the small aspect ratio between the vertical and horizontal length scales from the Boussinesq system, which is justified rigorously first in \cite{AG01} in a weak sense, then later in \cite{LT18} in a strong sense with error estimates (see also a recent paper \cite{LTY21} for the PEs with anisotropic horizontal viscosity). Notice that we have omitted the coupling with temperature in \eqref{PE-system} for the sake of simple and clear presentation. System \eqref{PE-system} is also referred to as the anisotropic vertically viscous hydrostatic Navier-Stokes equations.

The global well-posedness of strong solutions to the $3D$ PEs with full viscosity was first established in \cite{CT07}, and later in \cite{K06}. See also \cite{KZ07,KZ072} for different boundary conditions, and \cite{Hieber-Kashiwabara} for solutions with less regular initial data. In \cite{CLT16,CLT17,CLT17b}, the authors consider global well-posedness of strong solutions to the $3D$ PEs with only horizontal viscosity.

In the inviscid case without rotation ($\Omega = 0$), the linear ill-posedness of solutions in Sobolev spaces has been established in \cite{RE09}. Later on, the nonlinear ill-posedness of the inviscid PEs without rotation was established in \cite{HN16}. Moreover, without rotation, it was proved that smooth solutions to the inviscid PEs can develop singularity in finite time \cite{CINT15,W12}. It is shown later in \cite{ILT20} that these results can be extended to the case with rotation, i.e., $\Omega \neq 0$. Recently, the stability of the blowup is studied in \cite{CIL21}. Under some structural (local Rayleigh condition) or analyticity assumption of the initial data, the well-posedness theory was studied in \cite{BR99,BR03,GILT20,GR99,KMVW14,KTVZ11,MW12}. In particular, it has been shown that the lifespan of solutions to the $3D$ inviscid PEs can be prolonged provided that the rate of rotation is fast enough and the initial data is ``well-prepared'' in \cite{GILT20}. Similar results have been studied in the case of the $3D$ fast rotating Euler, Navier-Stokes, and Boussinesq equations in \cite{BMN97, BMN99a, BMN99b, BMN00, CDGG06,D05,EM96,IY,KLT14} (see also \cite{BIT11,GST15,KTZ18,LT04} for some explicit examples demonstrating the mechanism).

For the PEs with only vertical viscosity, it has been shown in \cite{RE09} that system \eqref{PE-system} is ill-posed in any Sobolev space. This ill-posedness can be overcome by considering additional linear (Rayleigh-like friction) damping, see \cite{CLT19} for the reduced $3D$ case.
On the other hand, with Gevrey regularity and some convex conditions on the initial data, the local well-posedness is established in \cite{GMV20}. When the initial data is analytic in the horizontal variables $\boldsymbol{x}$ and is sufficiently small, the global well-posedness is proved in \cite{PZZ18} in $2D$, with $\Omega = 0$ and Dirichlet boundary condition. In this paper, we consider \eqref{PE-system} in $3D$, with arbitrary $\Omega\in \mathbb{R}$ and subject to impermeable and stress-free boundary conditions.

The main results of this paper are roughly summarized as follows:

\begin{enumerate}[label = R\arabic*, ref = R\arabic*]
    \item Local well-posedness (see Theorem \ref{theorem-local}): Assume that $\mathcal{V}_0$ is analytic in the horizontal variables $\boldsymbol{x}$  and only $L^2$ in the vertical variable $z$. Let $\Omega \in \mathbb{R}$ be arbitrary but fixed. Then there exists a positive time $\mathcal T >0$, independent of $\Omega$, such that there exists a unique Leray-Hopf type weak solution $\mathcal V$ to system \eqref{PE-system} (see Definition \ref{definition-weak-solution}, below). Moreover the weak solution $\mathcal V$ depends continuously on the initial data and in particular it is unique.
    \item Instantaneous analyticity in the vertical variable (see Theorem \ref{theorem-radius}): With the same assumptions as in R1 above, the unique Leray-Hopf type weak solution $\mathcal V$ immediately becomes analytic in $z$ for $t>0$. Moreover, thanks to the viscous effect the radius of analyticity in $z$ increases in time, at least linearly, for as long as the solution exists. On the other hand, the radius of analyticity in the horizontal variables might decrease with time, but as long as it remains positive the solution exists.
    \item \label{R3} Long-time existence (see Theorem \ref{theorem-main}): Let $|\Omega|\geq |\Omega_0|$ with $|\Omega_0|$ large enough, in particular $|\Omega_0|>1$. Assume that the analytic-Sobolev norm (see \eqref{analytic-Sobolev-norm}, below) of both the barotropic mode $\overline{\mathcal{V}}_0$ and baroclinic mode $\widetilde{\mathcal{V}}_0$ are $ \mathcal O(1) $, and that some Sobolev norm of $\widetilde{\mathcal{V}}_0$ is $ \mathcal O (\frac{1}{|\Omega_0|})$, as $|\Omega_0| \rightarrow \infty$. Then a lower bound, $\mathcal T$, of the existence time of the Leray-Hopf type weak solution to system \eqref{PE-system} with $|\Omega|\geq |\Omega_0|$ satisfies
\begin{equation}\label{intro-time-1}
     \mathcal T = \mathcal O( \log[\log[\log(\log (|\Omega_0|) )]] ) \rightarrow \infty \text{ as } |\Omega_0|\rightarrow \infty.
\end{equation}
Moreover, as a corollary of R2, the solution is analytic in all variables (see Remark \ref{remark-analytic-in-z-main}, below).
\item \label{R4} Long-time existence with small barotropic mode (see Theorem \ref{theorem-small-barotropic}):
Let $|\Omega|\geq |\Omega_0| >1$ and $|\Omega_0|$ be large enough.
\begin{enumerate}
    \item Under the assumption that the solution $\overline{V}$ to the $2D$ Euler equations with initial data $\overline{\mathcal{V}}_0$ is uniformly-in-time bounded in the analytic space norm, \eqref{intro-time-1} can be improved to $\mathcal T = \mathcal O( \log(\log (|\Omega_0|) ) )$. Let us note that this result is parallel to a similar one in the inviscid case \cite{GILT20}.

    \item Moreover, under the assumption that $\overline{V}$ is uniformly-in-time small enough (the smallness condition is independent of ${|\Omega_0|}$) in the analytic space norm, the smallness requirement on the Sobolev norm of $\widetilde{V}_0$ can be relaxed and is independent of $\Omega_0$, and \eqref{intro-time-1} can be improved to $\mathcal T = \mathcal O( \log (|\Omega_0| ) )$, as $|\Omega_0| \to \infty$.
        In view of work reported in \cite{Kiselev-Sverak} about the growth of solutions of 2D Euler equations, we observe that the above assumptions about the smallness of $\overline{V}$ might not be valid for all initial data.

    \item \label{rz4-3} If the analytic norm of $\overline{\mathcal{V}}_0$ is of order $ \mathcal O ( \frac{1}{|\Omega_0|} ) $, as $|\Omega_0| \to \infty$, then the smallness requirement on the Sobolev norm of $\widetilde{V}_0$ can be relaxed and independent of $\Omega_0$; moreover,  \eqref{intro-time-1} can be improved to $\mathcal T = \mathcal O( |\Omega_0|^{\frac{1}{2}} )$.
\end{enumerate}

\item \label{rz2d}  Global well-posedness in $2D$ with $\Omega=0$ (see Theorem \ref{theorem-global}):
In the $2D$ case with $\Omega =0$, suppose that the initial data $\mathcal{V}_0$ is analytic only in the horizontal variable with small analytic-Sobolev norm (the smallness condition depends on $\nu$ and the initial radius of analyticity $\tau_0$). Then the unique Leray-Hopf type weak solution exists globally in time. Furthermore, R2 implies that the solution is analytic in all variables.
\end{enumerate}

Compared to the inviscid case \cite{GILT20}, this paper investigates the combined effect of the fast rotation and the vertical viscosity. The main differences are the following:
\begin{itemize}
    \item With analytic initial data in all variables, aside from the fast rotation, we do not observe the effect of the vertical viscosity in prolonging the lifespan in comparison to the inviscid case in \cite{GILT20}.

    \item However, with a larger class of the initial data, namely with initial data analytic in the horizontal $ x y $-variables and only Sobolev in the $z$-variable, the vertical viscosity allows us to establish the local well-posedness, which is not possible for the inviscid case (see \cite{RE09}). Moreover, the existence time is proportional to $ \nu $ and shrinks to zero as $\nu \rightarrow 0 $ (see \eqref{time-T}), which is consistent with the ill-posedness result in the inviscid case.

    \item Such a regularizing effect of the vertical viscosity can also be seen in the proof of Theorem \ref{theorem-main} in \eqref{est:301} and the absorbing argument \eqref{constraint-2}.

\end{itemize}

Compared to the work \cite{PZZ18}, which studies the $2D$ model subject to Dirichlet boundary condition without rotation, we investigate here both the $2D$ and $3D$ models subject to the impermeable and stress-free boundary conditions. While recognizing the subtle difference between the imposed boundary conditions and their mathematical and physical implications,  the result reported in \cite{PZZ18} is, roughly speaking, along the lines of the statement in R5, above, focusing on the $2D$ case. Meanwhile, our main objective in this contribution is to study the combined effect of the fast rotation and viscosity in the $3D$ case, as it has been summarized in R1 -- R4 above.

The paper is organized as follows. In section \ref{section-preliminaries}, we introduce the notations and some preliminary results which will be used throughout this paper. In section \ref{section-local}, we establish the local well-posedness of system \eqref{PE-system} and instantaneous analytic regularity in the vertical variable by proving Theorem \ref{theorem-local} (i.e., R1) and Theorem \ref{theorem-radius} (i.e., R2). In section \ref{section-limit-system}, we derive the formal limit resonant system of \eqref{PE-system} when $|\Omega|\rightarrow \infty$ and establish some properties about the limit system. Section \ref{section-rotation} is the centerpiece of this paper and is devoted to studying the effect of rotation, where we prove Theorem \ref{theorem-main} (i.e., R3) and Theorem \ref{theorem-small-barotropic} (i.e., R4). In section \ref{section-global}, we prove the global well-posedness in the $2D$ case with $\Omega=0$, i.e., Theorem \ref{theorem-global} (i.e., R5).

\section{Preliminaries}\label{section-preliminaries}
In this section, we introduce the notations and collect some preliminary results that will be used in this paper. The generic constant $C$ appearing in this paper may change from line to line. We use subscript, e.g., $C_r$, to emphasize the dependence of the constant on $r$.

\subsection{Functional settings}
We use the notation $(\boldsymbol{x},z) = (x,y,z) \in \mathcal{D}=\mathbb{T}^2\times [0,1]$, where $\boldsymbol{x}$ and $z$ represent the horizontal and vertical variables, respectively. $\mathbb{T}^2$ is the two-dimensional torus with unit length. Denote by $L^2(\mathcal{D})$, the Lebesgue space of complex/real valued functions $f(\boldsymbol{x},z)$ satisfying $\int_{\mathcal{D}} |f(\boldsymbol{x},z)|^2 d\boldsymbol{x} dz < \infty$, endowed with the norm
\begin{equation*}
    \|f \|:=\|f\|_{L^2(\mathcal{D})} = (\int_{\mathcal{D}} |f(\boldsymbol{x},z)|^2 d\boldsymbol{x} dz)^{\frac{1}{2}},
\end{equation*}
and the inner product
\begin{equation}\label{L2-inner-product}
    \langle f,g\rangle := \int_{\mathcal{D}} f(\boldsymbol{x},z)g^*(\boldsymbol{x},z) \;d\boldsymbol{x} dz
\end{equation}
for $f,g \in L^2(\mathcal{D})$. Here $ g^* $ represents the complex conjugate of $ g $. Given any time $\mathcal{T}>0$, $L^p(0,\mathcal{T};X)$ represents the space of functions $f: [0,T]\rightarrow X$ satisfying $\int_0^\mathcal{T} \|f(t)\|_X^p dt < \infty$, where $X$ is a Banach space with norm $\|\cdot\|_X$. For a function $f \in L^2(\mathcal{D})$, we use $\hat{f}_{\boldsymbol{k}}(z),
\boldsymbol{k} \in 2\pi \mathbb{Z}^2 $, to denote its Fourier coefficients in the $\boldsymbol{x}$-variables, i.e.,
\begin{eqnarray}\label{Fourier_coefficient}
&&\hskip-.1in
\hat{f}_{\boldsymbol{k}}(z) := \int_{\mathbb{T}^2} e^{- i\boldsymbol{k}\cdot \boldsymbol{x}} f(\boldsymbol{x},z) d\boldsymbol{x}, \qquad \text{and hence} \qquad
f(\boldsymbol{x},z) = \sum\limits_{\boldsymbol{k}\in 2\pi \mathbb{Z}^2} \hat{f}_{\boldsymbol{k}}(z) e^{ i\boldsymbol{k}\cdot \boldsymbol{x}}.
\end{eqnarray}

Let $A := \sqrt{-\Delta_h}$, where $\Delta_h = \partial_{xx} + \partial_{yy}$ is the horizontal Laplacian, defined by, in terms of the Fourier coefficients,
\begin{equation*}
    \widehat{Af}_{\boldsymbol{k}} (z) := |\boldsymbol{k}| \hat{f}_{\boldsymbol{k}}(z), \qquad \boldsymbol k \in 2\pi \mathbb Z^2.
\end{equation*}
For $r \geq 0$, we define
\begin{equation*}
    H^r(\mathcal{D}) : = \{ f\in L^2(\mathcal{D}):  \|f\|_{H^r} <\infty \},
\end{equation*}
with
\begin{equation*}
   \|f\|_{H^r} : = \sum\limits_{0 \leq m\leq r, m \in \mathbb Z} \big(\|A^{r-m}  \partial_z^m f\|^2 + \|\partial_z^m f\|^2\big)^{\frac{1}{2}}.
\end{equation*}
Notice that, with \eqref{Fourier_coefficient}, we have
\begin{equation*}
    \|\partial_z^m f\|^2 =\int_0^1 \Big( \sum\limits_{\boldsymbol{k}\in 2\pi  \mathbb{Z}^2}   |\partial^m_{z}\hat{f}_{\boldsymbol{k}}(z)|^2 \Big) dz \quad \text{and} \quad \|A^{r-m} \partial_z^m f\|^2 =\int_0^1 \Big( \sum\limits_{\boldsymbol{k}\in 2\pi  \mathbb{Z}^2}  |\boldsymbol{k}|^{2(r-m)}  |\partial^m_{z}\hat{f}_{\boldsymbol{k}}(z)|^2 \Big) dz.
\end{equation*}

In addition, given any $r\geq 0$ and $s \geq 0 $ with $ s \in \mathbb Z$, we define the anisotropic Sobolev space
\begin{equation*}
    H^r_{\boldsymbol{x}}H^s_z(\mathcal{D}) : = \{ f\in L^2(\mathcal{D}):  \|f\|_{H^r_{\boldsymbol{x}}H^s_z} <\infty \},
\end{equation*}
where the anisotropic Sobolev norm is given by
\begin{equation*}
    \|f\|_{H^r_{\boldsymbol{x}}H^s_z} := \sum\limits_{m\leq s}\big(\|A^r  \partial_z^m f\|^2 + \|\partial_z^mf\|^2\big)^{\frac{1}{2}}.
\end{equation*}

On the other hand,
given any $r \geq 0$, $s\geq 0$, and $\tau \geq 0$, with $ s \in \mathbb Z $, we define the analytic-Sobolev space
\begin{equation*}
    \mathcal{S}_{r,s,\tau} := \{ f\in L^2(\mathcal{D}): \|  f\|_{r,s,\tau} <\infty \},
\end{equation*}
where the norm is given by
\begin{equation}\label{analytic-Sobolev-norm}
    \| f\|_{r,s,\tau} := \sum\limits_{m\leq s}(\|A^r e^{\tau A} \partial_z^m f\|^2 + \|\partial_z^mf\|^2)^{\frac{1}{2}},
\end{equation}
with, recalling \eqref{Fourier_coefficient},
\begin{equation*}
    \|A^r e^{\tau A} \partial_z^m f\|^2 :=\int_0^1 \Big( \sum\limits_{\boldsymbol{k}\in 2\pi  \mathbb{Z}^2}  |\boldsymbol{k}|^{2r} e^{2\tau |\boldsymbol{k}|} |\partial^m_{z}\hat{f}_{\boldsymbol{k}}(z)|^2 \Big) dz.
\end{equation*}

Roughly speaking, $\mathcal{S}_{r,s,\tau}$ is the space of functions that are analytic with radius $\tau$ in the $\boldsymbol{x}$-variables, and $H^s$ in the $z$-variable. The space of analytic functions is a special case of Gevrey class. For more details about Gevrey class, we refer readers to \cite{FT98, FT89, GILT20, LO97}. Notice that when $\tau = 0$, one has $\mathcal{S}_{r,s,0} = H^r_{\boldsymbol{x}} H^s_z (\mathcal{D})$.

\begin{remark}
With abuse of notation, we also write $f\in \mathcal{S}_{r,0,\tau}$ for $ f = f(\boldsymbol{x}) $ depending only on the horizontal variables.
\end{remark}

The following lemma summarizes the algebraic property of functions with analyticity in the horizontal variables:
\begin{lemma} \label{lemma-banach-algebra}
For $\tau \geq 0$ and $r>1$, we have
\begin{eqnarray*}
\| A^r e^{\tau A} (fg)(z)\|_{L^2_{\boldsymbol{x}}} \leq C_{r}  \Big(|\hat{f}_0(z)|+\| A^r e^{\tau A} f(z)\|_{L^2_{\boldsymbol{x}}} \Big) \Big(|\hat{g}_0(z)|+ \|  A^r e^{\tau A} g(z)\|_{L^2_{\boldsymbol{x}}} \Big),
\end{eqnarray*}
provided that the right hand side is bounded, where, according to \eqref{Fourier_coefficient},
\begin{equation*}
    \hat{f}_0(z) = \int_{\mathbb{T}^2} f(\boldsymbol{x},z) d \boldsymbol{x}.
\end{equation*}
\end{lemma}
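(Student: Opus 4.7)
The plan is to work on the horizontal Fourier side and view the product as a convolution of Fourier coefficients in $\boldsymbol{x}$, then control the convolution with a weighted Young-type inequality, carefully isolating the zero mode $\hat f_0(z), \hat g_0(z)$ which must appear separately in the bound.

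First I would expand $\widehat{(fg)}_{\boldsymbol k}(z) = \sum_{\boldsymbol j \in 2\pi \mathbb Z^2} \hat f_{\boldsymbol j}(z) \hat g_{\boldsymbol k - \boldsymbol j}(z)$. Using the triangle inequality $|\boldsymbol k| \leq |\boldsymbol j| + |\boldsymbol k - \boldsymbol j|$ and the elementary estimate $(a+b)^r \leq 2^{r-1}(a^r + b^r)$ for $a,b \geq 0$, I obtain
\begin{equation*}
|\boldsymbol k|^r e^{\tau |\boldsymbol k|} \leq 2^{r-1}\bigl(|\boldsymbol j|^r + |\boldsymbol k - \boldsymbol j|^r\bigr) e^{\tau |\boldsymbol j|} e^{\tau |\boldsymbol k - \boldsymbol j|}.
\end{equation*}
Consequently, $|\boldsymbol k|^r e^{\tau|\boldsymbol k|}|\widehat{(fg)}_{\boldsymbol k}(z)|$ is bounded by the sum of two nonnegative convolutions, one being $F \ast G_0$ and the other $F_0 \ast G$, where $F_{\boldsymbol j} := |\boldsymbol j|^r e^{\tau |\boldsymbol j|} |\hat f_{\boldsymbol j}(z)|$, $(F_0)_{\boldsymbol j} := e^{\tau|\boldsymbol j|}|\hat f_{\boldsymbol j}(z)|$, and analogously for $G, G_0$.

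Next, by Parseval's identity and Young's convolution inequality $\ell^2 \ast \ell^1 \hookrightarrow \ell^2$, I would estimate
\begin{equation*}
\|A^r e^{\tau A}(fg)(z)\|_{L^2_{\boldsymbol x}} \leq C_r \bigl(\|F\|_{\ell^2} \|G_0\|_{\ell^1} + \|F_0\|_{\ell^1} \|G\|_{\ell^2}\bigr).
\end{equation*}
Here $\|F\|_{\ell^2}$ is proportional to $\|A^r e^{\tau A} f(z)\|_{L^2_{\boldsymbol x}}$, and similarly for $\|G\|_{\ell^2}$. The key remaining step is to bound $\|G_0\|_{\ell^1}$, for which I would separate the zero frequency and apply Cauchy–Schwarz to the remaining modes:
\begin{equation*}
\sum_{\boldsymbol j \in 2\pi\mathbb Z^2} e^{\tau|\boldsymbol j|}|\hat g_{\boldsymbol j}(z)| \leq |\hat g_0(z)| + \Big(\sum_{\boldsymbol j \neq 0} |\boldsymbol j|^{-2r}\Big)^{1/2}\Big(\sum_{\boldsymbol j \neq 0} |\boldsymbol j|^{2r} e^{2\tau|\boldsymbol j|}|\hat g_{\boldsymbol j}(z)|^2\Big)^{1/2}.
\end{equation*}
The first factor is finite precisely because $r > 1$, which makes $\sum_{\boldsymbol j \in 2\pi \mathbb Z^2 \setminus\{0\}}|\boldsymbol j|^{-2r}$ convergent in two spatial dimensions; the second factor is controlled by $\|A^r e^{\tau A} g(z)\|_{L^2_{\boldsymbol x}}$. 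An identical bound holds for $\|F_0\|_{\ell^1}$, and combining these produces the stated inequality.

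The only delicate point — and the main obstacle to a cruder approach — is the isolation of the zero Fourier mode: the weights $|\boldsymbol j|^r e^{\tau|\boldsymbol j|}$ vanish at $\boldsymbol j = 0$, so a naive attempt to bound $\|G_0\|_{\ell^1}$ directly by $\|A^r e^{\tau A} g(z)\|_{L^2_{\boldsymbol x}}$ would fail, and this is exactly why the $|\hat f_0(z)|$ and $|\hat g_0(z)|$ terms must appear on the right-hand side of the lemma. The hypothesis $r > 1$ enters solely to guarantee the summability used in the Cauchy–Schwarz step above.
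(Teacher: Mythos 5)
Your proof is correct and follows the standard route that the paper's cited references (\cite{FT98,GILT20,OL96}) take: pass to the horizontal Fourier side, split the weight $|\boldsymbol k|^r e^{\tau|\boldsymbol k|}$ via the triangle inequality and $(a+b)^r\leq 2^{r-1}(a^r+b^r)$, apply Young's convolution inequality $\ell^2\ast\ell^1\hookrightarrow\ell^2$, and control the $\ell^1$ factors by Cauchy--Schwarz against $|\boldsymbol j|^{-r}$, which is square-summable on $2\pi\mathbb Z^2\setminus\{0\}$ precisely when $r>1$. Your remark on why the zero modes $|\hat f_0(z)|,|\hat g_0(z)|$ must appear separately (the weight $|\boldsymbol j|^re^{\tau|\boldsymbol j|}$ vanishes at $\boldsymbol j=0$) is exactly the right point; the paper itself omits the proof but uses the same Cauchy--Schwarz summability device in the appendix lemmas (Lemmas A.1--A.4).
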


The proof of Lemma \ref{lemma-banach-algebra} is standard. We refer to \cite{FT98,GILT20,OL96} for details.


With $ \boldsymbol{k} = (k_1,k_2,k_3) \in 2\pi \bigl(\mathbb Z^2 \times ( \mathbb Z_+ \cup \lbrace 0 \rbrace ) \bigr) $
, we define
\begin{equation}\label{phik}
\phi_{\boldsymbol{k}} =  \phi_{k_1,k_2,k_3} :=
\begin{cases}
\sqrt{2}e^{ i\left( k_1 x_1 + k_2 x_2 \right)}\cos(\frac{1}{2} k_3 z) & \text{if} \;  k_3\neq 0,\\
e^{i\left( k_1 x_1 + k_2 x_2 \right)} & \text{if} \;  k_3=0,
\end{cases}
\end{equation}
and
\begin{eqnarray}\label{def:func-space}
	&&\hskip-.28in
	\mathscr{V}:=  \{ \phi \in C^\infty(\mathcal{D}) \; \Big| \; \phi= \sum\limits_{\boldsymbol{k}\in 2\pi \bigl(\mathbb Z^2 \times ( \mathbb Z_+ \cup \lbrace 0 \rbrace ) \bigr)} a_{\boldsymbol{k}} \phi_{\boldsymbol{k}}, \; a_{-k_1, -k_2,k_3}=a_{k_1,k_2,k_3}^{*}, \; \int_0^1 \nabla\cdot \phi=0 \}.
\end{eqnarray}
Here $a^*$ denotes the complex conjugate of $a$. Let
\begin{center}
    $H :=$ the closure of $\mathscr{V}$ in $L^2(\mathcal{D})$ and  $V:=$ the closure of $\mathscr{V}$ in $H^1(\mathcal{D})$,
\end{center}
with norms given by
\begin{center}
    $\|\cdot\|_H := \|\cdot\|_{L^2(\mathcal{D})}$  \text{ and }  $\|\cdot\|_V := \|\cdot\|_{H^1(\mathcal{D})}$, respectively.
\end{center}
Then one has
\begin{equation*}
    V  \subset H \equiv H'  \subset V', \quad V \hookrightarrow\hookrightarrow H \hookrightarrow\hookrightarrow V'.
\end{equation*}

\subsection{Projections and reformulation of the problem}
In this paper, we assume that $ \int_{\mathcal{D}} \mathcal{V}_0(\boldsymbol{x},z) d\boldsymbol{x} dz = 0$. This assumption is made to simplify the mathematical presentation. 
In fact,
integrating \eqref{PE-1} in $\mathcal{D}$ leads to, after applying integration by parts, \eqref{PE-3}, and \eqref{PE-BC},
\begin{eqnarray}\label{mean-zero}
\partial_t \int_{\mathcal{D}} \mathcal{V} d\boldsymbol{x}dz  + \Omega \int_{\mathcal{D}} \mathcal{V}^\perp d\boldsymbol{x}dz =0.
\end{eqnarray}
Therefore, under our assumption, one has
\begin{equation}\label{mean-zero-2}
    \int_{\mathcal{D}} \mathcal{V}(t) d\boldsymbol{x}dz = \int_{\mathcal{D}} \mathcal{V}_0(\boldsymbol{x},z) d\boldsymbol{x} dz = 0.
\end{equation}
With slight modifications, our result applies to the case when $\int_{\mathcal{D}} \mathcal{V}_0(\boldsymbol{x},z) d\boldsymbol{x} dz \neq 0$.

Let
\begin{equation*}
     \dot{L}^2 : = \Big\{\varphi\in L^2(\mathcal{D},\mathbb{R}^2)  : \int_{\mathcal{D}} \varphi(\boldsymbol{x},z)d\boldsymbol{x}dz = 0  \Big\}.
\end{equation*}
Denote the barotropic mode and the baroclinic mode of $ \mathcal {V} $ by
\begin{equation}\label{barotropic-and-baroclinic}
    \overline{\mathcal{V}}(\boldsymbol{x}):= \int_0^1 \mathcal{V}(\boldsymbol{x},z)dz \quad \text{and} \quad \widetilde{\mathcal{V}}(\boldsymbol{x},z):= \mathcal{V}-\overline{\mathcal{V}}, \qquad \text{respectively}.
    \end{equation}
From \eqref{PE-BC} and \eqref{PE-3}, we have
\begin{equation}\label{incompressible-2d}
    \nabla \cdot \overline{\mathcal{V}} = \int_0^1 \nabla \cdot \mathcal{V}(\boldsymbol{x},z)dz = -\int_0^1 \partial_z w(\boldsymbol{x},z)dz =0,
\end{equation}
and
\begin{equation}\label{PE-w}
    w(\boldsymbol{x},z) = -\int_0^z \nabla \cdot \widetilde{\mathcal{V}}(\boldsymbol{x},s)ds.
\end{equation}
\begin{remark}
In the remaining of this paper, we will substitute $ w $ by its representation \eqref{PE-w} without explicitly pointing it out.
\end{remark}
Since $\nabla \cdot \overline{\mathcal{V}} =0$, and $\overline{\mathcal{V}}$ has zero mean over $\mathbb{T}^2$ thanks to \eqref{mean-zero-2}, there exists a stream function $\psi(\boldsymbol{x})$ such that $\overline{\mathcal{V}} = \nabla ^{\perp}\psi = (-\partial_{y} \psi, \partial_{x}\psi)^\top.$
Therefore, the space of solutions to \eqref{PE-system} is given by
\begin{equation}\label{def:pe-fnt-space}
\begin{gathered}
    \mathcal{S}:= \dot{L}^2 \cap H = \Big\{\varphi\in \dot{L}^2: \nabla \cdot \overline{\varphi} = 0  \Big\} = \Big\{\varphi\in \dot{L}^2: \varphi = \nabla ^\perp \psi(\boldsymbol{x}) + \widetilde{\varphi}(\boldsymbol{x},z), \\
    \text{for some} ~ \psi, ~ \int_{\mathbb T^2 }\psi(\boldsymbol{x}) \,d \boldsymbol{x} = 0  \Big\}.
\end{gathered}
\end{equation}
Indeed, $ \mathcal S $ is the analogy of ``incompressible function space'' for the PEs. Here $ \overline\varphi $ and $ \widetilde\varphi $ are the barotropic and baroclinic modes of $ \varphi $, respectively, as in \eqref{barotropic-and-baroclinic}.

For $\varphi\in\dot{L}^2$, let the rotating operator be
$
    \mathcal{J}\varphi := \varphi^\perp = (-\varphi_2 , \varphi_1)^\top.
$
Denote the Leray projection in $ \mathbb T^2 $ by
\begin{equation}\label{leray-projection}
    \mathfrak{P}_h \overline{\varphi} := \overline{\varphi} - \nabla  \Delta_h^{-1} \nabla \cdot \overline{\varphi}.
\end{equation}
Here, $ \Delta_h^{-1} $ represents the inverse of Laplacian operator in $ \mathbb T^2 $ with zero mean value.
We define the analogy of the Leray projection for the PEs
$\mathfrak P_p : \dot{L}^2\rightarrow\mathcal{S}$ as
\begin{equation*}
    \mathfrak P_p \varphi := \widetilde{\varphi} + \mathfrak{P}_h \overline{\varphi}.
\end{equation*}
Moreover, let
$ \mathfrak R: \mathcal{S}\rightarrow\mathcal{S}$ be defined as
\begin{equation*}
    \mathfrak R \varphi:= \mathfrak P_p (\mathcal{J}\varphi) .
\end{equation*}

With notations as above, a direct computation shows that 
\begin{equation*}
    \mathfrak R \varphi = \widetilde{\varphi}^\perp \qquad \text{for} \quad \varphi \in \mathcal S.
\end{equation*}
Indeed, owing to \eqref{def:pe-fnt-space}, $ \varphi = \nabla^\perp \psi(\boldsymbol{x}) + \widetilde{\varphi} \in \mathcal S $ for some $ \psi(\boldsymbol{x}) $. Then
\begin{equation*}
    \begin{aligned}
    \mathfrak R \varphi = & \mathfrak P_p(\mathcal J \widetilde\varphi ) + \mathfrak P_p(\mathcal J \nabla^\perp \psi(\boldsymbol{x})) \\
    = & \widetilde{\varphi}^\perp - \underbrace{\mathfrak P_h \nabla \psi(\boldsymbol{x})}_{\equiv 0} = \widetilde{\varphi}^\perp .
    \end{aligned}
\end{equation*}
Therefore, the kernel of $\mathfrak R$ is given by
\begin{equation}\label{def:ker-R}
    \ker \mathfrak R =  \Big\{\varphi\in \mathcal{S}:  \widetilde{\varphi}^\perp = 0  \Big\} =  \Big\{\varphi\in \mathcal{S}:  \varphi = \overline{\varphi} \Big\}.
\end{equation}
One can define the projection $\mathfrak P_0: \mathcal{S} \rightarrow \ker \mathfrak R$ by
\begin{equation}\label{P0}
    \mathfrak P_0 \varphi := \overline{\varphi} = \int_0^1 \varphi(\boldsymbol{x},z) dz.
\end{equation}
Notice that $ \mathfrak P_0 $ can be interpreted as projection to the barotropic mode.
The fact that $ \ker \mathfrak R $ coincides with the space of functions with only the barotropic mode plays an important role in our analysis.

Furthermore, let
\begin{equation}\label{Pplus+Pminus}
    \mathfrak P_+ \varphi :=
\frac{1}{2}(\widetilde{\varphi} + i\widetilde{\varphi}^\perp),
\qquad \text{and} \qquad
    \mathfrak P_- \varphi :=
\frac{1}{2}(\widetilde{\varphi} - i\widetilde{\varphi}^\perp).
\end{equation}
Then it is easy to verify that
\begin{equation*}
\mathfrak R \mathfrak P_\pm \varphi = \mp i \mathfrak P_\pm \varphi,
\end{equation*}
i.e., $ \mathfrak P_\pm $ are the projection operators to eigenspaces of $ \mathfrak R $ with eigenvalues $ \mp i $, respectively.

Similarly to \cite{GILT20,D05,KLT14}, Lemma \ref{lemma-orthogonal-decomposition}--\ref{lemma-projection}, below, summarize projection properties of $ \mathfrak P_0, \mathfrak P_\pm $.
For the proof, we refer readers to \cite{GILT20} for details.
\begin{lemma}\label{lemma-orthogonal-decomposition}
For any $\varphi \in L^2(\mathcal{D})$, we have the following decomposition:
\begin{equation}\label{orthogonal-decomposition}
    \varphi = \mathfrak P_0 \varphi + \mathfrak P_+ \varphi + \mathfrak P_- \varphi.
\end{equation}
Moreover, we have the following properties:
\begin{equation*}
    \mathfrak P_\pm \mathfrak P_\pm \varphi = \mathfrak P_\pm \varphi, \qquad \mathfrak P_0 \mathfrak P_0 \varphi = \mathfrak P_0 \varphi,\qquad \text{and} \qquad 0 \equiv \mathfrak P_\pm \mathfrak P_\mp \varphi = \mathfrak P_0 \mathfrak P_\pm \varphi = \mathfrak P_\pm \mathfrak P_0 \varphi.
\end{equation*}
\end{lemma}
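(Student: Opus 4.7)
The plan is to verify all identities by direct computation from the definitions in \eqref{P0} and \eqref{Pplus+Pminus}, using three elementary facts: (a) the barotropic--baroclinic splitting $\varphi = \overline{\varphi} + \widetilde{\varphi}$ satisfies $\overline{\widetilde{\varphi}} = 0$ and $\overline{\overline{\varphi}} = \overline{\varphi}$; (b) $z$-averaging commutes with the rotation $\mathcal J$, so that $(\widetilde{\varphi})^\perp = \widetilde{\varphi^\perp}$ and $(\overline{\varphi})^\perp = \overline{\varphi^\perp}$; and (c) $\mathcal J^2\varphi = (\varphi^\perp)^\perp = -\varphi$.

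The decomposition \eqref{orthogonal-decomposition} is immediate: adding the definitions gives $\mathfrak P_+\varphi + \mathfrak P_-\varphi = \widetilde{\varphi}$, and hence $\mathfrak P_0\varphi + \mathfrak P_+\varphi + \mathfrak P_-\varphi = \overline{\varphi} + \widetilde{\varphi} = \varphi$.

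For $\mathfrak P_0\mathfrak P_0\varphi = \mathfrak P_0\varphi$, it suffices to note that $\mathfrak P_0\varphi = \overline{\varphi}$ is $z$-independent, so its $z$-average equals itself. For $\mathfrak P_\pm\mathfrak P_\pm\varphi = \mathfrak P_\pm\varphi$, facts (a) and (b) imply that $\mathfrak P_\pm\varphi$ is purely baroclinic, so $\widetilde{\mathfrak P_\pm\varphi} = \mathfrak P_\pm\varphi$; then expanding $(\mathfrak P_\pm\varphi)^\perp = \tfrac{1}{2}(\widetilde{\varphi}^\perp \mp i\widetilde{\varphi})$ via (c) and plugging into the definition shows the four resulting terms collapse to $\mathfrak P_\pm\varphi$.

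Finally, for the vanishing cross-products: $\mathfrak P_0\mathfrak P_\pm\varphi = 0$ follows from baroclinicity of $\mathfrak P_\pm\varphi$ (so its $z$-average vanishes), and $\mathfrak P_\pm\mathfrak P_0\varphi = 0$ follows from $\widetilde{\mathfrak P_0\varphi} = 0$ (the barotropic nature of $\overline{\varphi}$). The only identity requiring a brief explicit computation is $\mathfrak P_\mp\mathfrak P_\pm\varphi = 0$: substituting $\mathfrak P_\pm\varphi$ into the definition of $\mathfrak P_\mp$ and applying (c), one obtains $\tfrac{1}{4}(\widetilde{\varphi} \pm i\widetilde{\varphi}^\perp \mp i\widetilde{\varphi}^\perp - \widetilde{\varphi})$, which cancels pairwise. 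There is no substantive obstacle in this lemma; the only care needed is in tracking the signs produced by the relation $\mathcal J^2 = -\mathrm{Id}$.
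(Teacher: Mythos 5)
Your proof is correct, and every step checks out. The paper itself does not carry out the verification but simply refers the reader to \cite{GILT20}; your self-contained argument by direct computation from the definitions in \eqref{P0} and \eqref{Pplus+Pminus}, relying on the facts that $\mathfrak P_\pm\varphi$ is purely baroclinic, that $\mathfrak P_0\varphi$ is purely barotropic, and that $\mathcal J^2 = -\mathrm{Id}$, is exactly the intended route and is the natural way to prove the lemma.
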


\begin{lemma} \label{lemma-projection}
	For $f,g \in L^2(\mathcal{D})$, we have
	\begin{eqnarray*}
	\langle \mathfrak P_0 f, g \rangle = \langle f, \mathfrak P_0 g\rangle = \langle \mathfrak P_0 f, \mathfrak P_0 g\rangle
	\qquad \text{and} \qquad
	\langle \mathfrak P_\pm f, g\rangle = \langle f, \mathfrak P_\pm g\rangle.
	\end{eqnarray*}
	Here the $L^2$ inner product is defined as \eqref{L2-inner-product}.
    Moreover, if $f\in \mathcal{S}_{r,s,\tau}$ with $r, s, \tau \geq 0$, $ s \in \mathbb Z $, we have
\begin{equation*}
    A^{r}e^{\tau A} \partial_z^s \mathfrak P_0 f = \mathfrak P_0 A^{r}e^{\tau A} \partial_z^s f \;\;\; \text{and}\;\;\; A^{r}e^{\tau A} \partial_z^s \mathfrak P_\pm f = \mathfrak P_\pm A^{r}e^{\tau A} \partial_z^s f.
\end{equation*}
\end{lemma}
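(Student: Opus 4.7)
The plan is to verify each identity in the lemma by direct computation, exploiting the product structure of $\mathcal{D} = \mathbb{T}^2 \times [0,1]$ together with the horizontal Fourier decomposition $f(\boldsymbol{x},z) = \sum_{\boldsymbol{k}} \hat{f}_{\boldsymbol{k}}(z) e^{i\boldsymbol{k}\cdot \boldsymbol{x}}$ from \eqref{Fourier_coefficient}. In these coordinates, the four operators in question separate cleanly: $A^r e^{\tau A}$ is the horizontal Fourier multiplier $|\boldsymbol{k}|^r e^{\tau|\boldsymbol{k}|}$ acting only on the $\boldsymbol{x}$-variable; $\partial_z^s$ acts only on the $z$-variable; $\mathfrak{P}_0$ extracts $\int_0^1 \hat{f}_{\boldsymbol{k}}(z)\,dz$; and $\mathfrak{P}_\pm$ combines the baroclinic part $\widetilde{f}_{\boldsymbol{k}}(z)$ with its perp at each $(\boldsymbol{k},z)$. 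Once this is noted, both the adjointness identities and the commutation identities reduce to inspection.

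For the adjointness of $\mathfrak{P}_0$, I would just apply Fubini: since $\mathfrak{P}_0 f$ is $z$-independent and $\int_0^1 dz = 1$, the quantities $\langle \mathfrak{P}_0 f, g\rangle$, $\langle f, \mathfrak{P}_0 g\rangle$, and $\langle \mathfrak{P}_0 f, \mathfrak{P}_0 g\rangle$ all collapse to $\int_{\mathbb{T}^2} \overline{f}(\boldsymbol{x})\,\overline{g}^*(\boldsymbol{x})\,d\boldsymbol{x}$. For $\mathfrak{P}_\pm$, the two ingredients are: $\widetilde{(\cdot)} = I - \mathfrak{P}_0$ is self-adjoint by the previous step, and the rotation $\mathcal{J}\varphi = \varphi^\perp$ is anti-self-adjoint by the direct component calculation $\langle \mathcal{J}f, g\rangle = \int(-f_2 g_1^* + f_1 g_2^*)\,d\boldsymbol{x}\,dz = -\langle f, \mathcal{J}g\rangle$. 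When the factor $i\mathcal{J}$ in the definition \eqref{Pplus+Pminus} is moved to the second argument of the inner product, the anti-linearity of $\langle\cdot,\cdot\rangle$ in its second slot flips the sign of $i$, and this flip is exactly compensated by the anti-self-adjointness of $\mathcal{J}$, yielding $\langle \mathfrak{P}_\pm f, g\rangle = \langle f, \mathfrak{P}_\pm g\rangle$.

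For the commutation identities, the horizontal multiplier $A^r e^{\tau A}$ and the vertical operator $\partial_z^s$ act on disjoint variables and so commute with each other and with all pointwise-in-$\boldsymbol{k}$ operations; in particular they commute with the perp map and with $\widetilde{(\cdot)}$. Thus the only nontrivial check is $\partial_z^s \mathfrak{P}_0 = \mathfrak{P}_0 \partial_z^s$ when $s \geq 1$. The left-hand side vanishes because $\mathfrak{P}_0 f$ is $z$-independent, and the right-hand side vanishes because the stress-free boundary condition built into $V$ (equivalently, the cosine basis $\phi_{\boldsymbol{k}}$ in \eqref{phik}) gives $\int_0^1 \partial_z^s f\,dz = 0$ for $s \geq 1$. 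This will be the main (and only) subtlety: one must invoke that the relevant functions lie in the space adapted to the impermeable stress-free boundary conditions, after which the equality of the two sides is routine. The corresponding commutation for $\mathfrak{P}_\pm$ then follows by combining this with the trivial commutations of $\partial_z^s$ and $A^r e^{\tau A}$ through the perp operator and $\widetilde{(\cdot)}$.
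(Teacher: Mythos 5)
Your handling of the adjointness identities is correct: the Fubini reduction for $\mathfrak P_0$ (all three inner products collapse to $\int_{\mathbb T^2}\overline{f}\,\overline{g}^*\,d\boldsymbol{x}$), and the $\mathfrak P_\pm$ case via the anti-self-adjointness of $\mathcal J$ cancelling against the conjugate-linearity of the second slot, are exactly the right mechanism. The commutation of $A^r e^{\tau A}$ with $\mathfrak P_0$ and $\mathfrak P_\pm$ (the $s=0$ case) is likewise fine, since $A^r e^{\tau A}$ is a horizontal Fourier multiplier and $\mathfrak P_0$ acts only in $z$. Note, though, that the paper does not prove this lemma at all — it cites \cite{GILT20} — so there is no in-paper argument to compare yours against.

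There is, however, a genuine gap in your treatment of the $\partial_z^s$ commutation for $s\ge 1$. You reduce correctly to showing $\mathfrak P_0\,\partial_z^s f=0$, but the justification — ``the stress-free boundary condition (equivalently, the cosine basis) gives $\int_0^1\partial_z^s f\,dz=0$ for $s\ge 1$'' — is false when $s$ is odd. One has $\int_0^1\partial_z f\,dz=f|_{z=1}-f|_{z=0}$, which is \emph{not} controlled by $\partial_z f|_{z=0,1}=0$. Concretely, take the basis element $\phi_{k_1,k_2,2\pi}(\boldsymbol{x},z)=\sqrt{2}\,e^{i\boldsymbol{k}\cdot\boldsymbol{x}}\cos(\pi z)\in\mathscr V$: then $\mathfrak P_0\phi=0$, so $\partial_z\mathfrak P_0\phi=0$, while $\mathfrak P_0\partial_z\phi=\sqrt{2}\,e^{i\boldsymbol{k}\cdot\boldsymbol{x}}\big(\cos\pi-\cos 0\big)=-2\sqrt{2}\,e^{i\boldsymbol{k}\cdot\boldsymbol{x}}\neq 0$. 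The stress-free condition does give $\int_0^1\partial_z^2 f\,dz=\partial_z f|_0^1=0$, but for odd $s$ the integral is a genuine boundary term in $\partial_z^{s-1}f$ that survives. So the step as you wrote it fails. (As it happens, every use of this lemma in the paper either has $s=0$ or uses $\partial_z^s\overline{\mathcal V}=0$ directly rather than the commutation, so the gap does not leak into the main estimates — but your proposed justification for the $s\ge 1$ identity does not hold.)
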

Let $ \mathfrak I $ be the identity operator.
A direct corollary of Lemma \ref{lemma-projection} is the following:
\begin{corollary}\label{lemma-decomposition}
Consider $ r\geq 0, \tau \geq 0 $, and $ s \in \mathbb Z_+ $.
Since $\mathcal{V} = \mathfrak P_0 \mathcal{V} + (\mathfrak I-\mathfrak P_0)\mathcal{V} = \overline{\mathcal{V}} + \widetilde{\mathcal{V}}$, we have
\begin{equation*}
    \| \mathcal{V}\|^2 = \|\overline{\mathcal{V}}\|^2 + \| \widetilde{\mathcal{V}}\|^2,  \qquad \| \partial_z^s \mathcal{V}\|^2 = \|\partial_z^s \widetilde{\mathcal{V}}\|^2,
\end{equation*}
and
\begin{equation*}
  \| A^r e^{\tau A}\mathcal{V}\|^2 = \|A^r e^{\tau A}\overline{\mathcal{V}}\|^2 + \|A^r e^{\tau A} \widetilde{\mathcal{V}}\|^2,  \qquad \|A^r e^{\tau A} \partial_z^s \mathcal{V}\|^2 = \|A^r e^{\tau A}\partial_z^s \widetilde{\mathcal{V}}\|^2.
\end{equation*}
\end{corollary}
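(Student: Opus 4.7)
The plan is to deduce everything from the fundamental observation that the barotropic mode $\overline{\mathcal{V}}(\boldsymbol{x})$ is independent of $z$, while the baroclinic mode satisfies $\int_0^1 \widetilde{\mathcal{V}}(\boldsymbol{x},z)\,dz = 0$ by construction, cf.\ \eqref{barotropic-and-baroclinic}. These two facts together make $\overline{\mathcal{V}}$ and $\widetilde{\mathcal{V}}$ orthogonal in $L^2(\mathcal{D})$, and the identities in the corollary are then a Pythagorean-type decomposition combined with the commutation properties already established in Lemma \ref{lemma-projection}.

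First I would verify the $L^2$-orthogonality by a direct Fubini computation: since $\overline{\mathcal{V}}$ is $z$-independent,
\begin{equation*}
\langle \overline{\mathcal{V}},\widetilde{\mathcal{V}}\rangle = \int_{\mathbb{T}^2} \overline{\mathcal{V}}(\boldsymbol{x})\cdot \Big(\int_0^1 \widetilde{\mathcal{V}}(\boldsymbol{x},z)\,dz\Big)\,d\boldsymbol{x} = 0,
\end{equation*}
which gives $\|\mathcal{V}\|^2 = \|\overline{\mathcal{V}}\|^2 + \|\widetilde{\mathcal{V}}\|^2$. For the identities involving vertical derivatives, I would simply use $\partial_z^s \overline{\mathcal{V}} \equiv 0$ for $s\in\mathbb Z_+$, so $\partial_z^s \mathcal{V} = \partial_z^s \widetilde{\mathcal{V}}$ pointwise, which trivially yields $\|\partial_z^s\mathcal{V}\|^2 = \|\partial_z^s\widetilde{\mathcal{V}}\|^2$.

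For the statements involving $A^r e^{\tau A}$, the key is that this Fourier multiplier acts only on the horizontal variables, hence it commutes with $\mathfrak P_0$ and with $\partial_z^s$. This is precisely the content of the second half of Lemma \ref{lemma-projection}, applied with $\mathfrak P_0$ in place of $\mathfrak P_\pm$. Consequently, writing $\mathcal{U} := A^r e^{\tau A}\mathcal{V}$, one has $\overline{\mathcal{U}} = A^r e^{\tau A}\overline{\mathcal{V}}$ and $\widetilde{\mathcal{U}} = A^r e^{\tau A}\widetilde{\mathcal{V}}$, so applying the $L^2$-orthogonality to $\mathcal{U}$ gives the third identity. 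The fourth identity is obtained analogously after first using $\partial_z^s \overline{\mathcal{V}} = 0$ to reduce to the baroclinic mode, and then commuting $A^r e^{\tau A}$ past $\partial_z^s$.

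There is no real obstacle here; the content is essentially bookkeeping. The only point worth being careful about is the distinction between $s=0$ (where the barotropic contribution survives and all four identities reduce to the two genuine ones) and $s\geq 1$ (where the barotropic term is killed by $\partial_z^s$). Since the statement restricts to $s\in\mathbb{Z}_+$ for the $\partial_z^s$ identities, this case distinction is already handled by the phrasing of the corollary.
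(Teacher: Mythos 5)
Your proof is correct and is exactly the argument the paper has in mind when it calls the corollary a ``direct corollary of Lemma \ref{lemma-projection}'': orthogonality of the barotropic and baroclinic modes in $L^2(\mathcal{D})$ (Fubini plus $\int_0^1 \widetilde{\mathcal{V}}\,dz = 0$), the pointwise vanishing $\partial_z^s\overline{\mathcal{V}} = 0$ for $s\geq 1$, and the commutation $A^r e^{\tau A}\mathfrak{P}_0 = \mathfrak{P}_0 A^r e^{\tau A}$ from Lemma \ref{lemma-projection} to transfer the first two identities to the analytic-weighted case. Since the paper gives no explicit proof, you have simply supplied the implied bookkeeping, and all the steps check out.
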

Moreover, after applying $\mathfrak P_0$ and $\mathfrak I-\mathfrak P_0$ to equation \eqref{PE-1}, thanks to \eqref{PE-BC}, \eqref{incompressible-2d}, and \eqref{PE-w}, one can derive the evolutionary equations for $\overline{\mathcal{V}}$ and $\widetilde{\mathcal{V}}$ as follows:
\begin{subequations}\label{barotropic-baroclinic}
\begin{gather}
    \partial_t \overline{\mathcal{V}} +  \overline{\mathcal{V}}\cdot \nabla  \overline{\mathcal{V}} +  \mathfrak P_0 \Big((\nabla \cdot \widetilde{\mathcal{V}}) \widetilde{\mathcal{V}} + \widetilde{\mathcal{V}}\cdot \nabla  \widetilde{\mathcal{V}} \Big) + \nabla  p = 0, \label{barotropic-evolution-1}
    \\
    \label{baroclinic-evolution-1}
    \begin{gathered}
    \partial_t \widetilde{\mathcal{V}} + \widetilde{\mathcal{V}} \cdot \nabla  \widetilde{\mathcal{V}} + \widetilde{\mathcal{V}} \cdot \nabla  \overline{\mathcal{V}} + \overline{\mathcal{V}} \cdot \nabla  \widetilde{\mathcal{V}}
  - \mathfrak P_0\Big(\widetilde{\mathcal{V}} \cdot \nabla  \widetilde{\mathcal{V}} + (\nabla  \cdot \widetilde{\mathcal{V}}) \widetilde{\mathcal{V}} \Big)
   \\
    \qquad- \Big(\int_0^z \nabla \cdot \widetilde{\mathcal{V}}(\boldsymbol{x},s)ds \Big) \partial_z \widetilde{\mathcal{V}}  + \Omega \widetilde{\mathcal{V}}^{\perp} - \nu \partial_{zz} \widetilde{\mathcal{V}} = 0.
    \end{gathered}
\end{gather}
\end{subequations}
Here, we have abused the notation by denoting $ p - \Omega \psi $ with $ \nabla^\perp \psi(\boldsymbol{x},t) = \overline{\mathcal V}(\boldsymbol{x},t) $ as $ p $, where $ \psi $ is the stream function of $ \mathcal V $ (see \eqref{def:pe-fnt-space}).

\begin{remark}
According to \eqref{def:ker-R}, \eqref{barotropic-baroclinic} can be viewed as the orthogonal decomposition of \eqref{PE-system} into $ \ker \mathfrak R $ and $ (\ker \mathfrak R)^\perp $. As $ |\Omega| \rightarrow \infty $, formal asymptotic analysis of \eqref{baroclinic-evolution-1} assures that, for well-prepared data (i.e., data ensuring that \eqref{baroclinic-evolution-1} makes sense), $ \widetilde{\mathcal V} \rightarrow 0 $ in some functional space. Therefore, in the limiting equations, \eqref{barotropic-baroclinic} converge to the $2D$ Euler equations at leading order. In particular, in \cite{GILT20}, it has been shown that the lifespan of the solutions can be prolonged with well-prepared initial data in the inviscid case.
\end{remark}

According to \eqref{Pplus+Pminus}, one has $ \widetilde {\mathcal V}^\perp = -i \mathfrak P_+ \mathcal V + i \mathfrak P_- \mathcal V $.
Therefore, after applying $\mathfrak P_\pm$ to \eqref{baroclinic-evolution-1}, we arrive at
\begin{equation}\label{baroclinic-evolution-2}
\begin{split}
    \partial_t \mathfrak P_\pm \mathcal{V} + \mathfrak P_\pm\Big(&\widetilde{\mathcal{V}} \cdot \nabla \widetilde{\mathcal{V}} + \widetilde{\mathcal{V}} \cdot \nabla \overline{\mathcal{V}} + \overline{\mathcal{V}} \cdot \nabla \widetilde{\mathcal{V}} - \mathfrak P_0(\widetilde{\mathcal{V}} \cdot \nabla \widetilde{\mathcal{V}} + (\nabla \cdot \widetilde{\mathcal{V}}) \widetilde{\mathcal{V}} )\\
    &- (\int_0^z \nabla\cdot \widetilde{\mathcal{V}}(\boldsymbol{x},s)ds ) \partial_z \widetilde{\mathcal{V}} \Big) \mp i\Omega \mathfrak P_\pm \mathcal{V} - \nu \partial_{zz} \mathfrak P_\pm \mathcal{V} = 0 .
\end{split}
\end{equation}

Let \begin{equation}\label{def:V+-}
\mathcal{V}_+ := e^{-i\Omega t}\mathfrak P_+ \mathcal{V} \qquad \text{and} \qquad \mathcal{V}_- := e^{i\Omega t} \mathfrak P_- \mathcal{V} .
\end{equation}
Then, for $ r \geq 0, \tau \geq 0, s \geq 0 $, and $ s \in \mathbb Z $, it is straightforward to check that,
\begin{equation}\label{vtilde-upm-analytic}
   \|A^r e^{\tau A} \partial_z^s \mathcal{V}_+\|^2 = \|A^r e^{\tau A} \partial_z^s \mathcal{V}_-\|^2 = \frac{1}{2}\|A^r e^{\tau A} \partial_z^s \widetilde{\mathcal{V}}\|^2.
\end{equation}

One can derive from \eqref{baroclinic-evolution-2} that
\begin{equation}\label{baroclinic-evolution-3}
\begin{split}
    \partial_t \mathcal{V}_\pm + e^{\mp i\Omega t}\mathfrak P_\pm\Big(&\widetilde{\mathcal{V}} \cdot \nabla \widetilde{\mathcal{V}} + \widetilde{\mathcal{V}} \cdot \nabla \overline{\mathcal{V}} + \overline{\mathcal{V}} \cdot \nabla \widetilde{\mathcal{V}} - \mathfrak P_0(\widetilde{\mathcal{V}} \cdot \nabla \widetilde{\mathcal{V}} + (\nabla \cdot \widetilde{\mathcal{V}}) \widetilde{\mathcal{V}} )\\
    &- (\int_0^z \nabla\cdot \widetilde{\mathcal{V}}(\boldsymbol{x},s)ds ) \partial_z \widetilde{\mathcal{V}} \Big) - \nu \partial_{zz} \mathcal{V}_\pm = 0.
\end{split}
\end{equation}
Thanks to Lemma \ref{lemma-orthogonal-decomposition} and \eqref{Pplus+Pminus}, we have
\begin{gather*}
\begin{aligned}
    \mathfrak P_+(\widetilde{\mathcal{V}} \cdot \nabla \widetilde{\mathcal{V}})
    &=\frac{1}{2} (\widetilde{\mathcal{V}} \cdot \nabla \widetilde{\mathcal{V}} + i \widetilde{\mathcal{V}} \cdot \nabla \widetilde{\mathcal{V}}^\perp) - \frac{1}{2} \mathfrak P_0\Big( \widetilde{\mathcal{V}} \cdot \nabla \widetilde{\mathcal{V}} + i \widetilde{\mathcal{V}} \cdot \nabla \widetilde{\mathcal{V}}^\perp \Big)\\
    &= \frac{1}{2} \widetilde{\mathcal{V}} \cdot \nabla (\widetilde{\mathcal{V}} + i\widetilde{\mathcal{V}}^\perp) - \frac{1}{2}\mathfrak P_0\Big( \widetilde{\mathcal{V}} \cdot \nabla (\widetilde{\mathcal{V}} + i\widetilde{\mathcal{V}}^\perp) \Big) = e^{i\Omega t}\Big(\widetilde{\mathcal{V}} \cdot \nabla \mathcal{V}_+  - \mathfrak P_0(\widetilde{\mathcal{V}} \cdot \nabla \mathcal{V}_+) \Big)  ,
\end{aligned}\\
    \mathfrak P_+(\widetilde{\mathcal{V}} \cdot \nabla \overline{\mathcal{V}}) = \frac{1}{2} (\widetilde{\mathcal{V}} \cdot \nabla \overline{\mathcal{V}} + i \widetilde{\mathcal{V}} \cdot \nabla \overline{\mathcal{V}}^\perp) = \frac{1}{2} \widetilde{\mathcal{V}} \cdot \nabla (\overline{\mathcal{V}} + i\overline{\mathcal{V}}^\perp) ,\\
    \mathfrak P_+(\overline{\mathcal{V}} \cdot \nabla \widetilde{\mathcal{V}}) = \frac{1}{2} (\overline{\mathcal{V}} \cdot \nabla \widetilde{\mathcal{V}} + i \overline{\mathcal{V}} \cdot \nabla \widetilde{\mathcal{V}}^\perp)  = e^{i\Omega t}(\overline{\mathcal{V}} \cdot \nabla \mathcal{V}_+ ),\\
    \mathfrak P_+\mathfrak P_0\Big(\widetilde{\mathcal{V}} \cdot \nabla \widetilde{\mathcal{V}} +(\nabla \cdot \widetilde{\mathcal{V}}) \widetilde{\mathcal{V}}\Big) = 0.
\end{gather*}
After applying integration by parts, one has
\begin{equation*}
\begin{split}
    \mathfrak P_+\Big((\int_0^z \nabla\cdot \widetilde{\mathcal{V}}(\boldsymbol{x},s)ds) \partial_z \widetilde{\mathcal{V}}\Big)  = &\frac{1}{2} \Big((\int_0^z \nabla\cdot \widetilde{\mathcal{V}}(\boldsymbol{x},s)ds ) \partial_z \widetilde{\mathcal{V}} + i (\int_0^z \nabla\cdot \widetilde{\mathcal{V}}(\boldsymbol{x},s)ds ) \partial_z \widetilde{\mathcal{V}}^\perp \Big) \\
    &- \frac{1}{2} \mathfrak P_0\Big((\int_0^z \nabla\cdot \widetilde{\mathcal{V}}(\boldsymbol{x},s)ds ) \partial_z \widetilde{\mathcal{V}} + i (\int_0^z \nabla\cdot \widetilde{\mathcal{V}}(\boldsymbol{x},s)ds ) \partial_z \widetilde{\mathcal{V}}^\perp \Big)\\
     = &e^{i\Omega t} (\int_0^z \nabla\cdot \widetilde{\mathcal{V}}(\boldsymbol{x},s)ds )  \partial_z \mathcal{V}_+ + e^{i\Omega t} \mathfrak P_0 \Big( (\nabla\cdot\widetilde{\mathcal{V}}) \mathcal{V}_+  \Big).
\end{split}
\end{equation*}
Moreover, thanks to \eqref{Pplus+Pminus} and \eqref{def:V+-}, $ \widetilde{\mathcal V} = \mathcal V_+ e^{i\Omega t} + \mathcal V_-e^{-i\Omega t} $.
Therefore, the $\mathcal{V}_+$ part of \eqref{baroclinic-evolution-3} can be written as
\begin{equation}\label{up3}
\begin{aligned}
 \partial_t \mathcal{V}_+ = & -e^{i\Omega t} \Big(\mathcal{V}_+ \cdot \nabla \mathcal{V}_+ - \mathfrak P_0( \mathcal{V}_+ \cdot \nabla \mathcal{V}_+ + (\nabla \cdot \mathcal{V}_+) \mathcal{V}_+) - (\int_0^z \nabla\cdot \mathcal{V}_+(\boldsymbol{x},s)ds ) \partial_z \mathcal{V}_+ \Big)  \\
 & -  \Big(\overline{\mathcal{V}} \cdot \nabla \mathcal{V}_+  + \frac{1}{2}(\mathcal{V}_+ \cdot \nabla)(\overline{\mathcal{V}}+i\overline{\mathcal{V}}^\perp) \Big) + \nu \partial_{zz} \mathcal{V}_+ - e^{-2i\Omega t} \frac{1}{2} (\mathcal{V}_- \cdot \nabla)(\overline{\mathcal{V}}+i\overline{\mathcal{V}}^\perp)  \\
& - e^{-i\Omega t} \Big(\mathcal{V}_- \cdot \nabla \mathcal{V}_+ - \mathfrak P_0( \mathcal{V}_- \cdot \nabla \mathcal{V}_+ + (\nabla \cdot \mathcal{V}_-) \mathcal{V}_+) - (\int_0^z \nabla\cdot \mathcal{V}_-(\boldsymbol{x},s)ds ) \partial_z \mathcal{V}_+ \Big).
\end{aligned}
\end{equation}
Similarly, the $\mathcal{V}_-$ part of \eqref{baroclinic-evolution-3} can be written as
\begin{equation}\label{um1}
\begin{aligned}
\partial_t \mathcal{V}_- = & -e^{-i\Omega t} \Big(\mathcal{V}_- \cdot \nabla \mathcal{V}_- - \mathfrak P_0( \mathcal{V}_- \cdot \nabla \mathcal{V}_- + (\nabla \cdot \mathcal{V}_-) \mathcal{V}_-) - (\int_0^z \nabla\cdot \mathcal{V}_-(\boldsymbol{x},s)ds ) \partial_z \mathcal{V}_- \Big)  \\
& -  \Big(\overline{\mathcal{V}} \cdot \nabla \mathcal{V}_-  + \frac{1}{2}(\mathcal{V}_- \cdot \nabla)(\overline{\mathcal{V}}-i\overline{\mathcal{V}}^\perp) \Big) + \nu \partial_{zz} \mathcal{V}_- - e^{2i\Omega t} \frac{1}{2} (\mathcal{V}_+ \cdot \nabla)(\overline{\mathcal{V}}-i\overline{\mathcal{V}}^\perp)  \\
& - e^{i\Omega t} \Big(\mathcal{V}_+ \cdot \nabla \mathcal{V}_- - \mathfrak P_0( \mathcal{V}_+ \cdot \nabla \mathcal{V}_- + (\nabla \cdot \mathcal{V}_+) \mathcal{V}_-) - (\int_0^z \nabla\cdot \mathcal{V}_+(\boldsymbol{x},s)ds ) \partial_z \mathcal{V}_- \Big).
\end{aligned}
\end{equation}

In addition,
\eqref{barotropic-evolution-1} can be written as
\begin{equation*}\label{barotropic-evolution-3}
\begin{gathered}
 \partial_t \overline{\mathcal{V}} + \overline{\mathcal{V}}\cdot \nabla \overline{\mathcal{V}} + e^{2i\Omega t} \mathfrak P_0\Big(\mathcal{V}_+ \cdot \nabla \mathcal{V}_+  + (\nabla \cdot \mathcal{V}_+) \mathcal{V}_+\Big)  + e^{-2i\Omega t} \mathfrak P_0\Big(\mathcal{V}_- \cdot \nabla \mathcal{V}_-  + (\nabla \cdot \mathcal{V}_-) \mathcal{V}_-\Big)  \\
 + \nabla p + \mathfrak  P_0 \Big(\mathcal{V}_+ \cdot \nabla \mathcal{V}_-  + \mathcal{V}_- \cdot \nabla \mathcal{V}_+ + (\nabla \cdot \mathcal{V}_+) \mathcal{V}_- + (\nabla \cdot \mathcal{V}_-) \mathcal{V}_+  \Big) = 0.
\end{gathered}
\end{equation*}
Recalling \eqref{Pplus+Pminus} and \eqref{def:V+-}, i.e., $\mathcal{V}_\pm = e^{\mp i\Omega t} \mathfrak P_\pm \mathcal{V} = \frac{1}{2} e^{\mp i\Omega t} (\widetilde{\mathcal{V}} \pm i \widetilde{\mathcal{V}}^\perp)$,
we rewrite the last term of the above equation as
\begin{eqnarray*}
&&\hskip-.8in
\mathfrak P_0 \Big(\mathcal{V}_+ \cdot \nabla \mathcal{V}_-  + \mathcal{V}_- \cdot \nabla \mathcal{V}_+ + (\nabla \cdot \mathcal{V}_+) \mathcal{V}_- + (\nabla \cdot \mathcal{V}_-) \mathcal{V}_+ \Big)  \\
&&\hskip-.9in
= \frac{1}{2} \mathfrak P_0  \Big(\widetilde{\mathcal{V}} \cdot \nabla \widetilde{\mathcal{V}} + \widetilde{\mathcal{V}}^\perp \cdot \nabla \widetilde{\mathcal{V}}^\perp + (\nabla \cdot \widetilde{\mathcal{V}}) \widetilde{\mathcal{V}} + (\nabla \cdot \widetilde{\mathcal{V}}^\perp) \widetilde{\mathcal{V}}^\perp\Big) = \frac{1}{2}  \mathfrak P_0  (\nabla |\widetilde{\mathcal{V}}|^2) = \nabla (\frac{1}{2} \mathfrak P_0 |\widetilde{\mathcal{V}}|^2),
\end{eqnarray*}
which can be combined with $ \nabla p $. Therefore, with abuse of notation, one can rewrite \eqref{barotropic-evolution-1} as
\begin{equation}\label{barotropic-evolution-4}
\begin{aligned}
    \partial_t \overline{\mathcal{V}} + (\overline{\mathcal{V}}\cdot \nabla \overline{\mathcal{V}}) + \nabla p &+  e^{2i\Omega t} \mathfrak P_0 \Big(\mathcal{V}_+ \cdot \nabla \mathcal{V}_+  + (\nabla \cdot \mathcal{V}_+) \mathcal{V}_+ \Big)
    \\
    +&e^{-2i\Omega t} \mathfrak P_0\Big(\mathcal{V}_- \cdot \nabla \mathcal{V}_-  + (\nabla \cdot \mathcal{V}_-) \mathcal{V}_-\Big) = 0.
\end{aligned}
\end{equation}

\section{Local Well-posedness}\label{section-local}
In sections \ref{subsec:local-a-priori} and \ref{subsec:well-posedness}, below, we will establish the local well-posedness, i.e., the existence, the uniqueness, and the continuous dependency on initial data, of weak solutions to system \eqref{PE-system}, defined as below:

\begin{definition}\label{definition-weak-solution}
Let $\mathcal{T}>0$, $r>2$, $\tau_0>0$, and suppose that the initial data $\mathcal{V}_0\in \mathcal{S}_{r,0,\tau_0}\cap H$.
We say $\mathcal{V}$ is a Leray-Hopf type weak solution to system \eqref{PE-system} with initial and boundary conditions \eqref{PE-IC}--\eqref{PE-BC}
if
\begin{enumerate}[label=\arabic*)]
\item there exists $ \tau(t) > 0 $, for $ t \in [0,\mathcal T]$, such that
$$\mathcal{V}\in L^\infty\big(0,\mathcal{T}; \mathcal{S}_{r,0,\tau(t)}
\big) \cap L^2\big(0,\mathcal{T}; V\cap \mathcal{S}_{r,1,\tau(t)}\cap \mathcal{S}_{r+\frac{1}{2},0,\tau(t)}\big), $$
$$\partial_t \mathcal V , A^{r-\frac{1}{2}} e^{\tau A} \partial_t \mathcal{V} \in L^2\big(0,\mathcal{T}; V'\big), $$
\item system \eqref{PE-system} is satisfied in the distribution sense,
\item and moreover, the following energy inequality holds:
\begin{gather*}
    \| \mathcal{V}(t)\|_{r,0,\tau(t)}^2 + 2 \int_0^t \Big(  \nu \|\partial_z \mathcal{V}(s)\|_{r,0,\tau(s)}^2   + \|A^{r+\frac{1}{2}} e^{\tau(s) A}  \mathcal{V}(s)\|^2 \Big) ds\leq \| \mathcal{V}_0\|_{r,0,\tau_0}^2.
 \end{gather*}

\end{enumerate}
\end{definition}
The following theorem is the main result in this section.

\begin{theorem}\label{theorem-local}
Assume $\mathcal{V}_0\in \mathcal{S}_{r,0,\tau_0}\cap H$ with $r>2$ and $\tau_0>0$. Let $\Omega \in \mathbb{R}$ be arbitrary and fixed. Then there exist a positive time $\mathcal T>0$ and a positive function $\tau(t) >0$ given in \eqref{time-T} and \eqref{tau-inequality}, below, respectively,
such that $ \mathcal V $ is a Leray-Hopf type weak solution, as in Definition \ref{definition-weak-solution}, to system  \eqref{PE-system} with \eqref{PE-IC} and \eqref{PE-BC}  in $ [0,\mathcal{T}]$. In particular, $\tau(t) $ and $ \mathcal T $ are independent of $ \Omega $. Moreover, $\mathcal{V}$ is unique and depends continuously on the initial data, in the sense of \eqref{continuous-dependence}, below.
\end{theorem}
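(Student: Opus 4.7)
The plan is to construct the solution by a Galerkin approximation in the space $H$, truncated to finitely many horizontal and vertical Fourier modes (using the basis $\phi_{\boldsymbol{k}}$ from \eqref{phik}), and then to pass to the limit by uniform a priori estimates in the analytic-Sobolev scale $\mathcal{S}_{r,0,\tau(t)}$ with a decreasing radius $\tau(t)$ to be chosen. Since the Galerkin system is a finite-dimensional locally Lipschitz ODE, short-time existence at each level $N$ is automatic; the real work is to produce an $N$-uniform bound that persists on a fixed interval $[0,\mathcal T]$ depending only on $r$, $\nu$, $\tau_0$, and $\|\mathcal{V}_0\|_{r,0,\tau_0}$.

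For the a priori bound, I would apply $A^r e^{\tau(t) A}$ to \eqref{PE-1}, pair in $L^2(\mathcal D)$ with $A^r e^{\tau(t)A}\mathcal{V}$, and use that $\frac{d}{dt}e^{\tau A} = \dot\tau\, A e^{\tau A}$ to get
\begin{equation*}
\tfrac{1}{2}\tfrac{d}{dt}\|A^r e^{\tau A}\mathcal{V}\|^2 \;-\; \dot\tau\,\|A^{r+\frac{1}{2}}e^{\tau A}\mathcal{V}\|^2 \;+\; \nu\,\|A^r e^{\tau A}\partial_z\mathcal{V}\|^2 \;=\; -\bigl\langle A^r e^{\tau A}(\mathcal{V}\cdot\nabla\mathcal{V}+w\partial_z\mathcal{V}+\Omega\mathcal{V}^\perp+\nabla p),\,A^r e^{\tau A}\mathcal{V}\bigr\rangle.
\end{equation*}
The rotation term vanishes by antisymmetry of $\mathcal J$ and the pressure term vanishes on the divergence-free subspace after using \eqref{incompressible-2d} and \eqref{PE-w}; both cancellations are independent of $\Omega$, which is the key to $\mathcal T$ being $\Omega$-independent. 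For the horizontal advection, I would apply Lemma \ref{lemma-banach-algebra} at almost every $z\in(0,1)$ and then integrate in $z$ with Cauchy--Schwarz, producing a nonlinear bound of the form $C_r\|\mathcal{V}\|_{r,0,\tau}\|A^{r+\frac{1}{2}}e^{\tau A}\mathcal{V}\|^2$. Writing the vertical advection $w\partial_z\mathcal{V}$ as $-(\int_0^z\nabla\cdot\widetilde{\mathcal V})\partial_z\mathcal V$, an integration by parts in $z$ (using $w,\partial_z\mathcal{V}|_{z=0,1}=0$) trades the vertical derivative for a horizontal one and again leads to a bound of the same shape after Lemma \ref{lemma-banach-algebra}. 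The $L^2$ loss of the transport term then forces the choice
\begin{equation*}
\dot\tau(t) + C_r\,\|\mathcal{V}(t)\|_{r,0,\tau(t)} = 0,\qquad \tau(0)=\tau_0,
\end{equation*}
so that the dangerous nonlinear contributions are absorbed by the negative term $\dot\tau\|A^{r+\frac{1}{2}}e^{\tau A}\mathcal V\|^2$. What is left is
\begin{equation*}
\tfrac{d}{dt}\|\mathcal V\|_{r,0,\tau}^2 + 2\nu\|\partial_z\mathcal V\|_{r,0,\tau}^2 + 2\|A^{r+\frac{1}{2}}e^{\tau A}\mathcal V\|^2 \le 0,
\end{equation*}
which gives immediately the energy inequality in Definition \ref{definition-weak-solution} and, after solving the ODE for $\tau$ with $\|\mathcal V\|_{r,0,\tau}\le\|\mathcal V_0\|_{r,0,\tau_0}$, produces explicit expressions for $\mathcal T$ and $\tau(t)$ of the form $\mathcal T\sim \tau_0 / (C_r\|\mathcal V_0\|_{r,0,\tau_0})$, both $\Omega$-independent.

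Passage to the limit is then standard. The uniform estimate gives $\mathcal V_N$ bounded in $L^\infty(0,\mathcal T;\mathcal S_{r,0,\tau(t)})\cap L^2(0,\mathcal T;\mathcal S_{r+\frac12,0,\tau(t)}\cap\mathcal S_{r,1,\tau(t)})$; a matching estimate on $\partial_t\mathcal V_N$ in $L^2(0,\mathcal T;V')$ is read off from the equation using the algebra property and duality. Aubin--Lions yields strong convergence of $\mathcal V_N$ in $L^2(0,\mathcal T;H)$, enough to pass to the limit in all the quadratic nonlinearities, including the nonlocal $w$ term via \eqref{PE-w}. For uniqueness and continuous dependence, I would take two solutions $\mathcal V_1,\mathcal V_2$, work on their common radius $\tau_*(t):=\min(\tau_1(t),\tau_2(t))>0$, and estimate $\delta\mathcal V=\mathcal V_1-\mathcal V_2$ in $\mathcal S_{r,0,\tau_*(t)}$; the rotation term is antisymmetric in the difference equation too, so it drops out, and Gronwall with the same algebra estimate yields \eqref{continuous-dependence} and in particular uniqueness.

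The main obstacle is the vertical transport term: because $\mathcal V$ is only $L^2$ (resp.\ $H^1$) in $z$, one cannot afford to put vertical derivatives on $\widetilde{\mathcal V}$ beyond what the viscosity $\nu\|A^r e^{\tau A}\partial_z\mathcal V\|^2$ controls, yet the horizontal Banach algebra must be applied pointwise in $z$ before integrating, so the order of (IBP in $z$) $\to$ (algebra in $\boldsymbol{x}$) $\to$ (Cauchy--Schwarz in $z$) is delicate. Ensuring that all the nonlinear terms collapse into the single form $C_r\|\mathcal V\|_{r,0,\tau}\|A^{r+\frac{1}{2}}e^{\tau A}\mathcal V\|^2$ --- which is exactly what the decreasing radius can absorb --- is the heart of the proof.
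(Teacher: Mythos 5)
Your overall architecture --- Galerkin approximation, a decreasing radius $\tau(t)$ chosen to absorb the $\tfrac12$-derivative loss from transport, Aubin--Lions, uniqueness on the common radius --- matches the paper's. However, your proposed radius ODE, $\dot\tau + C_r\|\mathcal V\|_{r,0,\tau} = 0$, and the resulting time scale $\mathcal T\sim\tau_0/(C_r\|\mathcal V_0\|_{r,0,\tau_0})$, do not actually close. The difficulty is precisely in the step you flag but do not resolve. After applying the horizontal algebra inequality at fixed $z$ and integrating in $z$, one factor in the trilinear form must be taken in $L^\infty_z$, and since $\mathcal V$ carries no a priori bound beyond $L^2_z$, the $L^\infty_z\hookrightarrow H^1_z$ embedding is forced: the resulting estimates (the paper's $I_1$, $I_2$) come out as
\begin{equation*}
|I_1|+|I_2|\ \le\ C_r\bigl(\|\mathcal V\|_{r,0,\tau}+\|\partial_z\mathcal V\|_{r,0,\tau}\bigr)\,\|A^{r+\frac12}e^{\tau A}\mathcal V\|^2,
\end{equation*}
not $C_r\|\mathcal V\|_{r,0,\tau}\|A^{r+\frac12}e^{\tau A}\mathcal V\|^2$. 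The factor $\|\partial_z\mathcal V\|_{r,0,\tau}$ is not controlled pointwise in time by the energy $\|\mathcal V\|_{r,0,\tau}$, so with your choice of $\dot\tau$ the absorption fails and no a priori bound on $[0,\mathcal T]$ results.

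The paper's fix is to put this vertical derivative into the radius ODE, $\dot\tau + 1 + C_r\bigl(\|\mathcal V\|_{r,0,\tau}+\|\partial_z\mathcal V\|_{r,0,\tau}\bigr)=0$, then integrate and control $\int_0^t\|\partial_z\mathcal V\|_{r,0,\tau}\,ds$ by Cauchy--Schwarz in time combined with the dissipation inequality $2\nu\int_0^t\|\partial_z\mathcal V\|^2_{r,0,\tau}\,ds\le\|\mathcal V_0\|^2_{r,0,\tau_0}$. This yields $\tau(t)\ge\tau_0-(1+C_r\|\mathcal V_0\|_{r,0,\tau_0})t-\frac{C_r}{\sqrt{2\nu}}\|\mathcal V_0\|_{r,0,\tau_0}\sqrt t$, and the time $\mathcal T$ in \eqref{time-T} is the root of the corresponding quadratic in $\sqrt{\mathcal T}$; it depends on $\nu$ and degenerates as $\nu\to 0$, which is structurally necessary given the inviscid ill-posedness in Sobolev spaces. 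Your proposal should be revised to include $\|\partial_z\mathcal V\|_{r,0,\tau}$ in $\dot\tau$ and to note that $\mathcal T$ is $\nu$-dependent; everything else you wrote (cancellation of rotation and pressure, $\partial_t\mathcal V$ estimate in $V'$, Aubin--Lions, the common-radius Gr\"onwall argument for uniqueness) is in line with the paper.
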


Notice that we do not need to assume \eqref{mean-zero-2} in Theorem \ref{theorem-local}. 
Throughout the rest of this section, we assume that $ (\mathcal V, p) $ satisfies \eqref{PE-system}--\eqref{PE-BC} and is smooth enough such that the following calculation makes sense. The rigid justification can be established through Galerkin approximation arguments (see, e.g., \cite{GILT20,LT10}). In particular, in section \ref{subsec:local-a-priori}, we establish the {\it a priori} estimates of solutions to system \eqref{PE-system} with \eqref{PE-BC}.
In section \ref{subsec:well-posedness}, we finish the proof of Theorem \ref{theorem-local} by establishing the uniqueness and continuous dependency on initial data. In section \ref{section-radius}, we show that the weak solution immediately becomes analytic in $z$, and the radius of analyticity in $z$ increases as long as the solution exists.

\subsection{\textit{A Priori} Estimates}\label{subsec:local-a-priori}
Direct calculation of $ \langle \eqref{PE-1}, \mathcal V \rangle + \langle A^r e^{\tau A} \eqref{PE-1},A^r e^{\tau A} \mathcal V\rangle $, after applying integration by parts, \eqref{PE-3}, and \eqref{PE-BC}, shows that
\begin{equation}\label{apri-001}
\begin{split}
    \frac{1}{2}\frac{d}{dt} \| \mathcal{V}\|_{r,0,\tau}^2 + &\nu  \|\partial_z \mathcal{V}\|_{r,0,\tau}^2 - \dot{\tau} \|A^{r+\frac{1}{2}} e^{\tau A}  \mathcal{V}\|^2 =
    -\Big\langle A^r e^{\tau A}(\mathcal{V}\cdot \nabla \mathcal{V}), A^r e^{\tau A}  \mathcal{V}\Big\rangle\\
    +&\Big\langle A^r e^{\tau A} \Big[ \Big(\int_0^z \nabla\cdot \mathcal{V}(\boldsymbol{x},s)ds \Big) \partial_z \mathcal{V} \Big], A^r e^{\tau A}  \mathcal{V} \Big\rangle =: I_1 + I_2.
\end{split}
\end{equation}
By virtue of Lemma \ref{lemma-type1}, the Sobolev inequality, and the H\"older inequality, we have
\begin{equation*}
\begin{split}
    |I_1| &\leq \Big|\Big\langle A^r e^{\tau A} ( \mathcal{V}\cdot \nabla \mathcal{V}), A^r e^{\tau A}  \mathcal{V} \Big\rangle\Big| \\
    &\leq \int_0^1 C_r \Big(\|A^r e^{\tau A} \mathcal{V}(z)\|_{L^2(\mathbb{T}^2)} + \| \mathcal{V}(z)\|_{L^2(\mathbb{T}^2)} \Big) \|A^{r+\frac{1}{2}} e^{\tau A} \mathcal{V}(z)\|_{L^2(\mathbb{T}^2)}^2  dz\\
    &\leq C_r (\| \mathcal{V}\|_{r,0,\tau} + \| \partial_z \mathcal{V}\|_{r,0,\tau}) \|A^{r+\frac{1}{2}} e^{\tau A}  \mathcal{V}\|^2.
\end{split}
\end{equation*}
Applying Lemma \ref{lemma-type2} to $ I_2 $ leads to
\begin{equation*}
    |I_2| \leq C_r \|\partial_z \mathcal{V}\|_{r,0,\tau} \|A^{r+\frac{1}{2}} e^{\tau A}  \mathcal{V}\|^2.
\end{equation*}
Thus from \eqref{apri-001}, one has
\begin{equation}\label{apri-002}
\begin{gathered}
     \frac{1}{2}\frac{d}{dt} \| \mathcal{V}\|_{r,0,\tau}^2 + \nu  \|\partial_z \mathcal{V}\|_{r,0,\tau}^2 + \|A^{r+\frac{1}{2}} e^{\tau A}  \mathcal{V}\|^2 \leq \Big(\dot{\tau}+1+C_r (\| \mathcal{V}\|_{r,0,\tau} + \| \partial_z \mathcal{V}\|_{r,0,\tau})\Big) \\
     \qquad  \times \|A^{r+\frac{1}{2}} e^{\tau A}  \mathcal{V}\|^2
        \leq \Big(\dot{\tau}+C_r (1+\| \mathcal{V}\|^2_{r,0,\tau} + \| \partial_z \mathcal{V}\|^2_{r,0,\tau})\Big) \|A^{r+\frac{1}{2}} e^{\tau A}  \mathcal{V}\|^2  .
\end{gathered}
\end{equation}
Choose $ \tau $ such that
\begin{equation}\label{def:tau-local}
\dot{\tau}+1+C_r (\| \mathcal{V}\|_{r,0,\tau} + \| \partial_z \mathcal{V}\|_{r,0,\tau}) = 0 .
\end{equation}
Then, one has
\begin{equation*}
    \frac{1}{2}\frac{d}{dt} \| \mathcal{V}\|_{r,0,\tau}^2 + \nu  \|\partial_z \mathcal{V}\|_{r,0,\tau}^2 + \|A^{r+\frac{1}{2}} e^{\tau A}  \mathcal{V}\|^2 \leq 0.
\end{equation*}
For $ \mathcal T > 0 $, to be determined, and $t \in [0, \mathcal T ] $, one has, after integrating \eqref{apri-002} in the $ t $-variable,
\begin{equation}\label{apri-003}
    \| \mathcal{V}(t)\|_{r,0,\tau(t)}^2 + 2 \int_0^t \Big(  \nu \|\partial_z \mathcal{V}(s)\|_{r,0,\tau(s)}^2   + \|A^{r+\frac{1}{2}} e^{\tau(s) A}  \mathcal{V}(s)\|^2 \Big) ds\leq \| \mathcal{V}_0\|_{r,0,\tau_0}^2.
\end{equation}
On the other hand, integrating \eqref{def:tau-local} yields
\begin{equation}\label{tau-inequality}
\begin{aligned}
    \tau(t) =& \tau_0 - t - C_r \int_0^t \bigl( \| \mathcal{V}(s)\|_{r,0,\tau(s)} + \| \partial_z \mathcal{V}(s)\|_{r,0,\tau(s)} \bigr) ds\\
    \geq & \tau_0 - (1+C_r \| \mathcal{V}_0\|_{r,0,\tau_0}) t - \frac{C_r}{\sqrt{2\nu}} \| \mathcal{V}_0\|_{r,0,\tau_0} \sqrt{t}.
\end{aligned}
\end{equation}
Consider, for $ C_r > 0 $ as in \eqref{tau-inequality}, that
\begin{equation}\label{time-T}
    \mathcal{T} := \Big(\frac{\sqrt{\frac{C_r^2 \| \mathcal{V}_0\|_{r,0,\tau_0}^2}{2\nu} + 2\tau_0(1+C_r \| \mathcal{V}_0\|_{r,0,\tau_0})} - \frac{C_r \| \mathcal{V}_0\|_{r,0,\tau_0}}{\sqrt{2\nu}}}{2(1+C_r \| \mathcal{V}_0\|_{r,0,\tau_0})}\Big)^2 > 0,
\end{equation}
which solves
\begin{equation*}
   (1+C_r \| \mathcal{V}_0\|_{r,0,\tau_0}) \mathcal{T} - \frac{C_r}{\sqrt{2\nu}} \| \mathcal{V}_0\|_{r,0,\tau_0} \sqrt{\mathcal{T}} = \frac{\tau_0}{2}.
\end{equation*}
Then one has
\begin{equation*}
    \tau(t) \geq \tau_0/2 > 0 \quad \text{for} \quad t \in [0,\mathcal T].
\end{equation*}
Consequently, \eqref{apri-003} implies that
\begin{equation}\label{regularity-v}
    \mathcal{V}\in L^\infty\big(0,\mathcal{T};  \mathcal{S}_{r,0,\tau(t)}\big) \cap L^2\big(0,\mathcal{T}; V\cap \mathcal{S}_{r,1,\tau(t)}\cap \mathcal{S}_{r+\frac{1}{2},0,\tau(t)}\big)
\end{equation}
with $ \mathcal T > 0 $ given as in \eqref{time-T} and $ \tau(t) $ given as in \eqref{tau-inequality} (or equivalently \eqref{def:tau-local}).

Next, in order to obtain the estimate of $\partial_t \mathcal{V}$, testing \eqref{PE-1} with $ \forall \phi \in \mathscr{V} $ (see \eqref{def:func-space}) leads to
\begin{eqnarray}\label{apri-004}
&&\hskip-.8in
\Big\langle \partial_t \mathcal{V}, \phi \Big\rangle +  \Big\langle \mathcal{V} \cdot \nabla \mathcal{V} - \Big(\int_0^z \nabla\cdot \mathcal{V}(\boldsymbol{x},s)ds \Big) \partial_z \mathcal{V}  +  \Omega \mathcal{V}^\perp - \nu \partial_{zz}\mathcal{V},  \phi   \Big \rangle  = 0 .
\end{eqnarray}
where we have substituted, thanks to \eqref{PE-2} and \eqref{def:func-space}, $ \langle \nabla p , \phi \rangle = - \langle  p , \nabla \cdot \phi \rangle = 0$.
Since $r>2$, thanks to the H\"older inequality and the Sobolev inequality, we obtain that
\begin{equation*}
   \Big| \Big\langle \mathcal{V} \cdot \nabla \mathcal{V} ,  \phi \Big \rangle \Big| \leq   C \|\mathcal{V}\|_{L^\infty_{\boldsymbol{x}}L^2_z} \|\nabla\mathcal{V}\|_{L^2_{\boldsymbol{x}}L^2_z} \| \phi\|_{L^2_{\boldsymbol{x}}L^\infty_z} \leq C_r \|\mathcal{V}\|_{r,0,\tau}^2 \|\phi\|_{V}
\end{equation*}
and
\begin{equation*}
    \begin{split}
        &\Big| \Big\langle \Big(\int_0^z \nabla\cdot \mathcal{V}(\boldsymbol{x},s)ds \Big) \partial_z \mathcal{V} ,  \phi   \Big \rangle \Big| \leq \int_{\mathbb{T}^2} \Big(\int_0^1 |\nabla\cdot \mathcal{V}| dz\Big) \Big(\int_0^1 |\partial_z \mathcal{V}||\phi| dz\Big) d\boldsymbol{x} \\
         \leq C & \int_{\mathbb{T}^2} \|\nabla \mathcal{V}\|_{L^2_z} \|\partial_z \mathcal{V}\|_{L^2_z} \|\phi\|_{L^2_z} d\boldsymbol{x} \leq C  \|\nabla \mathcal{V}\|_{L^2_{\boldsymbol{x}}L^2_z}\|\partial_z \mathcal{V}\|_{L^4_{\boldsymbol{x}}L^2_z}\|\phi\|_{L^4_{\boldsymbol{x}}L^2_z} \leq  C_r \|\mathcal{V}\|_{r,1,\tau} \|\mathcal{V}\|_{r,0,\tau} \|\phi\|_{V}.
    \end{split}
\end{equation*}
After applying integration by parts, one has
\begin{equation*}
    \Big|\Big\langle   \Omega \mathcal{V}^\perp - \nu \partial_{zz}\mathcal{V},  \phi   \Big \rangle\Big|
    = \Big|\Big\langle \Omega \mathcal{V}^\perp,  \phi \Big\rangle + \nu \Big\langle \partial_z \mathcal V, \partial_z \phi \Big\rangle   \Big|
    \leq C_{\nu,  \Omega} \|\mathcal{V}\|_{r,1,\tau}\|\phi\|_{V}.
\end{equation*}
Therefore, one has
\begin{eqnarray*}
&&\hskip-.8in
\Big|\Big\langle \partial_t \mathcal{V}, \phi \Big\rangle \Big| \leq C_{\nu,r,\Omega} \Big(\| \mathcal{V}\|_{r,0,\tau}^2 + (1+\| \mathcal{V}\|_{r,0,\tau})\| \mathcal{V}\|_{r,1,\tau} \Big) \|\phi\|_{V}.
\end{eqnarray*}
Since $\mathscr{V}$ is dense in $V$, one has
\begin{equation*}
    \| \partial_t \mathcal{V}\|_{V'} = \sup\limits_{\|\phi\|_{V}=1} \Big|\Big\langle \partial_t \mathcal{V}, \phi \Big\rangle \Big| \leq C_{\nu,r,\Omega} \Big(\| \mathcal{V}\|_{r,0,\tau}^2 + (1+\| \mathcal{V}\|_{r,0,\tau})\| \mathcal{V}\|_{r,1,\tau} \Big) .
\end{equation*}
Thanks to \eqref{regularity-v}, we have
\begin{equation}\label{regularity-vt-1}
\begin{split}
&\partial_t \mathcal{V}\in L^2(0,\mathcal{T}; V')  \qquad \text{and}
\\
   & \|\partial_t \mathcal V \|_{L^2(0,\mathcal{T}; V')}  \leq C_{\nu,r,\Omega} \Big(C_{\mathcal T}\|  \mathcal{V}\|_{L^\infty(0,\mathcal T; \mathcal S_{r,0,\tau})}^2 + (1+\| \mathcal{V}\|_{L^\infty(0,\mathcal T; \mathcal S_{r,0,\tau})})\| \mathcal{V}\|_{L^2(0,\mathcal T; \mathcal S_{r,1,\tau})} \Big) < \infty.
\end{split}
\end{equation}

Meanwhile,
for $A^{r-\frac{1}{2}} e^{\tau A}\partial_t \mathcal{V}$, one has, similarly as in \eqref{apri-004},
\begin{equation*}
\begin{split}
    \Big\langle A^{r-\frac{1}{2}} e^{\tau A}\partial_t \mathcal{V}, \phi \Big\rangle +  \Big\langle & A^{r-\frac{1}{2}} e^{\tau A}\big(\mathcal{V} \cdot \nabla \mathcal{V}\big) - A^{r-\frac{1}{2}} e^{\tau A} \Big(\big(\int_0^z \nabla\cdot \mathcal{V}(\boldsymbol{x},s)ds \big) \partial_z \mathcal{V}\Big) \\
    & +  \Omega A^{r-\frac{1}{2}} e^{\tau A}\mathcal{V}^\perp - \nu \partial_{zz}A^{r-\frac{1}{2}} e^{\tau A}\mathcal{V} ,  \phi   \Big \rangle  = 0 .
\end{split}
\end{equation*}
With $r>2$, thanks to Lemma \ref{lemma-banach-algebra},  the H\"older inequality, and the Sobolev inequality, we obtain that
\begin{equation*}
    \begin{split}
        \Big| \Big\langle  A^{r-\frac{1}{2}} e^{\tau A}\big(\mathcal{V} \cdot \nabla \mathcal{V}\big),  \phi   \Big \rangle \Big| &\leq \|A^{r-\frac{1}{2}} e^{\tau A}\mathcal{V} \cdot \nabla \mathcal{V}\|_{L^2_{\boldsymbol{x}}L^1_z} \|\phi\|_{L^2_{\boldsymbol{x}} L^\infty_z} \\
        &
        \leq C_r \|\mathcal{V}\|_{r+\frac{1}{2},0,\tau} \|\mathcal{V}\|_{r,0,\tau} \|\phi\|_{V}.
    \end{split}
\end{equation*}
After applying integration by parts in the $ z $-variable and the H\"older inequality, one has
\begin{equation*}
    \begin{split}
       &\Big| \Big\langle  A^{r-\frac{1}{2}} e^{\tau A} \Big(\big(\int_0^z \nabla\cdot \mathcal{V}(\boldsymbol{x},s)ds \big) \partial_z \mathcal{V}\Big) ,  \phi   \Big \rangle   \Big|
       \leq  \Big| \Big\langle A^{r-\frac{1}{2}} e^{\tau A} \Big(  (\nabla\cdot \mathcal{V})  \mathcal{V} \Big),   \phi   \Big \rangle   \Big|\\
    &\qquad\qquad + \Big| \Big\langle A^{r-\frac{1}{2}} e^{\tau A} \Big( \big(\int_0^z \nabla\cdot \mathcal{V}(\boldsymbol{x},s)ds \big)  \mathcal{V} \Big) ,   \partial_z \phi   \Big \rangle   \Big|
        \leq  C_r \|\mathcal{V}\|_{r+\frac{1}{2},0,\tau} \|\mathcal{V}\|_{r,0,\tau} \|\phi\|_{V},
    \end{split}
\end{equation*}
and similarly,
\begin{equation*}
    \begin{split}
       \Big| \Big\langle  \Omega A^{r-\frac{1}{2}} e^{\tau A}\mathcal{V}^\perp - \nu \partial_{zz}A^{r-\frac{1}{2}} e^{\tau A}\mathcal{V} ,  \phi   \Big \rangle \Big| \leq C_{\nu,  \Omega} \|\mathcal{V}\|_{r,1,\tau}\|\phi\|_{V}.
    \end{split}
\end{equation*}
Therefore, one has
\begin{eqnarray*}
&&\hskip-.8in
\Big|\Big\langle A^{r-\frac{1}{2}} e^{\tau A}\partial_t \mathcal{V}, \phi \Big\rangle \Big| \leq C_{\nu,r,\Omega} \Big( \|\mathcal{V}\|_{r+\frac{1}{2},0,\tau} \|\mathcal{V}\|_{r,0,\tau} + \|\mathcal{V}\|_{r,1,\tau}   \Big) \|\phi\|_{V}.
\end{eqnarray*}
Since $\mathscr{V}$ is dense in $V$, one has
\begin{equation*}
    \|A^{r-\frac{1}{2}} e^{\tau A}\partial_t \mathcal{V}\|_{V'} = \sup\limits_{\|\phi\|_{V}=1} \Big|\Big\langle A^{r-\frac{1}{2}} e^{\tau A}\partial_t \mathcal{V}, \phi \Big\rangle \Big| \leq C_{\nu,r,\Omega} \Big( \|\mathcal{V}\|_{r+\frac{1}{2},0,\tau} \|\mathcal{V}\|_{r,0,\tau} + \|\mathcal{V}\|_{r,1,\tau}   \Big).
\end{equation*}
Thanks to \eqref{regularity-v}, we have
\begin{equation}\label{regularity-vt-2}
    \begin{gathered}
    A^{r-\frac{1}{2}} e^{\tau A} \partial_t \mathcal{V}\in L^2\big(0,\mathcal{T}; V'\big)
    \qquad \text{and}\qquad \\
    \|A^{r-\frac{1}{2}} e^{\tau A} \partial_t \mathcal{V}\|_{L^2\big(0,\mathcal{T}; V'\big)}
    \leq C_{\nu,r,\Omega} \Big( \|\mathcal{V}\|_{L^\infty(0,\mathcal T; \mathcal S_{r,0,\tau})} \|\mathcal{V}\|_{L^2(0,\mathcal T; \mathcal S_{r+\frac{1}{2},0,\tau})}  + \|\mathcal{V}\|_{L^2(0,\mathcal T; \mathcal S_{r,1,\tau})}   \Big)< \infty.
    \end{gathered}
\end{equation}

\subsection{Uniqueness and continuous dependence on the initial data}\label{subsec:well-posedness}
In this section, we show the uniqueness of solutions and the continuous dependence on the initial data. Let $\mathcal{V}_1$ and $\mathcal{V}_2$ be two weak solutions with initial data $(\mathcal{V}_0)_1$ and $(\mathcal{V}_0)_2$, respectively. Assume the radius of analyticity of $(\mathcal{V}_0)_1$ and $(\mathcal{V}_0)_2$ is $ \tau_0 $. By virtue of \eqref{tau-inequality} and \eqref{time-T}, for $i=1,2$, let
\begin{equation}\label{tau-T-i}
\begin{gathered}
    \tau_i(t) := \tau_0 - t - C_{r,i} \int_0^t \bigl( \| \mathcal{V}_i(s)\|_{r,0,\tau_i(s)} + \| \partial_z \mathcal{V}_i(s)\|_{r,0,\tau_i(s)} \bigr) ds, \\
    \text{and} \qquad \mathcal{T}_i :=
    \Big(\frac{\sqrt{\frac{C_{r,i} ^2 \| (\mathcal{V}_0)_i\|_{r,0,\tau_0}^2}{2\nu} + 2\tau_0(1+C_{r,i}  \| (\mathcal{V}_0)_i\|_{r,0,\tau_0})} - \frac{C_{r,i}  \| (\mathcal{V}_0)_i\|_{r,0,\tau_0}}{\sqrt{2\nu}}}{2(1+C_{r,i}  \| (\mathcal{V}_0)_i\|_{r,0,\tau_0})}\Big)^2
\end{gathered}
\end{equation}
such that, according to \eqref{apri-003}, \eqref{regularity-v}, \eqref{regularity-vt-1}, and \eqref{regularity-vt-2},
\begin{equation*}
     \| \mathcal{V}_i(t)\|_{r,0,\tau_i(t)}^2 + 2 \int_0^t   \Big( \nu\|\partial_z \mathcal{V}_i(s)\|_{r,0,\tau_i(s)}^2 +  \|A^{r+\frac{1}{2}} e^{\tau_i(s) A}  \mathcal{V}_i(s)\|^2 \Big) ds \leq \|(\mathcal{V}_0)_i\|^2_{r,0,\tau_{i0}},
\end{equation*}
for $t\in[0,\mathcal{T}_i]$, and
\begin{equation*}
    \mathcal{V}_i\in L^\infty\big(0,\mathcal{T}_i;  \mathcal{S}_{r,0,\tau_i(t)}\big) \cap L^2\big(0,\mathcal{T}_i; V\cap \mathcal{S}_{r,1,\tau_i(t)}\cap \mathcal{S}_{r+\frac{1}{2},0,\tau_i(t)}\big),
\end{equation*}
\begin{equation*}
    \partial_t \mathcal{V}_i \quad \text{and} \quad A^{r-\frac{1}{2}} e^{\tau_i A} \partial_t \mathcal{V}_i\in L^2\big(0,\mathcal{T}_i; V'\big).
\end{equation*}
We remind readers that $ C_{r,i}, i =1,2, $ are independent of $ \Omega $ and $ \tau_0 $.

Let 
\begin{equation}\label{M}
  M := \max\Big\{ \|(\mathcal{V}_0)_1\|_{r,0,\tau_{0}}, \|(\mathcal{V}_0)_2\|_{r,0,\tau_{0}}\Big\}.
\end{equation}
Denote by $\delta \mathcal{V} := \mathcal{V}_1- \mathcal{V}_2$ and $\delta p := p_1 -p_2$. Let
\begin{equation}\label{tilde-tau-T}
\begin{gathered}
 \widetilde{\tau}(t) := \tau_0 - t - C_r \sum\limits_{i=1}^2\int_0^t \bigl( \| \mathcal{V}_i(s)\|_{r,0,\tau_i(s)} +
 \| \mathcal{V}_i(s)\|_{r,0,\tau_i(s)}^2 + \| \partial_z \mathcal{V}_i(s)\|_{r,0,\tau_i(s)} \bigr) ds, \\
    \text{and} \quad  \widetilde{\mathcal{T}} := \Big(\frac{\sqrt{\frac{2C_{r}^2 M^2}{\nu} + 2\tau_0\big(1+2C_{r}  (M^2+M)\big)} - \frac{ \sqrt{2}C_{r}  M}{\sqrt{\nu}}}{2\big(1+2C_{r}  (M^2+M)\big)}\Big)^2,
\end{gathered}
\end{equation}
where $ C_r $ is a positive constant, to be determined later, satisfying
\begin{equation}\label{unq-def:cnst-c-r}
    C_r \geq \max\lbrace C_{r,1}, C_{r,2}\rbrace .
\end{equation}
In particular, \eqref{tilde-tau-T} and \eqref{unq-def:cnst-c-r} imply that $ \tilde{\tau}(t) \leq \tau_i(t) $ and $ \widetilde{\mathcal T} \leq \widetilde{\mathcal T}_i $ for $ i \in \lbrace 1, 2 \rbrace $ and $ t \in (0,\widetilde{\mathcal T} ] $. Therefore, for $i=1,2$,
\begin{gather}
\label{regularity-v-uniqueness}
    \delta \mathcal{V} \quad \text{and} \quad  \mathcal{V}_i\in L^\infty\big(0,\widetilde{\mathcal{T}};  \mathcal{S}_{r,0,\widetilde{\tau}(t)}) \cap L^2\big(0,\widetilde{\mathcal{T}}; V\cap \mathcal{S}_{r,1,\widetilde{\tau}(t)}\cap \mathcal{S}_{r+\frac{1}{2},0,\widetilde{\tau}(t)}),\\
\label{regularity-vt}
    \partial_t \delta \mathcal{V}  \quad \text{and} \quad  A^{r-\frac{1}{2}} e^{\widetilde{\tau} A} \partial_t \delta \mathcal{V}\in L^2\big(0,\widetilde{\mathcal{T}}; V'\big),
\end{gather}
and
\begin{equation*}
     \| \mathcal{V}_i(t)\|_{r,0,\widetilde{\tau}(t)}^2 + 2 \int_0^t \Big(   \nu\|\partial_z \mathcal{V}_i(s)\|_{r,0,\widetilde{\tau}(s)}^2 +  \|A^{r+\frac{1}{2}} e^{\widetilde{\tau}(s) A}  \mathcal{V}_i(s)\|^2 \Big) ds \leq M^2,
\end{equation*}
for $t\in[0,\widetilde{\mathcal{T}}]$.

From system \eqref{PE-system}, it is clear that
\begin{equation*}
    \begin{gathered}
        \partial_t \delta \mathcal{V} + \delta \mathcal{V}\cdot \nabla \mathcal{V}_1 + \mathcal{V}_2\cdot \nabla \delta \mathcal{V} - \Big(\int_0^z \nabla\cdot \delta \mathcal{V}(\boldsymbol{x},s)ds \Big) \partial_z \mathcal{V}_1
        - \Big(\int_0^z \nabla\cdot \mathcal{V}_2(\boldsymbol{x},s)ds \Big) \partial_z \delta \mathcal{V}\\
        + \Omega \delta \mathcal{V}^\perp -\nu \partial_{zz} \delta \mathcal{V} + \nabla \delta p =0 \qquad \text{and} \qquad \partial_z \delta p = 0.
    \end{gathered}
\end{equation*}
Notice that from \eqref{regularity-v-uniqueness}, one has that $A^{r-\frac{1}{2}} e^{\widetilde{\tau} A} \delta \mathcal{V} \in L^2\big(0,\widetilde{\mathcal{T}}; V\big)$. Thanks to \eqref{regularity-vt}, similar calculation as in \eqref{apri-001} leads to
\begin{equation}\label{apri-005}
    \begin{split}
    &\frac{1}{2}\frac{d}{dt} \| \delta \mathcal{V}\|_{r-\frac{1}{2},0,\widetilde{\tau}}^2 + \nu  \|\partial_z \delta \mathcal{V}\|_{r-\frac{1}{2},0,\widetilde{\tau}}^2 - \dot{\widetilde{\tau}} \|A^{r} e^{\widetilde{\tau} A}  \delta \mathcal{V}\|^2 \\
    =& -\Big\langle \delta \mathcal{V}\cdot \nabla \mathcal{V}_1 + \mathcal{V}_2\cdot \nabla \delta \mathcal{V} - \Big(\int_0^z \nabla\cdot \delta \mathcal{V}(\boldsymbol{x},s)ds \Big) \partial_z \mathcal{V}_1 - \Big(\int_0^z \nabla\cdot \mathcal{V}_2(\boldsymbol{x},s)ds \Big) \partial_z \delta \mathcal{V} ,\delta \mathcal{V} \Big\rangle \\
    &
    -\Big\langle A^{r-\frac{1}{2}} e^{\widetilde{\tau} A}(\delta \mathcal{V}\cdot \nabla \mathcal{V}_1), A^{r-\frac{1}{2}} e^{\widetilde{\tau} A}  \delta \mathcal{V}\Big\rangle
    +\Big\langle A^{r-\frac{1}{2}} e^{\widetilde{\tau} A} \Big[ \Big(\int_0^z \nabla\cdot \delta \mathcal{V}(\boldsymbol{x},s)ds \Big) \partial_z \mathcal{V}_1 \Big], A^{r-\frac{1}{2}} e^{\widetilde{\tau} A}  \delta \mathcal{V} \Big\rangle \\
    & -\Big\langle A^{r-\frac{1}{2}} e^{\widetilde{\tau} A}(\mathcal{V}_2\cdot \nabla \delta \mathcal{V}), A^{r-\frac{1}{2}} e^{\widetilde{\tau} A}  \delta \mathcal{V}\Big\rangle
    +\Big\langle A^{r-\frac{1}{2}} e^{\widetilde{\tau} A} \Big[ \Big(\int_0^z \nabla\cdot \mathcal{V}_2(\boldsymbol{x},s)ds \Big) \partial_z \delta \mathcal{V} \Big], A^{r-\frac{1}{2}} e^{\widetilde{\tau} A}  \delta \mathcal{V} \Big\rangle.
\end{split}
\end{equation}
After applying integration by parts, the H\"older inequality, the Young inequality, and the Sobolev inequality, since $r>2$, one has
\begin{equation*}
\begin{split}
    &\Big| \Big\langle \delta \mathcal{V}\cdot \nabla \mathcal{V}_1 + \mathcal{V}_2\cdot \nabla \delta \mathcal{V} - \Big(\int_0^z \nabla\cdot \delta \mathcal{V}(\boldsymbol{x},s)ds \Big) \partial_z \mathcal{V}_1 - \Big(\int_0^z \nabla\cdot \mathcal{V}_2(\boldsymbol{x},s)ds \Big) \partial_z \delta \mathcal{V} ,\delta \mathcal{V} \Big\rangle \Big| \\
    = & \Big| \Big\langle \delta \mathcal{V}\cdot \nabla \mathcal{V}_1 - \Big(\int_0^z \nabla\cdot \delta \mathcal{V}(\boldsymbol{x},s)ds \Big) \partial_z \mathcal{V}_1 ,\delta \mathcal{V} \Big\rangle \Big| \leq C_{r-\frac{1}{2}} \| \mathcal{V}_1\|_{r,1,\widetilde{\tau}} \| \delta \mathcal{V}\|_{r-\frac{1}{2},0,\widetilde{\tau}}^2.
\end{split}
\end{equation*}
Thanks to Lemmas \ref{lemma-type1} and \ref{lemma-type2}, the H\"older inequality, the Young inequality, and the Sobolev inequality, since $r>2$, one has
\begin{align*}
&
\begin{aligned}
     &\Big| \Big\langle A^{r-\frac{1}{2}} e^{\widetilde{\tau} A}(\delta \mathcal{V}\cdot \nabla \mathcal{V}_1), A^{r-\frac{1}{2}} e^{\widetilde{\tau} A}  \delta \mathcal{V}\Big\rangle    \Big| \\
     &\quad \leq \int_0^1 C_{r-\frac{1}{2}}\Big[ (\|A^{r-\frac{1}{2}} e^{\widetilde{\tau} A} \delta \mathcal{V}(z)\|_{L^2(\mathbb{T}^2)} + \|\delta \mathcal{V}(z)\|_{L^2(\mathbb{T}^2)} ) \|A^{r} e^{\widetilde{\tau} A} \mathcal{V}_1(z)\|_{L^2(\mathbb{T}^2)} \|A^{r} e^{\widetilde{\tau} A} \delta \mathcal{V}(z)\|_{L^2(\mathbb{T}^2)} \\
       & \qquad + \|A^{r} e^{\widetilde{\tau} A} \delta \mathcal{V}(z)\|_{L^2(\mathbb{T}^2)}  \|A^{r} e^{\widetilde{\tau} A} \mathcal{V}_1(z)\|_{L^2(\mathbb{T}^2)} \|A^{r-\frac{1}{2}} e^{\widetilde{\tau} A} \delta \mathcal{V}(z)\|_{L^2(\mathbb{T}^2)}\Big] dz\\
       &\quad \leq   C_{r-\frac{1}{2}}  \|\mathcal{V}_1\|_{r,1,\widetilde{\tau}} (\|\delta \mathcal{V}\|_{r-\frac{1}{2},0,\widetilde{\tau}}^2+ \|A^{r} e^{\widetilde{\tau} A}\delta \mathcal{V}\|^2 ),
\end{aligned}\\
&
    \begin{aligned}
        &\Big| \Big\langle A^{r-\frac{1}{2}} e^{\widetilde{\tau} A}(\mathcal{V}_2\cdot \nabla \delta \mathcal{V}), A^{r-\frac{1}{2}} e^{\widetilde{\tau} A}  \delta \mathcal{V}\Big\rangle    \Big| \\
        & \quad \leq \int_0^1 C_{r-\frac{1}{2}}\Big[ (\|A^{r-\frac{1}{2}} e^{\widetilde{\tau} A} \mathcal{V}_2(z)\|_{L^2(\mathbb{T}^2)} + \|\mathcal{V}_2(z)\|_{L^2(\mathbb{T}^2)} ) \|A^{r} e^{\widetilde{\tau} A} \delta \mathcal{V}(z)\|_{L^2(\mathbb{T}^2)} \|A^{r} e^{\widetilde{\tau} A} \delta \mathcal{V}(z)\|_{L^2(\mathbb{T}^2)} \\
       &\qquad + \|A^{r} e^{\widetilde{\tau} A} \mathcal{V}_2(z)\|_{L^2(\mathbb{T}^2)}  \|A^{r} e^{\widetilde{\tau} A} \delta \mathcal{V}(z)\|_{L^2(\mathbb{T}^2)} \|A^{r-\frac{1}{2}} e^{\widetilde{\tau} A} \delta \mathcal{V}(z)\|_{L^2(\mathbb{T}^2)}\Big] dz\\
       & \quad \leq  C_{r-\frac{1}{2}}  \|\mathcal{V}_2\|_{r,1,\widetilde{\tau}}  \|A^{r} e^{\widetilde{\tau} A}\delta \mathcal{V}\|^2,
    \end{aligned}\\
&
    \Big| \Big\langle A^{r-\frac{1}{2}} e^{\widetilde{\tau} A} \Big[ \Big(\int_0^z \nabla\cdot \delta \mathcal{V}(\boldsymbol{x},s)ds \Big) \partial_z \mathcal{V}_1 \Big], A^{r-\frac{1}{2}} e^{\widetilde{\tau} A}  \delta \mathcal{V} \Big\rangle \Big| \leq C_{r-\frac{1}{2}} \|\mathcal{V}_1\|_{r,1,\widetilde{\tau}}  \|A^{r} e^{\widetilde{\tau} A}\delta \mathcal{V}\|^2,
\end{align*}
and
\begin{equation}\label{apri-006}
\begin{aligned}
    &\Big| \Big\langle A^{r-\frac{1}{2}} e^{\widetilde{\tau} A} \Big[ \Big(\int_0^z \nabla\cdot \mathcal{V}_2(\boldsymbol{x},s)ds \Big) \partial_z \delta\mathcal{V} \Big], A^{r-\frac{1}{2}} e^{\widetilde{\tau} A}  \delta\mathcal{V} \Big\rangle \Big| \\
    &\quad \leq C_{r-\frac{1}{2}} \|A^{r} e^{\widetilde{\tau} A}\mathcal{V}_2\| \|\partial_z\delta \mathcal{V}\|_{r-\frac{1}{2},0,\widetilde{\tau}}  \|A^{r} e^{\widetilde{\tau} A}\delta\mathcal{V}\|\leq \frac{\nu}{2} \|\partial_z\delta \mathcal{V}\|_{r-\frac{1}{2},0,\widetilde{\tau}}^2 + C_{\nu,r-\frac{1}{2}} \|\mathcal{V}_2\|_{r,0,\widetilde{\tau}}^2 \|A^{r} e^{\widetilde{\tau} A}\delta\mathcal{V}\|^2.
    \end{aligned}
\end{equation}
Consequently, combining the calculations between \eqref{apri-005} and \eqref{apri-006} yields
\begin{equation*}
    \begin{split}
    &\frac{1}{2}\frac{d}{dt} \| \delta \mathcal{V}\|_{r-\frac{1}{2},0,\widetilde{\tau}}^2 + \frac{1}{2} \nu  \|\partial_z \delta \mathcal{V}\|_{r-\frac{1}{2},0,\widetilde{\tau}}^2  \\
    \leq & \Big(\dot{\widetilde{\tau}} + C_{\nu,r-\frac{1}{2}} \|\mathcal{V}_2\|_{r,0,\widetilde{\tau}}^2 + C_{r-\frac{1}{2}}(\|\mathcal{V}_1\|_{r,1,\widetilde{\tau}} + \|\mathcal{V}_2\|_{r,1,\widetilde{\tau}})  \Big) \|A^{r} e^{\widetilde{\tau} A}  \delta\mathcal{V}\|^2 + C_{r-\frac{1}{2}} \| \mathcal{V}_1\|_{r,1,\widetilde{\tau}} \| \delta \mathcal{V}\|_{r-\frac{1}{2},0,\widetilde{\tau}}^2 .
\end{split}
\end{equation*}
In addition, from \eqref{tilde-tau-T}, and \eqref{unq-def:cnst-c-r}, and the fact that $\tau_i(t) \geq \widetilde{\tau}(t)$, $ i=1,2$, one can derive that
\begin{equation*}
\begin{split}
    &\dot{\widetilde{\tau}} + C_{\nu,r-\frac{1}{2}} \|\mathcal{V}_2\|_{r,0,\widetilde{\tau}}^2 + C_{r-\frac{1}{2}}(\|\mathcal{V}_1\|_{r,1,\widetilde{\tau}} + \|\mathcal{V}_2\|_{r,1,\widetilde{\tau}}) \\
    = &  - 1 - C_r \sum\limits_{i=1}^2 \bigl(\| \mathcal{V}_i(t)\|_{r,0,\tau_i(t)}+ \| \mathcal{V}_i(t)\|^2_{r,0,\tau_i(t)} + \| \partial_z \mathcal{V}_i(t)\|_{r,0,\tau_i(t)} \bigr)
    \\
    &\qquad \qquad + C_{\nu,r-\frac{1}{2}} \|\mathcal{V}_2\|_{r,0,\widetilde{\tau}}^2 + C_{r-\frac{1}{2}}(\|\mathcal{V}_1\|_{r,1,\widetilde{\tau}} + \|\mathcal{V}_2\|_{r,1,\widetilde{\tau}})\\
    \leq & \big(\widetilde{C}_{\nu,r-\frac{1}{2}}-C_r \big) \sum\limits_{i=1}^2 \bigl(\| \mathcal{V}_i(t)\|_{r,0,\widetilde{\tau}(t)}+ \| \mathcal{V}_i(t)\|^2_{r,0,\widetilde{\tau}(t)} + \| \partial_z \mathcal{V}_i(t)\|_{r,0,\widetilde{\tau}(t)} \bigr)  \leq 0,
\end{split}
\end{equation*}
where we have chosen
\begin{equation}\label{unq-def:c-r}
    C_r := \max \lbrace \widetilde{C}_{\nu,r-\frac{1}{2}}, C_{r,1}, C_{r,2} \rbrace.
\end{equation}

In conclusion, with $ C_r $ satisfying \eqref{unq-def:c-r}, one has
\begin{equation}\label{apri:007}
    \begin{split}
    \frac{1}{2}\frac{d}{dt} \| \delta \mathcal{V}\|_{r-\frac{1}{2},0,\widetilde{\tau}}^2 + \frac{1}{2} \nu  \|\partial_z\delta  \mathcal{V}\|_{r-\frac{1}{2},0,\widetilde{\tau}}^2  \leq  C_{r-\frac{1}{2}} \| \mathcal{V}_1\|_{r,1,\widetilde{\tau}} \| \delta \mathcal{V}\|_{r-\frac{1}{2},0,\widetilde{\tau}}^2 .
\end{split}
\end{equation}

Applying the Gr\"onwall inequality to \eqref{apri:007} results in
\begin{equation}\label{continuous-dependence}
    \| \delta \mathcal{V}(t)\|_{r-\frac{1}{2},0,\widetilde{\tau}(t)}^2 \leq \| \delta \mathcal{V}(0)\|_{r-\frac{1}{2},0,\tau_0}^2 \exp(\int_0^t 2C_{r-\frac{1}{2}} \| \mathcal{V}_1(s)\|_{r,1,\widetilde{\tau}(s)}ds)
\end{equation}
for $t\in [0,\widetilde{\mathcal{T}}]$, which establishes the continuous dependence on the initial data as well as the uniqueness of the weak solutions. This, together with section \ref{subsec:local-a-priori}, finishes the proof of Theorem \ref{theorem-local}.
\subsection{Instantaneous analyticity in the $z$-variable}\label{section-radius}
In this section, we will show that the weak solution obtained in Theorem \ref{theorem-local} immediately becomes analytic in the $z$-variable (and thus analytic in all variables) when $t>0$. Moreover, the radius of analyticity in the $z$-variable increases as long as the solution exists. For simplicity, we consider the even extension for $\mathcal V$ in the $z$-variable, which is compatible with \eqref{PE-BC}, and work in the unit three-dimensional torus $\mathbb{T}^3$ instead of $\mathcal D$. With abuse of notations, we use $ \mathcal V $ to represent both $ \mathcal V $ in $ \mathcal D $ and its even extension with respect to the $ z $-variable in $ \mathbb T^3 $.


We first introduce the following notations that are only used in this subsection. For $f\in L^2(\mathbb{T}^3)$ even with respect to the $ z $-variable, we consider the following functional space
\begin{equation*}
    \mathcal{S}_{r,s,\tau,\eta} := \Big\{ f\in L^2(\mathbb{T}^3), \|f\|_{r,s,\tau,\eta} < \infty, ~ f ~ \text{even with respect to the $ z $-variable}  \Big\},
\end{equation*}
where
\begin{gather*}
   \|f\|_{r,s,\tau,\eta}^2 := \sum\limits_{\boldsymbol{k}\in  2\pi \mathbb{Z}^2, k_3 \in 2\pi \mathbb Z} \Big(1 + (|\boldsymbol{k}|^{2r} + |k_3|^{2s}) e^{2\tau |\boldsymbol{k}|} e^{2\eta |k_3|}\Big) |\hat{f}_{\boldsymbol{k},k_3}|^2 \\
   \text{and} \qquad  \hat{f}_{\boldsymbol{k},k_3} :=\int_{\mathbb T^3} e^{-i\boldsymbol{k} \cdot \boldsymbol{x}- ik_3 z} f(\boldsymbol{x},z)\, d\boldsymbol{x} dz.
\end{gather*}
Denote by
\begin{equation*}
    A_h := \sqrt{-\Delta_h}, \quad A_z :=\sqrt{-\partial_{zz}},
\end{equation*}
subject to periodic boundary condition, defined by, in terms of the Fourier coefficients,
\begin{equation*}
    (\widehat{A_h^r f})_{\boldsymbol{k},k_3} := |\boldsymbol{k}|^r \hat{f}_{\boldsymbol{k},k_3}, \qquad (\widehat{A_z^sf})_{\boldsymbol{k},k_3} := |k_3|^s \hat{f}_{\boldsymbol{k},k_3}, \qquad (\boldsymbol k ,k_3)  \in 2\pi (\mathbb Z^2 \times \mathbb Z), ~ r,s\geq 0.
\end{equation*}
Accordingly, one has
\begin{equation*}
    \|f\|_{r,s,\tau,\eta}^2 = \|f\|^2 +  \|A_h^r e^{\tau A_h} e^{\eta A_z} f\|^2 + \|A_z^s e^{\tau A_h} e^{\eta A_z} f\|^2.
\end{equation*}

With such notations, we establish the following theorem:
\begin{theorem}\label{theorem-radius}
Assume $\mathcal{V}_0\in \mathcal{S}_{r,0,\tau_0,0}$ with $r>2$ and $\tau_0>0$. Let $\Omega \in \mathbb{R}$ be arbitrary and fixed. Then there exist $\mathcal T>0$ defined in \eqref{T-radius},
 $\tau(t)>0$ given in  \eqref{tau-radius}, below, and $\eta(t) = \frac{\nu}{2}t$, such that there exists a unique solution $\mathcal V$ to system  \eqref{PE-system} with \eqref{PE-IC} and \eqref{PE-BC}  in $ [0,\mathcal{T}]$ satisfying
 \begin{equation*}
    \mathcal{V}\in L^\infty\big(0,\mathcal{T};  \mathcal{S}_{r,0,\tau(t),\eta(t)}\big) \cap L^2\big(0,\mathcal{T};  \mathcal{S}_{r,1,\tau(t),\eta(t)}\big),
\end{equation*}
and depending continuously on the initial data. In particular, $\mathcal V$ immediately becomes analytic in all spatial variables for $t>0$.
\end{theorem}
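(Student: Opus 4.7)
The strategy is to extend the \textit{a priori} energy estimates of Section~\ref{subsec:local-a-priori} from the horizontal analytic-Sobolev space to the full analytic-Sobolev space $\mathcal{S}_{r,0,\tau(t),\eta(t)}$ on $\mathbb{T}^3$, after taking the even extension of $\mathcal{V}$ in $z$ (which is compatible with \eqref{PE-BC}), so that $A_h$ and $A_z$ commute on Fourier modes and $w$ becomes odd in $z$. The key mechanism is that the vertical viscosity generates a dissipation strong enough to accommodate a linearly growing vertical radius of analyticity $\eta(t)$.

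Applying $A_h^r e^{\tau A_h} e^{\eta A_z}$ to \eqref{PE-1} and testing against $A_h^r e^{\tau A_h} e^{\eta A_z}\mathcal{V}$, the Coriolis term vanishes by antisymmetry and the pressure term vanishes by \eqref{incompressible-2d} applied to the $k_3=0$ mode. The viscous term produces $\nu\|A_z A_h^r e^{\tau A_h} e^{\eta A_z}\mathcal{V}\|^2$, while the time derivative of the weights contributes $\dot\tau\|A_h^{r+1/2}e^{\tau A_h}e^{\eta A_z}\mathcal{V}\|^2 + \dot\eta\|A_z^{1/2}A_h^r e^{\tau A_h}e^{\eta A_z}\mathcal{V}\|^2$. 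Setting $\eta(t)=\tfrac{\nu}{2}t$, and using the interpolation $\|A_z^{1/2}g\|^2\le \|g\|\|A_z g\|\le \tfrac{1}{2}\|A_z g\|^2+\tfrac{1}{2}\|g\|^2$, the $\dot\eta$-term is absorbed by half of the viscous dissipation plus a lower-order term treatable by Gr\"onwall.

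The nonlinear contributions are then bounded via a three-dimensional analog of Lemma~\ref{lemma-banach-algebra} for the weight $e^{\tau A_h}e^{\eta A_z}$ (valid since $A_h$ and $A_z$ commute and $r>2$), combined with the pointwise-in-$z$ Sobolev arguments of \eqref{apri-002}, yielding
\begin{equation*}
|\text{nonlinear terms}|\le C_r\bigl(\|\mathcal{V}\|_{r,0,\tau,\eta}+\|\partial_z\mathcal{V}\|_{r,0,\tau,\eta}\bigr)\|A_h^{r+1/2}e^{\tau A_h}e^{\eta A_z}\mathcal{V}\|^2,
\end{equation*}
exactly paralleling \eqref{apri-002}. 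Choosing $\dot\tau+1+C_r(\|\mathcal V\|_{r,0,\tau,\eta}+\|\partial_z\mathcal V\|_{r,0,\tau,\eta})=0$ as in \eqref{def:tau-local} absorbs this into the horizontal dissipation, and the bootstrap argument leading to \eqref{time-T}--\eqref{tau-inequality} provides a positive existence time $\mathcal T>0$ together with $\tau(t)\ge \tau_0/2>0$ and $\eta(t)=\nu t/2$, on which $\mathcal V\in L^\infty(0,\mathcal T;\mathcal S_{r,0,\tau(t),\eta(t)})\cap L^2(0,\mathcal T;\mathcal S_{r,1,\tau(t),\eta(t)})$. Uniqueness and continuous dependence are inherited from Theorem~\ref{theorem-local} since $\mathcal S_{r,0,\tau,\eta}\hookrightarrow\mathcal S_{r,0,\tau}$.

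The main obstacle is the three-dimensional algebra estimate for the nonlocal term $(\int_0^z\nabla\cdot\widetilde{\mathcal V}\,ds)\partial_z\mathcal V$ in the presence of the weight $e^{\eta A_z}$: in the spirit of Lemma~\ref{lemma-type2}, one needs a bound of the form $C_r\|\partial_z\mathcal V\|_{r,0,\tau,\eta}\|A_h^{r+1/2}e^{\tau A_h}e^{\eta A_z}\mathcal V\|^2$ under the even extension, which requires careful tracking of the interplay between the cutoff vertical integral and the $z$-Fourier weight $e^{\eta A_z}$. Once this product estimate is in hand the remainder of the argument parallels Section~\ref{subsec:local-a-priori}, and the rigorous construction proceeds via Galerkin approximation.
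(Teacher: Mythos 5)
Your overall strategy is the right one and matches the paper's: even extension to $\mathbb T^3$, simultaneous horizontal and vertical Gevrey weights, setting $\eta(t)=\nu t/2$ so the $\dot\eta$-term is absorbed by the vertical dissipation, and shrinking $\tau$ to absorb the nonlinearities. Your interpolation route for the $\dot\eta$-absorption (Young on $\|A_z^{1/2}g\|^2\le\|g\|\,\|A_z g\|$ plus a Gr\"onwall term) is a valid, slightly longer variant of the paper's direct observation that, in its notation, $H\le F$ since $|k_3|\le |k_3|^2$ for every nonzero frequency.

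There is, however, a genuine gap. On $\mathbb T^3$ with $s=0$ the norm decomposes as $\|f\|_{r,0,\tau,\eta}^2 = \|f\|^2 + \|A_h^r e^{\tau A_h}e^{\eta A_z}f\|^2 + \|e^{\tau A_h}e^{\eta A_z}f\|^2$, and the \emph{third} piece is precisely what carries the $z$-analyticity of the $\boldsymbol{k}=0$ (horizontally-averaged) modes: $A_h^r$ annihilates those modes and $\|f\|^2$ carries no $e^{\eta A_z}$ weight, so your energy, built only from the $L^2$ and the $A_h^r e^{\tau A_h}e^{\eta A_z}$ pairings, does not control $\|\mathcal V\|_{r,0,\tau,\eta}$. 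One must therefore also test against $e^{\tau A_h}e^{\eta A_z}\mathcal V$ (no extra $A_h^r$). The corresponding low-mode nonlinear term
\begin{equation*}
\Big\langle e^{\tau A_h}e^{\eta A_z}\Big(\bigl(\textstyle\int_0^z\nabla\cdot\mathcal V\,ds\bigr)\,\partial_z\mathcal V\Big),\ e^{\tau A_h}e^{\eta A_z}\mathcal V\Big\rangle
\end{equation*}
does not admit the bound you propose; without the extra $A_h^r$ on the test function the sharpest available estimate is of order $F^{1/2}E^{1/2}G^{1/2}$ (in the paper's notation $E=\|\mathcal V\|_{r,0,\tau,\eta}^2$, $F=\|\partial_z\mathcal V\|_{r,0,\tau,\eta}^2$, and $G$ is the $\dot\tau$-coefficient), which must be closed by Young's inequality as $\le\tfrac{C_r}{\nu}EG + \tfrac{\nu}{4}F$, spending a further quarter of the viscous dissipation. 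The correct $\dot\tau$-equation is therefore $\dot\tau + C_r(E^{1/2} + F^{1/2} + \tfrac{1}{\nu}E)=0$ rather than your $\dot\tau + 1 + C_r(E^{1/2}+F^{1/2})=0$, and both the lower bound on $\tau(t)$ in \eqref{tau-radius} and the existence time $\mathcal T$ in \eqref{T-radius} acquire a genuine $\tfrac1\nu E(0)$-dependence and a different algebraic form that a verbatim parallel of \eqref{apri-002} and \eqref{time-T} would miss.
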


\begin{remark}
    After restricting $ \mathcal V_0 $ and $ \mathcal V $ in $ \mathbb T^2 \times (0,1) $, the solutions in Theorem \ref{theorem-radius} are the same to the ones in Theorem \ref{theorem-local}, thanks to the uniqueness of solutions. Therefore, the gain of analyticity in the $ z $-variable of Theorem \ref{theorem-radius} can be regarded as a property to solutions in Theorem \ref{theorem-local}.
\end{remark}

{
\begin{remark}
Theorem \ref{theorem-radius} states the gain of analyticity in the $ z $-variable for solutions to system \eqref{PE-system}. One can then apply the result from \cite{GILT20} to study the effect of rotation on the lifespan of solutions after a initial time layer. However, in order to achieve a longer lifespan, the result from \cite{GILT20} requests smallness of Sobolev norms in the baroclinic mode. In this paper, thanks to the effect of viscosity, we are able to relax the condition by requiring smallness in a larger functional space. See Theorem \ref{theorem-main} and remark \ref{rm:small-baroclinic}, below, for more details.
\end{remark}
}

\begin{proof}[Sketch of proof]
Here we only show the {\it a priori} estimates. Direct calculation of $$\langle \eqref{PE-1}, \mathcal V \rangle + \langle A_h^r e^{\tau A_h}e^{\eta A_z} \eqref{PE-1},A_h^r e^{\tau A_h}e^{\eta A_z} \mathcal V\rangle + \langle  e^{\tau A_h}e^{\eta A_z} \eqref{PE-1},e^{\tau A_h} e^{\eta A_z} \mathcal V\rangle , $$ after applying integration by parts, \eqref{PE-3}, and \eqref{PE-BC}, shows that
\begin{equation*}
\begin{split}
    &\frac{1}{2} \frac{d}{dt} \|\mathcal V\|_{r,0,\tau,\eta}^2 + \nu \|\partial_z \mathcal V\|_{r,0,\tau,\eta}^2 - \dot{\tau}\Big( \|A_h^{r+\frac{1}{2}}  e^{\tau A_h} e^{\eta A_z} \mathcal V\|^2 +  \|A_h^{\frac{1}{2}} e^{\tau A_h} e^{\eta A_z} \mathcal V \|^2\Big)
    \\
    &\qquad \qquad \qquad \qquad - \dot{\eta} \Big(\|A_z^{\frac{1}{2}} A_h^r  e^{\tau A_h}  e^{\eta A_z} \mathcal V\|^2  + \|A_z^{\frac{1}{2}}  e^{\tau A_h}  e^{\eta A_z} \mathcal V\|^2  \Big)
    \\
    + & \Big\langle A_h^r e^{\tau A_h} e^{\eta A_z}(\mathcal V \cdot\nabla \mathcal V), A_h^r e^{\tau A_h} e^{\eta A_z} \mathcal V \Big\rangle  + \Big\langle e^{\tau A_h} e^{\eta A_z}(\mathcal V \cdot\nabla \mathcal V), e^{\tau A_h} e^{\eta A_z} \mathcal V \Big\rangle
    \\
    + &\Big\langle A_h^r e^{\tau A_h} e^{\eta A_z} \Big((\int_0^z \nabla\cdot \mathcal V ds) \partial_z \mathcal V\Big), A_h^r e^{\tau A_h} e^{\eta A_z}\mathcal V \Big\rangle
    \\
    + &\Big\langle  e^{\tau A_h} e^{\eta A_z} \Big((\int_0^z \nabla\cdot \mathcal V ds) \partial_z \mathcal V\Big),  e^{\tau A_h} e^{\eta A_z}\mathcal V \Big\rangle= 0.
\end{split}
\end{equation*}
Denote by
\begin{equation*}
    E: = \|\mathcal V\|_{r,0,\tau,\eta}^2 = \sum\limits_{(\boldsymbol{k},k_3)\in  2\pi \mathbb{Z}^3} \Big(1 + (|\boldsymbol{k}|^{2r} +1) e^{2\tau |\boldsymbol{k}|} e^{2\eta |k_3|} \Big) |\hat{\mathcal V}_{\boldsymbol{k},k_3}|^2,
\end{equation*}
\begin{equation*}
    F := \|\partial_z \mathcal V\|_{r,0,\tau,\eta}^2 = \sum\limits_{(\boldsymbol{k},k_3)\in  2\pi \mathbb{Z}^3} |k_3|^2 \Big(1 + (|\boldsymbol{k}|^{2r} +1)  e^{2\tau |\boldsymbol{k}|} e^{2\eta |k_3|} \Big) |\hat{\mathcal V}_{\boldsymbol{k},k_3}|^2,
\end{equation*}
\begin{equation*}
    G: = \|A_h^{r+\frac{1}{2}}  e^{\tau A_h} e^{\eta A_z} \mathcal V\|^2 +  \|A_h^{\frac{1}{2}} e^{\tau A_h} e^{\eta A_z} \mathcal V \|^2= \sum\limits_{(\boldsymbol{k},k_3)\in  2\pi \mathbb{Z}^3}  (|\boldsymbol{k}|^{2r+1} + |\boldsymbol{k}|^{\frac{1}{2}})  e^{2\tau |\boldsymbol{k}|} e^{2\eta |k_3|} |\hat{\mathcal V}_{\boldsymbol{k},k_3}|^2,
\end{equation*}
\begin{equation*}
    H:= \|A_z^{\frac{1}{2}} A_h^r  e^{\tau A_h}  e^{\eta A_z} \mathcal V\|^2  + \|A_z^{\frac{1}{2}}  e^{\tau A_h}  e^{\eta A_z} \mathcal V\|^2 = \sum\limits_{(\boldsymbol{k},k_3)\in  2\pi \mathbb{Z}^3} (|k_3| |\boldsymbol{k}|^{2r} + |k_3|)  e^{2\tau |\boldsymbol{k}|} e^{2\eta |k_3|} |\hat{\mathcal V}_{\boldsymbol{k},k_3}|^2.
\end{equation*}
Observe that $H\leq F$. After setting $\dot{\eta} = \frac{\nu}{2}$, one obtains that
\begin{equation*}
\begin{split}
    &\frac{1}{2} \frac{d}{dt} E + \frac{1}{2}\nu G - \dot{\tau} G
    \\
    + & \Big\langle A_h^r e^{\tau A_h} e^{\eta A_z}(\mathcal V \cdot\nabla \mathcal V), A_h^r e^{\tau A_h} e^{\eta A_z} \mathcal V \Big\rangle  + \Big\langle e^{\tau A_h} e^{\eta A_z}(\mathcal V \cdot\nabla \mathcal V), e^{\tau A_h} e^{\eta A_z} \mathcal V \Big\rangle
    \\
    + &\Big\langle A_h^r e^{\tau A_h} e^{\eta A_z} \Big((\int_0^z \nabla\cdot \mathcal V ds) \partial_z \mathcal V\Big), A_h^r e^{\tau A_h} e^{\eta A_z}\mathcal V \Big\rangle
    \\
    + &\Big\langle  e^{\tau A_h} e^{\eta A_z} \Big((\int_0^z \nabla\cdot \mathcal V ds) \partial_z \mathcal V\Big),  e^{\tau A_h} e^{\eta A_z}\mathcal V \Big\rangle \leq 0.
\end{split}
\end{equation*}
For the nonlinear terms, by applying similar calculations as in Lemma \ref{lemma-type1} and Lemma \ref{lemma-type2} (we also refer the readers to \cite{GILT20} for detailed calculations in $\mathbb{T}^3$), one can obtain that
\begin{equation*}
\begin{split}
    \Big| \Big\langle A_h^r e^{\tau A_h} e^{\eta A_z}(\mathcal V \cdot\nabla \mathcal V), A_h^r e^{\tau A_h} e^{\eta A_z} \mathcal V \Big\rangle\Big|  + \Big| \Big\langle e^{\tau A_h} e^{\eta A_z}(\mathcal V \cdot\nabla \mathcal V), e^{\tau A_h} e^{\eta A_z} \mathcal V \Big\rangle \Big|
    \leq  C_r \Big(E^{\frac{1}{2}}+F^{\frac{1}{2}} \Big)G,
\end{split}
\end{equation*}
\begin{equation*}
  \begin{split}
      \Big|\Big\langle A_h^r e^{\tau A_h} e^{\eta A_z} \Big((\int_0^z \nabla\cdot \mathcal V ds) \partial_z \mathcal V\Big), A_h^r e^{\tau A_h} e^{\eta A_z}\mathcal V \Big\rangle\Big| \leq C_r F^{\frac{1}{2}} G,
  \end{split}
\end{equation*}
and thanks to the Young inequality,
\begin{equation*}
   \Big| \Big\langle  e^{\tau A_h} e^{\eta A_z} \Big((\int_0^z \nabla\cdot \mathcal V ds) \partial_z \mathcal V\Big),  e^{\tau A_h} e^{\eta A_z}\mathcal V \Big\rangle   \Big| \leq C_r F^{\frac{1}{2}}E^{\frac{1}{2}}G^{\frac{1}{2}} \leq \frac{C_r}{\nu}E G + \frac{\nu}{4}F.
\end{equation*}
Therefore, combining all the estimates above leads to
\begin{equation}\label{apri-radius}
   \begin{split}
    & \frac{d}{dt} E + \frac{1}{2}\nu F \leq \Big(\dot{\tau}+ C_{r}(E^{\frac{1}{2}} + F^{\frac{1}{2}} + \frac{1}{\nu}E)  \Big)G .
\end{split}
\end{equation}
By taking $\dot{\tau}+ C_{r}(E^{\frac{1}{2}} + F^{\frac{1}{2}} + \frac{1}{\nu}E)=0$, one obtains
\begin{equation*}
   E(t) + \frac{1}{2}\nu \int_0^t F(s) ds \leq E(0).
\end{equation*}
Integrating in time for $\dot{\tau}+ C_{r}(E^{\frac{1}{2}} + F^{\frac{1}{2}} + \frac{1}{\nu}E)=0$, we have
\begin{equation}\label{tau-radius}
    \tau(t) = \tau_0 - \int_0^t C_r (E^{\frac{1}{2}}(s) + F^{\frac{1}{2}}(s) + \frac{1}{\nu}E(s)) ds \geq \tau_0 - C_r \Big( E^{\frac{1}{2}}(0) (t + \sqrt{\frac{2 t}{\nu}} ) + E(0)\frac{t}{\nu} \Big).
\end{equation}
Since $E(0) = \|\mathcal V\|_{r,0,\tau,0}^2$, we denote by
\begin{equation}\label{T-radius}
    \mathcal T := \Big( \frac{\sqrt{\frac{2}{\nu}\|\mathcal V\|_{r,0,\tau,0}^2 + \frac{2\tau_0}{C_r}(\frac{\|\mathcal V\|_{r,0,\tau,0}^2}{\nu} + \|\mathcal V\|_{r,0,\tau,0})} - \sqrt{\frac{2}{\nu}} \|\mathcal V\|_{r,0,\tau,0}}{2(\frac{\|\mathcal V\|_{r,0,\tau,0}^2}{\nu} + \|\mathcal V\|_{r,0,\tau,0})}   \Big)^{\frac{1}{2}}>0,
\end{equation}
which solves
\begin{equation*}
    \|\mathcal V\|_{r,0,\tau,0}(\mathcal T + \sqrt{\mathcal T}) + \frac{1}{\nu} \|\mathcal V\|_{r,0,\tau,0}^2 \mathcal T = \frac{\tau_0}{2C_r}.
\end{equation*}
Then one has
\begin{equation*}
    \tau(t) \geq \tau_0/2 > 0 \quad \text{for} \quad t \in [0,\mathcal T].
\end{equation*}
Notice that the radius of analyticity in the $z$ variable satisfies $\eta = \frac{\nu}{2}t$. Therefore, \eqref{apri-radius} implies that
\begin{equation*}
    \mathcal{V}\in L^\infty\big(0,\mathcal{T};  \mathcal{S}_{r,0,\tau(t),\eta(t)}\big) \cap L^2\big(0,\mathcal{T};  \mathcal{S}_{r,1,\tau(t),\eta(t)}\big).
\end{equation*}

Based on the estimates above, one is able to show the existence, uniqueness, and continuous dependence on the initial data of the solution $\mathcal V$. We omit the details.

\end{proof}

\section{The limit resonant system}\label{section-limit-system}
In this section, we derive the formal limit resonant system, i.e., the limit system of system \eqref{PE-system} (or, equivalently, system \eqref{barotropic-baroclinic}) as $|\Omega|\rightarrow \infty$, and discuss some properties of the limit resonant system. Recall that from \eqref{up3}, we have
\begin{equation}\label{upl}
\begin{aligned}
 \partial_t \mathcal{V}_+ = & -e^{i\Omega t} \Big(\underbrace{\mathcal{V}_+ \cdot \nabla \mathcal{V}_+ - \mathfrak P_0( \mathcal{V}_+ \cdot \nabla \mathcal{V}_+ + (\nabla \cdot \mathcal{V}_+) \mathcal{V}_+) - (\int_0^z \nabla\cdot \mathcal{V}_+(\boldsymbol{x},s)ds ) \partial_z \mathcal{V}_+}_{=:I_1} \Big)  \\
&-  \Big[\underbrace{\Big(\overline{\mathcal{V}} \cdot \nabla \mathcal{V}_+  + \frac{1}{2}(\mathcal{V}_+ \cdot \nabla)(\overline{\mathcal{V}}+i\overline{\mathcal{V}}^\perp) \Big) - \nu \partial_{zz} \mathcal{V}_+}_{=:I_0}\Big]  \\
& - e^{-i\Omega t} \Big(\underbrace{\mathcal{V}_- \cdot \nabla \mathcal{V}_+ - \mathfrak P_0( \mathcal{V}_- \cdot \nabla \mathcal{V}_+ + (\nabla \cdot \mathcal{V}_-) \mathcal{V}_+) - (\int_0^z \nabla\cdot \mathcal{V}_-(\boldsymbol{x},s)ds ) \partial_z \mathcal{V}_+}_{=:I_{-1}} \Big) \\
&
- e^{-2i\Omega t} \underbrace{\frac{1}{2} (\mathcal{V}_- \cdot \nabla)(\overline{\mathcal{V}}+i\overline{\mathcal{V}}^\perp)}_{=:I_{-2}}.
\end{aligned}
\end{equation}

We can further rewrite \eqref{upl} as
\begin{eqnarray*}
&&\hskip-.58in \partial_t \Big[\mathcal{V}_+ - \frac{i}{\Omega} \Big(e^{i\Omega t}I_1 - e^{-i\Omega t} I_{-1} - \frac{1}{2} e^{-2i\Omega t} I_{-2} \Big) \Big] = - \frac{i}{\Omega} \Big(e^{i\Omega t}\partial_t I_1 - e^{-i\Omega t}\partial_t I_{-1} - \frac{1}{2}e^{-2i\Omega t}\partial_t I_{-2}\Big) - I_0.
\label{up4}
\end{eqnarray*}
Denote by the formal limits of $\mathcal{V}_+, \mathcal{V}_- $, and $ \overline{\mathcal{V}}$ to be $V_+, V_-$, and $ \overline{V}$, respectively. By taking limit $\Omega \rightarrow \infty $, we obtain the limit resonant equation for $\mathcal{V}_+$ is
\begin{equation}
 \partial_t V_+ = - (\overline{V} \cdot \nabla)V_+ - \frac{1}{2} (V_+ \cdot \nabla)(\overline{V}+i\overline{V}^\perp) + \nu \partial_{zz} V_+  .
\label{uplimit}
\end{equation}
Similarly, one has
\begin{equation}
\partial_t V_- = - (\overline{V} \cdot \nabla)V_- - \frac{1}{2} (V_- \cdot \nabla)(\overline{V}-i\overline{V}^\perp) + \nu \partial_{zz} V_- ,
\label{umlimit}
\end{equation}
and
\begin{equation}
 \partial_t \overline{V} + \overline{V}\cdot \nabla \overline{V} + \nabla p =0, \qquad \nabla \cdot \overline{V} = 0, \qquad \partial_z p = 0.
\label{vbarlimit}
\end{equation}
Notice that \eqref{vbarlimit} is nothing but the 2D Euler equations.
Accordingly, we consider the initial conditions
\begin{equation}\label{initial:+-bar}
    (\overline{V}_0, (V_+)_0, (V_-)_0) = (\overline{\mathcal{V}}_0, \frac{1}{2}(\widetilde{\mathcal{V}}_0 + i\widetilde{\mathcal{V}}_0^\perp), \frac{1}{2}(\widetilde{\mathcal{V}}_0 - i\widetilde{\mathcal{V}}_0^\perp) )
\end{equation}
for equations \eqref{uplimit}--\eqref{vbarlimit}. Since $ \overline{V}_0$,  $ \overline{\mathcal V}_0 $, and $ \widetilde{\mathcal V}_0 $ are real valued, one has that $ (V_+)_0 = (V_-)_0^* $, $ (V_+)_0+ (V_-)_0 = i ((V_+)_0 - (V_-)_0)^\perp = \widetilde{\mathcal V}_0 $, and, thanks to \eqref{vbarlimit}, $ \overline{V} $ is real valued. Thanks to \eqref{uplimit} and \eqref{umlimit}, one has
\begin{gather}
\begin{gathered}
    \partial_t ( V_+ - V_-^* ) = - ( \overline{V} \cdot \nabla )( V_+ - V_-^* ) - \dfrac{1}{2} \bigl[( V_+ - V_-^* ) \cdot \nabla \bigr] ( \overline V + i \overline V^\perp) + \nu \partial_{zz} ( V_+ - V_-^* ),
\end{gathered}\\
\begin{gathered}
\partial_t \bigl[(V_+ + V_-) - i ( V_+ - V_-)^\perp \bigr] = - ( \overline V \cdot \nabla )\bigl[(V_+ + V_-) - i ( V_+ - V_-)^\perp \bigr] - \dfrac{1}{2}  \\
+ \nu \partial_{zz} \bigl[(V_+ + V_-) - i ( V_+ - V_-)^\perp \bigr].
\end{gathered}
\end{gather}
Therefore, provided solutions exist and are well-posed, one has $ V_+ \equiv V_-^* $ and $ V_+ + V_- \equiv i ( V_+ - V_-)^\perp $. Let
\begin{equation}\label{def:limit-V-tilde}
	\widetilde V := V_+ + V_- .
\end{equation}
Notice that, according to \eqref{def:V+-}, $ \widetilde V $ is the formal limit of $ \mathcal V_+ + \mathcal V_- = e^{-i\Omega t} \mathfrak P_+ \mathcal V  + e^{i\Omega t} \mathfrak P_- \mathcal V $, as $ \Omega \rightarrow \infty $.
It is easy to verify that \begin{equation}\label{apri:107}
    V_\pm = \dfrac{1}{2} ( \widetilde V \pm i \widetilde V^\perp),
\end{equation} and
\begin{equation*}
\partial_t \widetilde{V} + (\overline{V}\cdot \nabla)\widetilde{V} + \frac{1}{2}(\widetilde{V}\cdot \nabla \overline{V} -  \widetilde{V}^\perp\cdot \nabla \overline{V}^\perp) - \nu \partial_{zz} \widetilde{V} =0,
\end{equation*}
or, thanks to $\nabla \cdot \overline{V} = 0$, equivalently,
\begin{equation}\label{vtildelimit}
\partial_t \widetilde{V} + \overline{V} \cdot \nabla \widetilde{V} + \frac{1}{2} \widetilde{V}^\perp (\nabla^\perp \cdot \overline{V}) - \nu \partial_{zz} \widetilde{V} = 0.
\end{equation}

In summary, to solve the limit equations \eqref{uplimit}--\eqref{vbarlimit} with \eqref{initial:+-bar} is equivalent to solve the following equations:
\begin{subequations}\label{system-Vbar-Vtilde}
\begin{gather}
    \partial_t \overline{V} + \overline{V}\cdot \nabla \overline{V} + \nabla p = 0,\label{systemVbar-1}
    \\
     \nabla\cdot\overline{V} = 0, \qquad \partial_z p = 0 ,\label{systemVbar-2}
    \\
    \partial_t \widetilde{V} + \overline{V} \cdot \nabla \widetilde{V} + \frac{1}{2} \widetilde{V}^\perp (\nabla^\perp \cdot \overline{V}) - \nu \partial_{zz} \widetilde{V} = 0 , \label{systemVtilde}
    \\
    \partial_z\widetilde{V}\big|_{z=0,1} = 0, \qquad \overline{V}(0) = \overline{\mathcal{V}}_0, \quad \text{and} \quad \widetilde{V}(0) = \widetilde{\mathcal V}_0. \label{systemlimit-initial}
\end{gather}
\end{subequations}
Notice that, thanks to our choice of $ \overline{ \mathcal V}_0 $ and $\widetilde{\mathcal V}_0$, one has $\mathfrak P_0 \overline{V} = \overline{V}$ and $\mathfrak P_0 \widetilde{V} = 0$.
In addition, \eqref{systemVbar-1}--\eqref{systemVbar-2} is the $2D$ Euler system, and \eqref{systemVtilde} is a linear transport equation with a  stretching term and vertical dissipation. In the rest of this section, we summarize the well-posedness theory of \eqref{system-Vbar-Vtilde}.

\subsection{Well-posedness theory of \eqref{systemVbar-1} and \eqref{systemVbar-2}}
The global well-posedness of solutions to the $2D$ Euler system \eqref{systemVbar-1}--\eqref{systemVbar-2} in Sobolev spaces $H^r(\mathbb{T}^2) = \mathcal{S}_{r,0,0}$ with $r > 3$ is a classical result (see, e.g., \cite{BM02}). Moreover, from equation (3.84) in \cite{BM02}, for $r > 3$, we have
\begin{equation}\label{Euler-Sobolev-estimate-1}
    \frac{d}{dt}\|\overline{V}\|_{r,0,0} \leq C_r \|\overline{V}\|_{r,0,0}(1+\ln^+\|\overline{V}\|_{r,0,0}).
\end{equation}
Let $\|\overline{V}_0\|_{r,0,0} \leq  M$ for some $M\geq 0$.
Denote by $W(t):=\|\overline{V}(t)\|_{r,0,0} + e$.
Thanks to $\ln^+ x + 1 \leq 2\ln(x+e)$, from \eqref{Euler-Sobolev-estimate-1}, we have
\begin{equation*}
    \frac{d}{dt} W \leq C_r W \ln W.
\end{equation*}
Therefore, one can obtain that
\begin{equation}\label{Euler-Sobolev-estimate-2}
    \|\overline{V}(t)\|_{r,0,0} \leq W(t) \leq W(0)^{e^{C_r t}} = (\|\overline{V}_0\|_{r,0,0} +e)^{e^{C_r t}} \leq (M+e)^{e^{C_r t}} =: \theta_{M,r}(t).
\end{equation}

The authors in \cite{LO97} proved the global existence of solutions to system \eqref{systemVbar-1}--\eqref{systemVbar-2} for initial data in the space of analytic functions. For completion, we state it here, with slight modifications to meet our settings. See also \cite{GILT20}.
\begin{proposition}\label{global-Euler}
    Assume $\overline{V}_0 \in \mathcal{S}\cap \mathcal{S}_{r,0,\tau_0}$ with $r>3$ and $\tau_0 >0$, and suppose that $\|\overline{V}_0\|_{r,0,\tau_0} \leq M$ for some $M\geq 0$. Let
    \begin{equation*}
    \tau(t) := \tau_0 \exp\Big( -C_r \int_0^t    h(s)ds\Big), 
    \end{equation*}
    where
    \begin{equation*}
      h^2(t) := \|\overline{V}_0\|_{r,0,\tau_0}^2 + C_r\int_0^t \theta_{M,r}^3(s)ds,
    \end{equation*}
    with $\theta_{M,r}(t)$ defined in \eqref{Euler-Sobolev-estimate-2}. Then for any given time $\mathcal{T}>0$, there exists a unique solution
    \begin{equation*}
        \overline{V}\in L^\infty(0, \mathcal{T}; \mathcal{S}\cap \mathcal{S}_{r,0,\tau(t)} )
    \end{equation*}
    to system \eqref{systemVbar-1}--\eqref{systemVbar-2}. Moreover, there exist constants $C_M>1$ and $C_r>1$ such that
\begin{equation*}
 \|\overline{V}(t)\|_{r,0,\tau(t)}^2 \leq h^2(t) \leq C_M^{\exp(C_r t)}.
\end{equation*}
The solution is continuously depending on the initial data.
\end{proposition}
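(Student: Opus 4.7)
The plan is to combine the global Sobolev bound \eqref{Euler-Sobolev-estimate-2} for $2D$ Euler with a high-regularity analytic-Sobolev estimate in $\mathcal{S}_{r,0,\tau}$, choosing the radius $\tau(t)$ just small enough to absorb the worst part of the nonlinearity while remaining strictly positive for all time. The existence of a solution is produced by a Galerkin scheme entirely parallel to the construction in Section~\ref{section-local}, so I focus on the \textit{a priori} estimates, which are the novel content; once they are in hand, the proof reduces to the arguments of \cite{LO97}.

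First, I would derive the basic energy identity by testing \eqref{systemVbar-1} against $A^{2r} e^{2\tau A}\overline{V}$ in $L^2(\mathbb T^2)$; the pressure drops out via $\nabla\cdot\overline{V}=0$, yielding
\begin{equation*}
\frac{1}{2}\frac{d}{dt}\|\overline{V}\|_{r,0,\tau}^{2}-\dot\tau\,\|A^{r+\frac{1}{2}}e^{\tau A}\overline{V}\|^{2}=-\bigl\langle A^{r}e^{\tau A}(\overline{V}\cdot\nabla\overline{V}),\,A^{r}e^{\tau A}\overline{V}\bigr\rangle.
\end{equation*}
Distributing $A^{r} e^{\tau A}$ as $A^{r-\frac{1}{2}}e^{\tau A}\cdot A^{\frac{1}{2}}e^{\tau A}$ via Cauchy--Schwarz and invoking Lemma~\ref{lemma-banach-algebra} (applicable since $r>3$ and $\overline{V}$ has zero horizontal mean), the trilinear term is controlled by $C_{r}\|\overline{V}\|_{r,0,\tau}\|A^{r+\frac{1}{2}}e^{\tau A}\overline{V}\|^{2}$, giving the key inequality
\begin{equation*}
\frac{d}{dt}\|\overline{V}\|_{r,0,\tau}^{2}\leq \bigl(2\dot\tau+2C_{r}\|\overline{V}\|_{r,0,\tau}\bigr)\|A^{r+\frac{1}{2}}e^{\tau A}\overline{V}\|^{2}.
\end{equation*}

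The naive choice $\dot\tau=-C_{r}\|\overline{V}\|_{r,0,\tau}$ makes the right-hand side non-positive but forces $\tau$ to vanish in finite time, which yields only local existence. Instead, my plan is to majorize $\|\overline{V}\|_{r,0,\tau}$ by the explicit quantity $h(t)$ built from the $2D$ Euler Sobolev bound $\theta_{M,r}$, which remains finite for all $t$. This requires a two-term decomposition of the nonlinear estimate: a piece absorbable by $\dot\tau$-dissipation, and a residual piece controlled purely in Sobolev norm by $C_{r}\theta_{M,r}^{3}$. Choosing $\dot\tau=-C_{r}h(t)\,\tau(t)$ (which keeps $\tau$ strictly positive by Gr\"onwall), and using the bootstrap majoration $\|\overline{V}\|_{r,0,\tau}\leq h$ on a continuity interval, the surviving inequality becomes $\frac{d}{dt}\|\overline{V}\|_{r,0,\tau}^{2}\leq C_{r}\theta_{M,r}^{3}$, whose integral matches precisely the definition of $h^{2}$. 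A standard continuity argument then extends the bootstrap to all $t\geq 0$, and the double-exponential estimate $h^{2}(t)\leq C_{M}^{\exp(C_{r}t)}$ follows by substituting the explicit double-exponential form of $\theta_{M,r}$.

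The main obstacle will be the decomposition of the nonlinear term so that everything not absorbable by the analytic dissipation is in fact bounded purely by a power of the Sobolev norm, via a frequency splitting that uses the exponential weight $e^{\tau A}$ to separate the ``analytic'' and ``Sobolev'' regimes. Once the \textit{a priori} bound is established, uniqueness and continuous dependence on the initial data follow by energy estimates on the difference of two solutions in a norm $\|\cdot\|_{r-\frac{1}{2},0,\widetilde\tau}$ with a slightly smaller radius $\widetilde\tau$, exactly as in Section~\ref{subsec:well-posedness}.
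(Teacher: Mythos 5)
Your high-level strategy --- reduce to an \textit{a priori} estimate in $\mathcal{S}_{r,0,\tau}$, majorize by the explicit Sobolev growth function $\theta_{M,r}$, choose $\dot\tau$ proportional to $-\tau$ so $\tau$ stays positive, and close by bootstrap --- is the right one, and it is what \cite{LO97} (which the paper simply cites for this proposition) and \cite{GILT20} carry out. However, your execution has a genuine gap: the ``key inequality'' you derive first, using the direct product estimate (Lemma~\ref{lemma-banach-algebra} or its Fourier-side analogue Lemma~\ref{lemma-type1}), gives the nonlinear term in the form $C_r\|\overline V\|_{r,0,\tau}\,\|A^{r+\frac{1}{2}}e^{\tau A}\overline V\|^2$, which contains no factor of $\tau$ whatsoever. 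This cannot be decomposed, by any frequency splitting or weight manipulation, into ``a piece absorbable by $\dot\tau$-dissipation plus a piece bounded purely by $\theta_{M,r}^3$'': both ingredients of that decomposition require a $\tau$-prefactor somewhere, and there is none in the bound you started from. The ``naive choice $\dot\tau=-C_r\|\overline V\|_{r,0,\tau}$'' is in fact the \emph{only} choice compatible with your first step, which is why it kills $\tau$ in finite time.

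The missing idea is the incompressibility cancellation: since $\nabla\cdot\overline V = 0$, one has $\bigl\langle \overline V\cdot\nabla\,A^re^{\tau A}\overline V,\,A^re^{\tau A}\overline V\bigr\rangle = 0$, so the nonlinear term may be rewritten as a commutator $\bigl\langle A^re^{\tau A}(\overline V\cdot\nabla\overline V)-\overline V\cdot\nabla\,A^re^{\tau A}\overline V,\,A^re^{\tau A}\overline V\bigr\rangle$ before estimating. The paper makes this step explicit in the analogous computation~\eqref{apri:101} for Proposition~\ref{global-limit}. Once the commutator is in hand, Lemma~\ref{lemma-difference-type1} (whose engine is the Levermore--Oliver inequality~\eqref{lemma-inequality} $|\xi^re^{\tau\xi}-\eta^re^{\tau\eta}|\le C_r|\xi-\eta|\bigl(|\xi-\eta|^{r-1}+\eta^{r-1}+\tau(\ldots)e^{\tau|\xi-\eta|}e^{\tau\eta}\bigr)$) produces exactly the two pieces your plan requires: one bounded by a pure Sobolev norm $C_r\|A^r\overline V\|^3\lesssim\theta_{M,r}^3$, and one carrying an explicit factor of $\tau$ that is absorbed by choosing $\dot\tau\le -C_r\tau\,\|\overline V\|_{r,0,\tau}\le -C_r\tau\,h(t)$. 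In short, the ``frequency splitting'' you vaguely invoke only becomes available after the cancellation turns the trilinear term into a commutator; without that step your derivation is self-defeating. Everything else (Galerkin construction, uniqueness in a slightly smaller-radius norm, the double-exponential bound for $h^2$) follows as you outline once this is fixed.
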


\subsection{Global well-posedness of system \eqref{system-Vbar-Vtilde}}

In this subsection, we establish the global well-posedness of limit resonant system \eqref{system-Vbar-Vtilde} in both Sobolev spaces $\mathcal{S}_{r,s,0}$ and analytic-Sobolev spaces $\mathcal{S}_{r,s,\tau}$.

\begin{proposition}\label{global-limit} Let $ r > 2 $ and $ s \in \lbrace 0,1 \rbrace $.
    Assume that $\overline{V}_0 \in  \mathcal{S}\cap \mathcal{S}_{r+1,0,0}$ and $\widetilde{V}_0 \in \mathcal{S}\cap \mathcal{S}_{r,s,0}$. Let $M\geq 0$ be the constant such that $\|\overline{V}_0\|_{r+1,0,0} \leq M$. Then there exists a function $K(t) := C_M^{\exp(C_r t)}$ with constants $C_M>1$ and $C_r>1$, such that for any given time $\mathcal{T}>0$,  there exists a unique solution $( \overline{V}, \widetilde{V} )\in  L^\infty(0,\mathcal{T}; \mathcal{S} \cap \mathcal{S}_{r+1,0,0}) \times L^\infty(0,\mathcal{T}; \mathcal{S}\cap \mathcal{S}_{r,s,0})$ of system \eqref{system-Vbar-Vtilde}, which satisfies
    \begin{equation}\label{sobolev-growth-limit}
    \|\overline{V}(t)\|_{r+1,0,0} \leq  K(t)  \qquad \text{and} \qquad \|\widetilde{V}(t)\|^2_{r,s,0} + 2\nu \int_0^t \|\partial_z \widetilde{V}(\xi)\|^2_{r,s,0} d\xi  \leq  \|\widetilde{V}_0\|^2_{r,s,0} \; e^{\int_0^t K(s) \,ds}.
    \end{equation}
    On the other hand, suppose that $\overline{V}_0 \in \mathcal{S}\cap \mathcal{S}_{r+1,0,\tau_0}$ and $\widetilde{V}_0 \in \mathcal{S}\cap \mathcal{S}_{r,s,\tau_0}$ with $\tau_0 >0$, and that $\|\overline{V}_0\|_{r+1,0,\tau_0}\leq M$. Let
    \begin{equation}\label{tau-limit-system}
            \tau(t) := \tau_0 \exp(-\int_0^t K(s)ds).
    \end{equation}
    Then for any given time $\mathcal{T}>0$, there exists a unique solution $(\overline{V},\widetilde{V}) \in L^\infty(0,\mathcal{T}; \mathcal{S} \cap \mathcal{S}_{r+1,0,\tau}) \times L^\infty(0,\mathcal{T}; \mathcal{S} \cap \mathcal{S}_{r,s,\tau})$ of system \eqref{system-Vbar-Vtilde} such that
    \begin{equation}\label{analytic-growth-limit}
            \| \overline{V}(t)\|_{r+1,0,\tau(t)} \leq K(t)
           \qquad \text{and} \qquad  \|\widetilde{V}(t)\|^2_{r,s,\tau(t)} +  2\nu \int_0^t \|\partial_z \widetilde{V}(\xi)\|^2_{r,s,\tau(\xi)} d\xi \leq \|\widetilde{V}_0\|^2_{r,s,\tau_0} e^{\int_0^t K(s)\,ds}.
    \end{equation}
    The solutions continuously depend on the initial data.
\end{proposition}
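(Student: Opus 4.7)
The plan is to decouple the system and exploit the linearity of \eqref{systemVtilde} in $\widetilde V$. First I would solve the $2D$ Euler equations \eqref{systemVbar-1}--\eqref{systemVbar-2} for $\overline V$: the Sobolev estimate \eqref{Euler-Sobolev-estimate-2} (applied with index $r+1$ instead of $r$) yields $\|\overline V(t)\|_{r+1,0,0} \leq \theta_{M,r+1}(t) =: K(t)$, which already has the form $C_M^{\exp(C_r t)}$; while in the analytic setting Proposition \ref{global-Euler} supplies the radius $\tau(t) = \tau_0 \exp(-C_r\int_0^t h(s)\,ds)$ and the bound $\|\overline V(t)\|_{r+1,0,\tau(t)} \leq K(t)$ of the same double-exponential form (after relabeling $r \mapsto r+1$). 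Once $\overline V$ is fixed, equation \eqref{systemVtilde} becomes a linear parabolic equation for $\widetilde V$ with a drift and a zeroth-order stretching coefficient $\tfrac12(\nabla^\perp\cdot\overline V)$ that are both functions of $\boldsymbol x$ only. Well-posedness and uniqueness for this linear equation in the stated spaces is standard (Galerkin approximation), so the core of the proof is the \emph{a priori} energy estimate leading to the explicit bounds \eqref{sobolev-growth-limit} and \eqref{analytic-growth-limit}.

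For the Sobolev estimate, I would apply $A^r\partial_z^m$ for $0\leq m\leq s$ to \eqref{systemVtilde}, take the $L^2$ inner product with $A^r\partial_z^m\widetilde V$, and sum. Two structural features are used: first, $\partial_z^m$ commutes with $\overline V\cdot\nabla$ and with $\tfrac12\widetilde V^\perp(\nabla^\perp\cdot\overline V)$ since $\overline V$ is independent of $z$, so only horizontal commutators remain; second, $\nabla\cdot\overline V=0$ gives the standard cancellation
\begin{equation*}
\langle \overline V\cdot\nabla (A^r\partial_z^m\widetilde V), A^r\partial_z^m\widetilde V\rangle = 0,
\end{equation*}
so only the commutator $[A^r,\overline V\cdot\nabla]$ has to be controlled, via Lemma \ref{lemma-banach-algebra} and the fact that $r>2$, by $C_r\|\overline V\|_{r+1,0,0}\|\widetilde V\|_{r,s,0}^2$. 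The stretching term is a pointwise product and is bounded similarly by the algebra property. The dissipative term is handled by integrating by parts in $z$, using $\partial_z\widetilde V|_{z=0,1}=0$ to get $+\nu\|\partial_z A^r\partial_z^m\widetilde V\|^2$. Altogether
\begin{equation*}
\frac{d}{dt}\|\widetilde V\|_{r,s,0}^2 + 2\nu\|\partial_z\widetilde V\|_{r,s,0}^2 \leq C_r\|\overline V\|_{r+1,0,0}\|\widetilde V\|_{r,s,0}^2 \leq K(t)\|\widetilde V\|_{r,s,0}^2
\end{equation*}
(absorbing $C_r$ into $K$), and Gr\"onwall delivers \eqref{sobolev-growth-limit}.

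In the analytic-Sobolev setting I would repeat the same calculation with the weight $A^re^{\tau A}\partial_z^m$. The new feature is a term $-\dot\tau\|A^{r+\frac12}e^{\tau A}\partial_z^m\widetilde V\|^2$ on the left, while estimates of the transport and stretching terms (via Lemma \ref{lemma-banach-algebra}, parallel to those already used in Section 3) produce right-hand side contributions of the form
\begin{equation*}
C_r\|\overline V\|_{r+1,0,\tau}\Big(\|\widetilde V\|_{r,s,\tau}^2 + \sum_{m\leq s}\|A^{r+\frac12}e^{\tau A}\partial_z^m\widetilde V\|^2\Big).
\end{equation*}
Choosing $\dot\tau = -C_r\|\overline V\|_{r+1,0,\tau} \geq -K(t)$ absorbs the half-derivative terms on the left; integrating this ODE (and using the already established $\|\overline V\|_{r+1,0,\tau}\leq K$) reproduces \eqref{tau-limit-system}. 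The remaining inequality is then
\begin{equation*}
\frac{d}{dt}\|\widetilde V\|_{r,s,\tau}^2 + 2\nu\|\partial_z\widetilde V\|_{r,s,\tau}^2 \leq K(t)\|\widetilde V\|_{r,s,\tau}^2,
\end{equation*}
and Gr\"onwall gives \eqref{analytic-growth-limit}.

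The main technical obstacle I anticipate is the commutator/product estimate in the analytic-Sobolev norm for the transport term $\overline V\cdot\nabla\widetilde V$: because $\overline V$ lives in $\mathcal S_{r+1,0,\tau}$ while $\widetilde V$ has vertical Sobolev regularity, one must split horizontal and vertical derivatives carefully and use the Banach-algebra property of the analytic norm in $\boldsymbol x$ uniformly in $z$ to gain the missing half-derivative needed for the $\dot\tau$ absorption. Uniqueness and continuous dependence then follow by subtracting two solutions $(\overline V_1,\widetilde V_1)-(\overline V_2,\widetilde V_2)$ and running the same energy method at one order of regularity lower, exactly as in Section \ref{subsec:well-posedness}; here the linearity of \eqref{systemVtilde} makes the $\widetilde V$-difference estimate completely routine, and the $\overline V$-difference estimate is the standard $2D$ Euler stability in Sobolev/analytic class.
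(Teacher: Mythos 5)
Your overall architecture is correct and matches the paper's: decouple, solve 2D Euler for $\overline V$ using the double-exponential Sobolev estimate and Proposition \ref{global-Euler}, then run a linear energy estimate for $\widetilde V$ using the cancellation from $\nabla\cdot\overline V = 0$ and a commutator bound. But there is a genuine gap in your treatment of the radius $\tau(t)$. You claim that the right-hand side contributions to the analytic energy inequality have the form $C_r\|\overline V\|_{r+1,0,\tau}\bigl(\|\widetilde V\|_{r,s,\tau}^2 + \sum_m\|A^{r+\frac12}e^{\tau A}\partial_z^m\widetilde V\|^2\bigr)$ and that the choice $\dot\tau = -C_r\|\overline V\|_{r+1,0,\tau}$ absorbs the half-derivative terms and, on integration, ``reproduces \eqref{tau-limit-system}.'' Both parts of this are off. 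First, integrating $\dot\tau = -C_r\|\overline V\|_{r+1,0,\tau}$ gives $\tau(t) = \tau_0 - C_r\int_0^t\|\overline V\|_{r+1,0,\tau(s)}\,ds$, which is a \emph{linear} decay in the time integral of $\|\overline V\|$, not the exponential-in-integral form $\tau_0\exp(-\int_0^t K)$ of \eqref{tau-limit-system}. Second, and more seriously, since $\|\overline V\|_{r+1,0,\tau}$ grows (at a double-exponential rate), this $\tau(t)$ reaches zero in finite time, so you would only obtain local, not global, well-posedness — contradicting the ``for any given time $\mathcal T > 0$'' in the statement.

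The missing ingredient is that the relevant commutator estimate in the analytic class carries a factor of $\tau$ on the half-derivative contribution. This is precisely what the paper's Lemma \ref{lemma-difference-type1} (à la Levermore--Oliver, from the inequality $|\xi^r e^{\tau\xi} - \eta^r e^{\tau\eta}| \le C_r|\xi-\eta|(|\xi-\eta|^{r-1} + \eta^{r-1} + \tau(|\xi-\eta|^r + \eta^r)e^{\tau|\xi-\eta|}e^{\tau\eta})$) provides: the commutator of $A^r e^{\tau A}$ with the transport operator $\overline V\cdot\nabla$ splits into a Sobolev piece that is Gr\"onwallable and a $\tau$-\emph{weighted} analytic piece $C_r\,\tau\,\|\overline V\|_{r+1,0,\tau}\|A^{r+\frac12}e^{\tau A}\partial_z^m\widetilde V\|^2$. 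Against that piece the correct absorption condition is $\dot\tau + C_r\,\tau\,\|\overline V\|_{r+1,0,\tau} \le 0$, which is satisfied by $\dot\tau = -K\tau$, i.e.\ $\tau(t) = \tau_0\exp(-\int_0^t K(s)\,ds)$; this $\tau$ stays strictly positive for all $t$, which is what makes the global-in-time claim possible. A plain Banach-algebra estimate (your Lemma \ref{lemma-banach-algebra}) does not produce the $\tau$ factor, so that substitution, as written, would fail. The rest of your proposal — the Sobolev a priori bound, the cancellation $\langle\overline V\cdot\nabla A^re^{\tau A}\widetilde V, A^re^{\tau A}\widetilde V\rangle = 0$, the parabolic treatment of $-\nu\partial_{zz}$, and the linear-equation uniqueness argument — agrees with the paper's proof, which for brevity estimates $\|\widetilde V\|_{r,1,0}^2 = \|\widetilde V\|^2 + \|A^r\widetilde V\|^2 + \|\partial_z\widetilde V\|^2 + \|A^r\partial_z\widetilde V\|^2$ by testing against $(\mathfrak I - \partial_{zz})\widetilde V$ and $(\mathfrak I - \partial_{zz})A^r\widetilde V$ rather than summing over $\partial_z^m$; that is a cosmetic difference.
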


\begin{proof}[Sketch of proof]
We will consider the case when $s=1$ and only show the {\it a priori} estimates. The construction of solutions,  uniqueness, and continuous dependency of solutions on initial data, as well as the case when $s=0$, are left to readers as exercises. The global well-posedness of the $2D$ Euler equations in Sobolev spaces and corresponding growth estimate have been reviewed in the previous subsection. From \eqref{Euler-Sobolev-estimate-2}, we obtain that
\begin{equation}\label{apri:105}
    \|\overline{V}\|_{r+1,0,0} \leq K_1(t)
\end{equation}
for some function $K_1(t) := C_{M,1}^{\exp(C_{r,1}t)}$ with some constants $ C_{M,1}, C_{r,1} > 1 $.

Denote by $ \mathfrak I $ the identity map.
For the growth of $\|\widetilde{V}\|_{H^{r}}$, after calculating $ 2 \langle \eqref{systemVtilde}, (\mathfrak I - \partial_{zz}) \widetilde V \rangle + 2 \langle A^r  \eqref{systemVtilde},  (\mathfrak I - \partial_{zz}) A^r   \widetilde V \rangle $ and applying integration by parts to the resultant, one has, thanks to $\partial_z \overline{V} = 0 $, $ \nabla\cdot \overline{V}=0$, and $r>\frac{5}{2}$, for some constant $ C_{r,s} > 0 $,
\begin{equation}\label{estimate-limit-sobolev}
    \frac{d}{dt}\|\widetilde{V}\|^2_{r,1,0} + 2\nu \|\partial_z \widetilde{V}\|^2_{r,1,0} \leq C_{r,s}\|\overline{V}\|_{r+1,0,0} \|\widetilde{V}\|^2_{r,1,0}.
\end{equation}
After applying the Gr\"onwall inequality to the above,  by virtue of \eqref{apri:105}, we obtain
\begin{equation}\label{apri:106}
    \|\widetilde{V}(t)\|^2_{r,1,0} + 2\nu \int_0^t \|\partial_z \widetilde{V}(\xi)\|^2_{r,1,0} d\xi \leq \|\widetilde{V}_0\|^2_{r,1,0}\exp\Big(C_{r,s} \int_0^t K_1(\xi)d\xi\Big).
\end{equation}

On the other hand,
the global well-posedness of the $2D$ Euler equations in the space of analytic functions and the corresponding growth estimate are summarized in Proposition \ref{global-Euler}. 
We can first choose some suitable function $K_2(t) := C_{M,2}^{\exp(C_{r,2}t)}$, with $ C_{M,2}, C_{r,2} > 1 $, such that $\| \overline{V}(t)\|_{r+1,0,\tau_E(t)} \leq K_2(t)$ with $\tau_E(t) := \tau_0 \exp(-\int_0^t K_2(s)ds)$.

Let $ \tau = \tau(t) $ to be determined.
For $\widetilde{V}$, after calculating $ \langle \eqref{systemVtilde}, (\mathfrak I - \partial_{zz}) \widetilde V \rangle + \langle A^r e^{\tau A} \eqref{systemVtilde},  (\mathfrak I - \partial_{zz}) A^r e^{\tau A} \widetilde V \rangle $ and applying integration by parts, the H\"older inequality, the Sobolev inequality, Lemma \ref{lemma-banach-algebra}, and Lemma \ref{lemma-difference-type1} to the resultant,
since $r>2$, one has, for some constant $ C_{r,a} >0 $,
\begin{equation}\label{apri:101}
    \begin{split}
        &\frac{1}{2} \frac{d}{dt} \| \widetilde{V} \|_{r,1,\tau}^2 + \nu \| \partial_z \widetilde{V} \|_{r,1,\tau}^2 - \dot{\tau} \Big( \|A^{r+\frac{1}{2}} e^{\tau A} \widetilde{V} \|^2 +  \|A^{r+\frac{1}{2}} e^{\tau A} \partial_z \widetilde{V} \|^2 \Big)
        \\
        = & \underbrace{ - \Big\langle \overline{V}\cdot \nabla \widetilde{V}, \widetilde{V} \Big\rangle - \frac{1}{2} \Big\langle  (\nabla^\perp \cdot \overline{V}) \widetilde{V}^\perp, \widetilde{V} \Big\rangle - \Big\langle \overline{V}\cdot \nabla \partial_z \widetilde{V}, \partial_z \widetilde{V} \Big\rangle - \frac{1}{2} \Big\langle  (\nabla^\perp \cdot \overline{V}) \partial_z \widetilde{V}^\perp, \partial_z \widetilde{V} \Big\rangle }_{=0}
        \\
        & - \Big\langle A^r e^{\tau A} (\overline{V}\cdot \nabla \widetilde{V}), A^r e^{\tau A} \widetilde{V} \Big\rangle - \frac{1}{2} \Big\langle A^r e^{\tau A} \Big( (\nabla^\perp \cdot \overline{V}) \widetilde{V}^\perp \Big), A^r e^{\tau A} \widetilde{V} \Big\rangle
        \\
        & - \Big\langle A^r e^{\tau A} (\overline{V}\cdot \nabla \partial_z \widetilde{V}), A^r e^{\tau A} \partial_z \widetilde{V} \Big\rangle - \frac{1}{2} \Big\langle A^r e^{\tau A} \Big( (\nabla^\perp \cdot \overline{V}) \partial_z \widetilde{V}^\perp \Big), A^r e^{\tau A}\partial_z \widetilde{V} \Big\rangle
        \\
        = & - \Big(\Big\langle A^r e^{\tau A} (\overline{V}\cdot \nabla \widetilde{V}), A^r e^{\tau A} \widetilde{V} \Big\rangle - \underbrace{\Big\langle \overline{V}\cdot \nabla A^r e^{\tau A}\widetilde{V}, A^r e^{\tau A} \widetilde{V} \Big\rangle }_{=0}\Big)
        \\
        &- \Big(\Big\langle A^r e^{\tau A} (\overline{V}\cdot \nabla \partial_z \widetilde{V}), A^r e^{\tau A} \partial_z \widetilde{V} \Big\rangle - \underbrace{\Big\langle \overline{V}\cdot \nabla A^r e^{\tau A} \partial_z \widetilde{V}, A^r e^{\tau A} \partial_z \widetilde{V} \Big\rangle }_{=0}\Big)
        \\
        & - \frac{1}{2} \Big\langle A^r e^{\tau A} \Big( (\nabla^\perp \cdot \overline{V}) \widetilde{V}^\perp \Big) , A^r e^{\tau A} \widetilde{V} \Big\rangle - \frac{1}{2} \Big\langle A^r e^{\tau A} \Big( (\nabla^\perp \cdot \overline{V}) \partial_z \widetilde{V}^\perp\Big), A^r e^{\tau A}\partial_z \widetilde{V} \Big\rangle
        \\
        \leq &  C_{r,a}\tau \|\overline{V} \|_{r+1,0,\tau}  \Big( \|A^{r+\frac{1}{2}} e^{\tau A} \widetilde{V} \|^2 + \|A^{r+\frac{1}{2}} e^{\tau A} \partial_z \widetilde{V} \|^2 \Big) +  C_{r,a}\|\overline{V} \|_{r+1,0,\tau} \|\widetilde{V} \|_{r,1,\tau}^2  .
    \end{split}
\end{equation}

Now, let
\begin{equation}\label{apri:102}
    K(t) := \max\lbrace (1 +  C_{r,s} ) K_1(t), (1 + C_{r,a}) K_2(t) \rbrace \qquad \text{and} \qquad \tau = \tau(t) := \tau_0 \exp (- \int_0^t K(s) \,ds ).
\end{equation} Then $ \tau(t) $ satisfies
\begin{equation*}
    \tau(t) \leq \tau_E(t) \qquad \text{and} \qquad
    \dot{\tau} + C_{r,a} \tau \|\overline{V} \|_{r+1,0,\tau} \leq \dot{\tau} + C_{r,a} \tau \|\overline{V} \|_{r+1,0,\tau_E} \leq \dot\tau + C_{r,a}\tau K_2 \leq 0.
\end{equation*}
Therefore,
\begin{equation}\label{apri:103}
    \|  \overline{V} (t)\|_{r+1,0,\tau(t)} \leq \|  \overline{V} (t)\|_{r+1,0,\tau_E(t)} \leq K_2(t) \leq K(t), \end{equation}
and, after applying the Gr\"onwall inequality to \eqref{apri:101}, we have
\begin{equation}\label{apri:104}
    \begin{gathered}
        \| \widetilde{V}(t) \|_{r,1,\tau(t)}^2 + 2 \nu \int_0^t  \| \partial_z \widetilde{V}(\xi) \|_{r,1,\tau(\xi)}^2 d\xi \leq \| \widetilde{V}_0 \|_{r,1,\tau_0}^2 \exp\Big( \int_0^t C_{r,a} \|  \overline{V} (\xi)\|_{r+1,0,\tau(\xi)}d\xi \Big)
        \\
        \leq \| \widetilde{V}_0 \|_{r,1,\tau_0}^2 e^{\int_0^t C_{r,a} K_2(s)\,ds} \leq  \| \widetilde{V}_0 \|_{r,1,\tau_0}^2 e^{\int_0^t K(s)\,ds}.
    \end{gathered}
\end{equation}
Consequently, according to \eqref{apri:105}, \eqref{apri:106}, \eqref{apri:103}, and \eqref{apri:104}, $ K $ and $\tau$ as in \eqref{apri:102} verify \eqref{sobolev-growth-limit} and \eqref{analytic-growth-limit}.
\end{proof}
\begin{remark}
From Proposition \ref{global-limit}, one can see that the growth of $\|\overline{V}(t)\|_{r+1,0,0}$ and  $\|\overline{V}(t)\|_{r+1,0,\tau(t)}$ are double exponential in time, while the growth of $\|\widetilde{V}(t)\|_{r,s,0}$ and  $\|\widetilde{V}(t)\|_{r,s,\tau(t)}$ are triple exponential in time.
\end{remark}

\begin{remark}\label{remark-equivalence-upm-vtilde}
Thanks to \eqref{apri:107},
similarly as in \eqref{vtilde-upm-analytic}, we have
\begin{equation*}
   \|V_+\|_{r,s,\tau}^2 = \|V_-\|_{r,s,\tau}^2 = \frac{1}{2}\|\widetilde{V}\|_{r,s,\tau} ^2,
\end{equation*}
whose growths are also triple exponential.
\end{remark}

\begin{remark}\label{special-situation-limit}
Proposition \ref{global-limit} is for the general initial data. However, by considering special solutions to the 2D Euler equations, one has the following:
\begin{itemize}

\item Supposed that $\overline{V}$ is uniformly-in-time bounded in $\mathcal{S}_{r+1,0,\tau}$, i.e., $ \sup_{0\leq t < \infty }\|\overline{V}(t)\|_{r+1,0,\tau} \leq C_{M,r}$ for some positive constant $C_{M,r}$, then one can control the growth of $\|\widetilde{V}(t)\|_{r,1,\tau}$ by one exponential in time.

\item Supposed that $\sup_{0\leq t < \infty} \|\overline{V}(t)\|_{r+1,0,\tau} \leq \frac{\nu}{4C_{r,\alpha}}$ is small enough, by applying the Poincar\'e inequality and with $\tau$ chosen suitably, from \eqref{apri:101} one can derive that
\begin{equation*}
    \frac{d}{dt}\|\widetilde{V}\|^2_{r,1,\tau} + \frac{1}{2}\nu \|\partial_z \widetilde{V}\|^2_{r,1,\tau} \leq -\nu \|\widetilde{V}\|^2_{r,1,\tau}.
\end{equation*}
After applying the Gr\"onwall inequality to the above, we obtain
\begin{equation*}
    \|\widetilde{V}(t)\|_{r,1,\tau(t)}^2 e^{\nu t} + \frac{1}{2}\nu \int_0^t \|\partial_z \widetilde{V}(\xi)\|^2_{r,1,\tau(\xi)} e^{\nu \xi} d\xi \leq \|\widetilde{V}_0\|_{r,1,\tau_0}^2.
\end{equation*}
In particular, the estimate above holds when $\overline{V} \equiv 0$, i.e., zero solutions to the 2D Euler equations.

\end{itemize}
\end{remark}


\section{Effect of fast rotation}\label{section-rotation}
In this section, we investigate the effect of rotation on the lifespan $\mathcal{T}$ of solutions to system \eqref{PE-system}. We show that the existing time of the solution in $\mathcal{S}_{r,0,\tau(t)}$ can be prolonged for large $ |\Omega| $ provided that the Sobolev norm $\|\widetilde{\mathcal{V}}_0\|_{\frac{5}{2}+\delta,1,0}$ is small, while the analytic-Sobolev norm $\|\widetilde{\mathcal{V}}_0\|_{r,0,\tau_0}$ can be large. Such initial data is referred to as ``well-prepared" initial data.

\begin{theorem}\label{theorem-main}
	Let $ \delta \in (0,\frac{1}{2}) $ be a constant. Let $|\Omega| \geq |\Omega_0| > 1$ and $|\Omega_0|$ be large enough such that condition \eqref{constraint-1} below holds.
Assume $\overline{\mathcal{V}}_0 \in \mathcal{S}\cap \mathcal{S}_{r+3,0,\tau_0}$, $\widetilde{\mathcal{V}}_0 \in \mathcal{S}\cap\mathcal{S}_{r+2,0,\tau_0}\cap \mathcal{S}_{r+1,1,\tau_0} $ with $r>2$ and $\tau_0>0$.
Let $M\geq 0$ be such that
\begin{equation}\label{main-bound-initial}
	\| \overline {\mathcal V}_0 \|_{r+3,0,\tau_0}^2 + \| \widetilde  {\mathcal V}_0 \|_{r+2,0,\tau_0}^2 + \| \widetilde  {\mathcal V}_0 \|_{r+1,1,\tau_0}^2 \leq M,
\end{equation}
and
{
\begin{equation}\label{constraint-tilde-V}
    \|\widetilde{\mathcal{V}}_0\|_{\frac{3}{2}+\delta,0,0} \leq  \frac{M}{|\Omega_0|^{1/2}}.
\end{equation}
}
Then there exists a time $\mathcal{T} = \mathcal{T}(\tau_0, |\Omega_0|, M, r, \nu)$ satisfying
{
\begin{equation}\label{constraint-1}
     \mathcal T = \frac{1}{C_{\tau_0, M, r} } \log\left[e^{-C_{\tau_0, M, r}}\log\left[\log \left[C_{\tau_0, M, r,\nu}
       \log \left(C_{\tau_0, M, r,\nu} |\Omega_0|\right)\right]
     \right]\right] > 0,
\end{equation}
}
for some positive constant $ C_{\tau_0, M, r,\nu}>0 $ depending only on $ \tau_0, M $, and $ r $, such that the unique solution $\mathcal{V}$ obtained in Theorem \ref{theorem-local} satisfies  \begin{equation}\label{main-regularity}
    \mathcal{V} \in  L^\infty(0,\mathcal{T};\mathcal{S} \cap \mathcal{S}_{r,0,\tau(t)}),
\end{equation}
with $ \tau(t) > 0 $, $ t \in [0,\mathcal T] $, satisfying \eqref{est:tau}, below.
In particular, from \eqref{constraint-1}, $\mathcal{T} \rightarrow \infty$ as $ |\Omega_0|^{1/2}\rightarrow \infty$, for any fixed $ \nu $.
\end{theorem}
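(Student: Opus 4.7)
The plan is to compare the solution $(\overline{\mathcal V},\mathcal V_\pm)$ of \eqref{barotropic-evolution-4}, \eqref{up3}, and \eqref{um1} to the solution $(\overline V,V_\pm)$ of the limit resonant system \eqref{uplimit}--\eqref{vbarlimit} with matching initial data, and to show that the perturbation $(\overline v,v_\pm):=(\overline{\mathcal V}-\overline V,\mathcal V_\pm-V_\pm)$ stays of size $O(|\Omega_0|^{-1/2})$ in a suitable analytic-Sobolev norm on $[0,\mathcal T]$. Subtracting the limit equations from the full system, the evolution of $(\overline v,v_\pm)$ splits into: (a) interactions of $(\overline v,v_\pm)$ with themselves or with the limit profile, which are controlled by a standard analytic-Sobolev energy argument via Lemma \ref{lemma-banach-algebra} together with the trilinear estimates used in Section \ref{subsec:local-a-priori}; and (b) non-resonant forcing terms prefactored by $e^{\pm i\Omega t}$ or $e^{\pm 2 i\Omega t}$, built entirely from the limit profile $(\overline V,V_\pm)$, which are precisely the terms expected to vanish as $|\Omega|\to\infty$.

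The mechanism that extracts decay from the oscillatory forcing is integration by parts in time,
\begin{equation*}
\int_0^t e^{\pm ik\Omega s}F(s)\,ds=\frac{1}{\pm ik\Omega}\Bigl[e^{\pm ik\Omega s}F(s)\Big|_0^t-\int_0^t e^{\pm ik\Omega s}\partial_s F(s)\,ds\Bigr],
\end{equation*}
trading a gain of $|\Omega|^{-1}$ for a time derivative on $F$. Because the relevant $F$ is bilinear in $(\overline V,V_\pm)$, this derivative is supplied by \eqref{uplimit}--\eqref{vbarlimit} at the cost of one additional horizontal derivative on the limit fields, whose analytic-Sobolev norms are globally controlled by Proposition \ref{global-limit}. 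The analytic-Sobolev energy argument on $(\overline v,v_\pm)$, with the decrement of the radius $\tau(t)$ chosen in analogy with \eqref{def:tau-local} and augmented by a contribution of the form $C\|(\overline V,V_\pm)\|_{r+3,0,\tau}$ to absorb the higher-derivative terms generated by the time-differentiated limit fields, yields an inequality schematically of the form
\begin{equation*}
\frac{d}{dt}\|(\overline v,v_\pm)\|_{r,0,\tau}^2+\nu\|\partial_z v_\pm\|_{r,0,\tau}^2\leq C\,K(t)\,\|(\overline v,v_\pm)\|_{r,0,\tau}^2+\frac{C}{|\Omega_0|^2}P\bigl(K(t)\bigr),
\end{equation*}
where $K(t)=C_M^{\exp(C_r t)}$ is the bound from Proposition \ref{global-limit} and $P$ is a fixed polynomial.

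Since $\overline v(0)=0$ and an interpolation of the low-regularity smallness \eqref{constraint-tilde-V} against the analytic bound \eqref{main-bound-initial} yields $\|v_\pm(0)\|_{r,0,\tau_0}\lesssim|\Omega_0|^{-\alpha}$ for some $\alpha>0$, a Gr\"onwall step coupled with a bootstrap on $\|(\overline v,v_\pm)\|_{r,0,\tau}\lesssim 1$ reduces matters to enforcing $|\Omega_0|^{-2}\exp\bigl(\int_0^{\mathcal T}CK(s)\,ds\bigr)\lesssim 1$; unwinding the iterated exponentials in $K$ produces precisely the four-fold logarithm in \eqref{constraint-1}, and on the same time interval the monotone-decrease law for $\tau(t)$, analogous to \eqref{tau-inequality}, keeps $\tau(t)\geq\tau_0/2$. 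The main obstacle will be the compatibility of the integration-by-parts procedure with the analytic-Sobolev framework: each time-differentiation of an oscillatory term spawns high horizontal derivatives of $(\overline V,V_\pm)$ that must be absorbed either by the hyper-dissipation $\|A^{r+\frac{1}{2}}e^{\tau A}(\overline v,v_\pm)\|^2$ produced by the choice of $\dot\tau$, or by the vertical dissipation $\nu\|\partial_z v_\pm\|_{r,0,\tau}^2$ coming from the viscous term; it is precisely this regularizing role of the vertical viscosity that underlies the assumption \eqref{constraint-tilde-V} being imposed only in the low Sobolev norm $H^{3/2+\delta}$, relaxing the stronger analytic-Sobolev smallness required in the inviscid analogue \cite{GILT20}.
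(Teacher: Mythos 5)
Your overall strategy coincides with the paper's: compare $(\overline{\mathcal V},\mathcal V_\pm)$ to the limit resonant profile $(\overline V,V_\pm)$ with matching initial data, split the forcing of the perturbation into interaction terms and oscillatory non-resonant terms built purely from the limit profile, handle the latter by integration by parts in time to trade a factor of $|\Omega|^{-1}$ for one time derivative, and close via a Gr\"onwall argument together with a monotone-decrease law for $\tau(t)$. The count of iterated logarithms in \eqref{constraint-1} is traced correctly to the triple-exponential growth from Proposition \ref{global-limit}.

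There is, however, a genuine gap in how you bring the assumption \eqref{constraint-tilde-V} to bear, and it is accompanied by an internal inconsistency. You initialize the limit system with matching data, so the perturbation $(\overline\phi,\phi_\pm)$ satisfies $\overline\phi(0)=0$ and $\phi_\pm(0)=0$ \emph{identically}, as in \eqref{difference-ic}; there is no initial perturbation to bound, and the proposed interpolation ``$\|v_\pm(0)\|_{r,0,\tau_0}\lesssim |\Omega_0|^{-\alpha}$'' is both unnecessary and contradicts your own matching-data setup. More importantly, the smallness \eqref{constraint-tilde-V} is \emph{not} needed for an initial perturbation; it is needed in the $\dot\tau$ budget. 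Among the terms you lump together as ``interactions of $(\overline v,v_\pm)$ with the limit profile'' there is one that does not behave like the others: the transport of the perturbation by $V_\pm$ at top order (the paper's $\mathrm{Tp4}_1$). This term cannot be absorbed commutator-style and instead yields a bound of the form $C_r\|V_\pm\|_{\frac{3}{2}+\delta,1,0}\,\|A^{r+\frac12}e^{\tau A}\phi\|^2$, i.e.\ a coefficient multiplying the hyper-dissipation, which must therefore be charged to $\dot\tau$ in \eqref{eq:tau-main}. Keeping $\tau(t)>0$ over the long time interval then requires $\int_0^t(\|V_+\|_{\frac{3}{2}+\delta,1,0}+\|V_-\|_{\frac{3}{2}+\delta,1,0})\,ds$ to be small, and it is here — via the viscous energy bound \eqref{growing-2} for the limit baroclinic field — that the low-regularity smallness $\|\widetilde{\mathcal V}_0\|_{\frac{3}{2}+\delta,0,0}\leq M/|\Omega_0|^{1/2}$ enters and where the vertical viscosity relaxes the $H^{3+\delta}$ smallness required in the inviscid case. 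Without isolating this term and routing it into the $\dot\tau$ constraint, your energy inequality is too optimistic and the appearance of $\|\widetilde{\mathcal V}_0\|_{\frac{3}{2}+\delta,0,0}$ in the hypotheses remains unexplained. A minor secondary point: the integration by parts yields an inhomogeneous forcing of size $O(|\Omega|^{-1})$, not $O(|\Omega|^{-2})$, after integrating the boundary and residual terms; this does not change the qualitative conclusion, but your displayed schematic inequality should reflect the correct power.
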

{
\begin{remark}\label{rm:small-baroclinic}
Recall that the result in \cite{GILT20} requires the initial baroclinic mode $ \widetilde{\mathcal{V}}_0 $ to be small in the $ H^{3+\delta} $ space instead of \eqref{constraint-tilde-V} in Theorem \ref{theorem-main}. This relaxation on the requirement of $ \widetilde{\mathcal{V}}_0 $ is due to the vertical viscosity.
\end{remark}
}

In Theorem \ref{theorem-main}, we consider general initial data $\overline{\mathcal V}_0$ for the barotropic mode, {where the vertical viscosity helps relax the requirement on the initial baroclinic mode, but does not help prolong the lifespan}. By virtue of Remark \ref{special-situation-limit}, when the solution $\overline{V}$ to the $2D$ Euler equations with initial condition $\overline{\mathcal V}_0$ satisfies certain conditions, the smallness condition \eqref{constraint-tilde-V} can be relaxed and the result \eqref{constraint-1} can be improved. The following theorem is the summary of these results:

\begin{theorem}\label{theorem-small-barotropic}
 With the same assumptions as in Theorem \ref{theorem-main}, let $\overline{V}(t)$ be the solution to the $2D$ Euler equations with initial condition $\overline{V}_0 = \overline{\mathcal V}_0$. Then
\begin{enumerate}
    \item if  $\|\overline{V}(t)\|_{r+3,0,\tau(t)} \leq C_{M,r}$, the result \eqref{constraint-1} can be improved to $\mathcal T = \frac{1}{C_{\tau_0, M, r, \nu} } \log(\log (|\Omega_0|) )$;

    \item if $\|\overline{V}(t)\|_{r+3,0,\tau(t)} \leq \frac{\nu}{4C_{r,\alpha}}$ which is small enough, then \eqref{constraint-tilde-V} can be relaxed and replaced by $\|\widetilde{\mathcal{V}}_0\|_{\frac{3}{2}+\delta,0,0} \leq  \frac{\tau_0}{C_{r,\nu,M}}$,  and \eqref{constraint-1} can be improved to $\mathcal T = \frac{1}{C_{\tau_0, M, r, \nu} } \log (|\Omega_0|) $;

    \item finally, if the initial condition satisfies $\|\overline{\mathcal V}_0\|_{r+3,0,\tau_0} \leq \frac{M}{|\Omega_0|}$, \eqref{constraint-tilde-V} can be relaxed and replaced by $\|\widetilde{\mathcal{V}}_0\|_{\frac{3}{2}+\delta,0,0} \leq  \frac{\tau_0}{C_{r,\nu,M}}$,  and \eqref{constraint-1} can be improved to
    $\mathcal T = \frac{|\Omega_0|^{\frac{1}{2}}}{C_{\tau_0, M, r, \nu} } $.
\end{enumerate}
\end{theorem}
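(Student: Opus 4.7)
The plan is to re-run the same bootstrap-and-oscillation argument developed in the proof of Theorem \ref{theorem-main}, comparing the solution $\mathcal V$ of \eqref{PE-system} to the solution $(\overline V,\widetilde V)$ of the limit resonant system \eqref{system-Vbar-Vtilde} with matched initial data, and exploiting the oscillating factors $e^{\pm i\Omega t}$, $e^{\pm 2i\Omega t}$ in \eqref{up3}, \eqref{um1} and \eqref{barotropic-evolution-4} by integration by parts in time to gain prefactors $|\Omega|^{-1}$. In all three cases only the growth of $(\overline V,\widetilde V)$ changes: $\overline V$ is either uniformly bounded, uniformly small, or initially of order $|\Omega_0|^{-1}$ in the analytic-Sobolev norm. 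Feeding these improved bounds into the bootstrap closure in place of the triple-exponential bound from Proposition \ref{global-limit} directly produces the improved lifespans.

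For case (1), the first bullet of Remark \ref{special-situation-limit} replaces the triple-exponential growth of $\|\widetilde V(t)\|_{r,1,\tau(t)}$ by a single exponential $e^{Ct}$, while $\|\overline V(t)\|_{r+3,0,\tau(t)}$ is bounded by hypothesis. Tracking this through the two nested exponentials that appear in the bootstrap of Theorem \ref{theorem-main} (one from closing the difference $\mathcal V - V$ against the smallness \eqref{constraint-tilde-V}, another from keeping $\tau(t)\geq \tau_0/2$), the constraint reduces from $e^{e^{e^{C\mathcal T}}}\lesssim |\Omega_0|^{1/2}$ to $e^{e^{C\mathcal T}}\lesssim |\Omega_0|^{1/2}$, yielding $\mathcal T=\mathcal O(\log\log|\Omega_0|)$. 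For case (2), the second bullet of Remark \ref{special-situation-limit} combined with the Poincar\'e inequality produces genuine decay $\|\widetilde V(t)\|_{r,1,\tau(t)}^2 e^{\nu t}\leq \|\widetilde V_0\|_{r,1,\tau_0}^2$, so the vertical viscosity absorbs the stretching term in \eqref{systemVtilde}. As a consequence no $\Omega_0$-dependent smallness of $\widetilde{\mathcal V}_0$ is required to keep the difference $\mathcal V-V$ small; only the smallness needed to prevent $\tau(t)$ from collapsing survives, which is precisely the hypothesis $\|\widetilde{\mathcal V}_0\|_{\frac{3}{2}+\delta,0,0}\leq \tau_0/C_{r,\nu,M}$. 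A single exponential then remains in the bootstrap constraint, giving $\mathcal T=\mathcal O(\log|\Omega_0|)$.

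Case (3) is of a different flavor: the hypothesis $\|\overline{\mathcal V}_0\|_{r+3,0,\tau_0}\leq M/|\Omega_0|$ places $\overline V$ deep within the regime of case (2), since the small-data analytic-Sobolev theory of 2D Euler (underlying Proposition \ref{global-Euler}) keeps $\|\overline V(t)\|_{r+3,0,\tau(t)}$ of the same order as its initial size for times at least polynomial in the inverse initial size. For $t\lesssim |\Omega_0|^{1/2}$ this keeps $\overline V$ well below the smallness threshold of Remark \ref{special-situation-limit}, so that $\widetilde V$ decays exponentially. The oscillation-cancellation error in the bootstrap, which gains a prefactor $|\Omega|^{-1}$ multiplied by the accumulated growth, is thus of order $\mathcal O(\mathcal T/|\Omega_0|)$, and requiring this to be $\mathcal O(|\Omega_0|^{-1/2})$ yields $\mathcal T=\mathcal O(|\Omega_0|^{1/2})$.

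The main obstacle is a careful bookkeeping: identifying where the smallness of $\overline V$ actually enters each step of the proof of Theorem \ref{theorem-main}, and verifying that no hidden dependence on an $|\Omega_0|$-dependent smallness of $\widetilde{\mathcal V}_0$ is reintroduced through the absorbing argument \eqref{constraint-2} or through the radius-of-analyticity ODE. In particular, for case (3) one must check that $\tau(t)\geq \tau_0/2$ is preserved on the $|\Omega_0|^{1/2}$ timescale; since $\dot\tau$ is driven by $\|\overline V\|+\|\widetilde V\|$ and both remain small (the former by hypothesis and propagation, the latter by the Poincar\'e-type decay), the analytic radius is maintained provided $\|\widetilde{\mathcal V}_0\|$ is small in a manner independent of $\Omega_0$ but tied to $\nu$, $M$, and $\tau_0$, which is exactly what is being claimed.
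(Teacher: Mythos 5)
Your reading of cases (1) and (2) matches the paper's proof in spirit and substance: both parts reuse the perturbation/oscillation bootstrap of Theorem \ref{theorem-main}, replacing the triple-exponential $\mathcal K(t)$ of Proposition \ref{global-limit} by a single exponential (case (1), first bullet of Remark \ref{special-situation-limit}) or by a uniformly bounded quantity with exponential decay of $\widetilde V$ (case (2), second bullet), and each removal of an exponential iterate shaves one logarithm off the lifespan. Your observation that the $\Omega_0$-dependent smallness of $\widetilde{\mathcal V}_0$ is no longer forced in case (2) because the $\tau$-ODE is now driven by decaying data, with only the $\tau_0/C_{r,\nu,M}$ threshold surviving, is also the paper's argument.

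Your case (3), however, rests on a claim that fails. You assert that the small-data analytic theory of 2D Euler keeps $\|\overline V(t)\|_{r+3,0,\tau(t)}$ of the order of its initial size $M/|\Omega_0|$ for times polynomial in $|\Omega_0|$. That is not what the available estimates give. From \eqref{Euler-Sobolev-estimate-1}, for data of size $\epsilon<1$ the growth is at best $\epsilon e^{C_r t}$, so smallness is lost after time $\mathcal O(\log(1/\epsilon))=\mathcal O(\log|\Omega_0|)$, not $\mathcal O(|\Omega_0|^{1/2})$. The analytic-norm bound from Proposition \ref{global-Euler} is even worse: $\theta_{M,r}(t)=(M+e)^{e^{C_r t}}\geq e$ already at $t=0$ and is not small in $M$, so there is no propagation of analytic smallness at all. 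Your mechanism for case (3) therefore would not close. The paper avoids this entirely by a different decomposition: it sets $\overline V\equiv 0$ in the limit system (a legitimate solution of 2D Euler) and absorbs the small initial barotropic mode into the perturbation, $\overline\phi|_{t=0}=\overline{\mathcal V}_0$ with $\|\overline\phi_0\|\leq M/|\Omega_0|$. Then $K$ in \eqref{K} no longer contains $\overline V$-terms at all, $L$ decays exponentially, $F(0)\leq M/|\Omega_0|$ seeds the bootstrap, and one gets $F(t)+\int H\lesssim |\Omega_0|^{-1}$ uniformly, leading to $\tau(t)\geq \tau_0 C' - C''|\Omega_0|^{-1/2}(t+1) - C'''\|\widetilde V_0\|_{3/2+\delta,0,0}$ and hence $\mathcal T\sim |\Omega_0|^{1/2}$. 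To repair your argument you would need to adopt this re-centering rather than relying on propagation of smallness for 2D Euler.
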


\begin{remark}
 Compared to \cite{GILT20}, the main improvement in Theorem \ref{theorem-main} is that the initial data is analytic in the horizontal variables but only $L^2$ in the vertical variable. The main improvements in Theorem \ref{theorem-small-barotropic} are points $(ii)$ and $(iii)$, where the smallness assumption does not depend on $\Omega_0$, and the lifespan is growing faster with respect to $\Omega_0.$ For more details, we refer readers to R3 and R4 in the introduction (pages \pageref{R3} and \pageref{R4}).
\end{remark}

In this section, we focus on equations \eqref{up3}--\eqref{barotropic-evolution-4}, which are equivalent to system \eqref{PE-system}. To prove Theorem \ref{theorem-main}, in section \ref{section-difference-system},
we rewrite \eqref{up3}--\eqref{barotropic-evolution-4} as the perturbation of \eqref{uplimit}--\eqref{vbarlimit}.
In section \ref{section-proof-main}, we establish a series of {\it a priori} estimates on the solutions to the perturbation system. This together with Proposition \ref{global-limit} will finish the proof of Theorem \ref{theorem-main}. In section \ref{section-smallbaro}, the proof of Theorem \ref{theorem-small-barotropic} is provided.

\begin{remark}\label{remark-analytic-in-z-main}
  In this section, we only focus on the long-time existence of the weak solution. By virtue of Theorem \ref{theorem-radius}, the weak solution is analytic in all spatial variables.
\end{remark}


\subsection{The perturbation system}\label{section-difference-system}
Denote by
\begin{equation}\label{def:perturbation}
    \overline{\phi} := \overline{\mathcal{V}} - \overline{V} \qquad \text{and} \qquad \phi_\pm := \mathcal{V}_\pm - V_\pm.
\end{equation}
Calculating the difference between \eqref{up3}, \eqref{um1} , \eqref{barotropic-evolution-4}  and \eqref{uplimit}, \eqref{umlimit}, \eqref{vbarlimit}, respectively, leads to
\begin{gather}
\begin{gathered}
	\partial_t \phi_+ + \overline{\phi} \cdot \nabla V_+ + \overline{\phi} \cdot \nabla \phi_+ + \overline{V} \cdot \nabla \phi_+  + \frac{1}{2}(\phi_+ \cdot \nabla)(\overline{V}+i\overline{V}^\perp) + \frac{1}{2}(\phi_+ \cdot \nabla)(\overline{\phi}+i\overline{\phi}^\perp) \\
	+ \frac{1}{2}(V_+ \cdot \nabla)(\overline{\phi}+i\overline{\phi}^\perp) - \nu \partial_{zz} \phi_+ + e^{i\Omega t}\Big(Q_{1,+,+} - \mathfrak P_0 Q_{1,+,+} - \mathfrak P_0 Q_{2,+,+} - Q_{3,+,+}\Big)  \\
	+ e^{-i\Omega t} \Big(Q_{1,-,+} - \mathfrak P_0 Q_{1,-,+} - \mathfrak P_0 Q_{2,-,+} - Q_{3,-,+}\Big) + e^{-2i\Omega t} Q_{4,-,+}  = 0 , \label{difference-phip}
\end{gathered}
\\
\nonumber
\linebreak
\\
\begin{gathered}
	\partial_t \phi_- + \overline{\phi} \cdot \nabla V_- + \overline{\phi} \cdot \nabla \phi_- + \overline{V} \cdot \nabla \phi_-  + \frac{1}{2}(\phi_- \cdot \nabla)(\overline{V}-i\overline{V}^\perp) + \frac{1}{2}(\phi_- \cdot \nabla)(\overline{\phi}-i\overline{\phi}^\perp) \\
	+ \frac{1}{2}(V_- \cdot \nabla)(\overline{\phi}-i\overline{\phi}^\perp) - \nu \partial_{zz} \phi_- + e^{-i\Omega t}\Big(Q_{1,-,-} - \mathfrak P_0 Q_{1,-,-} - \mathfrak P_0 Q_{2,-,-} - Q_{3,-,-}\Big)  \\
	+ e^{i\Omega t} \Big(Q_{1,+,-} - \mathfrak P_0 Q_{1,+,-} - \mathfrak P_0 Q_{2,+,-} - Q_{3,+,-}\Big) + e^{2i\Omega t}Q_{4,+,-}  = 0 , \label{difference-phim}
	\\
	\nonumber \linebreak\\
	\nabla \cdot  \overline\phi = 0 , \qquad \partial_z p = 0,
\end{gathered}
\\
\nonumber
\linebreak
\\
\begin{gathered}
	 \partial_t \overline{\phi} + \overline{\phi}\cdot \nabla \overline{V} + \overline{\phi}\cdot \nabla \overline{\phi} + \overline{V}\cdot \nabla \overline{\phi}  +   e^{2i\Omega t} \mathfrak P_0\Big( Q_{1,+,+} + Q_{2,+,+} \Big)\\
	+ e^{-2i\Omega t} \mathfrak P_0\Big( Q_{1,-,-} + Q_{2,-,-} \Big) + \nabla p = 0,\label{difference-phibar}
\end{gathered}
\end{gather}
where
\begin{align*}
Q_{1,\pm,\mp} := & \phi_\pm \cdot \nabla V_\mp + \phi_\pm \cdot \nabla \phi_\mp + V_\pm \cdot \nabla \phi_\mp + V_\pm \cdot \nabla V_\mp, \nonumber
\\
Q_{2,\pm,\mp} := & (\nabla\cdot \phi_\pm) V_\mp + (\nabla\cdot \phi_\pm) \phi_\mp + (\nabla\cdot V_\pm) \phi_\mp + (\nabla\cdot V_\pm) V_\mp, \nonumber
\\
Q_{3,\pm,\mp} := & (\int_0^z \nabla\cdot \phi_\pm (\boldsymbol{x},s)ds) \partial_z V_\mp +(\int_0^z \nabla\cdot \phi_\pm (\boldsymbol{x},s)ds) \partial_z \phi_\mp \nonumber
\\
& +(\int_0^z \nabla\cdot V_\pm (\boldsymbol{x},s)ds) \partial_z \phi_\mp+(\int_0^z \nabla\cdot V_\pm (\boldsymbol{x},s)ds) \partial_z V_\mp, \nonumber
\\
Q_{4,\pm,\mp} := & \frac{1}{2}\Big[(\phi_\pm \cdot \nabla)(\overline{V} \mp i\overline{V}^\perp) + (\phi_\pm \cdot \nabla)(\overline{\phi} \mp i\overline{\phi}^\perp)
\nonumber\\
& + (V_\pm \cdot \nabla)(\overline{\phi} \mp i\overline{\phi}^\perp) + (V_\pm \cdot \nabla)(\overline{V} \mp i\overline{V}^\perp) \Big]. \nonumber
\end{align*}
Recalling that $ ( \overline{\mathcal V}, \mathcal V_\pm) $ and $ ( \overline V, V_\pm ) $ are complemented with the same initial data. Hence, we have
\begin{equation}\label{difference-ic}
  \overline{\phi}|_{t=0}=0 \qquad \text{and} \qquad \phi_\pm|_{t=0} = 0.
\end{equation}


\subsection{Proof of Theorem \ref{theorem-main}}\label{section-proof-main}
In this subsection, we prove Theorem \ref{theorem-main}. Thanks to Proposition \ref{global-limit},  for any given $ \mathcal T \in (0,\infty)$, let $V_\pm$ and $\overline{V}$ be the global solution to equations \eqref{uplimit}-\eqref{vbarlimit} in $L^\infty(0,\mathcal T; \mathcal{S}\cap \mathcal{S}_{r+2,1,\tau(t)})$ and $L^\infty(0,\mathcal T; \mathcal{S}\cap \mathcal{S}_{r+3,0,\tau(t)})$ for some $ \tau = \tau(t), t \in [0,\mathcal T) $, respectively. Next, we provide the energy estimate in the space $\mathcal{S}_{r,0,\tau(t)}$ for equations \eqref{difference-phip}--\eqref{difference-phibar}.

After applying similar calculation as in \eqref{apri-001}, we obtain that
\begin{align}
& \frac{1}{2}\frac{d}{dt} (\| \phi_+ \|_{r,0,\tau}^2 + \|\phi_- \|_{r,0,\tau}^2) + \nu (\|\partial_z \phi_+ \|_{r,0,\tau}^2 + \|\partial_z \phi_- \|_{r,0,\tau}^2)= \dot{\tau} (\|A^{r+\frac{1}{2}} e^{\tau A} \phi_+ \|^2 + \|A^{r+\frac{1}{2}} e^{\tau A} \phi_- \|^2)  \nonumber
\\
\quad &
- \Big\langle \overline{\phi} \cdot \nabla V_+ + \overline{\phi} \cdot \nabla \phi_+ + \overline{V} \cdot \nabla \phi_+  + \frac{1}{2}(\phi_+ \cdot \nabla)(\overline{V}+i\overline{V}^\perp) + \frac{1}{2}(\phi_+ \cdot \nabla)(\overline{\phi}+i\overline{\phi}^\perp)  \nonumber
\\
\quad &
+ \frac{1}{2}(V_+ \cdot \nabla)(\overline{\phi}+i\overline{\phi}^\perp)  + e^{i\Omega t}\Big(Q_{1,+,+}  - Q_{3,+,+}\Big)
+ e^{-i\Omega t} \Big(Q_{1,-,+}  - Q_{3,-,+}\Big) + e^{-2i\Omega t}Q_{4,-,+} , \phi_+  \Big\rangle \nonumber
 \\
\quad &
- \Big\langle \overline{\phi} \cdot \nabla V_- + \overline{\phi} \cdot \nabla \phi_- + \overline{V} \cdot \nabla \phi_-  + \frac{1}{2}(\phi_- \cdot \nabla)(\overline{V}-i\overline{V}^\perp) + \frac{1}{2}(\phi_- \cdot \nabla)(\overline{\phi}-i\overline{\phi}^\perp) \nonumber
\\
\quad &
+ \frac{1}{2}(V_- \cdot \nabla)(\overline{\phi}-i\overline{\phi}^\perp)  + e^{-i\Omega t}\Big(Q_{1,-,-}  - Q_{3,-,-}\Big)
+ e^{i\Omega t} \Big(Q_{1,+,-}  - Q_{3,+,-}\Big) + e^{2i\Omega t}Q_{4,+,-}   , \phi_- \Big\rangle  \nonumber
\\
\quad &
- \underbrace{\Big\langle A^r e^{\tau A} (\overline{\phi}\cdot \nabla V_+), A^r e^{\tau A} \phi_+ \Big\rangle}_\mathrm{Tp2} - \underbrace{\Big\langle A^r e^{\tau A} (\overline{\phi}\cdot \nabla V_-), A^r e^{\tau A} \phi_- \Big\rangle}_\mathrm{Tp2}  \nonumber
\\
\quad &
- \underbrace{\Big\langle A^r e^{\tau A} (\overline{\phi}\cdot \nabla \phi_+), A^r e^{\tau A} \phi_+ \Big\rangle }_\mathrm{Tp1}- \underbrace{\Big\langle A^r e^{\tau A} (\overline{\phi}\cdot \nabla \phi_-), A^r e^{\tau A} \phi_- \Big\rangle}_\mathrm{Tp1} \nonumber
\\
\quad &
- \underbrace{\Big\langle A^r e^{\tau A} (\overline{V}\cdot \nabla \phi_+), A^r e^{\tau A} \phi_+ \Big\rangle}_\mathrm{Tp4} - \underbrace{\Big\langle A^r e^{\tau A} (\overline{V}\cdot \nabla \phi_-), A^r e^{\tau A} \phi_- \Big\rangle}_\mathrm{Tp4} \nonumber
\\
\quad &
- \underbrace{\Big\langle A^r e^{\tau A} (\phi_+\cdot \nabla (\overline{V}+i\overline{V}^\perp)), A^r e^{\tau A} \phi_+ \Big\rangle}_\mathrm{Tp2} - \underbrace{\Big\langle A^r e^{\tau A} (\phi_- \cdot \nabla (\overline{V}-i\overline{V}^\perp)), A^r e^{\tau A} \phi_- \Big\rangle}_\mathrm{Tp2}  \nonumber
\\
\quad &
- \underbrace{\Big\langle A^r e^{\tau A} (\phi_+\cdot \nabla (\overline{\phi}+i\overline{\phi}^\perp)), A^r e^{\tau A} \phi_+ \Big\rangle}_\mathrm{Tp1} - \underbrace{\Big\langle A^r e^{\tau A} (\phi_- \cdot \nabla (\overline{\phi}-i\overline{\phi}^\perp)), A^r e^{\tau A} \phi_- \Big\rangle}_\mathrm{Tp1} \nonumber
\\
\quad &
- \underbrace{\Big\langle A^r e^{\tau A} (V_+\cdot \nabla (\overline{\phi}+i\overline{\phi}^\perp)), A^r e^{\tau A} \phi_+ \Big\rangle}_\mathrm{Tp4} - \underbrace{\Big\langle A^r e^{\tau A} (V_- \cdot \nabla (\overline{\phi}-i\overline{\phi}^\perp)), A^r e^{\tau A} \phi_- \Big\rangle}_\mathrm{Tp4} \nonumber
\\
\quad &
- \underbrace{e^{i\Omega t}\Big(\Big\langle A^r e^{\tau A}(Q_{1,+,+} - Q_{3,+,+}), A^r e^{\tau A} \phi_+ \Big\rangle + \Big\langle A^r e^{\tau A}(Q_{1,+,-} - Q_{3,+,-}), A^r e^{\tau A} \phi_-\Big\rangle\Big) }_{\mathrm{Tp1}, \cdots , \mathrm{Tp5}} \nonumber
\\
\quad &
- \underbrace{e^{-i\Omega t}\Big(\Big\langle A^r e^{\tau A}(Q_{1,-,+} - Q_{3,-,+}), A^r e^{\tau A} \phi_+ \Big\rangle + \Big\langle A^r e^{\tau A}(Q_{1,-,-} - Q_{3,-,-}), A^r e^{\tau A} \phi_- \Big\rangle\Big) }_{\mathrm{Tp1} , \cdots , \mathrm{Tp5}} \nonumber
\\
\quad &
-\underbrace{e^{2i\Omega t}\Big\langle A^r e^{\tau A}Q_{4,+,-}, A^r e^{\tau A} \phi_- \Big\rangle}_{\mathrm{Tp1}, \mathrm{Tp2},\mathrm{Tp4},\mathrm{Tp5}} - \underbrace{e^{-2i\Omega t}\Big\langle A^r e^{\tau A}Q_{4,-,+}, A^r e^{\tau A} \phi_+ \Big\rangle}_{\mathrm{Tp1} ,\mathrm{Tp2},\mathrm{Tp4},\mathrm{Tp5}}, \label{energy-difference-phipm}
\end{align}
and
	\begin{align}
		\frac{1}{2} \frac{d}{dt} \|A^r e^{\tau A} \overline{\phi} \|^2 = & \dot{\tau} \|A^{r+\frac{1}{2}} e^{\tau A} \overline{\phi} \|^2 \nonumber
		\\
		&
		- \underbrace{\Big\langle A^r e^{\tau A} (\overline{\phi}\cdot \nabla \overline{V}), A^r e^{\tau A} \overline{\phi} \Big\rangle}_{\mathrm{Tp2}} - \underbrace{\Big\langle A^r e^{\tau A} (\overline{\phi}\cdot \nabla \overline{\phi}), A^r e^{\tau A} \overline{\phi} \Big\rangle}_\mathrm{Tp1} \nonumber
		\\
		&
		- \underbrace{\Big\langle A^r e^{\tau A} (\overline{V}\cdot \nabla \overline{\phi}), A^r e^{\tau A} \overline{\phi} \Big\rangle}_\mathrm{Tp4}  - \underbrace{e^{2i\Omega t}\Big\langle A^r e^{\tau A}(Q_{1,+,+} + Q_{2,+,+}), A^r e^{\tau A} \overline{\phi}\Big\rangle}_{\mathrm{Tp1} , \mathrm{Tp2}, \mathrm{Tp4},\mathrm{Tp5} } \nonumber
		\\
		&
		- \underbrace{e^{-2i\Omega t}\Big\langle A^r e^{\tau A}(Q_{1,-,-} + Q_{2,-,-}), A^r e^{\tau A} \overline{\phi}\Big\rangle}_{\mathrm{Tp1} , \mathrm{Tp2}, \mathrm{Tp4},\mathrm{Tp5} },  \label{energy-difference-phibar}
	\end{align}
where we have applied Lemmas \ref{lemma-orthogonal-decomposition}--\ref{lemma-projection}. It is easy to verify from \eqref{difference-phibar} and \eqref{difference-ic} that
\begin{equation*}
    \int_{\mathbb{T}^2} \overline{\phi}(\boldsymbol{x},t) \, d\boldsymbol{x} = \int_{\mathbb{T}^2} \overline{\phi}(\boldsymbol{x},t)|_{t=0} \, d\boldsymbol{x}   = 0,
\end{equation*}
and therefore, applying the Poincar\'e inequality yields
\begin{equation}\label{ineq:poincare-1}
    \|\overline{\phi}\| \leq \|A^r e^{\tau A}\overline{\phi}\| \qquad \text{and} \qquad
    \|\overline{\phi}\|_{r,0,\tau} \leq C \|A^r e^{\tau A}\overline{\phi}\|.
\end{equation}

In \eqref{energy-difference-phipm}  and \eqref{energy-difference-phibar}, we have labeled five types of terms by $ \mathrm{Tp1},\cdots, \mathrm{Tp5} $, which we will present the estimates. The rest lower order terms can be estimated in a similar manner and will be omitted.
Temporally, let $ V $ denote $ V_\pm $ and  $ \overline{V} $, and $ \phi $ denote $ \phi_\pm $ and $ \overline{\phi} $. The aforementioned five types of terms are described in the following:
\begin{itemize}
    \item Type 1 (labeled as $ \mathrm{Tp1} $): terms that are trilinear in $\phi$, e.g.,
    	\begin{gather*} e^{\mathrm{j} i \Omega t} \Big\langle A^r e^{\tau A} \bigl( ({\phi}\cdot \nabla) {\phi}\bigr), A^r e^{\tau A} {\phi} \Big\rangle, \qquad e^{\mathrm{j} i \Omega t} \Big\langle A^r e^{\tau A} \bigl( (\nabla \cdot {\phi}) \phi \bigr), A^r e^{\tau A} {\phi} \Big\rangle,
    		\\
    	\text{and}  \qquad  e^{\mathrm{j} i \Omega t}\Big\langle A^r e^{\tau A} \bigl( \int_0^z (\nabla \cdot \phi(\boldsymbol{x},s)) \,ds \partial_z \phi\bigr) , A^r e^{\tau A} \phi  \Big\rangle  , \qquad \mathrm{j} = 0,\pm 1, \pm 2; \end{gather*}
    \item Type 2 (labeled as $ \mathrm{Tp2} $): terms that are bilinear in $\phi$ with no derivative of $\phi$, e.g.,
    \begin{gather*}  e^{\mathrm{j} i \Omega t} \Big\langle A^r e^{\tau A} \bigl( ({\phi}\cdot \nabla) {V}\bigr), A^r e^{\tau A} {\phi} \Big\rangle \qquad  \text{and} \\
    	  e^{\mathrm{j} i \Omega t} \Big\langle A^r e^{\tau A} \bigl( (\nabla \cdot {V}) \phi \bigr), A^r e^{\tau A} {\phi} \Big\rangle,
    	  \qquad \mathrm{j} = 0,\pm 1, \pm 2; \end{gather*}
    \item Type 3 (labeled as $ \mathrm{Tp3} $): terms that are bilinear in $\phi$ and a vertical derivative of $\phi$, e.g., \begin{gather*}   e^{\mathrm{j} i \Omega t}\Big\langle A^r e^{\tau A} \bigl( \int_0^z (\nabla \cdot V(\boldsymbol{x},s)) \,ds \partial_z \phi\bigr) , A^r e^{\tau A} \phi  \Big\rangle  , \qquad \mathrm{j} = 0,\pm 1, \pm 2; \end{gather*}
    \item Type 4 (labeled as $ \mathrm{Tp4} $): terms that are bilinear in $\phi$ and a horizontal derivative of $\phi$, e.g., \begin{gather*} e^{\mathrm{j} i \Omega t} \Big\langle A^r e^{\tau A} \bigl( ({V}\cdot \nabla) {\phi}\bigr), A^r e^{\tau A} {\phi} \Big\rangle, \qquad e^{\mathrm{j} i \Omega t} \Big\langle A^r e^{\tau A} \bigl( (\nabla \cdot {\phi}) V \bigr), A^r e^{\tau A} {\phi} \Big\rangle,
    	\\
    	\text{and}  \qquad  e^{\mathrm{j} i \Omega t}\Big\langle A^r e^{\tau A} \bigl( \int_0^z (\nabla \cdot \phi(\boldsymbol{x},s)) \,ds \partial_z V \bigr) , A^r e^{\tau A} \phi  \Big\rangle  , \qquad \mathrm{j} = 0,\pm 1, \pm 2; \end{gather*}
    \item Type 5 (labeled as $ \mathrm{Tp5} $): terms that are linear in $\phi$, e.g.,
     	\begin{gather*} e^{\mathrm{j} i \Omega t} \Big\langle A^r e^{\tau A} \bigl( ({V}\cdot \nabla) {V}\bigr), A^r e^{\tau A} {\phi} \Big\rangle, \qquad e^{\mathrm{j} i \Omega t} \Big\langle A^r e^{\tau A} \bigl( (\nabla \cdot {V}) V \bigr), A^r e^{\tau A} {\phi} \Big\rangle,
    	\\
    	\text{and}  \qquad e^{\mathrm{j} i \Omega t} \Big\langle A^r e^{\tau A} \bigl( \int_0^z (\nabla \cdot V(\boldsymbol{x},s)) \,ds \partial_z V\bigr) , A^r e^{\tau A} \phi  \Big\rangle  , \qquad \mathrm{j} = \pm 1, \pm 2. \end{gather*}
\end{itemize}

\subsubsection{Estimates of Type 1 -- Type 4 terms}\label{subsec:tp1-4}
We start with Type 1 terms. Applying Lemmas \ref{lemma-type1}--\ref{lemma-type3} yields
\begin{align*}
	|\mathrm{Tp1}| \leq & C_r \int_0^1   \underbrace{\| A^{r+\frac{1}{2}} e^{\tau A} \phi (z) \|_{L^2(\mathbb T^2)}^2}_{L^1 ~ \text{in} ~ z} \underbrace{ \bigl(\| A^{r} e^{\tau A} \phi(z) \|_{L^2(\mathbb T^2)} + \| \phi(z) \|_{L^2(\mathbb T^2)} \bigr)}_{L^\infty ~ \text{in} ~ z }  \,dz \\
	& + C_r \| A^{r+\frac{1}{2}}e^{\tau A}\phi \|^2 \| \partial_z \phi \|_{r,0, \tau}  \leq C_r \|A^{r+\frac{1}{2}}e^{\tau A} \phi\|^2 \|\phi\|_{r,1,\tau},
\end{align*}
where we have used the embedding $ L^\infty_z\hookrightarrow H^1_z  $ in the $ z $-variable and the H\"older inequality. Notice that, for $ \phi = \overline \phi $, the estimate is similar with obvious modification. Therefore, hereafter, unless pointed out explicitly, we omit the estimates in the case of $ \phi = \overline \phi $ and, similarly, $ V = \overline V $.

Similarly, applying Lemma \ref{lemma-banach-algebra} to Types 2 and 3 terms yields
\begin{align*}
	|\mathrm{Tp2}| \leq & C_r \int_0^1 \underbrace{\bigl(  \| A^{r+1}e^{\tau A} V (z) \|_{L^2(\mathbb T^2)} + \| V (z)) \|_{L^2(\mathbb T^2)} \bigr)}_{L^\infty ~ \text{in} ~ z}  \underbrace{ \bigl(\| A^{r}e^{\tau A} \phi (z) \|_{L^2(\mathbb T^2)} + \| \phi (z)) \|_{L^2(\mathbb T^2)} \bigr)^2}_{L^1 ~ \text{in} ~ z} \,dz \\
	 \leq & C_r \| V \|_{r+1, 1, \tau} \| \phi \|_{r,0,\tau}^2 \qquad \qquad \qquad  \text{and}
	 \\
	 |\mathrm{Tp3} | \leq & C_r \int_0^1 \biggl\lbrack \underbrace{\biggl( \int_0^z \| A^{r+1} e^{\tau A} V (s) \|_{L^2(\mathbb T^2)} +  \| V (s) \|_{L^2(\mathbb T^2)} \,ds \biggr)}_{L^\infty ~ \text{in} ~ z } \underbrace{ \biggl( \| A^{r} e^{\tau A} \partial_z \phi (z) \|_{L^2(\mathbb T^2)} + \|\partial_z \phi (z) \|_{L^2(\mathbb T^2)} \biggr)}_{L^2 ~ \text{in} ~ z} \\
	 &  \times \underbrace{\biggl( \| A^{r} e^{\tau A} \phi (z) \|_{L^2(\mathbb T^2)}  +   \|  \phi (z) \|_{L^2(\mathbb T^2)} \biggr)}_{L^2 ~ \text{in} ~ z}  \biggr\rbrack \,dz \leq C_r \| V\|_{r+1,0,\tau} \| \partial_z \phi \|_{r,0,\tau} \|\phi \|_{r,0,\tau} \\
	 \leq & \frac{\nu}{2}  \| \partial_z \phi \|_{r,0,\tau}^2 + { \dfrac{C_{r}}{\nu}} \| V\|_{r+1,0,\tau}^2 \| \phi \|_{r,0,\tau} ^2.
\end{align*}
In order to estimate Type 4 terms, notice that $ \mathrm{Tp4} $ can be written as, with abuse of notations,
\begin{equation*}
	\mathrm{Tp4} =  \mathrm{Tp4}_1 + \mathrm{Tp4}_2,
\end{equation*}
where
\begin{align*}
	& \begin{aligned}
		\mathrm{Tp4}_1 := &  e^{\mathrm j i\Omega t} \Big\langle (V \cdot \nabla) A^r e^{\tau A} \phi, A^r e^{\tau A} \phi \Big\rangle
		 + e^{\mathrm j i\Omega t} \Big\langle  (\nabla \cdot A^r e^{\tau A} \phi) V, A^r e^{\tau A} \phi \Big\rangle \\
		&+ e^{\mathrm j i\Omega t} \Big\langle \int_0^z \bigl(\nabla \cdot A^r e^{\tau A} \phi(s)\bigr) \,ds \partial_z V, A^r e^{\tau A} \phi \Big\rangle,
	\end{aligned}\\
	& \begin{aligned}
		\mathrm{Tp4}_2 := & e^{\mathrm j i\Omega t} \Big\langle A^r e^{\tau A} \bigl( ( V \cdot \nabla ) \phi \bigr) - (V \cdot \nabla) A^r e^{\tau A} \phi, A^r e^{\tau A} \phi \Big\rangle \\
		& + e^{\mathrm j i\Omega t} \Big\langle A^r e^{\tau A} \bigl( ( \nabla \cdot \phi ) V \bigr) - (\nabla \cdot A^r e^{\tau A} \phi) V, A^r e^{\tau A} \phi \Big\rangle \\
		&+ e^{\mathrm j i\Omega t} \Big\langle A^r e^{\tau A} \bigl( \int_0^z( \nabla \cdot  \phi (s) ) \,ds \partial_z V  \bigr) - \int_0^z \bigl(\nabla \cdot A^r e^{\tau A} \phi(s)\bigr) \,ds \partial_z V, A^r e^{\tau A} \phi \Big\rangle.
	\end{aligned}
\end{align*}
Observing from \eqref{energy-difference-phipm} and \eqref{energy-difference-phibar}, only for $ V = V_\pm $, $ \mathrm{Tp4}_1 $ is nontrivial.
Therefore,
after substituting the inequality $ |\alpha|^{\frac{1}{2}} \leq |\beta|^{\frac{1}{2}} + |\xi|^{\frac{1}{2}} $ for $ \alpha + \beta = \xi $ in the Fourier representation of $ \mathrm{Tp4}_1 $ (see the proof of Lemma \ref{lemma-type2} in the appendix), one can obtain that, for any $ \delta \in (0,1) $,
\begin{align*}
	& | \mathrm{Tp4}_1| \leq  \Big|  \Big\langle (A^{\frac{1}{2}} V_\pm \cdot \nabla) A^{r-\frac{1}{2}} e^{\tau A} \phi, A^r e^{\tau A} \phi \Big\rangle    \Big| + \Big|  \Big\langle (V_\pm \cdot \nabla) A^{r-\frac{1}{2}} e^{\tau A} \phi, A^{r+\frac{1}{2}} e^{\tau A} \phi \Big\rangle   \Big| \\
	& \quad + \Big|  \Big\langle  (\nabla \cdot A^{r-\frac{1}{2}} e^{\tau A} \phi)  A^{\frac{1}{2}} V_\pm, A^r e^{\tau A} \phi \Big\rangle \Big| + \Big|  \Big\langle  (\nabla \cdot A^{r-\frac{1}{2}} e^{\tau A} \phi) V_\pm, A^{r+\frac{1}{2}} e^{\tau A} \phi \Big\rangle \Big| \\
	& \quad  + \Big| \Big\langle \int_0^z \bigl(\nabla \cdot A^{r-\frac{1}{2}} e^{\tau A} \phi(s)\bigr) \,ds \partial_z A^{\frac{1}{2}} V_\pm, A^r e^{\tau A} \phi \Big\rangle \Big| + \Big| \Big\langle \int_0^z \bigl(\nabla \cdot A^{r-\frac{1}{2}} e^{\tau A} \phi(s)\bigr) \,ds \partial_z V_\pm, A^{r+\frac{1}{2}} e^{\tau A} \phi \Big\rangle \Big|\\
	& \leq C_r \int_0^1 \underbrace{\bigl( \|A^{\frac{1}{2}}V_\pm(z)\|_{H^{1+\delta}(\mathbb T^2)}+\|V_\pm(z)\|_{H^{1+\delta}(\mathbb T^2)} \bigr)}_{L^\infty ~ \text{in} ~ z}  \underbrace{\| A^{r+\frac{1}{2}} e^{\tau A} \phi (z)  \|_{L^2(\mathbb T^2)}^2}_{L^1 ~ \text{in} ~ z} \,dz \\
	& \quad + C_r \int_0^1 \biggl\lbrack \underbrace{\int_0^z \| A^{r+\frac{1}{2}} e^{\tau A} \phi(s) \|_{L^2(\mathbb T^2)} \,ds}_{L^\infty ~ \text{in} ~ z} \times \underbrace{\| A^{r+\frac{1}{2}} e^{\tau A} \phi(z) \|_{L^2(\mathbb T^2)}}_{L^2 ~ \text{in} ~ z} \\
	& \qquad\qquad \times   \underbrace{\bigl( \| \partial_z A^{\frac{1}{2}} V_\pm (z) \|_{H^{1+\delta}(\mathbb T^2)}  + \|\partial_z V_\pm (z)\|_{H^{1+\delta}(\mathbb T^2)} \bigr)}_{L^2 ~ \text{in} ~ z} \biggr\rbrack
	\,dz  \leq C_r  \| V_\pm \|_{\frac{3}{2}+\delta,1,0} \| A^{r+\frac{1}{2}} e^{\tau A} \phi  \|^2,
\end{align*}
where we have applied the Sobolev embedding inequality and the H\"older inequality. Meanwhile, applying Lemmas \ref{lemma-difference-type1}--\ref{lemma-difference-type4} to $ \mathrm{Tp4}_2 $ yields
\begin{align*}
	|\mathrm{Tp4}_2| \leq & C_r \int_0^1 \| A^r \phi(z) \|_{L^2(\mathbb T^2)}^2 \| A^r V(z) \|_{L^2(\mathbb T^2)} \,dz \\
	& + C_r \tau \int_0^1 \|A^{r+\frac{1}{2}}e^{\tau A} \phi(z)  \|_{L^2(\mathbb T^2)}^2 \|A^{r+\frac{1}{2}}e^{\tau A} V(z)  \|_{L^2(\mathbb T^2)}  \,dz \\
	& + C_r \| A^r \partial_z V \| \| A^r \phi \|^2 + C_r \tau  \| A^{r+\frac{1}{2}}e^{\tau A} \partial_z V \| \| A^{r+\frac{1}{2}} e^{\tau A} \phi \|^2 \\
	\leq & C_r \| V \|_{r,1,\tau} \| \phi \|_{r,0,\tau}^2 + C_r \tau \| V\|_{r+\frac{1}{2},1,\tau} \| A^{r+\frac{1}{2}} e^{\tau A} \phi \|^2.
\end{align*}

\begin{remark}
For the interested readers, we refer to \cite{GILT20} for an alternative estimate of $ \mathrm{Tp4}_1 $, where some cancellations are taking care of. However, in this paper, such cancellations are not necessary and thus omitted.  Notably, the terms $\| V_\pm \|_{\frac{3}{2}+\delta,1,0}$ in the estimate of $\mathrm{Tp4}_1$ is the reason for the requirement \eqref{constraint-tilde-V}.
\end{remark}
\subsubsection{Estimates of Type 5 terms}\label{subsec:tp5}
In this case, $ \mathrm j \neq 0 $ and $ e^{\mathrm j \Omega i t} = \dfrac{1}{\mathrm j \Omega i} \dfrac{d}{dt} e^{\mathrm j \Omega i t} $. Therefore, $ \mathrm{Tp5} $ can be written as, with abuse of notations,
\begin{equation*}
	\mathrm{Tp5} = \dfrac{1}{\Omega} \partial_t  N + \dfrac{1}{\Omega} R,
\end{equation*}
with
\begin{align}
	& \label{def:N} \begin{aligned}
		N := & \dfrac{e^{\mathrm{j}\Omega i t}}{\mathrm j i} \biggl\lbrack  \Big\langle A^r e^{\tau A} \bigl( ( V \cdot \nabla ) V \bigr) , A^r e^{\tau A} \phi  \Big\rangle +  \Big\langle A^r e^{\tau A} \bigl( ( \nabla  \cdot V ) V \bigr) , A^r e^{\tau A} \phi  \Big\rangle \\
		& +  \Big\langle A^r e^{\tau A} \bigl( \int_0^z (\nabla \cdot V (s)) \,ds  \partial_z V \bigr) , A^r e^{\tau A} \phi  \Big\rangle \biggr\rbrack,
	\end{aligned} \\
	& \label{def:R} \begin{aligned}
		R := & \dfrac{e^{\mathrm{j}\Omega i t}}{\mathrm j i} \biggl\lbrack  \underbrace{\partial_t \Big\langle A^r e^{\tau A} \bigl( ( V \cdot \nabla ) V \bigr) , A^r e^{\tau A} \phi  \Big\rangle}_{=:R_1} + \underbrace{\partial_t \Big\langle A^r e^{\tau A} \bigl( ( \nabla  \cdot V ) V \bigr) , A^r e^{\tau A} \phi  \Big\rangle}_{=:R_2} \\
		& + \underbrace{\partial_t \Big\langle A^r e^{\tau A} \bigl( \int_0^z (\nabla \cdot V (s)) \,ds  \partial_z V \bigr) , A^r e^{\tau A} \phi  \Big\rangle}_{=:R_3} \biggr\rbrack.
	\end{aligned}
\end{align}
It is straightforward to check that
\begin{equation}\label{est:N}
N \leq C_r \|V \|_{r,1,\tau} \|V \|_{r+1,0,\tau}\|\phi \|_{r,0,\tau}.
\end{equation}
Meanwhile, one has
\begin{align*}
	R_1 = & 2 \dot\tau \Big\langle A^{r+1} e^{\tau A} \bigl( (V \cdot \nabla ) V\bigr), A^r e^{\tau A} \phi  \Big\rangle + \Big\langle A^r e^{\tau A} \partial_t \bigl( ( V\cdot \nabla ) V \bigr) , A^r e^{\tau A}\phi \Big\rangle \\
	&  +  \Big\langle A^r e^{\tau A}  \bigl( ( V\cdot \nabla ) V \bigr) , A^r e^{\tau A}\partial_t\phi \Big\rangle =: R_{1,1} + R_{1,2} +  R_{1,3}.
\end{align*}
It follows that, thanks to Lemma \ref{lemma-banach-algebra} and similar arguments as in section \ref{subsec:tp1-4},
\begin{equation}\label{est:R-1-1}
	R_{1,1} \leq C_r |\dot \tau| \| V \|_{r+1,1,\tau}  \| V \|_{r+2,0,\tau} \| \phi \|_{r,0,\tau}.
\end{equation}
After applying the Leray projection \eqref{leray-projection} to \eqref{vbarlimit}, together with \eqref{uplimit} and \eqref{umlimit},
for $ V = V_\pm $ or $ \overline V$, one has
\begin{equation}\label{eq:dt-V}
	\partial_t V - \underbrace{\nu \partial_{zz}V}_{\text{for} ~ V = V_\pm} = \mathcal B( V, \nabla V ).
\end{equation}
Here we use $ \mathcal B $ to represent a generic bilinear term with respect to both of its arguments.
With such notations, after applying integration by parts, one can derive
\begin{equation}\label{est:R-1-2}
\begin{aligned}
	R_{1,2}  = &  - 2 \nu \Big\langle A^r e^{\tau A} \bigl( (\partial_z V \cdot \nabla) \partial_z V       \bigr)  , A^r e^{\tau A}  \phi  \Big\rangle - \nu \Big\langle A^r e^{\tau A} \bigl( (\partial_z V \cdot \nabla) V +( V \cdot \nabla) \partial_z V      \bigr)  , A^r e^{\tau A}  \partial_z \phi  \Big\rangle
		\\
	& + \Big\langle A^r e^{\tau A} \bigl(  (\mathcal B(V,\nabla V) \cdot \nabla) V + (V \cdot \nabla) \mathcal B(V, \nabla V)   \bigr)  , A^r e^{\tau A}  \phi  \Big\rangle
	 \\
	\leq & C_{r}{(1+\nu)}\Big( \| V \|_{r,1,\tau} \| V \|_{r+1,1,\tau}  \|\phi \|_{r,1,\tau} + \| V \|_{r,0,\tau} \| V \|_{r+1,1,\tau}^2 \|\phi\|_{r,0,\tau}
	+ \| V\|_{r+1,1,\tau}^2 \| V\|_{r+2,0,\tau}  \|\phi\|_{r,0,\tau}\Big),
\end{aligned}
\end{equation}
where we have applied Lemma \ref{lemma-banach-algebra} and similar arguments as in section \ref{subsec:tp1-4}.
Similarly, according to \eqref{difference-phip}--\eqref{difference-phibar}, for $ \phi = \phi_\pm $ or $ \overline\phi $, one has, with abuse of notations
\begin{equation}\label{eq:dt-phi}
	\partial_t \phi - \underbrace{\bigl( \nu \partial_{zz} \phi + e^{\mathrm{j}\Omega i t }(\int_0^z \nabla\cdot  (\phi + V)(s) \,ds) \partial_z (\phi + V) \bigr)}_{\text{for} \quad  \phi = \phi_\pm \quad \text{and} \quad  V = V_\pm } = \sum_{ \mathrm A, \mathrm B \in \lbrace \phi,  V \rbrace}\mathcal B( \mathrm A, \nabla \mathrm B ).
\end{equation}
Therefore, $ R_{1,3} $ can be estimated as
\begin{equation}\label{est:R-1-3}
	\begin{aligned}
		R_{1,3} = & - \nu \Big\langle A^r e^{\tau A} \bigl( (\partial_z V \cdot \nabla) V + (V \cdot \nabla) \partial_z V  \bigr), A^r e^{\tau A} \partial_z \phi  \Big\rangle  \\
		& - e^{\mathrm j\Omega i t} \Big\langle A^{r+1} e^{\tau A} \bigl( (V \cdot \nabla ) V \bigr), A^{r-1} e^{\tau A} \bigl( (\int_0^z \nabla \cdot (\phi + V)(s) \,ds) \partial_z ( \phi + V)  \bigr) \Big\rangle \\
		& - \sum_{\mathrm A, \mathrm B \in \lbrace \phi, V \rbrace} \Big\langle A^{r+1} e^{\tau A} \bigl( (V \cdot \nabla ) V \bigr), A^{r-1} e^{\tau A} \mathcal B( \mathrm A,\nabla \mathrm B ) \Big\rangle \\
		\leq  & C_{r}{\nu} \| V \|_{r,1,\tau} \| V\|_{r+1,1,\tau} \|\partial_z \phi\|_{r,0,\tau}  \\
		& + C_r \| V \|_{r+1,1,\tau} \| V \|_{r+2,0,\tau} (\| \phi \|_{r-1,1,\tau} + \| V\|_{r-1,1,\tau}  )( \| \phi \|_{r,0,\tau} + \| V \|_{r,0, \tau}  ).
	\end{aligned}
\end{equation}
The estimate of $ R_2 $ is the same as $ R_1 $ (see \eqref{est:R-1-1}, \eqref{est:R-1-2}, and \eqref{est:R-1-3}). To estimate $ R_3 $, one has, after applying integration by parts,
\begin{align*}
	R_3 = & 2 \dot\tau \Big\langle A^{r+1} e^{\tau A} \bigl( \int_0^z (\nabla \cdot V (s) ) \,ds \partial_z V \bigr), A^r e^{\tau A} \phi \Big\rangle - \Big\langle A^{r} e^{\tau A} \partial_t \bigl( (\nabla \cdot V ) V \bigr), A^r e^{\tau A} \phi \Big\rangle \\
	&  - \Big\langle A^{r} e^{\tau A} \partial_t \bigl( \int_0^z (\nabla \cdot V (s) ) \,ds  V \bigr), A^r e^{\tau A} \partial_z\phi \Big\rangle + \Big\langle A^r e^{\tau A} \bigl( \int_0^z (\nabla \cdot V (s) ) \,ds \partial_z V \bigr), A^r e^{\tau A} \partial_t \phi \Big\rangle\\
	 =: &  R_{3,1} + R_{3,2} + R_{3,3} + R_{3,4}.
\end{align*}
As before,
\begin{equation}\label{est:R-3-1}
	R_{3,1} \leq C_r |\dot\tau| \| V \|_{r+2,0,\tau}  \| V\|_{r+1,1,\tau} \| \phi \|_{r,0,\tau}.
\end{equation}
The estimate of $ R_{3,2} $ is the same as that of $ R_{1,2} $ in \eqref{est:R-1-2}. Meanwhile,
substituting representation \eqref{eq:dt-V} in $ R_{3,3} $ leads to
\begin{equation}\label{est:R-3-3}
	\begin{aligned}
		R_{3,3} = & - \Big\langle A^r e^{\tau A}  \bigl( \int_0^z (\nabla \cdot \partial_t V (s)) \,ds V \bigr), A^r e^{\tau A} \partial_z \phi \Big\rangle - \Big\langle A^r e^{\tau A}  \bigl( \int_0^z (\nabla \cdot  V (s)) \,ds \partial_t V \bigr), A^r e^{\tau A} \partial_z \phi \Big\rangle \\
		= & - \Big\langle A^r e^{\tau A}  \bigl( \int_0^z (\nabla \cdot (\nu \partial_{zz}V + \mathcal B(V,\nabla V) ) (s)) \,ds V \bigr), A^r e^{\tau A} \partial_z \phi \Big\rangle \\
		&  - \Big\langle A^r e^{\tau A}  \bigl( \int_0^z (\nabla \cdot  V (s)) \,ds (\nu \partial_{zz}V + \mathcal B(V,\nabla V) ) \bigr), A^r e^{\tau A} \partial_z \phi \Big\rangle\\
		\leq & C_{r} \bigl(  {\nu} \|V\|_{r+1,0,\tau} \|V\|_{r,2,\tau} + \|V\|_{r+1,0,\tau} \| V\|_{r,1,\tau} \|V\|_{r+2,0,\tau}   \bigr) \| \partial_z \phi \|_{r,0,\tau}.
	\end{aligned}
\end{equation}
After substituting \eqref{eq:dt-phi}, $ R_{3,4} $ can be estimated as
\begin{equation}\label{est:R-3-4}
	\begin{aligned}
		R_{3,4} = & - \nu \Big\langle A^r e^{\tau A} \bigl( \int_0^z (\nabla \cdot V (s) ) \,ds \partial_{zz} V \bigr), A^r e^{\tau A} \partial_z \phi \Big\rangle - \nu  \Big\langle A^r e^{\tau A} \bigl( (\nabla \cdot V )  \partial_z V \bigr), A^r e^{\tau A} \partial_z \phi \Big\rangle \\
		& -  e^{\mathrm j \Omega i t } \Big\langle A^{r+1} e^{\tau A} \bigl( \int_0^z (\nabla \cdot V (s) ) \,ds \partial_z V \bigr), A^{r-1} e^{\tau A} \bigl\lbrack \bigl( \int_0^z \nabla \cdot (\phi + V )(s) \,ds \bigr) \partial_z (\phi + V ) \bigr\rbrack \Big\rangle \\
		& -  \sum_{\mathrm A, \mathrm B \in \lbrace \phi, V \rbrace }\Big\langle A^{r+1} e^{\tau A} \bigl( \int_0^z (\nabla \cdot V (s) ) \,ds \partial_z V \bigr), A^{r-1} e^{\tau A} \mathcal B(\mathrm A, \nabla \mathrm B) \Big\rangle \\
		\leq & C_{r} {\nu}  \| V \|_{r+1,0,\tau} \| V\|_{r,2,\tau} \| \partial_z \phi \|_{r,0,\tau} \\
		& + C_r \| V \|_{r+2,0,\tau} \| V \|_{r+1,1,\tau}  (\|\phi \|_{r,0,\tau} + \|V \|_{r,0,\tau} )(\|\phi\|_{r-1,1,\tau} + \|V \|_{r-1,1,\tau} ).
	\end{aligned}
\end{equation}
We emphasize that, in the estimates above, we do not distinguish $ V_\pm $ and $ \overline V $, $ \phi_\pm $  and $ \overline\phi $, i.e., we treat all $ V $ and $ \phi $ as if they are three-dimensional. The estimates in the case when they are two-dimensional are similar with obvious modifications, and thus omitted.
Consequently, combining \eqref{est:R-1-1}--\eqref{est:R-3-4} leads to the estimate of $ R $.

\subsubsection{Finishing of proof of Theorem \ref{theorem-main}}
Without loss of generality, we assume $ | \Omega | >  1 $.
Combining the estimates in subsections \ref{subsec:tp1-4} and \ref{subsec:tp5}, from \eqref{energy-difference-phipm} and \eqref{energy-difference-phibar}, yields, thanks to \eqref{ineq:poincare-1} and the Young inequality,
\begin{equation}\label{main-estimate-1}
    \begin{aligned}
        &\frac{d}{dt} F  + \nu H
        \leq  \Big[ \dot{\tau} + C_r K^{\frac{1}{2}} \tau  + C_r\big( \| V_+\|_{\frac{3}{2}+\delta,1,0} + \|  V_-\|_{\frac{3}{2}+\delta,1,0}  \big) \\
        & \qquad \qquad  + C_r F^{\frac{1}{2}} + C_r H^{\frac{1}{2}}  \Big]
        \times G
         + C_{r}{(1+\nu+\dfrac{1}{\nu})}\Big( K^2+ 1 \Big) F
        \\
        & \quad + \frac{C_{r}({1+\nu})}{|\Omega|}  K H
         + \frac{C_{r}}{|\Omega|} \Big( {\nu \|\partial_z V_+\|_{r,1,\tau} + \nu \|\partial_z V_-\|_{r,1,\tau}} \Big) K^{\frac{1}{2}} H^{\frac{1}{2}}
        \\
        & \quad + \frac{C_{r}{(1+\nu)}}{|\Omega|}\Big( |\dot{\tau}|^2 + K^2 + 1 \Big) +  \dfrac{{1}}{|\Omega|} \partial_t N.
    \end{aligned}
\end{equation}
where $ \delta \in (0,\frac{1}{2}) $ and
\begin{align}
	F:= &  \|A^r e^{\tau A} \overline{\phi} \|^2 + \|\phi_+ \|_{r,0,\tau}^2 + \|\phi_- \|_{r,0,\tau}^2 , \label{F} \\
	G := &  \|A^{r+\frac{1}{2}} e^{\tau A} \overline{\phi} \|^2 + \|A^{r+\frac{1}{2}} e^{\tau A} \phi_+ \|^2 + \|A^{r+\frac{1}{2}} e^{\tau A} \phi_- \|^2 , \\
	H:= &  \|\partial_z \phi_+ \|_{r,0,\tau}^2 + \|\partial_z \phi_- \|_{r,0,\tau}^2, \label{H}\\
	K:= & \|\overline V\|_{r+2,0,\tau}^2 + \|V_+\|_{r+2,0,\tau}^2 + \|V_-\|_{r+2,0,\tau}^2 +  \|V_+\|_{r+1,1,\tau}^2+ \|V_-\|_{r+1,1,\tau}^2 \label{K} .
\end{align}

Assume that, for the moment, we have
\begin{equation}\label{eq:tau-main}
	\dot{\tau} + C_r K^{\frac{1}{2}} \tau  + C_r\big(  \| V_+\|_{\frac{3}{2}+\delta,1,0} + \|  V_-\|_{\frac{3}{2}+\delta,1,0}  \big)  + C_r F^{\frac{1}{2}} + C_r H^{\frac{1}{2}} = 0,
\end{equation}
which implies $ \tau \leq \tau_0 $ and
\begin{equation*}
	|\dot \tau|^2 \leq C_r (\tau_0^2 + 1) K + C_r (F + H).
\end{equation*}
On the other hand, recalling $ M $ as in \eqref{main-bound-initial},
then according to Proposition \ref{global-limit}, \eqref{def:limit-V-tilde}, and \eqref{apri:107}, there exist $ C_{M}, C_r > 1 $ such that
{
\begin{align}
		& \label{growing-1}
		K + \int_0^t \Big(\nu \|\partial_z V_+ (s) \|_{r,1,\tau}^2 + \nu \|\partial_z V_-(s) \|_{r,1,\tau}^2 \Big) \,ds
		\leq \exp[ \exp[\exp(C_r t + C_{M})] ] =: \mathcal K(t) , \\
		& \qquad  \text{and} \nonumber \\
		& \label{growing-2}
		\int_0^t \bigl(  \| V_+(s) \|_{\frac{3}{2}+\delta,1,0}^2 +  \|  V_-(s) \|_{\frac{3}{2}+\delta,1,0}^2 \bigr)  \,ds \leq (1+\dfrac{1}{\nu}) \| \widetilde{V}_0 \|_{\frac{3}{2}+\delta, 0, 0}^2 \mathcal K(t). 
\end{align}
}
Under these conditions, from \eqref{main-estimate-1}, one can derive that
{
\begin{equation}\label{main-estimate-2}
     \begin{aligned}
    	&\frac{d}{dt} F  + \dfrac{\nu}{2} H
    	\leq  C_{r}(1+\nu+\dfrac{1}{\nu})\Big( \mathcal K^2+ 1 \Big) F + \frac{C_{r}(1+\nu)}{|\Omega|}  (\mathcal K + 1 ) H
    	+ \dfrac{C_{r}}{|\Omega|^2} \Big(\nu \|\partial_z V_+\|_{r,1,\tau}^2 + \nu \|\partial_z V_-\|_{r,1,\tau}^2 \Big) \mathcal K
    	\\
    	& \quad + \frac{C_{r}(1+\nu)}{|\Omega|}\Big( \mathcal K^2 + \tau_0^4  + 1 \Big) +  \dfrac{1}{|\Omega|} \partial_t N.
    \end{aligned}
\end{equation}
Therefore, multiplying \eqref{main-estimate-2} with $ e^{- C_{r}(1+\nu+\frac{1}{\nu})\int_0^t (\mathcal K^2 + 1)(s) \,ds} $ leads to
\begin{align*}
	& \dfrac{d}{dt} \bigl( F e^{- C_{r}(1+\nu+\frac{1}{\nu})\int_0^t (\mathcal K^2 + 1)(s) \,ds} \bigr) + \bigl[ \dfrac{\nu}{2} - \frac{C_{r}(1+\nu)}{|\Omega|}  (\mathcal K + 1 )  \bigr] H e^{- C_{r}(1+\nu+\frac{1}{\nu})\int_0^t (\mathcal K^2 + 1)(s) \,ds} \\
	& \leq \dfrac{C_{r}}{|\Omega|^2} \Big(\nu \|\partial_z V_+\|_{r,1,\tau}^2 + \nu \|\partial_z V_-\|_{r,1,\tau}^2 \Big) \mathcal K e^{- C_{r}(1+\nu+\frac{1}{\nu})\int_0^t (\mathcal K^2 + 1)(s) \,ds}
    	\\
    & \quad + \frac{C_{r}(1+\nu)}{|\Omega|}\Big( \mathcal K^2 + \tau_0^4  + 1 \Big) e^{- C_{r}(1+\nu+\frac{1}{\nu})\int_0^t (\mathcal K^2 + 1)(s) \,ds} +  \dfrac{1}{|\Omega|} \partial_t N \times e^{- C_{r}(1+\nu+\frac{1}{\nu})\int_0^t (\mathcal K^2 + 1)(s) \,ds}.
\end{align*}
}
Integrating the above equation in time and recalling that $F(t=0)=0$, one obtains
{
\begin{equation}\label{est:301}
\begin{aligned}
	& \bigl( F(t) e^{- C_{r}(1+\nu+\frac{1}{\nu})\int_0^t (\mathcal K^2 + 1)(s) \,ds} \bigr) + \int_0^t \bigl[ \dfrac{\nu}{2} - \frac{C_{r}(1+\nu)}{|\Omega|}  (\mathcal K (t') + 1 )  \bigr] H (t') e^{- C_{r}(1+\nu+\frac{1}{\nu})\int_0^{t'} (\mathcal K^2 + 1)(s) \,ds} \,dt' \\
	& \leq \int_0^t \dfrac{C_{r}}{|\Omega|^2} \Big(\nu \|\partial_z V_+(t') \|_{r,1,\tau}^2 + \nu \|\partial_z V_-(t') \|_{r,1,\tau}^2 \Big) \mathcal K e^{- C_{r}(1+\nu+\frac{1}{\nu})\int_0^{t'} (\mathcal K^2 + 1)(s) \,ds} \,dt' \\
	& \quad + \int_0^t \frac{C_{r}(1+\nu)}{|\Omega|}\Big( \mathcal K^2 (t') + \tau_0^4  + 1 \Big) e^{- C_{r}(1+\nu+\frac{1}{\nu})\int_0^{t'} (\mathcal K^2 + 1)(s) \,ds} \,dt' \\
	& \quad  +  \int_0^t  \dfrac{1}{|\Omega|} \partial_t N(t') e^{- C_{r}(1+\nu+\frac{1}{\nu})\int_0^{t'} (\mathcal K^2 + 1)(s) \,ds} \,dt' \\
	& \leq \dfrac{C_{r}}{|\Omega|^2 } \mathcal K(t) + \dfrac{C_{r}(1+\nu)}{|\Omega|} \int_0^t ( \mathcal K(t') + \tau_0^4 + 1 ) \,dt' +  \dfrac{1}{|\Omega|} \int_0^t  \partial_t N(t') e^{- C_{r}(1+\nu+\frac{1}{\nu})\int_0^{t'} (\mathcal K^2 + 1)(s) \,ds} \,dt',
\end{aligned}
\end{equation}
}
where we have applied \eqref{growing-1} and, thanks to the definition of $ \mathcal K $,
\begin{equation}\label{est:e-ineq}
\mathcal K(t') e^{- C_{r}(1+\nu+\frac{1}{\nu})\int_0^{t'} (\mathcal K^2 + 1)(s) \,ds} < C,
\end{equation}
for some constant $ C \in (0,\infty) $. On the other hand, thanks to \eqref{est:N}, \eqref{growing-1}, and \eqref{est:e-ineq}, since $N(t=0)=0$, one can derive that
\begin{equation}\label{N}
    \begin{split}
        & \int_0^t  \partial_t N(t') e^{- C_{r}(1+\nu+\frac{1}{\nu})\int_0^{t'} (\mathcal K^2 + 1)(s) \,ds} \,dt' = N(t) e^{- C_{r}(1+\nu+\frac{1}{\nu})\int_0^{t} (\mathcal K^2 + 1)(s) \,ds} \\
    & \qquad + C_{r}(1+\nu + \frac{1}{\nu}) \int_0^t N(t') (\mathcal K^2(t') + 1) e^{- C_{r}(1+\nu+\frac{1}{\nu})\int_0^{t'} (\mathcal K^2 + 1)(s) \,ds} \,dt'\\
    & \leq \mathcal K(t) e^{- C_{r}(1+\nu+\frac{1}{\nu})\int_0^{t} (\mathcal K^2 + 1)(s) \,ds} F^{\frac{1}{2}}(t) + C_{r}(1+\nu+\dfrac{1}{\nu}) \int_0^t (\mathcal K^2 (t') + 1 ) \mathcal K(t') F^{\frac{1}{2}}(t') e^{- C_{r}(1+\nu+\frac{1}{\nu})\int_0^{t'} (\mathcal K^2 + 1)(s) \,ds} \,dt' \\
    & \leq C_{r}F^{\frac{1}{2}}(t) + C_{r}(1+\nu + \frac{1}{\nu}) \int_0^t (\mathcal K^2 (t') + 1 )F^{\frac{1}{2}}(t') \,dt'.
    \end{split}
\end{equation}
Hence, \eqref{est:301} implies that, for $ t \in [0,\mathcal T] $, since $|\Omega|>1$, after applying the young inequality,
{
\begin{equation}\label{est:302}
    \begin{aligned}
    & F(t) + \dfrac{\nu}{4}\int_0^t H (t')  \,dt' \leq \dfrac{C_{r}}{|\Omega|} \mathcal K(t) e^{C_{r}(1+\nu+\frac{1}{\nu})\int_0^t (\mathcal K^2 + 1)(s) \,ds} \\
    & \qquad + \dfrac{C_{r}(1+\nu+\frac{1}{\nu})^2}{|\Omega|} \biggl( \int_0^t \bigl( \mathcal K^2(t') + 1 \bigr) \,dt'\biggr)^2 e^{C_{r}(1+\nu+\frac{1}{\nu})\int_0^t (\mathcal K^2 + 1)(s) \,ds} \\
    & \qquad + \dfrac{C_{r}(1+\nu)}{|\Omega|} \int_0^t \bigl( \mathcal K(t') + \tau_0^4 + 1 \bigr) \,dt' \times e^{C_{r}(1+\nu+\frac{1}{\nu})\int_0^t (\mathcal K^2 + 1)(s) \,ds},
    \end{aligned}
\end{equation}
}
where $ \mathcal T \in (0,\infty] $ is given by the following constraints:
\begin{equation}\label{constraint-2}
\tau(s) > 0 \qquad \text{and} \qquad
\dfrac{\nu}{2} - \dfrac{C_{r}(1+\nu)}{|\Omega|}(\mathcal K(s) + 1 ) \geq \dfrac{\nu}{4} > 0 \qquad \text{for} \quad s \in [0,\mathcal T].
\end{equation}
Since $|\Omega|\geq |\Omega_0|$, in particular, there exists a constant $ \mathcal C_{M,r,\tau_0}  \in (1,\infty) $ such that, for $ t \in (0,\mathcal T] $,
{
\begin{equation}\label{est:304}
    F(t) + \dfrac{\nu}{4}\int_0^t H(t') \,dt' \leq \dfrac{1}{|\Omega_0|}(1+\nu+\dfrac{1}{\nu})^2 \exp\biggl[(1+\nu+\frac{1}{\nu})\exp[\exp[\exp[\mathcal C_{M,r,\tau_0}(t+1)]]]\biggr] .
\end{equation}
}

Now we will be able to estimate $ \mathcal T $. To ensure $ \tau > 0 $ in \eqref{constraint-2}, from \eqref{eq:tau-main}, \eqref{growing-1}, \eqref{growing-2}, and \eqref{est:304}, one has
{
\begin{equation}\label{est:tau}
    \begin{aligned}
    & \tau(t) =  - C_r \int_0^t e^{-C_r \int_{t'}^t K^{\frac{1}{2}}(s)\,ds} \bigl(\|V_+\|_{3/2+\delta, 1,0} +\|V_-\|_{3/2+\delta, 1,0} + F^{\frac{1}{2}} + H^{\frac{1}{2}}  \bigr) \,dt'\\
    & \qquad + \tau_0 e^{-C_r \int_0^t K^{\frac{1}{2}}(t')\,dt'} \geq \tau_0 \exp[\exp[\exp[\exp[-\mathcal C'_{M,r}(t+1)]]]] \\
    & \qquad\qquad - C_r \biggl((1+\dfrac{1}{\nu^{1/2}})\|\widetilde V_0 \|_{\frac{3}{2}+\delta, 0,0} + \dfrac{1}{|\Omega_0|^{1/2}} (1 + \nu + \dfrac{1}{\nu^{3/2}}) \biggr)\exp\biggl[(1+\nu+\frac{1}{\nu})\exp[\exp[\exp[C_{M,r,\tau_0}(t+1)]]] \biggr]
    \end{aligned}
\end{equation}
}
for some constant $ \mathcal C'_{M,r}, \mathcal C'_{M,r,\tau_0} \in (1,\infty) $. Notably, the function $\tau(t)$ we obtain is bounded above by \eqref{tau-limit-system}.
Therefore, for $ t > 0 $ satisfying
{
\begin{equation}\label{est:305}
\begin{gathered}
    \exp\biggl[(1+\nu+\frac{1}{\nu})\exp[\exp[\exp[ \mathcal C'_{M,r,\tau_0}(t+1)]]] -\exp[\exp[\exp[-C_{M,r}'(t+1)]]] \biggr]\\
    < \dfrac{\tau_0}{2 C_r \bigl((1+\dfrac{1}{\nu^{1/2}})\|\widetilde V_0 \|_{\frac{3}{2}+\delta, 0,0} + \frac{1}{|\Omega_0|^{1/2}} (1 + \nu + \dfrac{1}{\nu^{3/2}}) \bigr)},
\end{gathered}
\end{equation}
or equivalently, under the assumption of \eqref{constraint-tilde-V}, for some $ C_{M,r,\tau_0}'' \in (1,\infty) $,
\begin{equation}
    \exp\biggl[ (1+\nu+\frac{1}{\nu})\exp[\exp[\exp[ \mathcal C''_{M,r,\tau_0}(t+1)]]] -\exp[\exp[\exp[-C_{M,r,\tau_0}''(t+1)]]] \biggr] \\
    < \dfrac{C_{M,r,\tau_0}''|\Omega_0|^{1/2}}{1+\nu + \frac{1}{\nu^{3/2}}},
\end{equation}
}
it follows that $ \tau(t) > 0 $.
{
In particular, for $ t \in (0,T] $ with
\begin{equation}
T = \dfrac{1}{C''_{M,r,\tau_0}}\log\left[\frac1{e^{C''_{M,r,\tau_0}}}\log\left[\log\left[\dfrac{\log \left(\frac{C_{M,r,\tau_0}''|\Omega_0|^{1/2}}{1+\nu + \frac{1}{\nu^{3/2}}}\right)
}{1+\nu+\frac{1}{\nu}}\right]\right]\right],
\end{equation}
the above inequality is satisfied.
}

Consequently, under condition \eqref{constraint-tilde-V}, \eqref{constraint-2} and \eqref{est:305} imply \eqref{constraint-1}, and \eqref{est:304} implies \eqref{main-regularity} thanks to \eqref{def:perturbation}.
This completes the proof of Theorem \ref{theorem-main}.


\subsection{Proof of Theorem \ref{theorem-small-barotropic}}\label{section-smallbaro}

In this section, we prove Theorem \ref{theorem-small-barotropic}. We only sketch the proof for the first two parts, and will provide detailed proof for the third part.

For the first part of the theorem, thanks to Remark \ref{special-situation-limit}, we know that when $ \sup_{0\leq t< \infty }\|\overline{V}(t)\|_{r+3,0,\tau(t)} \leq C_{M,r}$ the growth of $\|\widetilde{V}(t)\|_{r+2,1,\tau(t)}$ will only be exponentially in time. Thus, the function $\mathcal K(t)$ appears in the proof of Theorem \ref{theorem-main} (e.g., \eqref{growing-1} and \eqref{est:302}) becomes only exponentially in time. This reduces two logarithms in the estimate of existence time and gives
\begin{equation*}
    \mathcal T = \frac{1}{C_{\tau_0, M, r, \nu} } \log(\log (|\Omega_0|) ).
\end{equation*}
This can be seen as in \eqref{constraint-2} -- \eqref{est:305}.

Similarly, for the second part of Theorem \ref{theorem-small-barotropic}, thanks to Remark \ref{special-situation-limit}, when $\sup_{0\leq t < \infty }\|\overline{V}(t)\|_{r+3,0,\tau} \leq \frac{\nu}{4C_{r,\alpha}}$ is small enough $\|\widetilde{V}(t)\|_{r+2,1,\tau(t)}$ does not grow and thus the function $\mathcal K(t)$ is uniformly-in-time bounded. This reduces one more logarithm and gives
\begin{equation*}
    \mathcal T = \frac{1}{C_{\tau_0, M, r, \nu} } \log (|\Omega_0|) ).
\end{equation*}
To show that the smallness condition \eqref{constraint-tilde-V} can be relaxed, recalling $ K $ in \eqref{K}. Under our new assumption on $\overline{\mathcal V}$, thanks to Remark \ref{special-situation-limit}, we have that $K^{\frac{1}{2}} \leq \frac{\nu}{C_{r,\alpha}} + C_M e^{-\frac{\nu}{2}t}$ and $\|V_+(t)\|_{3/2+\delta, 1,0} +\|V_-(t)\|_{3/2+\delta, 1,0} \leq \frac{\tau_0}{C_{r,\nu,M}} e^{-\frac{\nu}{2}t}$. Now recall from \eqref{est:tau} that
\begin{equation*}
    \begin{aligned}
    \tau(t) =  \Big(\tau_0 - C_r \int_0^t e^{C_r \int_0^{t'} K^{\frac{1}{2}}(s)\,ds} \bigl(\|V_+\|_{3/2+\delta, 1,0} +\|V_-\|_{3/2+\delta, 1,0} + F^{\frac{1}{2}} + H^{\frac{1}{2}}  \bigr) \,dt' \Big) e^{-C_r \int_0^t K^{\frac{1}{2}}(t')\,dt'},
    \end{aligned}
\end{equation*}
in which we will ask for
\begin{gather*}
    \tau_0 - C_r \int_0^t e^{C_r \int_0^{t'} K^{\frac{1}{2}}(s)\,ds} \bigl(\|V_+\|_{3/2+\delta, 1,0} +\|V_-\|_{3/2+\delta, 1,0}\bigr) dt'
    \\
    \geq \tau_0 - C_r \int_0^\infty C_{M,\nu} \frac{\tau_0}{C_{r,\nu,M}} e^{\frac{C_r}{C_{r,\alpha}} \nu t' - \frac{\nu}{2} t'}  dt' \geq \frac{\tau_0}{2},
\end{gather*}
provided that $C_{r,\nu,M}$ and $C_{r,\alpha}$ are large enough. From this, one can conclude that the smallness assumption can be relaxed and replaced by $\|\widetilde{\mathcal{V}}_0\|_{\frac{3}{2}+\delta,0,0} \leq  \frac{\tau_0}{C_{r,\nu,M}}$.

Next we give the detailed proof to the third part of Theorem \ref{theorem-small-barotropic}. Consider the initial data satisfying $\|\overline{\mathcal V}_0\|_{r+3,0,\tau_0} \leq \frac{M}{|\Omega|_0}$. We set $\overline{V} = 0$ and replace the initial condition \eqref{difference-ic} of the perturbed system to
\begin{equation*}
    \overline{\phi}_0 = \overline{\mathcal{V}}_0, \quad (\phi_\pm)_0 = 0.
\end{equation*}
With more careful estimates, \eqref{main-estimate-1} becomes
\begin{equation}\label{main-estimate-small-1}
    \begin{aligned}
        &\frac{d}{dt} F  + \nu H
        \leq  \Big[ \dot{\tau} + C_r K^{\frac{1}{2}} \tau  + C_r\big( \| V_+\|_{\frac{3}{2}+\delta,1,0} + \|  V_-\|_{\frac{3}{2}+\delta,1,0}  \big) \\
        & \qquad \qquad  + C_r F^{\frac{1}{2}} + C_r H^{\frac{1}{2}}  \Big]
        \times G
         + C_{r,\nu}L F
        \\
        & \quad + \frac{C_{r,\nu}}{|\Omega|}  K H
         + \frac{C_{r,\nu}}{|\Omega|}    \Big(\|\partial_z V_+\|_{r,1,\tau} + \|\partial_z V_-\|_{r,1,\tau} \Big) K^{\frac{1}{2}} H^{\frac{1}{2}}
        \\
        & \quad + \frac{C_{r,\nu,\tau_0}}{|\Omega|} L  +  \dfrac{C_{r, \nu}}{|\Omega|} \partial_t N,
    \end{aligned}
\end{equation}
where $ \delta \in (0,\frac{1}{2}) $ and $F,G,H$ are defined as in \eqref{F}--\eqref{H},
\begin{equation*}
    K:=  \|V_+\|_{r+2,0,\tau}^2 + \|V_-\|_{r+2,0,\tau}^2 +  \|V_+\|_{r+1,1,\tau}^2+ \|V_-\|_{r+1,1,\tau}^2, \quad L:= K^{\frac{1}{2}} + K + K^2,
\end{equation*}
and
\begin{equation}\label{constraint-tau-small}
    \dot{\tau} + C_r K^{\frac{1}{2}} \tau  + C_r\big( \| V_+\|_{\frac{3}{2}+\delta,1,0} + \|  V_-\|_{\frac{3}{2}+\delta,1,0}  \big)  + C_r F^{\frac{1}{2}} + C_r H^{\frac{1}{2}}=0.
\end{equation}
On the other hand, thanks to Remark \ref{special-situation-limit}, \eqref{def:limit-V-tilde}, and \eqref{apri:107}, there exist $ C_{M,\nu}, C_r, C > 1 $ such that
\begin{gather}
	 \label{growing-small-1}
		L
		\leq C_{M} e^{-\frac{\nu}{C}t} =: \mathcal K(t) , \\
		\nu\int_0^t \Big(\|\partial_z V_+ (s) \|_{r,1,\tau}^2 + \|\partial_z V_-(s) \|_{r,1,\tau}^2 \Big) e^{\nu s}\,ds \leq C_M
		 \qquad  \text{and}  \\
		 \label{growing-small-2}
		  \nu\int_0^t \bigl( \| V_+(s) \|_{\frac{3}{2}+\delta,1,0}^2 + \|  V_-(s) \|_{\frac{3}{2}+\delta,1,0}^2 \bigr) e^{\nu s} \,ds \leq  C\| \widetilde{V}_0 \|_{\frac{3}{2}+\delta, 0, 0}^2 .
\end{gather}
With these conditions, from \eqref{main-estimate-small-1}, one can derive that
\begin{equation*}
     \begin{aligned}
    	&\frac{d}{dt} F  + \nu H
    	\leq  C_{r,\nu} L F + \frac{C_{r,\nu}}{|\Omega|}  (\mathcal K + 1 ) H
    	+ \dfrac{C_{r,\nu}}{|\Omega|^2} \Big(\|\partial_z V_+\|_{r,1,\tau}^2 + \|\partial_z V_-\|_{r,1,\tau}^2 \Big) \mathcal K
    	+ \frac{C_{r,\nu,\tau_0}}{|\Omega|}L +  \dfrac{C_{r, \nu}}{|\Omega|} \partial_t N,
    \end{aligned}
\end{equation*}
and thus
\begin{equation}\label{main-estimate-small-2}
     \begin{aligned}
    	&\frac{d}{dt} F  + \frac{\nu}{2} H
    	\leq  C_{r,\nu} L F
    	+ \dfrac{C_{r,\nu}}{|\Omega|^2} \Big(\|\partial_z V_+\|_{r,1,\tau}^2 + \|\partial_z V_-\|_{r,1,\tau}^2 \Big) \mathcal K
    	+ \frac{C_{r,\nu,\tau_0}}{|\Omega|}L +  \dfrac{C_{r, \nu}}{|\Omega|} \partial_t N,
    \end{aligned}
\end{equation}
provided that $|\Omega| > C_{M,r,\nu}$ for some positive constant $C_{M,r,\nu} >0$.
Multiplying \eqref{main-estimate-small-2} with $ e^{- C_{r,\nu}\int_0^t L(s) \,ds} $ leads to
\begin{align*}
	& \dfrac{d}{dt} \bigl( F e^{- C_{r,\nu}\int_0^t L(s) \,ds} \bigr) + \dfrac{\nu}{2}  H e^{- C_{r,\nu}\int_0^t L(s) \,ds}  \leq \dfrac{C_{r,\nu}}{|\Omega|^2} \Big(\|\partial_z V_+\|_{r,1,\tau}^2 + \|\partial_z V_-\|_{r,1,\tau}^2 \Big) \mathcal K e^{- C_{r,\nu}\int_0^t L(s) \,ds}
    	\\
    & \qquad \qquad \qquad + \frac{C_{r,\nu,\tau_0}}{|\Omega|}L e^{- C_{r,\nu}\int_0^t L(s) \,ds} +  \dfrac{C_{r, \nu}}{|\Omega|} \partial_t N e^{- C_{r,\nu}\int_0^t L(s) \,ds}.
\end{align*}
After integrating the above equation in time and recalling that $F(t=0)\leq \frac{M}{|\Omega_0|}$, since $|\Omega|>|\Omega_0|>1$, one obtains
\begin{equation}\label{est:301-1}
\begin{aligned}
	& \bigl( F(t) e^{- C_{r,\nu}\int_0^t L(s) \,ds} \bigr) + \int_0^t  \dfrac{\nu}{2}  H (t') e^{- C_{r,\nu}\int_0^{t'} L(s) \,ds} \,dt' \\
	& \leq \frac{C_M}{|\Omega_0|} + \int_0^t \dfrac{C_{r,\nu}}{|\Omega_0|} \Big(\|\partial_z V_+(t') \|_{r,1,\tau}^2 + \|\partial_z V_-(t') \|_{r,1,\tau}^2 \Big) \mathcal K e^{- C_{r,\nu}\int_0^{t'} L(s) \,ds} \,dt' \\
	& \quad + \int_0^t \frac{C_{r,\nu,\tau_0}}{|\Omega_0|}L(t') e^{- C_{r,\nu}\int_0^{t'} L(s) \,ds} \,dt'  +  \int_0^t  \dfrac{C_{r, \nu}}{|\Omega_0|} \partial_t N(t') e^{- C_{r,\nu}\int_0^{t'} L(s) \,ds} \,dt' \\
	& \leq \dfrac{C_{M,r,\nu,\tau_0}}{|\Omega_0| } + \dfrac{C_{r, \nu}}{|\Omega_0|} \int_0^t  \partial_t N(t') e^{- C_{r,\nu}\int_0^{t'} L(s) \,ds} \,dt'.
\end{aligned}
\end{equation}
According to \eqref{N}, since now $N(0)\neq 0$ due to $\overline{\phi}_0 \neq 0$, the estimate becomes
\begin{equation*}
    \begin{split}
        & \int_0^t  \partial_t N(t') e^{- C_{r,\nu}\int_0^{t'} L(s) \,ds} \,dt' = N(t) e^{- C_{r,\nu}\int_0^{t} (\mathcal K^2 + 1)(s) \,ds}  -N(0)\\
    & \qquad + C_{r,\nu} \int_0^t N(t') L(t') e^{- C_{r,\nu}\int_0^{t'} L(s) \,ds} \,dt'\\
    & \leq C_{M,r,\nu} \Big(F^{\frac{1}{2}}(t)+1 \Big) + C_{r,\nu} \int_0^t  \mathcal K(t') F^{\frac{1}{2}}(t')  \,dt' .
    \end{split}
\end{equation*}
Hence, \eqref{est:301-1} implies that, for $ t \in [0,\mathcal T] $, after applying the young inequality, one has
\begin{equation}\label{est:302-1}
    \begin{aligned}
    & F(t) + \int_0^t H (t')  \,dt' \leq \dfrac{C_{M,r,\nu,\tau_0}}{|\Omega_0|} ,
    \end{aligned}
\end{equation}
where $ \mathcal T \in (0,\infty] $ is given by the  constraint
\begin{equation*}
\tau(s) > 0 \qquad \text{for} \quad s \in [0,\mathcal T].
\end{equation*}
Now we will be able to estimate $ \mathcal T $. To ensure $ \tau > 0 $, from \eqref{constraint-tau-small}, \eqref{growing-small-1}, \eqref{growing-small-2}, and \eqref{est:302-1}, one has
\begin{equation}\label{est:tau-2}
    \begin{aligned}
     \tau(t) &=  - C_r \int_0^t e^{-C_r \int_{t'}^t K^{\frac{1}{2}}(s)\,ds} \bigl(\|V_+\|_{3/2+\delta, 1,0} +\|V_-\|_{3/2+\delta, 1,0} + F^{\frac{1}{2}} + H^{\frac{1}{2}}  \bigr) \,dt'\\
    & \qquad \qquad \qquad+ \tau_0 e^{-C_r \int_0^t K^{\frac{1}{2}}(t')\,dt'}
    \\
    &\geq \tau_0  C'_{M,r,\nu} - C'_{M,r,\nu,\tau_0} \dfrac{1}{|\Omega_0|^{\frac{1}{2}}} (t+1) - C_{r,\nu} \|\widetilde V_0 \|_{\frac{3}{2}+\delta, 0,0}
    \end{aligned}
\end{equation}
for some constant $ \mathcal C'_{M,r,\nu} \in(0,1), C_{r,\nu}, \mathcal C'_{M,r,\nu,
    \tau_0} \in (1,\infty) $. Therefore, for $ t > 0 $ satisfying
\begin{equation}\label{est:305-2}
    t+1 < \dfrac{C'_{M,r,\nu} \tau_0 |\Omega_0|^{\frac{1}{2}}}{2\mathcal C'_{M,r,\nu,
    \tau_0} }
\end{equation}
and $\|\widetilde V_0 \|_{\frac{3}{2}+\delta, 0,0}$ satisfying
\begin{equation*}
    \|\widetilde V_0 \|_{\frac{3}{2}+\delta, 0,0} < \frac{\tau_0  C'_{M,r,\nu}}{2C_{r,\nu}},
\end{equation*}
it follows that $ \tau(t) > 0 $. Consequently, \eqref{est:305-2} implies $\mathcal T = \frac{|\Omega_0|^{\frac{1}{2}}}{C_{\tau_0, M, r, \nu} }$. This completes the proof of Theorem \ref{theorem-small-barotropic}.


\section{Global Existence in $2D$ with $\Omega=0$}\label{section-global}

In this section, we show that the weak solution obtained in section \ref{section-local} exists globally in time in the case of $2D$ and $\Omega=0$, provided that the initial data is small. This result is similar to the one in \cite{PZZ18}, where system \eqref{PE-system} with Dirichlet boundary condition is considered.

To be more precious,
let us consider $\mathcal{V}=(u,v)^\top(x,z,t)$ with \eqref{mean-zero-2}, i.e., the solution to system \eqref{PE-system} independent of the $ y $-variable. It is easy to verify that
\begin{subequations}\label{system-2d-with-rotation}
\begin{align}
    &\overline{u}=0,
    \\
    &\partial_t \overline{v} + \partial_x  \mathfrak P_0 (\widetilde{u} \widetilde{v})=0,
    \\
    &\partial_t \widetilde{u} + \widetilde{u}\partial_x \widetilde{u} - \partial_x \mathfrak P_0 (\widetilde{u}^2) - \Big(\int_0^z \partial_x\widetilde{u}(x,s) ds \Big) \partial_z \widetilde{u} - \Omega \widetilde{v} -\nu \partial_{zz} \widetilde{u} = 0,
    \\
    &\partial_t \widetilde{v} + \widetilde{u}\partial_x \widetilde{v} + \widetilde{u}\partial_x \overline{v} - \partial_x \mathfrak P_0 (\widetilde{u} \widetilde{v})  - \Big(\int_0^z \partial_x\widetilde{u}(x,s) ds \Big) \partial_z \widetilde{v} + \Omega \widetilde{u} -\nu \partial_{zz} \widetilde{v}=0.\label{system-2d-with-rotation-4}
\end{align}
\end{subequations}
 We remind readers that $ \mathfrak P_0 $ is the barotropic projection operator defined in \eqref{P0}.
In addition, let $\Omega = 0$. Then one can observe that $\overline{v}\equiv 0$ and $\widetilde{v}\equiv 0$ are invariant in time, a property that is not true in the case of $\Omega \neq 0$.
Consequently, with $\Omega = 0$ and $\overline{v}_0 = \widetilde{v}_0 = 0$, system \eqref{system-2d-with-rotation} reduces to
\begin{equation}\label{system-2d-without-rotation}
   \partial_t \widetilde{u} + \widetilde{u}\partial_x \widetilde{u} - \partial_x \mathfrak P_0 (\widetilde{u}^2) - \Big(\int_0^z \partial_x\widetilde{u}(x,s) ds \Big) \partial_z \widetilde{u}  -\nu \partial_{zz} \widetilde{u} = 0 \qquad \text{with} \qquad \partial_z \widetilde{u}|_{z=0,1}.
\end{equation}

We have the following theorem concerning the global existence of the weak solutions to  \eqref{system-2d-without-rotation} with $\Omega = 0$:
\begin{theorem}\label{theorem-global}
For $r>2$ and $\tau_0 > 0$, suppose that the initial data
$\widetilde u|_{t=0} = \widetilde{u}_0 \in \mathcal{S}_{r,0,\tau_0} $
with $ \int_0^1 \widetilde{u}_0(x,z) \,dz = 0 $ satisfies the smallness condition
\begin{equation}\label{small-conditn}
   \| \widetilde{u}_0 \|_{r,0,\tau_0} < \frac{\nu \tau_0}{\mathcal C_r} ,
\end{equation}
where $\mathcal C_r > 0$ is a constant as in \eqref{Cr-global}, below. Then the unique weak solution to system \eqref{system-2d-without-rotation} exists globally in time.
\end{theorem}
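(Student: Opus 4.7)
The plan is to piece together two ingredients: the local well-posedness and energy framework already built up in Section \ref{section-local}, and a decay-type estimate powered by the vertical viscosity plus the Poincar\'e inequality that is available because $\widetilde u$ has zero vertical mean. First I would invoke (the 2D analog of) Theorem \ref{theorem-local} to obtain a unique Leray-Hopf type weak solution $\widetilde u \in L^\infty(0,\mathcal T_\ast; \mathcal S_{r,0,\tau(t)}) \cap L^2(0, \mathcal T_\ast; \mathcal S_{r,1,\tau(t)})$ on some maximal interval $[0,\mathcal T_\ast)$, and check by integrating \eqref{system-2d-without-rotation} in $z$ that $\mathfrak P_0 \widetilde u(t) \equiv 0$ is preserved. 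This zero mean is crucial: together with $\partial_z \widetilde u|_{z=0,1}=0$ it yields the Poincar\'e-type bound $\|\widetilde u\|_{r,0,\tau} \leq C_P \|\partial_z \widetilde u\|_{r,0,\tau}$.

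Next I would redo the $\mathcal S_{r,0,\tau}$ energy estimate already carried out in \eqref{apri-001}--\eqref{apri-002}, using Lemmas \ref{lemma-banach-algebra}, \ref{lemma-type1}, and \ref{lemma-type2} for the trilinear terms, and then absorbing the lower-order factor $\|\widetilde u\|_{r,0,\tau}$ into $\|\partial_z \widetilde u\|_{r,0,\tau}$ via Poincar\'e. The outcome should be an inequality of the form
\begin{equation*}
\tfrac{1}{2}\tfrac{d}{dt}\|\widetilde u\|_{r,0,\tau}^2 + \nu \|\partial_z \widetilde u\|_{r,0,\tau}^2 \leq \Big(\dot\tau + \mathcal C_r \|\partial_z \widetilde u\|_{r,0,\tau}\Big)\|A^{r+\frac{1}{2}} e^{\tau A}\widetilde u\|^2,
\end{equation*}
for a constant $\mathcal C_r$ depending only on $r$, and I would then fix $\tau$ by the ODE
\begin{equation*}
\dot\tau(t) = -\mathcal C_r \|\partial_z \widetilde u(t)\|_{r,0,\tau(t)}, \qquad \tau(0)=\tau_0,
\end{equation*}
so that the right-hand side is annihilated and $\|\widetilde u(t)\|_{r,0,\tau(t)}^2 + 2\nu \int_0^t \|\partial_z \widetilde u(s)\|_{r,0,\tau(s)}^2\,ds \leq \|\widetilde u_0\|_{r,0,\tau_0}^2$.

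With this basic energy identity in hand, the Poincar\'e inequality upgrades it to exponential decay of $\|\widetilde u\|_{r,0,\tau}$ at rate $\nu/(2C_P^2)$, and by testing against the integrating factor $e^{\nu s/(2C_P^2)}$ one also obtains $\int_0^\infty e^{\nu s/(2C_P^2)} \|\partial_z \widetilde u(s)\|_{r,0,\tau(s)}^2\,ds \leq \tfrac{1}{\nu}\|\widetilde u_0\|_{r,0,\tau_0}^2$. Applying Cauchy--Schwarz against $e^{-\nu s/(2C_P^2)}$ yields the \emph{uniform in $t$} bound
\begin{equation*}
\int_0^t \|\partial_z \widetilde u(s)\|_{r,0,\tau(s)}\,ds \leq \frac{C'}{\nu}\|\widetilde u_0\|_{r,0,\tau_0},
\end{equation*}
so that $\tau(t) \geq \tau_0 - \tfrac{\mathcal C_r C'}{\nu}\|\widetilde u_0\|_{r,0,\tau_0}$ for every $t\geq 0$. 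Choosing the constant $\mathcal C_r$ in \eqref{small-conditn} to absorb $C'$, the smallness assumption $\|\widetilde u_0\|_{r,0,\tau_0} < \nu \tau_0/\mathcal C_r$ guarantees $\tau(t) \geq \tau_0/2 > 0$ uniformly in time, so a standard continuation argument extends the local solution past any finite horizon.

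The main technical obstacle is bookkeeping: one must track the constants through the Poincar\'e absorption in the nonlinear estimate and through the weighted Cauchy--Schwarz step so that a single $\mathcal C_r$ can be exhibited that makes both the ODE for $\tau$ and the smallness threshold \eqref{small-conditn} mutually consistent. Once this is arranged the argument is essentially self-contained; the analyticity-in-$z$ statement of the theorem then follows from Theorem \ref{theorem-radius} applied on every compact subinterval of $[0,\infty)$.
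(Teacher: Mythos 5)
Your proposal is correct and follows essentially the same route as the paper: the $\mathcal S_{r,0,\tau}$ energy estimate via Lemmas \ref{lemma-type1}--\ref{lemma-type2}, the Poincar\'e inequality in the $z$-variable (available because $\int_0^1 \widetilde u\,dz = 0$ is preserved and $\partial_z\widetilde u|_{z=0,1}=0$), the choice $\dot\tau = -\mathcal C_r\|\partial_z\widetilde u\|_{r,0,\tau}$, exponential decay via Gr\"onwall, and the weighted Cauchy--Schwarz step to bound $\int_0^t\|\partial_z\widetilde u\|_{r,0,\tau}\,ds$ uniformly in $t$, which keeps $\tau(t)$ bounded below under the smallness assumption \eqref{small-conditn}. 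The only cosmetic difference is that the paper normalizes the Poincar\'e constant to $1$ (the interval is $(0,1)$), whereas you carry a generic $C_P$; this only shifts the value of $\mathcal C_r$ and does not affect the argument.
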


\begin{proof}[Sketch of proof]
Similarly to \eqref{apri-001}, we have
\begin{equation*}
    \begin{aligned}
        & \frac{1}{2} \frac{d}{dt} \| \widetilde{u} \|_{r,0,\tau}^2 + \nu \|\partial_z \widetilde{u} \|_{r,0,\tau}^2= \dot{\tau} \|A^{r+\frac{1}{2}} e^{\tau A} \widetilde{u}\|^2 - \Big\langle A^r e^{\tau A}\widetilde{u}\partial_x \widetilde{u} , A^r e^{\tau A} \widetilde{u} \Big\rangle
        \\
        & \qquad\qquad - \Big\langle A^r e^{\tau A}\Big(\int_0^z \partial_x\widetilde{u}(x,s) ds \Big) \partial_z \widetilde{u} , A^r e^{\tau A} \widetilde{u} \Big\rangle\\
        & \qquad \leq \Big(\dot{\tau} + C_r(\| \widetilde{u} \|_{r,0,\tau} + \|\partial_z \widetilde{u} \|_{r,0,\tau}) \Big) \|A^{r+\frac{1}{2}} e^{\tau A} \widetilde{u}\|^2,
    \end{aligned}
\end{equation*}
thanks to Lemma \ref{lemma-type1} and Lemma \ref{lemma-type2}.

It is easy to see that $\int_0^1 \widetilde{u}(x,z) dz = 0$. One can apply the Poincar\'e inequality to get
$
    \| \widetilde{u} \|_{r,0,\tau} \leq  \|\partial_z \widetilde{u} \|_{r,0,\tau},
$
and consequently,
\begin{equation*}
   \frac{1}{2} \frac{d}{dt} \| \widetilde{u} \|_{r,0,\tau}^2 + \frac{\nu}{2} \|\partial_z \widetilde{u} \|_{r,0,\tau}^2 \leq \Big(\dot{\tau} + C_r \|\partial_z \widetilde{u} \|_{r,0,\tau} \Big) \|A^{r+\frac{1}{2}} e^{\tau A} \widetilde{u}\|^2 - \frac{\nu}{2}\| \widetilde{u} \|_{r,0,\tau}^2.
\end{equation*}
Assuming that
\begin{equation}\label{global-tau}
    \dot{\tau} + C_r \|\partial_z \widetilde{u} \|_{r,0,\tau} = 0,
\end{equation}
one has
\begin{equation*}
    \frac{d}{dt} \| \widetilde{u} \|_{r,0,\tau}^2 + \nu \|\partial_z \widetilde{u} \|_{r,0,\tau}^2 \leq -\nu \| \widetilde{u} \|_{r,0,\tau}^2.
\end{equation*}
After applying the Gr\"onwall inequality, one obtains
\begin{equation*}
    \| \widetilde{u}(t) \|_{r,0,\tau(t)}^2 e^{\nu t} + \nu \int_0^t \|\partial_z \widetilde{u}(s) \|_{r,0,\tau(s)}^2 e^{\nu s} ds \leq \| \widetilde{u}_0 \|_{r,0,\tau_0}^2 .
\end{equation*}
Therefore, integrating \eqref{global-tau} from $0$ to $t\in (0,\infty)$ and applying the H\"older inequality in the resultant lead to
\begin{equation}\label{Cr-global}
\begin{split}
    \tau(t) &= \tau_0 - C_r \int_0^t  \|\partial_z \widetilde{u}(s) \|_{r,0,\tau(s)} ds
    \\
    &\geq \tau_0 -C_r \Big( \int_0^t  \|\partial_z \widetilde{u}(s) \|_{r,0,\tau(s)}^2 e^{\nu s} ds\Big)^{\frac{1}{2}} \Big( \int_0^t   e^{-\nu s} ds\Big)^{\frac{1}{2}}
    \\
    &\geq \tau_0 - \frac{\mathcal C_r}{\nu} \| \widetilde{u}_0 \|_{r,0,\tau_0},
\end{split}
\end{equation}
for some positive constant $ \mathcal C_r \in (0,\infty) $.

In summary, for the initial data satisfying \eqref{small-conditn},
we have that $\tau(t) > 0$ for all $t>0$, and thus the solution exists for all time.
\end{proof}

\appendix

\section{Estimates of nonlinear terms}
In this appendix, we list the estimates of nonlinear terms in the analytic-Sobolev spaces $\mathcal{S}_{r,s,\tau}$. Lemma \ref{lemma-type1}--\ref{lemma-type2} will be used to prove the local well-posedness, and similar estimates can be found in \cite{GILT20,HL22}.
\begin{lemma}\label{lemma-type1}
For $f, g, h\in \mathcal{S}_{r+\frac{1}{2},s,\tau}$, where $r>1$, $s\geq 0$, and $\tau \geq 0$, one has
\begin{equation}\label{lemma-type1-inequality}
    \begin{split}
        &\Big|\Big\langle A^r e^{\tau A} (f\cdot \nabla  g), A^r e^{\tau A} h  \Big\rangle\Big| \\
        \leq&  \int_0^1 C_r\Big[ (\|A^r e^{\tau A} f(z)\|_{L^2(\mathbb{T}^2)} + |\hat{f}_0(z)| ) \|A^{r+\frac{1}{2}} e^{\tau A} g(z)\|_{L^2(\mathbb{T}^2)} \|A^{r+\frac{1}{2}} e^{\tau A} h(z)\|_{L^2(\mathbb{T}^2)} \\
       &+ \|A^{r+\frac{1}{2}} e^{\tau A} f(z)\|_{L^2(\mathbb{T}^2)}  \|A^{r+\frac{1}{2}} e^{\tau A} g(z)\|_{L^2(\mathbb{T}^2)} \|A^{r} e^{\tau A} h(z)\|_{L^2(\mathbb{T}^2)}\Big] dz.
    \end{split}
\end{equation}
\end{lemma}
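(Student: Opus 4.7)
The plan is to reduce the estimate to Fourier analysis on $\mathbb{T}^2$ at each fixed $z \in (0,1)$, bound the resulting triple convolution by $\ell^2$-based norms, and integrate in $z$. The argument is in the spirit of the analytic algebra estimates in \cite{FT98, LO97, OL96} (and of Lemma \ref{lemma-banach-algebra} above), with extra bookkeeping needed to accommodate both the derivative in $\nabla g$ and the isolated zero-mode term $|\hat{f}_0(z)|$ on the right-hand side of \eqref{lemma-type1-inequality}.

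First I would fix $z \in (0,1)$ and expand $f, g, h$ in horizontal Fourier series. Since $A^r e^{\tau A}$ acts as multiplication by $|\boldsymbol{k}|^r e^{\tau|\boldsymbol{k}|}$ on the $\boldsymbol{k}$-mode, and since $\widehat{f \cdot \nabla g}_{\boldsymbol{k}_3} = \sum_{\boldsymbol{k}_1 + \boldsymbol{k}_2 = \boldsymbol{k}_3} \hat{f}_{\boldsymbol{k}_1} \cdot (i \boldsymbol{k}_2) \hat{g}_{\boldsymbol{k}_2}$, Parseval gives
\begin{equation*}
    \bigl| \langle A^r e^{\tau A}(f \cdot \nabla g), A^r e^{\tau A} h \rangle_{L^2_{\boldsymbol{x}}} \bigr|(z) \leq \!\!\!\sum_{\boldsymbol{k}_1 + \boldsymbol{k}_2 = \boldsymbol{k}_3}\!\!\! |\boldsymbol{k}_3|^{2r} e^{2\tau|\boldsymbol{k}_3|}\, |\hat{f}_{\boldsymbol{k}_1}(z)|\, |\boldsymbol{k}_2|\, |\hat{g}_{\boldsymbol{k}_2}(z)|\, |\hat{h}_{\boldsymbol{k}_3}(z)|.
\end{equation*}
Next I would redistribute the weight using three elementary inequalities coming from $\boldsymbol{k}_3 = \boldsymbol{k}_1 + \boldsymbol{k}_2$: (a) $e^{2\tau|\boldsymbol{k}_3|} \leq e^{\tau|\boldsymbol{k}_1|} e^{\tau|\boldsymbol{k}_2|} e^{\tau|\boldsymbol{k}_3|}$; (b) for $r \geq 0$, $|\boldsymbol{k}_3|^r \leq C_r (|\boldsymbol{k}_1|^r + |\boldsymbol{k}_2|^r)$, applied to one of the two copies of $|\boldsymbol{k}_3|^r$ inside $|\boldsymbol{k}_3|^{2r}$; and (c) $|\boldsymbol{k}_2|^{1/2} \leq |\boldsymbol{k}_1|^{1/2} + |\boldsymbol{k}_3|^{1/2}$, used to split $|\boldsymbol{k}_2| = |\boldsymbol{k}_2|^{1/2} \cdot |\boldsymbol{k}_2|^{1/2}$. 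Applying (a)--(c) majorizes the summand by a finite sum of monomials of the form $|\boldsymbol{k}_1|^{\alpha_1} e^{\tau|\boldsymbol{k}_1|} |\hat{f}_{\boldsymbol{k}_1}| \cdot |\boldsymbol{k}_2|^{\alpha_2} e^{\tau|\boldsymbol{k}_2|} |\hat{g}_{\boldsymbol{k}_2}| \cdot |\boldsymbol{k}_3|^{\alpha_3} e^{\tau|\boldsymbol{k}_3|} |\hat{h}_{\boldsymbol{k}_3}|$, with $(\alpha_1, \alpha_2, \alpha_3) \in \{(r + \tfrac12, \tfrac12, r),\ (r, \tfrac12, r + \tfrac12),\ (\tfrac12, r + \tfrac12, r),\ (0, r + \tfrac12, r + \tfrac12)\}$.

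For each shape I would apply Cauchy--Schwarz in $\boldsymbol{k}_3$ followed by Young's convolution inequality $\|F * Q\|_{\ell^2} \leq \|F\|_{\ell^1} \|Q\|_{\ell^2}$, placing into $\ell^1$ the factor whose exponent equals $\tfrac12$, or -- for the fourth shape -- the $f$-factor whose exponent is $0$. In the first three shapes the zero mode drops out automatically thanks to the $|\boldsymbol{k}|^{1/2}$ factor, whereas in the fourth shape the mode $\boldsymbol{k}_1 = 0$ is separated out, yielding precisely the $|\hat{f}_0(z)|$ contribution in \eqref{lemma-type1-inequality}. The remaining non-zero modes are then controlled via the weighted Cauchy--Schwarz
\begin{equation*}
    \sum_{\boldsymbol{k} \neq 0} |\boldsymbol{k}|^\beta e^{\tau|\boldsymbol{k}|} |\hat{\psi}_{\boldsymbol{k}}| \leq \Bigl(\sum_{\boldsymbol{k} \neq 0} |\boldsymbol{k}|^{-2r} \Bigr)^{1/2} \|A^{\beta + r} e^{\tau A} \psi\|_{L^2_{\boldsymbol{x}}} \leq C_r \|A^{\beta + r} e^{\tau A} \psi\|_{L^2_{\boldsymbol{x}}},
\end{equation*}
valid for $\beta \in \{0, \tfrac12\}$; the weight $|\boldsymbol{k}|^{-2r}$ is summable on $2\pi \mathbb{Z}^2 \setminus \{0\}$ precisely when $r > 1$, which is where the hypothesis enters essentially. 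Choosing $\beta = \tfrac12$ or $\beta = 0$ produces the norms $\|A^{r+1/2} e^{\tau A} \cdot\|_{L^2_{\boldsymbol{x}}}$ or $\|A^r e^{\tau A} \cdot\|_{L^2_{\boldsymbol{x}}}$ recorded on the right-hand side of \eqref{lemma-type1-inequality}. Integrating the resulting pointwise-in-$z$ bound over $z \in (0,1)$ completes the argument.

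The main obstacle will be the bookkeeping in the last step: each of the four monomials produced by (a)--(c) must be paired with the correct $\ell^1$-factor and the correct weighted Cauchy--Schwarz so that, after the dust settles, it lands in one of the two structured patterns $(r, r + \tfrac12, r + \tfrac12)$ or $(r + \tfrac12, r + \tfrac12, r)$ on $(f, g, h)$ recorded in \eqref{lemma-type1-inequality}. Structurally, the fact that the middle $g$-factor always carries the higher exponent $r + \tfrac12$ on the right-hand side reflects that the half-power borrowed by Young's inequality is, up to the $|\boldsymbol{k}|^{-r}$ Cauchy--Schwarz weight, exactly the leftover $|\boldsymbol{k}_2|^{1/2}$ not absorbed via (c).
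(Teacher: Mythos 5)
Your proposal is correct and follows essentially the same route as the paper: the Fourier‐side reduction, the splittings $e^{2\tau|\boldsymbol{k}_3|}\leq e^{\tau|\boldsymbol{k}_1|}e^{\tau|\boldsymbol{k}_2|}e^{\tau|\boldsymbol{k}_3|}$, $|\boldsymbol{k}_3|^r\leq C_r(|\boldsymbol{k}_1|^r+|\boldsymbol{k}_2|^r)$, $|\boldsymbol{k}_2|^{1/2}\leq|\boldsymbol{k}_1|^{1/2}+|\boldsymbol{k}_3|^{1/2}$, the resulting four monomials, the use of $\sum_{\boldsymbol{k}\neq 0}|\boldsymbol{k}|^{-2r}<\infty$ for $r>1$, and the separation of $\hat f_0$ in the $(0,r+\tfrac12,r+\tfrac12)$ term all match the paper's argument (its $A_1,\dots,A_4$). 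The only cosmetic difference is that you close each monomial with Cauchy--Schwarz in the $h$-index followed by Young's convolution inequality, whereas the paper runs an equivalent iterated Cauchy--Schwarz/sup (Schur-type) bound; both are standard and produce the same estimates.
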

\begin{proof}
    First, notice that $\Big|\Big\langle A^r e^{\tau A} (f\cdot \nabla  g), A^r e^{\tau A} h \Big\rangle\Big| = \Big|\Big\langle f\cdot \nabla  g, A^r e^{\tau A} H  \Big\rangle\Big| $, where $H =  A^r e^{\tau A} h$.
    Using the Fourier representation, we have,
    \begin{subequations}\label{id:frr-rpn}
    \begin{align}
     f(\boldsymbol{x},z) & = \sum\limits_{\boldsymbol{j}\in  2\pi \mathbb{Z}^2} \hat{f}_{\boldsymbol{j}}(z) e^{ i\boldsymbol{j}\cdot \boldsymbol{x}}, \\
    g(\boldsymbol{x},z) & = \sum\limits_{\boldsymbol{k}\in 2\pi \mathbb{Z}^2} \hat{g}_{\boldsymbol{k}}(z) e^{ i\boldsymbol{k}\cdot \boldsymbol{x}}, \\
    h(\boldsymbol{x},z) & = \sum\limits_{\boldsymbol{l}\in 2\pi \mathbb{Z}^2} \hat{h}_{\boldsymbol{l}}(z) e^{ i\boldsymbol{l}\cdot \boldsymbol{x}}, \qquad \text{and by definition},  \\
    A^r e^{\tau A} H(\boldsymbol{x},z) & = \sum\limits_{\boldsymbol{l}\in 2 \pi \mathbb{Z}^2} |\boldsymbol{l}|^r e^{\tau |\boldsymbol{l}|}\hat{H}_{\boldsymbol{l}}(z) e^{i\boldsymbol{l}\cdot \boldsymbol{x}}, \qquad \text{with} \quad \hat H_{\boldsymbol{l}}(z) = | \boldsymbol{l} |^r e^{\tau |\boldsymbol{l}|} \hat h_{\boldsymbol{l}}(z).
    \end{align}
    \end{subequations}
    Therefore,
    \begin{eqnarray*}
    \Big|\Big\langle f\cdot \nabla  g, A^r e^{\tau A} H  \Big\rangle\Big| \leq  \int_0^1 \sum\limits_{\boldsymbol{j}+\boldsymbol{k}+\boldsymbol{l}=0} |\hat{f}_{\boldsymbol{j}}(z)||\boldsymbol{k}||\hat{g}_{\boldsymbol{k}}(z)||\boldsymbol{l}|^r e^{\tau |\boldsymbol{l}|} |\hat{H}_{\boldsymbol{l}}(z)| dz.
    \end{eqnarray*}

Since $|\boldsymbol{l}| = |\boldsymbol{j}+\boldsymbol{k}| \leq |\boldsymbol{j}|+|\boldsymbol{k}|$, we have the following inequalities:
\begin{equation*}
    |\boldsymbol{l}|^r \leq (|\boldsymbol{j}|+|\boldsymbol{k}|)^r \leq C_r(|\boldsymbol{j}|^r + |\boldsymbol{k}|^r), \;\;\;  e^{\tau |\boldsymbol{l}|} \leq e^{\tau |\boldsymbol{j}|} e^{\tau |\boldsymbol{k}|}.
\end{equation*}
Applying these inequalities, we have
\begin{eqnarray*}
\Big|\Big\langle f\cdot \nabla  g, A^r e^{\tau A} H  \Big\rangle\Big| \leq  \int_0^1 \sum\limits_{\boldsymbol{j}+\boldsymbol{k}+\boldsymbol{l}=0} C_r|\hat{f}_{\boldsymbol{j}}(z)||\boldsymbol{k}||\hat{g}_{\boldsymbol{k}}(z)|(|\boldsymbol{j}|^r+|\boldsymbol{k}|^r)e^{\tau |\boldsymbol{j}|}e^{\tau |\boldsymbol{k}|}|\boldsymbol{l}|^r e^{\tau |\boldsymbol{l}|}|\hat{h}_{\boldsymbol{l}}(z)| dz.
\end{eqnarray*}
Since $|\boldsymbol{k}|, |\boldsymbol{j}|, |\boldsymbol{l}| \geq 0 $, we have $|\boldsymbol{k}|^{\frac{1}{2}} \leq (|\boldsymbol{j}|+|\boldsymbol{l}|)^{\frac{1}{2}} \leq |\boldsymbol{j}|^{\frac{1}{2}} + |\boldsymbol{l}|^{\frac{1}{2}}$, therefore,
\begin{equation*}
\begin{aligned}
& 
\Big|\Big\langle f\cdot \nabla  g, A^r e^{\tau A} H  \Big\rangle\Big| \\
\leq & \int_0^1 \sum\limits_{\boldsymbol{j}+\boldsymbol{k}+\boldsymbol{l}=0} C_r|\hat{f}_{\boldsymbol{j}}(z)||\boldsymbol{k}|^{\frac{1}{2}}(|\boldsymbol{j}|^{\frac{1}{2}} + |\boldsymbol{l}|^{\frac{1}{2}})|\hat{g}_{\boldsymbol{k}}(z)|(|\boldsymbol{j}|^r+|\boldsymbol{k}|^r)e^{\tau |\boldsymbol{j}|}e^{\tau |\boldsymbol{k}|}|\boldsymbol{l}|^r e^{\tau |\boldsymbol{l}|}|\hat{h}_{\boldsymbol{l}}(z)| dz \\
\leq & \int_0^1 \sum\limits_{\boldsymbol{j}+\boldsymbol{k}+\boldsymbol{l}=0} C_r \Big(|\boldsymbol{k}|^{\frac{1}{2}}|\boldsymbol{j}|^{r+\frac{1}{2}} |\boldsymbol{l}|^{r} + |\boldsymbol{k}|^{r+\frac{1}{2}}|\boldsymbol{j}|^{\frac{1}{2}} |\boldsymbol{l}|^{r} + |\boldsymbol{k}|^{\frac{1}{2}}|\boldsymbol{j}|^{r} |\boldsymbol{l}|^{r+\frac{1}{2}} + |\boldsymbol{k}|^{r+\frac{1}{2}}|\boldsymbol{l}|^{r+\frac{1}{2}} \Big) \\
& \qquad 
\times e^{\tau |\boldsymbol{j}|}e^{\tau |\boldsymbol{k}|} e^{\tau |\boldsymbol{l}|}  |\hat{f}_{\boldsymbol{j}}(z)| |\hat{g}_{\boldsymbol{k}}(z)| |\hat{h}_{\boldsymbol{l}}(z)| dz =: \int_0^1 (A_1 + A_2 + A_3 + A_4)(z) dz.
\end{aligned}
\end{equation*}

Thanks to Cauchy–Schwarz inequality, since $r>1$, we have
\begin{equation*}
    \begin{aligned}
A_1 & = \sum\limits_{\boldsymbol{j}+\boldsymbol{k}+\boldsymbol{l}=0} C_r |\boldsymbol{k}|^{\frac{1}{2}}|\boldsymbol{j}|^{r+\frac{1}{2}} |\boldsymbol{l}|^{r} e^{\tau |\boldsymbol{j}|}e^{\tau |\boldsymbol{k}|} e^{\tau |\boldsymbol{l}|}  |\hat{f}_{\boldsymbol{j}}(z)| |\hat{g}_{\boldsymbol{k}}(z)| |\hat{h}_{\boldsymbol{l}}(z)| \\
&
= C_r \sum\limits_{\substack{\boldsymbol{k}\in 2\pi \mathbb{Z}^2 \\ \boldsymbol{k}\neq 0} } \biggl\lbrack |\boldsymbol{k}|^{\frac{1}{2}} |\hat{g}_{\boldsymbol{k}}(z)| e^{\tau |\boldsymbol{k}|} \sum\limits_{\substack{\boldsymbol{j}\in 2\pi \mathbb{Z}^2 \\ \boldsymbol{j}\neq 0, -\boldsymbol{k}} } |\boldsymbol{j}|^{r+\frac{1}{2}} e^{\tau |\boldsymbol{j}|}|\hat{f}_{\boldsymbol{j}}(z)|  |\boldsymbol{j}+\boldsymbol{k}|^{r}e^{\tau |\boldsymbol{j}+\boldsymbol{k}|}|\hat{h}_{-\boldsymbol{j}-\boldsymbol{k}}(z)| \biggr\rbrack \\
&
\leq C_r \Big( \sum\limits_{\substack{\boldsymbol{k}\in 2\pi \mathbb{Z}^2 \\ \boldsymbol{k}\neq 0} } |\boldsymbol{k}|^{-2r}\Big)^{\frac{1}{2}} \Big( \sum\limits_{\substack{\boldsymbol{k}\in 2\pi \mathbb{Z}^2 \\ \boldsymbol{k}\neq 0} } |\boldsymbol{k}|^{2r+1} e^{2\tau |\boldsymbol{k}|} |\hat{g}_{\boldsymbol{k}}(z)|^2\Big)^{\frac{1}{2}}   \\
& \qquad
\times \sup\limits_{\boldsymbol{k}\in 2\pi \mathbb{Z}^2}\biggl\lbrack \Big( \sum\limits_{\substack{\boldsymbol{j}\in 2\pi \mathbb{Z}^2 \\ \boldsymbol{j}\neq 0, -\boldsymbol{k}} } |\boldsymbol{j}|^{2r+1}e^{2\tau |\boldsymbol{j}|} |\hat{f}_{\boldsymbol{j}}(z)|^2\Big)^{\frac{1}{2}} \Big( \sum\limits_{\substack{\boldsymbol{j}\in 2\pi \mathbb{Z}^2 \\ \boldsymbol{j}\neq 0, -\boldsymbol{k}} } |\boldsymbol{j}+\boldsymbol{k}|^{2r}e^{2\tau |\boldsymbol{j}+\boldsymbol{k}|} |\hat{h}_{-\boldsymbol{j}-\boldsymbol{k}}(z)|^2\Big)^{\frac{1}{2}} \biggr\rbrack \\
& \leq C_r \|A^{r+\frac{1}{2}} e^{\tau A} f(z)\|_{L^2(\mathbb{T}^2)} \|A^{r+\frac{1}{2}} e^{\tau A} g(z)\|_{L^2(\mathbb{T}^2)} \|A^{r} e^{\tau A} h(z)\|_{L^2(\mathbb{T}^2)}.
\end{aligned}
\end{equation*}
Similarly, we have
\begin{equation*}
\begin{aligned}
A_2 & = \sum\limits_{\boldsymbol{j}+\boldsymbol{k}+\boldsymbol{l}=0} C_r |\boldsymbol{k}|^{r+\frac{1}{2}}|\boldsymbol{j}|^{\frac{1}{2}} |\boldsymbol{l}|^{r} e^{\tau |\boldsymbol{j}|}e^{\tau |\boldsymbol{k}|} e^{\tau |\boldsymbol{l}|}  |\hat{f}_{\boldsymbol{j}}(z)| |\hat{g}_{\boldsymbol{k}}(z)| |\hat{h}_{\boldsymbol{l}}(z)|  \\
& \leq C_r \|A^{r+\frac{1}{2}} e^{\tau A} f(z)\|_{L^2(\mathbb{T}^2)} \|A^{r+\frac{1}{2}} e^{\tau A} g(z)\|_{L^2(\mathbb{T}^2)} \|A^{r} e^{\tau A} h(z)\|_{L^2(\mathbb{T}^2)},
\end{aligned}
\end{equation*}
and
\begin{equation*}
\begin{aligned}
A_3 & = \sum\limits_{\boldsymbol{j}+\boldsymbol{k}+\boldsymbol{l}=0} C_r |\boldsymbol{k}|^{\frac{1}{2}}|\boldsymbol{j}|^{r} |\boldsymbol{l}|^{r+\frac{1}{2}} e^{\tau |\boldsymbol{j}|}e^{\tau |\boldsymbol{k}|} e^{\tau |\boldsymbol{l}|}  |\hat{f}_{\boldsymbol{j}}(z)| |\hat{g}_{\boldsymbol{k}}(z)| |\hat{h}_{\boldsymbol{l}}(z)|  \\
& \leq C_r \|A^{r} e^{\tau A} f(z)\|_{L^2(\mathbb{T}^2)} \|A^{r+\frac{1}{2}} e^{\tau A} g(z)\|_{L^2(\mathbb{T}^2)} \|A^{r+\frac{1}{2}} e^{\tau A} h(z)\|_{L^2(\mathbb{T}^2)}.
\end{aligned}
\end{equation*}

For $A_4$, thanks to Cauchy–Schwarz inequality, since $r>1$, we have
\begin{equation*}
\begin{aligned}
A_4 & = \sum\limits_{\boldsymbol{j}+\boldsymbol{k}+\boldsymbol{l}=0} C_r |\boldsymbol{k}|^{r+\frac{1}{2}}|\boldsymbol{l}|^{r+\frac{1}{2}} e^{\tau |\boldsymbol{j}|}e^{\tau |\boldsymbol{k}|} e^{\tau |\boldsymbol{l}|}  |\hat{f}_{\boldsymbol{j}}(z)| |\hat{g}_{\boldsymbol{k}}(z)| |\hat{h}_{\boldsymbol{l}}(z)| \\
&
= C_r \sum\limits_{\boldsymbol{j}\in 2\pi \mathbb{Z}^2 } \biggl\lbrack e^{\tau |\boldsymbol{j}|}|\hat{f}_{\boldsymbol{j}}(z)|  \sum\limits_{\substack{\boldsymbol{k}\in  2\pi \mathbb{Z}^2 \\ \boldsymbol{k}\neq 0, -\boldsymbol{j}} } |\boldsymbol{k}|^{r+\frac{1}{2}} |\hat{g}_{\boldsymbol{k}}(z)| e^{\tau |\boldsymbol{k}|}  |\boldsymbol{j}+\boldsymbol{k}|^{r+\frac{1}{2}}e^{\tau |\boldsymbol{j}+\boldsymbol{k}|}|\hat{h}_{-\boldsymbol{j}-\boldsymbol{k}}| \biggr\rbrack \\
&
\leq C_r \Big\{ |\hat{f}_0(z)| + \Big( \sum\limits_{\substack{\boldsymbol{j}\in 2\pi \mathbb{Z}^2 \\ \boldsymbol{j}\neq 0} } |\boldsymbol{j}|^{-2r}\Big)^{\frac{1}{2}} \Big( \sum\limits_{\substack{\boldsymbol{j}\in 2\pi \mathbb{Z}^2 \\ \boldsymbol{j}\neq 0} } |\boldsymbol{j}|^{2r} e^{2\tau |\boldsymbol{j}|} |\hat{f}_{\boldsymbol{j}}(z)|^2\Big)^{\frac{1}{2}} \Big\}   \\
& \qquad
\times \sup\limits_{\boldsymbol{j}\in 2\pi \mathbb{Z}^2} \biggl\lbrack \Big( \sum\limits_{\substack{\boldsymbol{k}\in 2\pi \mathbb{Z}^2 \\ \boldsymbol{k}\neq 0, -\boldsymbol{j}} } |\boldsymbol{k}|^{2r+1}e^{2\tau |\boldsymbol{k}|} |\hat{g}_{\boldsymbol{k}}(z)|^2\Big)^{\frac{1}{2}} \Big( \sum\limits_{\substack{\boldsymbol{k}\in 2\pi \mathbb{Z}^2 \\ \boldsymbol{k}\neq 0, -\boldsymbol{j}} } |\boldsymbol{j}+\boldsymbol{k}|^{2r+1}e^{2\tau |\boldsymbol{j}+\boldsymbol{k}|} |\hat{h}_{-\boldsymbol{j}-\boldsymbol{k}}|^2\Big)^{\frac{1}{2}} \biggr\rbrack \\
& \leq C_r (\|A^{r} e^{\tau A} f(z)\|_{L^2(\mathbb{T}^2)} + |\hat{f}_0(z)|) \|A^{r+\frac{1}{2}} e^{\tau A} g(z)\|_{L^2(\mathbb{T}^2)} \|A^{r+\frac{1}{2}} e^{\tau A} h(z)\|_{L^2(\mathbb{T}^2)}.
\end{aligned}
\end{equation*}

Combining the estimates for $A_1$ to $A_4$, we achieve the desired inequality.
\end{proof}

\begin{lemma}\label{lemma-type2}
For $f,  h\in \mathcal{S}_{r+\frac{1}{2},s,\tau}$ and $g, \partial_z g \in \mathcal{S}_{r,s,\tau}$, where $r > \frac{3}{2} $, $s\geq 0$, and $\tau\geq 0$, one has
\begin{equation*}
\begin{aligned}
    & \Big|\Big\langle A^r e^{\tau A} \big((\int_0^z \nabla \cdot f(\boldsymbol{x},s)ds)\partial_z g\big), A^r e^{\tau A} h  \Big\rangle\Big|\\
    \leq & C_r\|A^{r+\frac{1}{2}} e^{\tau A} f\|  \| \partial_z  g\|_{r,0,\tau} \|A^{r+\frac{1}{2}} e^{\tau A} h\|.
\end{aligned}
\end{equation*}
\end{lemma}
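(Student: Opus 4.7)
The plan is to adapt the Fourier-analytic strategy used for Lemma \ref{lemma-type1}. Expanding $f$, $\partial_z g$, and $h$ into their $\boldsymbol{x}$-Fourier series, noting that $\nabla\cdot f$ contributes a factor of $|\boldsymbol{j}|$, and using the crude but lossless pointwise bound $\big|\int_0^z\hat f_{\boldsymbol{j}}(s)\,ds\big|\leq \int_0^1|\hat f_{\boldsymbol{j}}(s)|\,ds$ to decouple the inner vertical integration from $z$, I would reduce the inner product to a triple Fourier sum over $\boldsymbol{j}+\boldsymbol{k}+\boldsymbol{l}=0$ with horizontal weight $|\boldsymbol{j}|\cdot|\boldsymbol{l}|^{2r}e^{2\tau|\boldsymbol{l}|}$, integrated in $z$. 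A Cauchy--Schwarz in $s$ at the end will absorb the inner $s$-integral into $\|A^{r+\frac12}e^{\tau A}f\|$.

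I would then apply $|\boldsymbol{l}|^r e^{\tau|\boldsymbol{l}|}\leq C_r(|\boldsymbol{j}|^r+|\boldsymbol{k}|^r)e^{\tau|\boldsymbol{j}|}e^{\tau|\boldsymbol{k}|}$ to one of the two $|\boldsymbol{l}|^r e^{\tau|\boldsymbol{l}|}$ factors, producing dominant weights $|\boldsymbol{j}|^{r+1}|\boldsymbol{l}|^r$ and $|\boldsymbol{j}|\cdot|\boldsymbol{k}|^r|\boldsymbol{l}|^r$. For the first, a further use of $|\boldsymbol{j}|^{1/2}\leq|\boldsymbol{k}|^{1/2}+|\boldsymbol{l}|^{1/2}$ yields two subterms dominated by $|\boldsymbol{j}|^{r+1/2}|\boldsymbol{k}|^{1/2}|\boldsymbol{l}|^r$ and $|\boldsymbol{j}|^{r+1/2}|\boldsymbol{l}|^{r+1/2}$, each handled by a Cauchy--Schwarz in $\boldsymbol{j}$ to pair $f$ with $h$, followed by a Cauchy--Schwarz in $\boldsymbol{k}$ against the convergent tails $\sum_{\boldsymbol{k}\neq 0}|\boldsymbol{k}|^{1-2r}$ or $\sum_{\boldsymbol{k}\neq 0}|\boldsymbol{k}|^{-2r}$, in the same spirit as the proof of Lemma \ref{lemma-type1}. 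The inequality $\|A^r e^{\tau A}h\|\leq C\|A^{r+\frac12}e^{\tau A}h\|$, which holds because the $\boldsymbol{l}=0$ mode of $h$ is annihilated by $A^r$, will be used to raise the $h$-weight from $r$ to $r+\tfrac12$ whenever needed.

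The main obstacle is the remaining summand $|\boldsymbol{j}|\cdot|\boldsymbol{k}|^r|\boldsymbol{l}|^r$, which places only one horizontal derivative on $f$ whereas the target $\|A^{r+1/2}e^{\tau A}f\|$ demands $r+\tfrac12$ (and similarly is half a derivative short on $h$). The key trick is to write $|\boldsymbol{j}|=|\boldsymbol{j}|^{-(r-1/2)}\cdot|\boldsymbol{j}|^{r+1/2}$ for $\boldsymbol{j}\neq 0$ (the $\boldsymbol{j}=0$ mode is automatically killed by the $|\boldsymbol{j}|$ prefactor), and to apply Cauchy--Schwarz in $\boldsymbol{j}$ with weight $|\boldsymbol{j}|^{-(r-1/2)}$. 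The resulting combinatorial factor $\bigl(\sum_{\boldsymbol{j}\neq 0}|\boldsymbol{j}|^{-(2r-1)}\bigr)^{1/2}$ is finite in $2D$ if and only if $r>3/2$, which is precisely where the hypothesis is used. A second Cauchy--Schwarz in $\boldsymbol{k}$ then pairs $|\boldsymbol{k}|^r|\partial_z\hat g_{\boldsymbol{k}}|$ with the $\boldsymbol{j}$-estimate, and because $\boldsymbol{l}=-\boldsymbol{j}-\boldsymbol{k}$ ranges over all of $2\pi\mathbb{Z}^2$ as $\boldsymbol{k}$ varies, the $h$-factor cleanly assembles into $\|A^r e^{\tau A}h(z)\|_{L^2_{\boldsymbol{x}}}\leq C\|A^{r+\frac12}e^{\tau A}h(z)\|_{L^2_{\boldsymbol{x}}}$. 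A final Cauchy--Schwarz in $z$ then delivers the claimed product bound, in which the $z$-pointwise factor $|\widehat{\partial_z g}_0(z)|+\|A^r e^{\tau A}\partial_z g(z)\|_{L^2_{\boldsymbol{x}}}$ is controlled by $\|\partial_z g\|_{r,0,\tau}$.
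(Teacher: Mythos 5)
Your proposal is correct and follows essentially the same route as the paper: expand in horizontal Fourier modes, split one factor of $|\boldsymbol{l}|^r e^{\tau|\boldsymbol{l}|}$ using $|\boldsymbol{l}|^r e^{\tau|\boldsymbol{l}|}\leq C_r(|\boldsymbol{j}|^r+|\boldsymbol{k}|^r)e^{\tau|\boldsymbol{j}|}e^{\tau|\boldsymbol{k}|}$, refine the $|\boldsymbol{j}|^{r+1}|\boldsymbol{l}|^r$ piece with $|\boldsymbol{j}|^{1/2}\le|\boldsymbol{k}|^{1/2}+|\boldsymbol{l}|^{1/2}$, and handle the resulting three summands (the paper's $B_1,B_2,B_3$) by Cauchy--Schwarz in $\boldsymbol{j}$ and $\boldsymbol{k}$, with the hypothesis $r>3/2$ entering exactly through the convergence of the tails $\sum_{\boldsymbol{j}\neq 0}|\boldsymbol{j}|^{1-2r}$ (equivalently $\sum_{\boldsymbol{k}\neq 0}|\boldsymbol{k}|^{1-2r}$). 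You correctly identify the weight redistribution $|\boldsymbol{j}|=|\boldsymbol{j}|^{-(r-1/2)}|\boldsymbol{j}|^{r+1/2}$ for the $|\boldsymbol{j}|\,|\boldsymbol{k}|^r|\boldsymbol{l}|^r$ summand (which the paper leaves implicit by saying ``similar to $B_1$''), the role of the bound $\int_0^z|\hat f_{\boldsymbol{j}}(s)|\,ds\le\|\hat f_{\boldsymbol{j}}\|_{L^2_z}$ in producing $\|A^{r+\frac12}e^{\tau A}f\|$, the need for $\|A^r e^{\tau A}h\|\le\|A^{r+\frac12}e^{\tau A}h\|$ (Poincar\'e in $\mathbb{T}^2$), and the appearance of the $\boldsymbol{k}=0$ mode $|\widehat{\partial_z g}_0|$ in the $B_2$ piece, absorbed by $\|\partial_z g\|_{r,0,\tau}$.
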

\begin{proof}
First, $\Big|\Big\langle A^r e^{\tau A} \big((\int_0^z \nabla \cdot f(\boldsymbol{x},s)ds)\partial_z g\big), A^r e^{\tau A} h  \Big\rangle\Big| = \Big|\Big\langle (\int_0^z \nabla \cdot f(\boldsymbol{x},s)ds)\partial_z g, A^r e^{\tau A} H  \Big\rangle\Big| $.
Owing to the Fourier representation in \eqref{id:frr-rpn} ,
we have
\begin{equation*}
\begin{aligned}
& \Big|\Big\langle (\int_0^z \nabla \cdot f(\boldsymbol{x},s)ds)\partial_z g, A^r e^{\tau A} H  \Big\rangle\Big| = \Big|\Big\langle \int_0^z \sum\limits_{\boldsymbol{j}\in 2\pi \mathbb{Z}^2 } \boldsymbol{j} \cdot \hat{f}_{\boldsymbol{j}}(s) e^{i\boldsymbol{j} \cdot \boldsymbol{x}} ds)\partial_z g, A^r e^{\tau A} H  \Big\rangle\Big| \\
\leq & \int_0^1 \sum\limits_{\boldsymbol{j}+\boldsymbol{k}+\boldsymbol{l}=0} C_r|\boldsymbol{j}|\Big(\int_0^z|\hat{f}_{\boldsymbol{j}}(s)|ds\Big)|\partial_z \hat{g}_{\boldsymbol{k}}(z)|(|\boldsymbol{j}|^r+|\boldsymbol{k}|^r)e^{\tau |\boldsymbol{j}|}e^{\tau |\boldsymbol{k}|}|\boldsymbol{l}|^r e^{\tau |\boldsymbol{l}|}|\hat{h}_{\boldsymbol{l}}(z)| dz \\
\leq & \int_0^1 \sum\limits_{\boldsymbol{j}+\boldsymbol{k}+\boldsymbol{l}=0} C_r \Big(|\boldsymbol{k}|^{\frac{1}{2}}|\boldsymbol{j}|^{r+\frac{1}{2}} |\boldsymbol{l}|^{r} + |\boldsymbol{j}|^{r+\frac{1}{2}} |\boldsymbol{l}|^{r+\frac{1}{2}} + |\boldsymbol{j}||\boldsymbol{k}|^{r} |\boldsymbol{l}|^{r}  \Big) \\
& \qquad \qquad
\times e^{\tau |\boldsymbol{j}|}e^{\tau |\boldsymbol{k}|} e^{\tau |\boldsymbol{l}|}  \Big(\int_0^z|\hat{f}_{\boldsymbol{j}}(s)|ds\Big)|\partial_z \hat{g}_{\boldsymbol{k}}(z)| |\hat{h}_{\boldsymbol{l}}(z)| dz
=: B_1 + B_2 + B_3 .
\end{aligned}
\end{equation*}
where we have substituted the following inequalities: for $  \boldsymbol{j}+\boldsymbol{k}+ \boldsymbol{l}=0$,
\begin{equation*}
    |\boldsymbol{j}|^{\frac{1}{2}} \leq (|\boldsymbol{k}|^{\frac{1}{2}} + |\boldsymbol{l}|^{\frac{1}{2}}),  \;\;\;
    |\boldsymbol{l}|^r  \leq C_r (|\boldsymbol{j}|^r+|\boldsymbol{k}|^r).
\end{equation*}

Thanks to the Cauchy–Schwarz inequality, since $r > \frac{3}{2} $, we have
\begin{equation*}
\begin{aligned}
B_1 & = \int_0^1 \sum_{\boldsymbol{j}+\boldsymbol{k}+\boldsymbol{l}=0}  C_r  |\boldsymbol{k}|^{\frac{1}{2}}|\boldsymbol{j}|^{r+\frac{1}{2}} |\boldsymbol{l}|^{r} e^{\tau |\boldsymbol{j}|}e^{\tau |\boldsymbol{k}|} e^{\tau |\boldsymbol{l}|}  \Big(\int_0^z|\hat{f}_{\boldsymbol{j}}(s)|ds\Big)|\partial_z \hat{g}_{\boldsymbol{k}}(z)| |\hat{h}_{\boldsymbol{l}}(z)| dz  \\
&
= C_r \int_0^1  \sum_{\substack{\boldsymbol{k}\in 2\pi \mathbb{Z}^2 \\ \boldsymbol{k} \neq 0}} \biggl\lbrack |\boldsymbol{k}|^{\frac{1}{2}} |\partial_z \hat{g}_{\boldsymbol{k}}(z)| e^{\tau |\boldsymbol{k}|}  \sum_{\boldsymbol{j}\in 2\pi \mathbb{Z}^2 }   |\boldsymbol{j}|^{r+\frac{1}{2}} e^{\tau |\boldsymbol{j}|}\Big(\int_0^z|\hat{f}_{\boldsymbol{j}}(s)|ds\Big)  |\boldsymbol{j}+\boldsymbol{k}|^{r}e^{\tau |\boldsymbol{j}+\boldsymbol{k}|}|\hat{h}_{-\boldsymbol{j}-\boldsymbol{k}}(z)|  \biggr\rbrack dz \\
&
\leq  C_r \int_0^1  \Big(\sum_{\boldsymbol{k}\neq 0} |\boldsymbol{k}|^{1-2r}\Big)^{\frac{1}{2}}  \Big(\sum_{\boldsymbol{k}\neq 0} |\boldsymbol{k}|^{2r} |\partial_z \hat{g}_{\boldsymbol{k}}(z)|^2 e^{2\tau |\boldsymbol{k}|}\Big)^{\frac{1}{2}} \sup_{\boldsymbol{k}\neq 0}\biggl\lbrack \Big( \sum_{\boldsymbol{j}\in 2\pi \mathbb{Z}^2}  |\boldsymbol{j}|^{2r+1} e^{2\tau |\boldsymbol{j}|} \|\hat{f}_{\boldsymbol{j}}\|_{L^2_z}^2\Big)^{\frac{1}{2}}   \\
& \qquad\qquad
\times \Big( \sum_{\boldsymbol{j}\in 2\pi \mathbb{Z}^2}   |\boldsymbol{j}+\boldsymbol{k}|^{2r}e^{2\tau |\boldsymbol{j}+\boldsymbol{k}|}|\hat{h}_{-\boldsymbol{j}-\boldsymbol{k}}(z)|^2\Big)^{\frac{1}{2}} \biggr\rbrack dz  \\
&  \leq C_r \|A^{r+\frac{1}{2}} e^{\tau A} f\| \int_0^1 \|A^{r} e^{\tau A} \partial_z g(z)\|_{L^2(\mathbb{T}^2)} \|A^{r} e^{\tau A} h(z)\|_{L^2(\mathbb{T}^2)} dz  \\
&
\leq C_r \|A^{r+\frac{1}{2}} e^{\tau A} f\|  \|A^{r} e^{\tau A} \partial_z  g\| \|A^{r} e^{\tau A} h\|,
\end{aligned}
\end{equation*}

For $B_2$, we have
\begin{equation*}
\begin{aligned}
B_2 & = \int_0^1 \sum\limits_{\boldsymbol{j}+\boldsymbol{k}+\boldsymbol{l}=0}  C_r  |\boldsymbol{j}|^{r+\frac{1}{2}} |\boldsymbol{l}|^{r+\frac{1}{2}} e^{\tau |\boldsymbol{j}|}e^{\tau |\boldsymbol{k}|} e^{\tau |\boldsymbol{l}|}  \Big(\int_0^z|\hat{f}_{\boldsymbol{j}}(s)|ds\Big)|\partial_z \hat{g}_{\boldsymbol{k}}(z)| |\hat{h}_{\boldsymbol{l}}(z)| dz  \\
& = C_r \int_0^1 \sum\limits_{\boldsymbol{k}\in 2\pi \mathbb{Z}^2} \biggl\lbrack |\partial_z \hat{g}_{\boldsymbol{k}}(z)| e^{\tau |\boldsymbol{k}|} \sum\limits_{\boldsymbol{j}\in 2\pi \mathbb{Z}^2 }   |\boldsymbol{j}|^{r+\frac{1}{2}} e^{\tau |\boldsymbol{j}|}\Big(\int_0^z|\hat{f}_{\boldsymbol{j}}(s)|ds\Big)  |\boldsymbol{j}+\boldsymbol{k}|^{r+\frac{1}{2}}e^{\tau |\boldsymbol{j}+\boldsymbol{k}|}|\hat{h}_{-\boldsymbol{j}-\boldsymbol{k}}(z)| \biggr\rbrack dz \\
&
\leq \int_0^1 C_r \Big\{|\partial_z \hat{g}_0(z)|+\Big(\sum\limits_{\boldsymbol{k}\neq 0} |\boldsymbol{k}|^{-2r}\Big)^{\frac{1}{2}}  \Big(\sum\limits_{\boldsymbol{k}\neq 0} |\boldsymbol{k}|^{2r} |\partial_z \hat{g}_{\boldsymbol{k}}(z)|^2 e^{2\tau |\boldsymbol{k}|}\Big)^{\frac{1}{2}} \Big\}\\
& \qquad\qquad
\times \sup\limits_{\boldsymbol{k}\in 2\pi \mathbb{Z}^2}\biggl\lbrack  \Big( \sum\limits_{\boldsymbol{j}\in 2\pi \mathbb{Z}^2}  |\boldsymbol{j}|^{2r+1} e^{2\tau |\boldsymbol{j}|} \|\hat{f}_{\boldsymbol{j}}\|_{L^2_z}^2\Big)^{\frac{1}{2}}
 \Big( \sum\limits_{\boldsymbol{j}\in 2\pi \mathbb{Z}^2}   |\boldsymbol{j}+\boldsymbol{k}|^{2r+1}e^{2\tau |\boldsymbol{j}+\boldsymbol{k}|}|\hat{h}_{-\boldsymbol{j}-\boldsymbol{k}}(z)|^2\Big)^{\frac{1}{2}} \biggr\rbrack dz  \\
&
\leq C_r \|A^{r+\frac{1}{2}} e^{\tau A} f\| \int_0^1 \Big(|\partial_z \hat{g}_0(z)| + \|A^{r} e^{\tau A} \partial_z g(z)\|_{L^2(\mathbb{T}^2)}\Big) \|A^{r+\frac{1}{2}} e^{\tau A} h(z)\|_{L^2(\mathbb{T}^2)} dz  \\
&
\leq C_r \|A^{r+\frac{1}{2}} e^{\tau A} f\|  \| \partial_z  g\|_{r,0,\tau} \|A^{r+\frac{1}{2}} e^{\tau A} h\|.
\end{aligned}
\end{equation*}

The estimate of $B_3$ is similar to that of $B_1$, and one can obtain that
\begin{equation*}
    B_3 \leq C_r \|A^{r+\frac{1}{2}} e^{\tau A} f\|  \|A^{r} e^{\tau A} \partial_z  g\| \|A^{r} e^{\tau A} h\|.
\end{equation*}

Combine the estimates of $B_1$, $B_2$, and $B_3$, we obtain the desired result.
\end{proof}

\begin{lemma}\label{lemma-type3}
For $f, g, h\in \mathcal{S}_{r+\frac{1}{2},s,\tau}$, where $r>1$, $s\geq 0$, and $\tau \geq 0$, one has
\begin{equation*}
    \begin{split}
        &\Big|\Big\langle A^r e^{\tau A} \big((\nabla\cdot f)g\big), A^r e^{\tau A} h  \Big\rangle\Big|
        \\
        \leq&  \int_0^1 C_r\Big[ (\|A^r e^{\tau A} g(z)\|_{L^2(\mathbb{T}^2)} + |\hat{g}_0(z)| ) \|A^{r+\frac{1}{2}} e^{\tau A} f(z)\|_{L^2(\mathbb{T}^2)} \|A^{r+\frac{1}{2}} e^{\tau A} h(z)\|_{L^2(\mathbb{T}^2)} \\
       &+ \|A^{r+\frac{1}{2}} e^{\tau A} f(z)\|_{L^2(\mathbb{T}^2)}  \|A^{r+\frac{1}{2}} e^{\tau A} g(z)\|_{L^2(\mathbb{T}^2)} \|A^{r} e^{\tau A} h(z)\|_{L^2(\mathbb{T}^2)}\Big] dz.
    \end{split}
\end{equation*}
\end{lemma}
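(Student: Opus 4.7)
The plan is to mimic the Fourier argument used in the proof of Lemma \ref{lemma-type1}, with the roles of the two low-regularity factors essentially interchanged: in Lemma \ref{lemma-type1} the horizontal derivative fell on $g$, so the ``supremum'' slot that produced the $|\hat f_0(z)|$ term was attached to $f$; here the horizontal derivative falls on $f$, so I expect the supremum slot to be attached to $g$ and to produce the $|\hat g_0(z)|$ contribution.

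More concretely, I first expand $f,g,h$ in Fourier series in $\boldsymbol{x}$ as in \eqref{id:frr-rpn} and write
\[
\Bigl|\bigl\langle A^r e^{\tau A}\bigl((\nabla\cdot f)g\bigr),\,A^r e^{\tau A}h\bigr\rangle\Bigr|
\leq \int_0^1 \sum_{\boldsymbol{j}+\boldsymbol{k}+\boldsymbol{l}=0} |\boldsymbol{j}|\,|\hat f_{\boldsymbol{j}}(z)|\,|\hat g_{\boldsymbol{k}}(z)|\,|\boldsymbol{l}|^r e^{\tau|\boldsymbol{l}|} |\hat H_{\boldsymbol{l}}(z)|\,dz,
\]
with $H=A^r e^{\tau A}h$. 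Then I apply the two standard pointwise inequalities on the constraint $\boldsymbol{j}+\boldsymbol{k}+\boldsymbol{l}=0$:
\[
|\boldsymbol{l}|^r\leq C_r(|\boldsymbol{j}|^r+|\boldsymbol{k}|^r),\qquad e^{\tau|\boldsymbol{l}|}\leq e^{\tau|\boldsymbol{j}|}e^{\tau|\boldsymbol{k}|},\qquad |\boldsymbol{j}|^{1/2}\leq |\boldsymbol{k}|^{1/2}+|\boldsymbol{l}|^{1/2},
\]
the last of which is the key swap relative to Lemma \ref{lemma-type1} (there, one distributed $|\boldsymbol{k}|^{1/2}$ instead). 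Writing $|\boldsymbol{j}|=|\boldsymbol{j}|^{1/2}\cdot|\boldsymbol{j}|^{1/2}$ and distributing, the estimate splits into four symbol families
\[
|\boldsymbol{j}|^{r+\frac12}|\boldsymbol{k}|^{\frac12}|\boldsymbol{l}|^r,\qquad |\boldsymbol{j}|^{\frac12}|\boldsymbol{k}|^{r+\frac12}|\boldsymbol{l}|^r,\qquad |\boldsymbol{j}|^{r+\frac12}|\boldsymbol{l}|^{r+\frac12},\qquad |\boldsymbol{j}|^{\frac12}|\boldsymbol{k}|^{r}|\boldsymbol{l}|^{r+\frac12},
\]
weighted by $e^{\tau(|\boldsymbol{j}|+|\boldsymbol{k}|+|\boldsymbol{l}|)}|\hat f_{\boldsymbol{j}}(z)||\hat g_{\boldsymbol{k}}(z)||\hat h_{\boldsymbol{l}}(z)|$.

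For the three ``generic'' terms (the first, second and fourth symbols above), I will run Cauchy--Schwarz in the variable with the smallest exponent, exactly as in the $A_1,A_2,A_3$ estimates of Lemma \ref{lemma-type1}; the summability $\sum_{\boldsymbol{k}\neq0}|\boldsymbol{k}|^{-2r}<\infty$ or $\sum_{\boldsymbol{j}\neq 0}|\boldsymbol{j}|^{-2r}<\infty$ uses $r>1$, matching the hypothesis. Each of these three yields a bound of the type $\|A^{r+\frac12}e^{\tau A}f(z)\|_{L^2_{\boldsymbol{x}}}\|A^{r+\frac12}e^{\tau A}g(z)\|_{L^2_{\boldsymbol{x}}}\|A^{r}e^{\tau A}h(z)\|_{L^2_{\boldsymbol{x}}}$ (or its cyclic analogue), which are absorbed into the right-hand side of \eqref{lemma-type1-inequality}.

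The one delicate term is $|\boldsymbol{j}|^{r+\frac12}|\boldsymbol{l}|^{r+\frac12}$, which carries no positive power of $|\boldsymbol{k}|$. Summing first in $\boldsymbol{k}$ using Cauchy--Schwarz with weight $|\boldsymbol{k}|^{-2r}$ requires excluding $\boldsymbol{k}=0$; the $\boldsymbol{k}=0$ contribution must then be handled separately, and it produces exactly the $|\hat g_0(z)|$ factor in the claimed bound--mirroring the $|\hat f_0(z)|$ contribution from the $A_4$ term of Lemma \ref{lemma-type1}. This isolation of the zero mode of $g$ is the only nonroutine bookkeeping step; once it is in place, Cauchy--Schwarz on the remaining $\boldsymbol{k}\neq 0$ sum, followed by integration in $z$, closes the estimate. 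The main (mild) obstacle is therefore just confirming that, under the swap $\nabla g\leadsto \nabla\cdot f$, the zero-mode correction relocates from $f$ to $g$; no new analytic idea beyond the techniques of Lemma \ref{lemma-type1} is needed.
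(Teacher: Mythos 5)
Your proposal is correct and implements exactly what the paper intends: the paper omits the proof with the remark that it is ``almost the same as Lemma~\ref{lemma-type1},'' and your argument is the intended mirror of that proof, with the derivative now falling on $f$ so that the splitting $|\boldsymbol{j}|^{1/2}\leq|\boldsymbol{k}|^{1/2}+|\boldsymbol{l}|^{1/2}$ replaces the splitting of $|\boldsymbol{k}|^{1/2}$, the four resulting symbol families are treated by Cauchy--Schwarz exactly as $A_1$--$A_4$ were, and the zero-mode correction relocates from $\hat f_0$ (the $A_4$ term, symbol $|\boldsymbol{k}|^{r+1/2}|\boldsymbol{l}|^{r+1/2}$) to $\hat g_0$ (your third symbol $|\boldsymbol{j}|^{r+1/2}|\boldsymbol{l}|^{r+1/2}$).
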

The proof of Lemma \ref{lemma-type3} is almost the same as Lemma \ref{lemma-type1}, so we omit it.

We will show lemmas which are essential in the study of effect of rotation. Lemma \ref{lemma-difference-type1} to Lemma \ref{lemma-difference-type4} are concerning the commutator estimates.
\begin{lemma} \label{lemma-difference-type1}
For $f, g, h\in \mathcal{S}_{r+\frac{1}{2},s,\tau}$, where $r > 2 $, $s\geq 0$, and $\tau\geq 0$, one has
\begin{eqnarray*}
&&\hskip-.8in
\Big|\Big\langle A^r e^{\tau A} (f\cdot \nabla  g), A^r e^{\tau A} h  \Big\rangle - \Big\langle  f\cdot \nabla  A^r e^{\tau A} g, A^r e^{\tau A} h  \Big\rangle\Big| \nonumber\\
&&\hskip-.9in
\leq C_r \int_0^1 \|A^r f(z)\|_{L^2(\mathbb{T}^2)} \|A^r g(z)\|_{L^2(\mathbb{T}^2)} \|A^r h(z)\|_{L^2(\mathbb{T}^2)} dz \nonumber\\
&&\hskip-.8in
+ C_r \tau \int_0^1 \|A^{r+\frac{1}{2}} e^{\tau A} f(z)\|_{L^2(\mathbb{T}^2)} \|A^{r+\frac{1}{2}} e^{\tau A} g(z)\|_{L^2(\mathbb{T}^2)} \|A^{r+\frac{1}{2}} e^{\tau A} h(z)\|_{L^2(\mathbb{T}^2)} dz.
\end{eqnarray*}
\end{lemma}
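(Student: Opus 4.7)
The plan is to build on the Fourier-analytic approach used in the proof of Lemma \ref{lemma-type1}, while exploiting the extra cancellation present in the commutator. Using the representations \eqref{id:frr-rpn}, I would first rewrite the left-hand side in Fourier space as
\begin{equation*}
\int_0^1\sum_{\boldsymbol{j}+\boldsymbol{k}+\boldsymbol{l}=0} \bigl[\sigma(\boldsymbol{l})-\sigma(\boldsymbol{k})\bigr]\,\sigma(\boldsymbol{l})\,\bigl(i\boldsymbol{k}\cdot\hat f_{\boldsymbol{j}}(z)\bigr)\hat g_{\boldsymbol{k}}(z)\hat h_{-\boldsymbol{l}}(z)\,dz,
\end{equation*}
where $\sigma(\boldsymbol{n}) := |\boldsymbol{n}|^r e^{\tau|\boldsymbol{n}|}$ is the Fourier symbol of $A^r e^{\tau A}$.

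The key step is the exact decomposition
\begin{equation*}
\sigma(\boldsymbol{n}) = |\boldsymbol{n}|^r + \tau |\boldsymbol{n}|^{r+1}\int_0^1 e^{s\tau|\boldsymbol{n}|}\,ds,
\qquad
\sigma(\boldsymbol{l})-\sigma(\boldsymbol{k}) = \bigl(|\boldsymbol{l}|^r-|\boldsymbol{k}|^r\bigr) + \tau\int_0^1\!\bigl(|\boldsymbol{l}|^{r+1}e^{s\tau|\boldsymbol{l}|}-|\boldsymbol{k}|^{r+1}e^{s\tau|\boldsymbol{k}|}\bigr)ds,
\end{equation*}
which when combined with the identity above for $\sigma(\boldsymbol l)$ distributes $[\sigma(\boldsymbol{l})-\sigma(\boldsymbol{k})]\,\sigma(\boldsymbol{l})$ into a single purely polynomial piece $(|\boldsymbol{l}|^r-|\boldsymbol{k}|^r)|\boldsymbol{l}|^r$ together with three remaining pieces that each carry an overall factor of $\tau$ and retain an exponential weight.

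For the polynomial piece, I would use the mean-value inequality $\bigl||\boldsymbol{l}|^r-|\boldsymbol{k}|^r\bigr|\leq C_r|\boldsymbol{j}|(|\boldsymbol{l}|^{r-1}+|\boldsymbol{k}|^{r-1})$ together with $|\boldsymbol{l}|^{r-1}\leq C_r(|\boldsymbol{j}|^{r-1}+|\boldsymbol{k}|^{r-1})$ (both valid since $\boldsymbol{l}=-(\boldsymbol{j}+\boldsymbol{k})$ and $r>1$); absorbing the extra $|\boldsymbol{k}|$ coming from the gradient, the Fourier weight reduces to a sum of terms of the form $|\boldsymbol{j}|^r|\boldsymbol{k}||\boldsymbol{l}|^r$ and $|\boldsymbol{j}||\boldsymbol{k}|^r|\boldsymbol{l}|^r$. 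A discrete Cauchy--Schwarz/convolution estimate entirely parallel to that in the proof of Lemma \ref{lemma-type1}, using the summability of $\sum_{\boldsymbol{k}\neq 0}|\boldsymbol{k}|^{2-2r}$ (which holds in $2D$ precisely for $r>2$), then produces the first term $C_r\int_0^1\|A^rf(z)\|\|A^rg(z)\|\|A^rh(z)\|\,dz$. For each of the three $\tau$-weighted pieces, I would run the same scheme but keep the exponential weights, using the triangle-inequality bound $e^{\tau|\boldsymbol{l}|}\leq e^{\tau|\boldsymbol{j}|}e^{\tau|\boldsymbol{k}|}$ (and its cyclic permutations) to symmetrize the exponential factors across $f,g,h$; the extra half-derivative available in the $A^{r+1/2}$-norms absorbs the mismatch between the weights $|\boldsymbol{k}|\cdot|\boldsymbol{l}|^{r+1}$ and $|\boldsymbol{j}|^{r+1/2}|\boldsymbol{k}|^{r+1/2}|\boldsymbol{l}|^{r+1/2}$, yielding the second term.

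The main obstacle is the bookkeeping in the polynomial piece: one must redistribute the top-order weight $|\boldsymbol{l}|^r$ coming from the test function onto $\hat f_{\boldsymbol{j}}$ and $\hat g_{\boldsymbol{k}}$ so that all three norms on the right-hand side appear at the same order $r$, as opposed to the $r+1/2$-versus-$r$ split in Lemma \ref{lemma-type1}. It is precisely the commutator-generated factor $|\boldsymbol{j}|$ from $||\boldsymbol{l}|^r-|\boldsymbol{k}|^r|$ that effectively ``trades'' a derivative onto $f$ via the triangle inequality, allowing this symmetrization; this trade is also what forces the threshold $r>2$ (rather than the $r>1$ of Lemma \ref{lemma-type1}). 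Once this splitting is in place, the remaining estimates are essentially identical in spirit to those used earlier in the appendix, and I do not anticipate additional difficulties.
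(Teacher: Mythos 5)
Your high-level strategy is the same as the paper's: pass to Fourier coefficients, exploit the commutator cancellation through a bound on the symbol difference $\sigma(\boldsymbol{l})-\sigma(\boldsymbol{k})$ where $\sigma(\boldsymbol{n})=|\boldsymbol{n}|^r e^{\tau|\boldsymbol{n}|}$, and split into a purely polynomial piece and a $\tau$-weighted exponential piece before closing with convolution/Cauchy--Schwarz at the threshold $r>2$. The paper achieves the split by citing, from the reference labelled \cite{LO97}, the inequality
\begin{equation*}
|\xi^r e^{\tau \xi} - \eta^r e^{\tau \eta}| \leq C_r|\xi - \eta|\Bigl( |\xi - \eta|^{r-1} + \eta^{r-1} + \tau(|\xi - \eta|^{r} + \eta^r)e^{\tau|\xi-\eta|}e^{\tau \eta} \Bigr),
\end{equation*}
and then peels the extra factor $e^{\tau|\boldsymbol{l}|}$ from the test function via $e^{x}\leq 1+xe^{x}$. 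You instead propose to re-derive the split from scratch via the exact representation $e^x = 1 + x\int_0^1 e^{sx}\,ds$; this is essentially a hands-on proof of the same symbol inequality, so the two routes are morally identical.

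There is, however, a genuine gap in the way you describe the $\tau$-weighted pieces. Your decomposition of $[\sigma(\boldsymbol{l})-\sigma(\boldsymbol{k})]\,\sigma(\boldsymbol{l})$ produces, besides the polynomial piece, terms that contain the \emph{sub-commutator}
\begin{equation*}
\tau\int_0^1\bigl(|\boldsymbol{l}|^{r+1}e^{s\tau|\boldsymbol{l}|}-|\boldsymbol{k}|^{r+1}e^{s\tau|\boldsymbol{k}|}\bigr)\,ds = |\boldsymbol{l}|^r\bigl(e^{\tau|\boldsymbol{l}|}-1\bigr)-|\boldsymbol{k}|^r\bigl(e^{\tau|\boldsymbol{k}|}-1\bigr).
\end{equation*}
You claim to handle these with ``the triangle-inequality bound $e^{\tau|\boldsymbol{l}|}\leq e^{\tau|\boldsymbol{j}|}e^{\tau|\boldsymbol{k}|}$.'' That is not enough: if you discard the cancellation and bound each summand separately, the weight on $h$ becomes $|\boldsymbol{l}|^{2r+1}$, and the convolution sum then requires $r>5/2$ rather than $r>2$. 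If instead you apply the mean-value theorem to $\zeta\mapsto\zeta^{r+1}e^{s\tau\zeta}$ term-by-term, the derivative contributes a term $s\tau\zeta^{r+1}e^{s\tau\zeta}$ which, after multiplying by the overall prefactor $\tau$, produces a $\tau^2$ that does not match the statement. The fix is to differentiate the function $\zeta\mapsto\zeta^r(e^{\tau\zeta}-1)$ and then use $e^{\tau\zeta}-1\leq \tau\zeta e^{\tau\zeta}$ on the derivative; only then do all the pieces collapse to a single factor of $\tau$ times $C_r|\boldsymbol{j}|(|\boldsymbol{j}|^r+|\boldsymbol{k}|^r)e^{\tau|\boldsymbol{j}|}e^{\tau|\boldsymbol{k}|}$, which is exactly the content of the cited inequality. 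You also assert that the remaining pieces ``each carry an overall factor of $\tau$''---but the cross term of your two $\tau$-terms in $[\sigma(\boldsymbol{l})-\sigma(\boldsymbol{k})]\,\sigma(\boldsymbol{l})$ visibly carries $\tau^2$, and reducing it to a single $\tau$ requires the same $e^{\tau\zeta}-1\leq\tau\zeta e^{\tau\zeta}$ observation. Finally, your remark that ``the main obstacle is the bookkeeping in the polynomial piece'' has it backwards: the polynomial piece is the straightforward one; the exponential piece is where the subtlety lies.
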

Next, we have
\begin{lemma} \label{lemma-difference-type2}
For $f, g, h\in \mathcal{S}_{r+\frac{1}{2},s,\tau}$, where $r > 2 $, $s\geq 0$, and $\tau\geq 0$, one has
\begin{eqnarray*}
&&\hskip-.8in
\Big|\Big\langle A^r e^{\tau A} \big( (\nabla\cdot f)g\big), A^r e^{\tau A} h  \Big\rangle - \Big\langle  (\nabla\cdot A^r e^{\tau A} f)g, A^r e^{\tau A} h  \Big\rangle\Big| \nonumber \\
&&\hskip-.9in
\leq C_r \int_0^1 \|A^r f(z)\|_{L^2(\mathbb{T}^2)} \|A^r g(z)\|_{L^2(\mathbb{T}^2)} \|A^r h(z)\|_{L^2(\mathbb{T}^2)} dz \nonumber\\
&&\hskip-.8in
+ C_r \tau \int_0^1 \|A^{r+\frac{1}{2}} e^{\tau A} f(z)\|_{L^2(\mathbb{T}^2)} \|A^{r+\frac{1}{2}} e^{\tau A} g(z)\|_{L^2(\mathbb{T}^2)} \|A^{r+\frac{1}{2}} e^{\tau A} h(z)\|_{L^2(\mathbb{T}^2)} dz.
\end{eqnarray*}

\end{lemma}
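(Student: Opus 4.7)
The plan is to follow the strategy of Lemma \ref{lemma-difference-type1} (which is the analogous commutator estimate for $ f \cdot \nabla g $) and adapt it to the form $ (\nabla \cdot f) g $, exploiting the symmetry in Fourier space between writing $ (i\boldsymbol{j}\cdot \hat{f}_{\boldsymbol{j}}) \hat{g}_{\boldsymbol{k}} $ and $ \hat{f}_{\boldsymbol{j}} (i\boldsymbol{k}\cdot \hat{g}_{\boldsymbol{k}}) $. First I would pass to the Fourier representation \eqref{id:frr-rpn}: writing $ H := A^r e^{\tau A} h $, the quantity to estimate becomes
\begin{equation*}
    \Big| \int_0^1 \sum_{\boldsymbol{j}+\boldsymbol{k}+\boldsymbol{l} = 0} \bigl( |\boldsymbol{l}|^r e^{\tau |\boldsymbol{l}|} - |\boldsymbol{j}|^r e^{\tau |\boldsymbol{j}|}  \bigr) (i \boldsymbol{j}\cdot \hat{f}_{\boldsymbol{j}}(z)) \hat{g}_{\boldsymbol{k}}(z) \hat{H}_{-\boldsymbol{l}}(z) \, dz \Big|.
\end{equation*}

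Next I would use the elementary splitting
\begin{equation*}
    |\boldsymbol{l}|^r e^{\tau |\boldsymbol{l}|} - |\boldsymbol{j}|^r e^{\tau |\boldsymbol{j}|} = \bigl( |\boldsymbol{l}|^r - |\boldsymbol{j}|^r \bigr) e^{\tau |\boldsymbol{l}|}  + |\boldsymbol{j}|^r \bigl( e^{\tau |\boldsymbol{l}|} - e^{\tau |\boldsymbol{j}|} \bigr),
\end{equation*}
together with the mean value bounds, valid for $ \boldsymbol{j} + \boldsymbol{k} = \boldsymbol{l} $,
\begin{equation*}
    \bigl| |\boldsymbol{l}|^r - |\boldsymbol{j}|^r \bigr| \leq C_r |\boldsymbol{k}| \bigl( |\boldsymbol{j}|^{r-1} + |\boldsymbol{k}|^{r-1} \bigr), \qquad \bigl| e^{\tau |\boldsymbol{l}|} - e^{\tau |\boldsymbol{j}|} \bigr| \leq \tau |\boldsymbol{k}| e^{\tau |\boldsymbol{j}|} e^{\tau |\boldsymbol{k}|}.
\end{equation*}
These inequalities gain a factor of $ |\boldsymbol{k}| $, which absorbs the derivative on $ f $ in the commutator: once combined with the factor $ |\boldsymbol{j}| $ from $ \nabla \cdot f $, the symbol of the commutator is of order $ r $ (not $ r+1 $) in each of the variables $ \boldsymbol{j}, \boldsymbol{k}, \boldsymbol{l} $, with the Gevrey weight redistributed so that the second splitting contributes an overall $ \tau $.

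Then I would close the estimate by Cauchy--Schwarz, exactly as in the proofs of Lemmas \ref{lemma-type1} and \ref{lemma-difference-type1}: for the piece arising from $ |\boldsymbol{l}|^r - |\boldsymbol{j}|^r $ one obtains a sum of terms of the form $ |\boldsymbol{j}|^a |\boldsymbol{k}|^b |\boldsymbol{l}|^c $ with $ a + b + c = 2r $ (no Gevrey weight is present, since $ e^{\tau |\boldsymbol{l}|} \leq e^{\tau |\boldsymbol{j}|} e^{\tau |\boldsymbol{k}|} $ is absorbed into the analytic norms by using $ |\boldsymbol{l}|^r \leq C_r(|\boldsymbol{j}|^r + |\boldsymbol{k}|^r) $), giving the first term on the right-hand side bounded by $ C_r \|A^r f(z)\|_{L^2(\mathbb{T}^2)} \|A^r g(z)\|_{L^2(\mathbb{T}^2)} \|A^r h(z)\|_{L^2(\mathbb{T}^2)} $. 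For the piece arising from $ e^{\tau |\boldsymbol{l}|} - e^{\tau |\boldsymbol{j}|} $, the extra $ \tau $ factor stays in front and the exponential weights group as $ e^{\tau(|\boldsymbol{j}|+|\boldsymbol{k}|+|\boldsymbol{l}|)}/e^{\tau|\boldsymbol{l}|} \cdot e^{\tau|\boldsymbol{l}|} $ so that we may put a half power of the horizontal derivative on each of $ f, g, h $ (using $ |\boldsymbol{j}| \leq |\boldsymbol{j}|^{1/2}(|\boldsymbol{k}|^{1/2}+|\boldsymbol{l}|^{1/2}) $), producing the second term on the right-hand side.

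The main technical obstacle, as in Lemma \ref{lemma-difference-type1}, is the bookkeeping of the exponent distribution: one has to check that after using $ |\boldsymbol{j}|+|\boldsymbol{k}|+|\boldsymbol{l}| $ cancellations and the inequality $ |\boldsymbol{\alpha}|^{1/2} \leq |\boldsymbol{\beta}|^{1/2}+|\boldsymbol{\gamma}|^{1/2} $, every resulting trilinear sum has the structure that Cauchy--Schwarz in $ \boldsymbol{j} $ or $ \boldsymbol{k} $ converges, which requires $ r > 2 $ (so that series like $ \sum |\boldsymbol{k}|^{-2(r-1)} $ converge in $ \mathbb{Z}^2 $). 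The rest is mechanical, and I would refer to the computations already performed in the proofs of Lemmas \ref{lemma-type1}--\ref{lemma-type3} above.
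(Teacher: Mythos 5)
Your outline correctly identifies that the proof should mirror Lemma \ref{lemma-difference-type1} with the roles of $\boldsymbol{j}$ and $\boldsymbol{k}$ interchanged, but the commutator-symbol estimate you propose is not the one the paper uses, and it does not close. The paper's proof of Lemma \ref{lemma-difference-type1} rests on the inequality \eqref{lemma-inequality} from \cite{LO97},
\begin{equation*}
|\xi^r e^{\tau\xi} - \eta^r e^{\tau\eta}| \leq C_r|\xi-\eta|\Bigl(|\xi-\eta|^{r-1} + \eta^{r-1} + \tau\bigl(|\xi-\eta|^r + \eta^r\bigr)e^{\tau|\xi-\eta|}e^{\tau\eta}\Bigr),
\end{equation*}
whose essential feature is that the first piece carries \emph{no} exponential weight at all. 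Your proposed splitting $(|\boldsymbol{l}|^r-|\boldsymbol{j}|^r)e^{\tau|\boldsymbol{l}|}+|\boldsymbol{j}|^r(e^{\tau|\boldsymbol{l}|}-e^{\tau|\boldsymbol{j}|})$ breaks this structure: the first term retains the full weight $e^{\tau|\boldsymbol{l}|}$, and since $\hat{H}_{\boldsymbol{l}}=|\boldsymbol{l}|^r e^{\tau|\boldsymbol{l}|}\hat{h}_{\boldsymbol{l}}$ contributes another $e^{\tau|\boldsymbol{l}|}$, after multiplying through you are left with $e^{2\tau|\boldsymbol{l}|}$ acting on the bare Fourier coefficients of $f,g,h$. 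That is incompatible with the exponential-free bound $C_r\int_0^1\|A^r f\|\|A^r g\|\|A^r h\|\,dz$, and the term also lacks the overall $\tau$ factor needed to be filed under the Gevrey bound. Your claim that ``no Gevrey weight is present'' because $e^{\tau|\boldsymbol{l}|}\leq e^{\tau|\boldsymbol{j}|}e^{\tau|\boldsymbol{k}|}$ is ``absorbed'' via $|\boldsymbol{l}|^r\leq C_r(|\boldsymbol{j}|^r+|\boldsymbol{k}|^r)$ confuses the polynomial factor with the exponential one: redistributing the exponential onto $\boldsymbol{j},\boldsymbol{k}$ moves it into $\|A^{\cdot}e^{\tau A}f\|,\|A^{\cdot}e^{\tau A}g\|$, it does not remove it.

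You are also missing the second essential step of the paper's argument: the decomposition $|\hat{H}_{\boldsymbol{l}}|\leq |\boldsymbol{l}|^r|\hat{h}_{\boldsymbol{l}}|+\tau(|\boldsymbol{j}|+|\boldsymbol{k}|)|\hat{H}_{\boldsymbol{l}}|$ obtained from $e^x\leq 1+xe^x$. It is the pairing of the exponential-free piece of \eqref{lemma-inequality} with the exponential-free part $|\boldsymbol{l}|^r|\hat{h}_{\boldsymbol{l}}|$ that produces the first term of the Lemma, while the cross pairings all acquire the needed $\tau$ and exponential weights $e^{\tau|\boldsymbol{j}|}e^{\tau|\boldsymbol{k}|}e^{\tau|\boldsymbol{l}|}$ for the second term. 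The correct route for Lemma \ref{lemma-difference-type2} is simply to apply \eqref{lemma-inequality} with $\xi=|\boldsymbol{l}|$, $\eta=|\boldsymbol{j}|$, $|\xi-\eta|\leq|\boldsymbol{k}|$ (in place of the choices $\xi=|\boldsymbol{l}|$, $\eta=|\boldsymbol{k}|$, $|\xi-\eta|\leq|\boldsymbol{j}|$ used for Lemma \ref{lemma-difference-type1}), combine with the $\hat{H}_{\boldsymbol{l}}$ decomposition as above, and then run Cauchy--Schwarz on the resulting sums exactly as the paper does for $I_1$ and $I_2$. As written, your proposal substitutes a weaker decomposition that loses the cancellation \eqref{lemma-inequality} encodes and omits the treatment of $\hat{H}_{\boldsymbol{l}}$, so it cannot land in the stated bound.
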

We start with the proof of Theorem \ref{lemma-difference-type1}. The proof of Theorem \ref{lemma-difference-type2} will be similarly.

\begin{proof}[Proof of Lemma \ref{lemma-difference-type1}]
First, notice that $\Big|\Big\langle A^r e^{\tau A} (f\cdot \nabla  g), A^r e^{\tau A} h \Big\rangle\Big| = \Big|\Big\langle f\cdot \nabla  g, A^r e^{\tau A} H  \Big\rangle\Big| $, where $H =  A^r e^{\tau A} h$.
    We use Fourier representation of $f, g$ and $H$, in which we can write
    \begin{eqnarray*}
    &&\hskip-.8in
     f(\boldsymbol{x},z) = \sum\limits_{\boldsymbol{j}\in 2\pi\mathbb{Z}^2} \hat{f}_{\boldsymbol{j}}(z) e^{i\boldsymbol{j}\cdot \boldsymbol{x}}, \\
     &&\hskip-.8in
    g(\boldsymbol{x},z) = \sum\limits_{\boldsymbol{k}\in 2\pi\mathbb{Z}^2} \hat{g}_{\boldsymbol{k}}(z) e^{i\boldsymbol{k}\cdot \boldsymbol{x}}, \\
    &&\hskip-.8in
    A^r e^{\tau A} H(\boldsymbol{x},z) = \sum\limits_{\boldsymbol{l}\in 2\pi\mathbb{Z}^2} |\boldsymbol{l}|^r e^{\tau |\boldsymbol{l}|}\hat{H}_{\boldsymbol{l}}(z) e^{i\boldsymbol{l}\cdot \boldsymbol{x}}.
    \end{eqnarray*}
Therefore,
\begin{eqnarray*}
&&\hskip-1in
I :=\Big|\Big\langle A^r e^{\tau A} (f\cdot \nabla  g), A^r e^{\tau A} h  \Big\rangle - \Big\langle  f\cdot \nabla  A^r e^{\tau A} g, A^r e^{\tau A} h  \Big\rangle\Big| \nonumber\\
&&\hskip-.9in
= \Big|\Big\langle  (f\cdot \nabla  g), A^r e^{\tau A} H  \Big\rangle - \Big\langle  f\cdot \nabla  A^r e^{\tau A} g, H  \Big\rangle\Big|\nonumber\\
&&\hskip-.9in
\leq \sum\limits_{\boldsymbol{j}+\boldsymbol{k}+\boldsymbol{l}=0} \int_0^1  |\hat{f}_{\boldsymbol{j}}(z)||\boldsymbol{k}||\hat{g}_{\boldsymbol{k}}(z)||\hat{H}_{\boldsymbol{l}}(z)| \Big| |\boldsymbol{l}|^r e^{\tau|\boldsymbol{l}|} - |\boldsymbol{k}|^r e^{\tau|\boldsymbol{k}|} \Big| dz.
\end{eqnarray*}
By virtue of the following observation \cite{LO97}:

For $r\geq 1$ and $\tau \geq 0$, and for all positive $\xi,\eta \in \mathbb{R}$, we have
\begin{eqnarray}\label{lemma-inequality}
&&\hskip-.8in
|\xi^r e^{\tau \xi} - \eta^r e^{\tau \eta}| \leq C_r|\xi - \eta|\Big( |\xi - \eta|^{r-1} + \eta^{r-1} + \tau(|\xi - \eta|^{r} + \eta^r)e^{\tau|\xi-\eta|}e^{\tau \eta} \Big);
\end{eqnarray}
with $\xi = |\boldsymbol{l}|$, $\eta = |\boldsymbol{k}|$, and $|\xi - \eta| \leq |\boldsymbol{j}|$, inequality \eqref{lemma-inequality} implies
\begin{equation*}
    I \leq C_r \sum\limits_{\boldsymbol{j}+\boldsymbol{k}+\boldsymbol{l}=0} \int_0^1  |\hat{f}_{\boldsymbol{j}}(z)||\boldsymbol{k}||\hat{g}_{\boldsymbol{k}}(z)||\hat{H}_{\boldsymbol{l}}(z)| |\boldsymbol{j}|\Big( |\boldsymbol{j}|^{r-1} + |\boldsymbol{k}|^{r-1} + \tau(|\boldsymbol{j}|^{r} + |\boldsymbol{k}|^r)e^{\tau|\boldsymbol{j}|}e^{\tau |\boldsymbol{k}|} \Big) dz.
\end{equation*}
By the definition of $H$, and since $e^x \leq 1+xe^x$ for any $x\geq 0$, we have
\begin{eqnarray*}
|\hat{H}_{\boldsymbol{l}}(z)| = |\boldsymbol{l}|^r e^{\tau |\boldsymbol{l}|} |\hat{h}_{\boldsymbol{l}}(z)| \leq |\boldsymbol{l}|^r(1+\tau |\boldsymbol{l}|e^{\tau |\boldsymbol{l}|}) |\hat{h}_{\boldsymbol{l}}(z)| \leq |\boldsymbol{l}|^r |\hat{h}_{\boldsymbol{l}}(z)| + \tau(|\boldsymbol{j}|+|\boldsymbol{k}|) |\hat{H}_{\boldsymbol{l}}(z)|.
\end{eqnarray*}
Therefore, one obtains that
\begin{eqnarray*}
&&\hskip-.68in
|\hat{H}_{\boldsymbol{l}}(z)| \Big( |\boldsymbol{j}|^{r-1} + |\boldsymbol{k}|^{r-1} + \tau(|\boldsymbol{j}|^{r} + |\boldsymbol{k}|^{r})e^{\tau|\boldsymbol{k}|}e^{\tau|\boldsymbol{j}|} \Big)\nonumber\\
&&\hskip-.8in
\leq \Big(|\boldsymbol{l}|^r |\hat{h}_{\boldsymbol{l}}(z)| + \tau(|\boldsymbol{j}|+|\boldsymbol{k}|) |\hat{H}_{\boldsymbol{l}}(z)|\Big) \Big( |\boldsymbol{k}|^{r-1} + |\boldsymbol{j}|^{r-1} \Big) + |\hat{H}_{\boldsymbol{l}}(z)| \Big(\tau(|\boldsymbol{k}|^{r} + |\boldsymbol{j}|^{r})e^{\tau|\boldsymbol{k}|}e^{\tau|\boldsymbol{j}|} \Big)\nonumber\\
&&\hskip-.8in
\leq |\hat{h}_{\boldsymbol{l}}(z)| |\boldsymbol{l}|^r(|\boldsymbol{k}|^{r-1} + |\boldsymbol{j}|^{r-1}) + \tau C_r |\hat{H}_{\boldsymbol{l}}(z)|(|\boldsymbol{k}|^{r} + |\boldsymbol{j}|^{r})e^{\tau|\boldsymbol{k}|}e^{\tau|\boldsymbol{j}|}.\label{Hl}
\end{eqnarray*}
Based on this, one has
\begin{eqnarray*}
&&\hskip-.8in
I \leq  C_r \sum\limits_{\boldsymbol{j}+\boldsymbol{k}+\boldsymbol{l}=0} \int_0^1  |\hat{f}_{\boldsymbol{j}}(z)||\boldsymbol{k}||\hat{g}_{\boldsymbol{k}}(z)| |\boldsymbol{j}||\hat{h}_{\boldsymbol{l}}(z)| |\boldsymbol{l}|^r(|\boldsymbol{k}|^{r-1} + |\boldsymbol{j}|^{r-1}) dz \nonumber\\
&&\hskip-.6in
+ \tau C_r \sum\limits_{\boldsymbol{j}+\boldsymbol{k}+\boldsymbol{l}=0} \int_0^1  |\hat{f}_{\boldsymbol{j}}(z)||\boldsymbol{k}||\hat{g}_{\boldsymbol{k}}(z)| |\boldsymbol{j}| |\hat{H}_{\boldsymbol{l}}(z)|(|\boldsymbol{k}|^{r} + |\boldsymbol{j}|^{r})e^{\tau|\boldsymbol{k}|}e^{\tau|\boldsymbol{j}|} dz:= I_{1} + I_{2}.
\end{eqnarray*}
Here
\begin{equation*}
    I_1 = C_r \sum\limits_{\boldsymbol{j}+\boldsymbol{k}+\boldsymbol{l}=0} \int_0^1 \Big( |\hat{f}_{\boldsymbol{j}}(z)||\boldsymbol{k}|^r|\hat{g}_{\boldsymbol{k}}(z)| |\boldsymbol{j}||\hat{h}_{\boldsymbol{l}}(z)| |\boldsymbol{l}|^r + |\hat{f}_{\boldsymbol{j}}(z)||\boldsymbol{k}||\hat{g}_{\boldsymbol{k}}(z)| |\boldsymbol{j}|^r|\hat{h}_{\boldsymbol{l}}(z)| |\boldsymbol{l}|^r \Big) dz := \int_0^1 I_{11} + I_{12} dz.
\end{equation*}

Thanks to Cauchy–Schwarz inequality, since $r> 2$, we have
\begin{eqnarray*}
&&\hskip-.8in
I_{11} = C_r \sum\limits_{\boldsymbol{j}+\boldsymbol{k}+\boldsymbol{l}=0}  |\boldsymbol{j}||\hat{f}_{\boldsymbol{j}}(z)||\boldsymbol{k}|^r|\hat{g}_{\boldsymbol{k}}(z)| |\boldsymbol{l}|^r|\hat{h}_{\boldsymbol{l}}(z)|  \nonumber \\
&&\hskip-.58in
= C_r \sum\limits_{\substack{\boldsymbol{j}\in 2\pi\mathbb{Z}^2 \\ \boldsymbol{j}\neq 0} } |\boldsymbol{j}| |\hat{f}_{\boldsymbol{j}}(z)| \sum\limits_{\substack{\boldsymbol{k}\in 2\pi\mathbb{Z}^2 \\ \boldsymbol{k}\neq 0, -\boldsymbol{j}} } |\boldsymbol{k}|^{r} |\hat{g}_{\boldsymbol{k}}(z)|  |\boldsymbol{j}+\boldsymbol{k}|^{r}|\hat{h}_{-\boldsymbol{j}-\boldsymbol{k}}(z)| \nonumber\\
&&\hskip-.58in
\leq C_r \Big( \sum\limits_{\substack{\boldsymbol{j}\in 2\pi\mathbb{Z}^2 \\ \boldsymbol{j}\neq 0} } |\boldsymbol{j}|^{2-2r}\Big)^{\frac{1}{2}} \Big( \sum\limits_{\substack{\boldsymbol{j}\in 2\pi\mathbb{Z}^2 \\ \boldsymbol{j}\neq 0} } |\boldsymbol{j}|^{2r}  |\hat{f}_{\boldsymbol{j}}(z)|^2\Big)^{\frac{1}{2}}  \nonumber \\
&&\hskip-.38in
\times \sup\limits_{\boldsymbol{j}\in 2\pi\mathbb{Z}^2}\Big( \sum\limits_{\substack{\boldsymbol{k}\in 2\pi\mathbb{Z}^2 \\ \boldsymbol{k}\neq 0, -\boldsymbol{j}} } |\boldsymbol{k}|^{2r} |\hat{g}_{\boldsymbol{k}}(z)|^2\Big)^{\frac{1}{2}} \Big( \sum\limits_{\substack{\boldsymbol{k}\in 2\pi\mathbb{Z}^2 \\ \boldsymbol{k}\neq 0, -\boldsymbol{j}} } |\boldsymbol{j}+\boldsymbol{k}|^{2r} |\hat{h}_{-\boldsymbol{j}-\boldsymbol{k}}(z)|^2\Big)^{\frac{1}{2}} \nonumber\\
&&\hskip-.58in \leq C_r \|A^r f(z)\|_{L^2(\mathbb{T}^2)} \|A^r g(z)\|_{L^2(\mathbb{T}^2)} \|A^r h(z)\|_{L^2(\mathbb{T}^2)}.
\end{eqnarray*}
Similarly, one gets
\begin{equation*}
    I_{12} \leq C_r \|A^r f(z)\|_{L^2(\mathbb{T}^2)} \|A^r g(z)\|_{L^2(\mathbb{T}^2)} \|A^r h(z)\|_{L^2(\mathbb{T}^2)}.
\end{equation*}
Therefore,
\begin{equation*}
    I_1 \leq C_r \int_0^1 \|A^r f(z)\|_{L^2(\mathbb{T}^2)} \|A^r g(z)\|_{L^2(\mathbb{T}^2)} \|A^r h(z)\|_{L^2(\mathbb{T}^2)} dz.
\end{equation*}

Next, we estimate
\begin{equation*}
\begin{split}
   I_2 = \tau C_r  \sum\limits_{\boldsymbol{j}+\boldsymbol{k}+\boldsymbol{l}=0} \int_0^1 &\Big( |\boldsymbol{j}|^{r+1}e^{\tau|\boldsymbol{j}|}|\hat{f}_{\boldsymbol{j}}(z)||\boldsymbol{k}|e^{\tau|\boldsymbol{k}|}|\hat{g}_{\boldsymbol{k}}(z)|  |\hat{H}_{\boldsymbol{l}}(z)| \\
   &+ |\boldsymbol{j}|e^{\tau|\boldsymbol{j}|}|\hat{f}_{\boldsymbol{j}}(z)||\boldsymbol{k}|^{r+1}e^{\tau|\boldsymbol{k}|}|\hat{g}_{\boldsymbol{k}}(z)|  |\hat{H}_{\boldsymbol{l}}(z)| \Big) dz := \int_0^1 I_{21} + I_{22} dz.
\end{split}
\end{equation*}
Thanks to Cauchy–Schwarz inequality, since $r> 2$, by using $|\boldsymbol{j}|^{\frac{1}{2}} \leq |\boldsymbol{k}|^{\frac{1}{2}} + |\boldsymbol{l}|^{\frac{1}{2}}$, and $|\boldsymbol{k}|^{\frac{1}{2}} + |\boldsymbol{l}|^{\frac{1}{2}} \leq 2 |\boldsymbol{k}|^{\frac{1}{2}} |\boldsymbol{l}|^{\frac{1}{2}}$ when $|\boldsymbol{k}|\geq 1$ and $|\boldsymbol{l}|\geq 1$,
we have
\begin{eqnarray*}
&&\hskip-.8in
I_{21} = \tau C_r \sum\limits_{\boldsymbol{j}+\boldsymbol{k}+\boldsymbol{l}=0} |\boldsymbol{j}|^{r+1}e^{\tau|\boldsymbol{j}|}|\hat{f}_{\boldsymbol{j}}(z)||\boldsymbol{k}|e^{\tau|\boldsymbol{k}|}|\hat{g}_{\boldsymbol{k}}(z)|  |\hat{H}_{\boldsymbol{l}}(z)| \nonumber \\
&&\hskip-.58in
\leq \tau C_r \sum\limits_{\substack{\boldsymbol{j}+\boldsymbol{k}+\boldsymbol{l}=0\\ \boldsymbol{j}, \boldsymbol{k}, \boldsymbol{l} \neq 0  }} |\boldsymbol{j}|^{r+\frac{1}{2}}e^{\tau|\boldsymbol{j}|}|\hat{f}_{\boldsymbol{j}}(z)||\boldsymbol{k}|^{\frac{3}{2}}e^{\tau|\boldsymbol{k}|}|\hat{g}_{\boldsymbol{k}}(z)|  |\boldsymbol{l}|^{r+\frac{1}{2}}e^{\tau|\boldsymbol{l}|}|\hat{h}_{\boldsymbol{l}}(z)| \nonumber\\
&&\hskip-.58in
\leq C_r \tau \sum\limits_{\substack{\boldsymbol{k}\in 2\pi\mathbb{Z}^2 \\ \boldsymbol{k}\neq 0} } |\boldsymbol{k}|^{\frac{3}{2}} |\hat{g}_{\boldsymbol{k}}(z)| e^{\tau |\boldsymbol{k}|} \sum\limits_{\substack{\boldsymbol{j}\in 2\pi\mathbb{Z}^2 \\ \boldsymbol{j}\neq 0, -\boldsymbol{k}} } |\boldsymbol{j}|^{r+\frac{1}{2}} e^{\tau |\boldsymbol{j}|}|\hat{f}_{\boldsymbol{j}}(z)|  |\boldsymbol{j}+\boldsymbol{k}|^{r+\frac{1}{2}}e^{\tau |\boldsymbol{j}+\boldsymbol{k}|}|\hat{h}_{-\boldsymbol{j}-\boldsymbol{k}}(z)| \nonumber\\
&&\hskip-.58in
\leq C_r \tau \Big( \sum\limits_{\substack{\boldsymbol{k}\in 2\pi\mathbb{Z}^2 \\ \boldsymbol{k}\neq 0} } |\boldsymbol{k}|^{2-2r}\Big)^{\frac{1}{2}} \Big( \sum\limits_{\substack{\boldsymbol{k}\in 2\pi\mathbb{Z}^2 \\ \boldsymbol{k}\neq 0} } |\boldsymbol{k}|^{2r+1} e^{2\tau |\boldsymbol{k}|} |\hat{g}_{\boldsymbol{k}}(z)|^2\Big)^{\frac{1}{2}}  \nonumber \\
&&\hskip-.38in
\times \sup\limits_{\boldsymbol{k}\in 2\pi\mathbb{Z}^2}\Big( \sum\limits_{\substack{\boldsymbol{j}\in 2\pi\mathbb{Z}^2 \\ \boldsymbol{j}\neq 0, -\boldsymbol{k}} } |\boldsymbol{j}|^{2r+1}e^{2\tau |\boldsymbol{j}|} |\hat{f}_{\boldsymbol{j}}(z)|^2\Big)^{\frac{1}{2}} \Big( \sum\limits_{\substack{\boldsymbol{j}\in 2\pi\mathbb{Z}^2 \\ \boldsymbol{j}\neq 0, -\boldsymbol{k}} } |\boldsymbol{j}+\boldsymbol{k}|^{2r+1}e^{2\tau |\boldsymbol{j}+\boldsymbol{k}|} |\hat{h}_{-\boldsymbol{j}-\boldsymbol{k}}(z)|^2\Big)^{\frac{1}{2}} \nonumber\\
&&\hskip-.58in \leq C_r \tau\|A^{r+\frac{1}{2}} e^{\tau A} f(z)\|_{L^2(\mathbb{T}^2)} \|A^{r+\frac{1}{2}} e^{\tau A} g(z)\|_{L^2(\mathbb{T}^2)} \|A^{r+\frac{1}{2}} e^{\tau A} h(z)\|_{L^2(\mathbb{T}^2)}.
\end{eqnarray*}
Similarly, one gets
\begin{equation*}
    I_{22} \leq C_r \tau\|A^{r+\frac{1}{2}} e^{\tau A} f(z)\|_{L^2(\mathbb{T}^2)} \|A^{r+\frac{1}{2}} e^{\tau A} g(z)\|_{L^2(\mathbb{T}^2)} \|A^{r+\frac{1}{2}} e^{\tau A} h(z)\|_{L^2(\mathbb{T}^2)}.
\end{equation*}
Therefore,
\begin{equation*}
    I_2 \leq  C_r \tau \int_0^1 \|A^{r+\frac{1}{2}} e^{\tau A} f(z)\|_{L^2(\mathbb{T}^2)} \|A^{r+\frac{1}{2}} e^{\tau A} g(z)\|_{L^2(\mathbb{T}^2)} \|A^{r+\frac{1}{2}} e^{\tau A} h(z)\|_{L^2(\mathbb{T}^2)} dz.
\end{equation*}

\end{proof}

\begin{lemma} \label{lemma-difference-type4}
For  $f, g, \partial_z g, h\in \mathcal{S}_{r+\frac{1}{2},s,\tau}$, where $r > 2 $, $s\geq 0$, and $\tau\geq 0$, one has
\begin{eqnarray*}
&&\hskip-.8in
\Big|\Big\langle A^r e^{\tau A} \Big( (\int_0^z \nabla\cdot f(\boldsymbol{x},s)ds) \partial_z g  \Big), A^r e^{\tau A} h  \Big\rangle -  \Big\langle \partial_z g A^r e^{\tau A} (\int_0^z \nabla\cdot f(\boldsymbol{x},s)ds)  , A^r e^{\tau A} h  \Big\rangle\Big| \\
&&\hskip-.9in
\leq C_r \|A^{r} \partial_z g\| \|A^{r}  f\| \|A^{r}  h\| + C_r \tau \|A^{r+\frac{1}{2}} e^{\tau A} \partial_z g\| \|A^{r+\frac{1}{2}} e^{\tau A} f\| \|A^{r+\frac{1}{2}} e^{\tau A} h\|.
\end{eqnarray*}
\end{lemma}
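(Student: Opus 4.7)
The proof will follow closely the pattern of Lemma \ref{lemma-difference-type1}, with two main modifications to handle the vertical integration and the vertical derivative. I first expand in Fourier series in the horizontal variables: write $\int_0^z \nabla\cdot f(\boldsymbol{x},s)\,ds = \sum_{\boldsymbol{j}} \hat{F}_{\boldsymbol{j}}(z) e^{i\boldsymbol{j}\cdot \boldsymbol{x}}$ with $\hat{F}_{\boldsymbol{j}}(z) = i\boldsymbol{j} \cdot \int_0^z \hat{f}_{\boldsymbol{j}}(s)\,ds$, $\partial_z g = \sum_{\boldsymbol{k}} \partial_z \hat{g}_{\boldsymbol{k}}(z) e^{i\boldsymbol{k}\cdot\boldsymbol{x}}$, and $h = \sum_{\boldsymbol{l}} \hat{h}_{\boldsymbol{l}}(z) e^{i\boldsymbol{l}\cdot\boldsymbol{x}}$, setting $H := A^r e^{\tau A} h$. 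By Plancherel, the commutator expression reduces to a triple sum over $\boldsymbol{j}+\boldsymbol{k}+\boldsymbol{l}=0$ with integrand $|\hat{F}_{\boldsymbol{j}}(z)||\partial_z \hat{g}_{\boldsymbol{k}}(z)||\hat{h}_{\boldsymbol{l}}(z)|$ multiplied by the weight difference $\big||\boldsymbol{l}|^r e^{\tau|\boldsymbol{l}|} - |\boldsymbol{j}|^r e^{\tau|\boldsymbol{j}|}\big|$, since $A^r e^{\tau A}$ acts on the combined mode $\boldsymbol{l}$ in the first term but only on the mode $\boldsymbol{j}$ of $F$ in the second.

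Next, I invoke inequality \eqref{lemma-inequality} with $\xi = |\boldsymbol{l}|$ and $\eta = |\boldsymbol{j}|$, using $|\xi - \eta| \leq |\boldsymbol{k}|$ (from $\boldsymbol{l} = -\boldsymbol{j}-\boldsymbol{k}$), to bound the weight difference by
\begin{equation*}
C_r |\boldsymbol{k}| \bigl[|\boldsymbol{k}|^{r-1} + |\boldsymbol{j}|^{r-1} + \tau(|\boldsymbol{k}|^r + |\boldsymbol{j}|^r) e^{\tau|\boldsymbol{k}|} e^{\tau|\boldsymbol{j}|}\bigr].
\end{equation*}
The crucial bound on the vertical primitive is, by Cauchy-Schwarz in $z$, $|\hat{F}_{\boldsymbol{j}}(z)| \leq |\boldsymbol{j}| \|\hat{f}_{\boldsymbol{j}}\|_{L^2_z}$, which is independent of $z$. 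This makes the remaining $z$-integration $\int_0^1 |\partial_z \hat{g}_{\boldsymbol{k}}(z)||\hat{h}_{\boldsymbol{l}}(z)|\,dz \leq \|\partial_z \hat{g}_{\boldsymbol{k}}\|_{L^2_z} \|\hat{h}_{\boldsymbol{l}}\|_{L^2_z}$ amenable to a further Cauchy-Schwarz in $z$.

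After these reductions, the non-analytic part is controlled by the two symmetric sums
\begin{equation*}
\sum_{\boldsymbol{j}+\boldsymbol{k}+\boldsymbol{l}=0}\bigl(|\boldsymbol{j}|^r \|\hat{f}_{\boldsymbol{j}}\|_{L^2_z}\bigr)\bigl(|\boldsymbol{k}| \|\partial_z \hat{g}_{\boldsymbol{k}}\|_{L^2_z}\bigr)\bigl(|\boldsymbol{l}|^r \|\hat{h}_{\boldsymbol{l}}\|_{L^2_z}\bigr),
\end{equation*}
together with its analogue carrying the weight $|\boldsymbol{k}|^r$ on $\partial_z g$. Each is estimated by two successive applications of Cauchy-Schwarz exactly as for $I_{11}, I_{12}$ in the proof of Lemma \ref{lemma-difference-type1}: summing first over $\boldsymbol{k}$ using $(\sum_{\boldsymbol{k}\neq 0} |\boldsymbol{k}|^{2-2r})^{1/2} < \infty$ (which requires $r > 2$), then over $\boldsymbol{j}$ via $\boldsymbol{l} = -\boldsymbol{j}-\boldsymbol{k}$, yielding $C_r \|A^r f\|\|A^r \partial_z g\|\|A^r h\|$. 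The analytic contribution is treated analogously to $I_2$ there, absorbing one $e^{\tau|\boldsymbol{l}|}$ factor onto $\hat{h}_{\boldsymbol{l}}$ via $e^x \leq 1 + xe^x$, which produces the advertised $A^{r+\frac{1}{2}} e^{\tau A}$ norms of $f$, $\partial_z g$, and $h$ with prefactor $\tau$.

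The main obstacle, and the only substantive deviation from the model proof of Lemma \ref{lemma-difference-type1}, lies in the order of $z$-integration. The earlier commutator lemmas state their estimates pointwise in $z$ under an outer $\int_0^1 dz$, whereas here one must perform the $z$-integration at the level of Fourier coefficients: the bound $|\hat F_{\boldsymbol j}(z)| \leq |\boldsymbol j| \|\hat f_{\boldsymbol j}\|_{L^2_z}$ is uniform in $z$, and Cauchy-Schwarz in $z$ applied to the pair $\partial_z \hat g_{\boldsymbol k}, \hat h_{\boldsymbol l}$ recovers the full $L^2(\mathcal D)$ norms on the right-hand side. Once this $z$-accounting is set up correctly, the horizontal-frequency summations proceed by a direct transcription of the arguments already developed in the preceding commutator lemma.
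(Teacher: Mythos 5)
Your proof is correct, but it misses the shortcut the paper actually takes. The paper proves Lemma~\ref{lemma-difference-type4} in a few lines as a direct corollary of Lemma~\ref{lemma-difference-type2}: replace $f$ by $\int_0^z f(\boldsymbol{x},s)\,ds$ and $g$ by $\partial_z g$ in the statement of Lemma~\ref{lemma-difference-type2}, then observe that $\|A^r \int_0^z f(\cdot,s)\,ds\|_{L^2(\mathbb{T}^2)} \leq \|A^r f\|_{L^2(\mathcal{D})}$ uniformly in $z$ (by Cauchy--Schwarz in $s$, since $z\leq 1$), and apply H\"older in $z$ to the remaining two factors to pass from $\int_0^1\|\cdot\|_{L^2(\mathbb{T}^2)}\|\cdot\|_{L^2(\mathbb{T}^2)}\,dz$ to the advertised full $L^2(\mathcal{D})$ norms. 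You instead re-derive the underlying Fourier--commutator estimate from scratch, essentially inlining the proof of Lemma~\ref{lemma-difference-type2} with the substitution already performed. Your choice of $\xi=|\boldsymbol{l}|$, $\eta=|\boldsymbol{j}|$, $|\xi-\eta|\leq|\boldsymbol{k}|$ in inequality~\eqref{lemma-inequality} is correct for this commutator (the $A^r e^{\tau A}$ lands on $F$ rather than on $\partial_z g$, matching the Type~2 structure, not Type~1 as you nominally cite), the uniform-in-$z$ primitive bound $|\hat F_{\boldsymbol{j}}(z)|\leq|\boldsymbol{j}|\|\hat f_{\boldsymbol{j}}\|_{L^2_z}$ is exactly the right observation, and the subsequent Cauchy--Schwarz summations go through for $r>2$. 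The two proofs are logically equivalent; the paper's is shorter because it factors through a previously stated lemma, whereas yours is self-contained but duplicates work. One small notational slip: after Plancherel the integrand should carry $|\hat H_{\boldsymbol{l}}(z)|=|\boldsymbol{l}|^re^{\tau|\boldsymbol{l}|}|\hat h_{\boldsymbol{l}}(z)|$ rather than $|\hat h_{\boldsymbol{l}}(z)|$, which you implicitly acknowledge later when you split $\hat H$ via $e^x\leq 1+xe^x$.
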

\begin{proof}
Observe that Lemma \ref{lemma-difference-type4} follows directly from Lemma \ref{lemma-difference-type2}. Indeed, if one replaces $f$ by $\int_0^z f(\boldsymbol{x}, s) ds$ and $g$ by $\partial_z g$ in Lemma \ref{lemma-difference-type2}, by the H\"older inequality, one obtains that
\begin{equation*}
\begin{split}
   &\Big|\Big\langle A^r e^{\tau A} \Big( (\int_0^z \nabla\cdot f(\boldsymbol{x},s)ds) \partial_z g  \Big), A^r e^{\tau A} h  \Big\rangle -  \Big\langle \partial_z g A^r e^{\tau A} (\int_0^z \nabla\cdot f(\boldsymbol{x},s)ds)  , A^r e^{\tau A} h  \Big\rangle\Big|
   \\
   \leq &C_r \int_0^1 \|A^r \int_0^z f(\boldsymbol{x}, s) ds\|_{L^2(\mathbb{T}^2)} \|A^r \partial_z g(z)\|_{L^2(\mathbb{T}^2)} \|A^r h(z)\|_{L^2(\mathbb{T}^2)} dz
   \\
   &+ C_r \tau \int_0^1 \|A^{r+\frac{1}{2}} e^{\tau A} \int_0^z f(\boldsymbol{x}, s) ds\|_{L^2(\mathbb{T}^2)} \|A^{r+\frac{1}{2}} e^{\tau A} \partial_z g(z)\|_{L^2(\mathbb{T}^2)} \|A^{r+\frac{1}{2}} e^{\tau A} h(z)\|_{L^2(\mathbb{T}^2)} dz
   \\
   \leq & C_r \|A^{r} \partial_z g\| \|A^{r}  f\| \|A^{r}  h\| + C_r \tau \|A^{r+\frac{1}{2}} e^{\tau A} \partial_z g\| \|A^{r+\frac{1}{2}} e^{\tau A} f\| \|A^{r+\frac{1}{2}} e^{\tau A} h\|.
\end{split}
\end{equation*}
\end{proof}

\noindent
\section*{Acknowledgments}
X.L. acknowledges the partial funding by the Deutsche Forschungsgemeinschaft (DFG) through project AA2--9 {\it ``Variational Methods for Viscoelastic Flows and Gelation"} within MATH+. X.L. and E.S.T. would like to thank the Isaac Newton Institute for Mathematical Sciences, Cambridge, for support and hospitality during the programme {\it``Mathematical aspects of turbulence: where do we stand?"} where part of the work on this paper was undertaken. This work was supported in part by EPSRC grant no EP/R014604/1. X.L.’s work was partially supported by a grant from the Simons Foundation, during his visit to the Isaac Newton Institute for Mathematical Sciences.

\section*{Conflict of interest}
On behalf of all authors, the corresponding author states that there is no conflict of interest.

\section*{Data availability statement}
Our manuscript has no associated data.


\begin{thebibliography}{99}
	

\bibitem{AG01} P. Az\'erad and F. Guill\'en, {\em Mathematical justification of the hydrostatic approximation in the primitive equations of geophysical fluid dynamics,}
	SIAM J. Math.
Anal. {\bf 33 }(2001), 847--859.

\bibitem{BIT11} A. Babin, A.A. Ilyin, and E.S. Titi, {\em On the regularization mechanism for the spatially periodic Korteweg–de
		Vries equation,}
	Commun. Pure Appl. Math. {\bf 64} (2011), 591--648 .

\bibitem{BMN97} A. Babin, A. Mahalov, and B. Nicolaenko, {\em Regularity and integrability of 3D Euler and Navier-Stokes
equations for rotating fluids,}
	Asymptot. Anal. {\bf 15:2} (1997), 103--150.
	
\bibitem{BMN99a} A. Babin, A. Mahalov, and B. Nicolaenko, {\em Global regularity of 3D rotating Navier-Stokes equations for resonant domains,}
	Indiana Univ. Math. J. {\bf 48:3}  (1999), 1133--1176.
	
\bibitem{BMN99b} A. Babin, A. Mahalov, and B. Nicolaenko, {\em On the regularity of three-dimensional rotating Euler-Boussinesq equations,}
	Mathematical Models and Methods in Applied Sciences {\bf 9:7} (1999),  1089--1121.
	
\bibitem{BMN00} A. Babin, A. Mahalov, and B. Nicolaenko, {\em Fast singular oscillating limits and global regularity for the 3D primitive
equations of geophysics,}
	Math. Model. Numer. Anal. {\bf 34} (1999), 201--222.
	

\bibitem{BM02} A. L. Bertozzi and A. J. Majda, {\em Vorticity and Incompressible Flow,}
	Cambridge University Press, Cambridge, 2002.

\bibitem{BR99} Y. Brenier, {\em Homogeneous hydrostatic flows with convex velocity profiles,} Nonlinearity
{\bf 12:3} (1999), 495--512.

\bibitem{BR03} Y. Brenier, {\em Remarks on the derivation of the hydrostatic Euler equations,} Bull. Sci. Math.
{\bf 127:7} (2003), 585--595.


\bibitem{CINT15} C. Cao, S. Ibrahim, K. Nakanishi and E. S. Titi, {\em
	Finite-time blowup for the
	inviscid primitive equations of oceanic and atmospheric dynamics,} Comm. Math.
Phys. {\bf 337} (2015), 473--482.

\bibitem{CLT16} C. Cao, J. Li and E. S. Titi, {\em
	Global Well‐Posedness of the Three‐Dimensional Primitive Equations with Only Horizontal Viscosity and Diffusion,} Comm. Pure Appl. Math.
{\bf 69} (2016), 1492--1531.

\bibitem{CLT17} C. Cao, J. Li and E. S. Titi, {\em
	Strong solutions to the 3D primitive equations
	with only horizontal dissipation: Near $H^1$
	initial data,} J. Funct. Anal. {\bf 272:11} (2017),  4606--4641.

\bibitem{CLT17b} C. Cao, J. Li and E. S. Titi, {\em
	Global well-posedness of the 3D primitive equations with horizontal viscosity and
	vertical diffusivity,} Physica D {\bf 412} (2020), 132606.
	
\bibitem{CLT19} C. Cao, Q. Lin and E. S. Titi, {\em
	On the well--posedness of reduced $3D$ primitive geostrophic adjustment model with weak dissipation,} J. Math. Fluid Mech. {\bf 22:3} (2020), Article {\bf 32}. https://doi.org/10.1007/s00021-020-00495-6.

\bibitem{CT07} C. Cao and E. S. Titi, {\em
	Global well-posedness of the three-dimensional viscous primitive
	equations of large scale ocean and atmosphere dynamics,} Ann. of Math.
{\bf 166} (2007), 245--267.


\bibitem{CDGG06} J.-Y. Chemin, B. Desjardines, I. Gallagher and
E. Grenier, {\em Mathematical Geophysics. An Introduction to Rotating Fluids
and the Navier-Stokes Equations,} Clarendon Press, Oxford, 2006.

\bibitem{CIL21} C. Collot, S. Ibrahim and Q. Lin, {\em
	Stable Singularity Formation for the Inviscid Primitive Equations,} arXiv preprint arXiv:2112.09759 (2021).

\bibitem{D05}  A. Dutrifoy, {\em Examples of dispersive effects in non-viscous rotating fluids}, J. Math. Pures Appl. {\bf 84:9} (2005), 331--356.

\bibitem{EM96}  P.F. Embid and A.J. Majda, {\em Averaging over fast gravity waves for geophysical flows with arbitrary potential vorticity,} Comm. PDE {\bf 21} (1996), 619--658.

\bibitem{FT98}  A. B. Ferrari and E. S. Titi, {\em Gevrey regularity for nonlinear analytic parabolic
equations},
	Comm. Partial Differential Equations {\bf 23} (1998), 1--16.
	
\bibitem{FT89}  C. Foias and R. Temam, {\em Gevrey class regularity for the solutions of the Navier-Stokes equations}, J. Funct. Anal. {\bf 87} (1989), 359--369.

\bibitem{GMV20} D. G\'erard-Varet, N. Masmoudi and V. Vicol, {\em Well-posedness of the hydrostatic Navier-Stokes equations,} Analysis \& PDE {\bf 13:5} (2020), 1417--1455.

\bibitem{GILT20} T. Ghoul, S. Ibrahim, Q. Lin and E. S. Titi, {\em
	On the effect of rotation on the life-span of analytic solutions to the $3D$ inviscid primitive equations,} Arch. Ration. Mechan. Anal. {\bf 243} (2022), 747--806.


\bibitem{GR99} E. Grenier, {\em On the derivation of homogeneous hydrostatic equations,}  M2AN Math. Model. Numer. Anal. {\bf 33:5} (1999), 965--970.

\bibitem{GST15} Y. Guo, K. Simon and E.S. Titi {\em  Global well-posedness of a system of nonlinearly coupled KdV equations of Majda and Biello,}  Commun. Math. Sci. {\bf 13:5} (2015), 1261--1288.

\bibitem{HN16}  D. Han-Kwan and T. Nguyen, {\em  Illposedness of the hydrostatic Euler and
	singular Vlasov equations}, Arch. Ration. Mech. Anal. {\bf 221:3} (2016),  1317--1344.
	

\bibitem{Hieber-Kashiwabara}
M. Hieber and  T. Kashiwabara, {\em Global well-posedness of the three-dimensional primitive equations in $L^p$-space}, Arch. Rational Mech. Anal. {\bf 221} (2016), 1077--1115.

\bibitem{HL22}
R. Hu and  Q. Lin, {\em Local Martingale Solutions and Pathwise Uniqueness for the Three-dimensional Stochastic Inviscid Primitive Equations}, arXiv preprint arXiv:2201.05924 (2022).

\bibitem{ILT20} S. Ibrahim, Q. Lin and E. S. Titi, {\em
	Finite-time blowup and ill-posedness in Sobolev spaces of the inviscid primitive equations with rotation,} J. Differ. Equ. {\bf 286} (2021), 557--577.

\bibitem{IY} S. Ibrahim and T. Yoneda, {\em
	Long time solvability of the Navier-Stokes-Boussinesq equations with almost periodic initial large data} J. Math. Sci. Univ. Tokyo {\bf 20:1} (2013), 1--25.

\bibitem{Kiselev-Sverak} A. Kiselev and V. \v{S}ver\'ak, {\em Small scale creation for solutions of the incompressible two-dimensional Euler equation}, Ann. of Math.  {\bf 180} (2014), 1205--1220.


\bibitem{K06} G. M. Kobelkov, {\em
	Existence of a solution in the large for the 3D large-scale ocean
	dynamics equaitons,} C. R. Math. Acad. Sci. Paris {\bf 343} (2006), 283--286.
	
\bibitem{KLT14} Y. Koh, S. Lee and R. Takada, {\em Strichartz estimates for the Euler equations in the rotating framework,}
J. Differ. Equ. {\bf 256} (2014), 707--744.

\bibitem{KTZ18}  A. Kostianko, E. S. Titi and S. Zelik, {\em Large dispertion, averaging and attractors: three 1D paradigms,} Nonlinearity {\bf 31} (2018), 317--350.


\bibitem{KMVW14}  I. Kukavica, N. Masmoudi, V. Vicol and T. Wong, {\em
	 On the local well-posedness of the Prandtl
	 and the hydrostatic Euler equations with multiple monotonicity regions,} SIAM J. Math. Anal. {\bf 46:6} (2014), 3865--3890.

\bibitem{KTVZ11} I. Kukavica, R. Temam, V. Vicol, and M. Ziane, {\em
	Local existence and uniqueness for the
	hydrostatic Euler equations on a bounded domain,} J. Differ. Equ. {\bf 250:3} (2011), 1719--1746.


\bibitem{KZ07} I. Kukavica and M. Ziane, {\em
	The regularity of solutions of the primitive equations of the ocean in space dimension three,} C. R. Math. Acad. Sci. Paris {\bf 345} (2007), 257--260.

\bibitem{KZ072} I. Kukavica and M. Ziane, {\em
	On the regularity of the primitive equations of the ocean,} Nonlinearity {\bf 20} (2007), 2739--2753.


\bibitem{LT10} A. Larios and E.S. Titi, {\em
	On the higher-order global regularity of the inviscid Voigt-regularization of three-dimensional hydrodynamic models,}  Discrete Contin. Dyn. Syst. Ser. B {\bf 14:2} (2010), 603--627.
	
\bibitem{LO97} C.D. Levermore and M. Oliver, {\em Analyticity of solutions for a generalized Euler equation},
	J. Differ. Equ. {\bf 133} (1997), 321--339.
	
	

\bibitem{LT18} J. Li and E.S. Titi, {\em
	The primitive equations as the small aspect
ratio limit of the Navier–Stokes equations: rigorous justification of the hydrostatic approximation,}  J. Math. Pures Appl. {\bf 124} (2019), 30--58.

\bibitem{LTY21} J. Li, E.S. Titi and G. Yuan {\em
	The primitive equations approximation of the anisotropic  horizontally viscous $3D$ Navier-Stokes equations,}  Journal of Differential Equations {\bf 306} (2022), 492--524.



\bibitem{LT04} H. Liu and E. Tamdor, {\em
	: Rotation prevents finite-time breakdown,}  Phys. D {\bf 188} (2004), 262--276.

\bibitem{MW12}  N. Masmoudi and T. Wong, {\em
	On the $H^s$ theory of hydrostatic Euler equations,} Arch. Ration.
Mech. Anal. {\bf 204:1} (2012), 231--271.

	
\bibitem{OL96}  M. Oliver, {\em
	  A  Mathematical  Investigation  of  Models  of  Shallow  Water  with  a  Varying Bottom,} Ph.D.  Dissertation,  University of Arizona,  Tucson,  Arizona,  1996.

\bibitem{PZZ18}  M. Paicu, P. Zhang and Z. Zhang, {\em
	  On the hydrostatic approximation of the Navier-Stokes equations in a thin strip,} Advances in Mathematics {\bf 372} (2020), 107293.


\bibitem{RE09} M. Renardy,  {\em Ill-posedness of the hydrostatic Euler and Navier-Stokes equations,} Arch. Ration.
Mech. Anal. {\bf 194:3} (2009), 877--886.


\bibitem{W12} T. K. Wong, {\em Blowup of solutions of the hydrostatic Euler equations}, Proc. Amer. Math. Soc. {\bf 143:3} (2015), 1119--1125.



	
\end{thebibliography}
\end{document}